\definecolor{lightyellow}{RGB}{255, 255, 204} 
\definecolor{darkred}{RGB}{139,0,0} 
\newcommand{\bq}{{\bm q}}
\newcommand{\bx}{{\bm x}}
\newcommand{\by}{{\bm y}}
\newcommand{\bz}{{\mathbf{z}}}
\newcommand{\balpha}{{\boldsymbol{\alpha}}}
\newcommand{\bbeta}{{\boldsymbol{\beta}}}
\newcommand{\bdelta}{{\boldsymbol{\delta}}}
\newcommand{\Exp}{\mathbb{E}}
\let\R\Real
\begin{document}


\RUNAUTHOR{Khalid and Sturt}

\RUNTITLE{Assortment Optimization and the Sample Average Approximation}
\TITLE{Assortment Optimization and the Sample Average Approximation}

\ARTICLEAUTHORS{%
\AUTHOR{Hassaan Khalid}
\AFF{Department of Information and Decision Sciences\\
University of Illinois Chicago, \EMAIL{hkhali23@uic.edu}} 
\AUTHOR{Bradley Sturt}
\AFF{Department of Information and Decision Sciences\\
University of Illinois Chicago, \EMAIL{bsturt@uic.edu}}
} 

     \ABSTRACT{{\looseness=-1 \spaceskip= 2pt plus 1pt minus 1.5pt  \spaceskip= 3pt plus 2pt minus 2pt 
     We consider a simple approach to solving assortment optimization under the random utility maximization model. The approach uses Monte-Carlo simulation to  construct a ranking-based choice model that serves as a proxy for the true choice model, followed by finding an assortment that is optimal with respect to that proxy.  In this paper, we make that  approach more viable  by developing  faster algorithms for finding assortments that are optimal under ranking-based choice models. Our algorithms are based on mixed-integer programming and  consist of stronger formulations as well as  new structural and algorithmic results related to  Benders cuts.  We demonstrate that our algorithms---without any heuristics   or parameter tuning---can offer more than a 20x speedup in real-world settings with thousands of products and samples. Equipped with  our algorithms, we  showcase the value of using the sample average approximation to solve assortment optimization problems for which no practically efficient algorithms are known. 
     \looseness=-1  
     }}
     \KEYWORDS{Stochastic programming; assortment optimization; large-scale mixed-integer programming.}

\HISTORY{First version: Sept.\,30, 2025.  }
\maketitle

\vspace{-0.7em}

\section{Introduction} \label{sec:intro}

The sample average approximation is one of the most celebrated and effective methods for solving \emph{stochastic programs}---optimization problems in which the goal is to maximize the expectation of a reward function. This method has been deployed by organizations in nearly every subfield of operations research and management science, ranging from inventory control to energy planning.  The attractiveness of the sample average approximation can be attributed to its versatility: it can be used to obtain near-optimal solutions for stochastic programs with virtually {any} probability distribution, as long as it is possible using Monte-Carlo simulation to obtain a large number of samples from the underlying probability distribution~\citep{shapiro2021lectures}. 

Assortment optimization has emerged over the past two decades 
as a fundamental and practically significant class of stochastic programs. Widely studied  in revenue management and marketing, assortment optimization refers to a class of stochastic programs in which the goal is to select a subset of products for a firm to offer to its customers that maximizes the firm's expected revenue. In the most common setting, the customer preferences are captured in the stochastic program by a random utility vector that is drawn from a joint probability distribution;  when presented with a subset of products,  the customer will purchase the product from the subset with the highest utility. 
Tailoring the probability distribution of the random utility vector gives rise to assortment optimization problems under canonical choice models such as the multinomial logit, nested logit, and mixed multinomial logit 
as well as more modern variants like neural network-based choice models.  
\looseness=-1

In contrast to many other classes of stochastic programs, assortment optimization problems are \emph{not} typically solved using the sample average approximation. 
Instead, most of the algorithms that have been developed in the past two decades  for solving assortment optimization problems have been tailored to {parametric}  probability distributions for the random utility vector. For example, in the case where the random utility vector is comprised of independent Gumbel random variables---in which the demand model is known as the multinomial logit model---a number of  algorithms have been {developed} for solving the assortment optimization problem. Exact and approximation  algorithms that run in practical computation times have {also} been developed for numerous other parametric probability distributions such as those corresponding to the mixed MNL choice model \citep{rusmevichientong2014assortment}, the Markov-chain choice model \citep{blanchet2016markov}, and many others. 
These so-called \emph{distribution-specific} algorithms  usually rely on the probability distribution generating an expected revenue for each assortment that has a simple closed-form representation.

Yet there are many probability distributions for which no {practically efficient} distribution-specific algorithms for solving the  assortment optimization problem are known. Examples  range from random utility vectors with simple parametric probability distributions (e.g., when the random utilities are a mixture of exponential random variables~\citep[Section 8.2]{aouad2023}) to random utility vectors with complex probability distributions (e.g., when the random utilities are generated using an artificial neural network \citep{aouad2022representing}). The prevalence of complex probability distributions for the random utility vector is likely to accelerate as deep learning and artificial intelligence become increasingly used in the construction of probability distributions for the random utility vector. In such situations, the lack of practically efficient algorithms is a key bottleneck for firms to operationalizing new probability distributions that accurately represent customer behavior.  

In view of the above, the sample average approximation has the potential to serve as a valuable tool for solving assortment optimization problems for which no practically efficient distribution-specific algorithms are known. 
Indeed, the combination of the sample average approximation and Monte-Carlo simulation can be used to solve assortment optimization problems under \emph{any}  probability distribution. This  combination can hence  provide firms a means to decouple the tasks of \emph{estimating} a probability distribution that accurately captures customer behavior from the task of \emph{optimizing} the expected revenue. This decoupling of estimation  and optimization, in turn, can 
 offer firms the flexibility to estimate probability distributions that most accurately capture their customer behavior.

The sample average approximation can also play an important role in designing new distribution-specific algorithms for assortment optimization. 
In many real-world applications---such as recommendation systems and online retail---it is crucial to develop algorithms  that can scale to problems involving tens or even hundreds of thousands of products. A central challenge firms face when considering a new choice model is determining whether it is promising enough to justify the investment in building a distribution-specific algorithm. This challenge is especially salient because, counterintuitively, a new choice model may not improve predictive accuracy but can still lead to substantially better assortments \cite[p.2830]{aouad2023}. In such cases, the sample average approximation can provide a practical way for firms to test the performance of a new choice model on smaller instances (e.g., with hundreds of products) before committing resources to developing a scalable approximation algorithm (e.g., a PTAS or FPRAS) for real-world use with  hundreds of thousands of products.\looseness=-1

Thus motivated,  our goal of the present paper   is to make the sample average approximation a more viable approach to solving assortment optimization problems. We pursue this goal by focusing on the task of \emph{algorithm design}. On one hand, the combination of the sample average approximation and Monte-Carlo simulation can, at least in principle, solve any assortment optimization problem to any desired level of accuracy.
     On the other hand, the viability of this combination  hinges on being able to solve the sample average approximation  in practical computation times when the {number of samples and the number of products are large}.  This is problematic because  solving the sample average approximation is equivalent to solving assortment optimization {under the ranking-based choice model}, a  class of problems that is known to be computationally demanding. The need for \emph{practically efficient} algorithms for solving assortment optimization  under the ranking-based choice model  
has thus been the key roadblock to deploying the sample average approximation to solve assortment optimization problems in practice.\looseness=-1 

Our main contribution of this paper contends with that challenge by developing faster algorithms for solving assortment optimization under the ranking-based choice model. We do this in two ways. First, we introduce a new mixed-integer programming formulation of these problems, the relative strength  of which improves as  the number of samples increases. Second, we develop an accelerated Benders decomposition method based on novel structural insights and algorithmic results related to  Pareto-optimal cuts in the context of assortment optimization under the ranking-based choice model. Theoretically, we  prove that our solution methods are strict improvements over the state-of-the-art solution methods from the literature, with the improvements being most significant when the numbers of products and samples are large. Empirically, we show across  numerical experiments with synthetic and real-world data that our  solution methods can lead to substantial speedups in solve times for the sample average approximation, thereby offering the state-of-the-art solution methods for several classes of assortment optimization problems from the literature. Our developments thus make the sample average approximation a more viable approach to solving  assortment optimization problems  for which no practically efficient distribution-specific algorithms are known.\looseness=-1

\subsection{Related Literature} \label{sec:lit}
The study of assortment optimization under the ranking-based choice model has a rich history.\footnote{The equivalence of random utility models and distributions over rankings, as well as the construction of rankings by sampling, was established by \citet[Sections III and VII]{blockmarschak}.} 
As best as we can tell,  \cite{mcbride1988integer} were the first to formalize this class of assortment optimization  problems and present methods based on mixed-integer programming for solving them.  The design of solution methods based on mixed-integer programming  continued in the ensuing decades, with alternative formulations for this class of problems developed by \cite{belloni2008optimizing}, \cite{bertsimas2019exact}, and \cite{ma2023assortment}, among many others. The theoretical intractability and approximability  of these mixed-integer programs were established by \cite{aouad2019approximation} and \cite{feldman2019assortment}, and special cases of these mixed-integer programs that can be solved in polynomial time have been discovered by  
\cite{honhon2012optimal}, \cite{aouad2021assortment}, and \cite{ma2023assortment}. A two-phase Benders decomposition method  was proposed by \cite{bertsimas2019exact}, which was  rediscovered by \cite{zhang2024approximate} and extended to more general settings by \cite{akchen2021assortment}.  Heuristics  based on local search have been developed and studied by  \cite{jagabathula2014assortment} and \cite{gallego2024efficient}, and  
algorithms for solving robust versions of these assortment optimization problems were developed by \cite{farias2013nonparametric} and \cite{sturt2025value}. 

Up to this point,  the state-of-the-art solution methods for  assortment optimization under ranking-based choice models are those developed by \cite{bertsimas2019exact}. Their solution methods are two-fold.  First, they developed a mixed-integer programming formulation that is more compact and provably stronger (i.e., has a tighter linear programming relaxation) than the formulations of \cite{mcbride1988integer} and \cite{belloni2008optimizing}. Second, \cite{bertsimas2019exact} developed a two-phase Benders decomposition method for solving their mixed-integer program. The first phase of their decomposition method consists of solving the linear programming relaxation  of their mixed-integer programming formulation using constraint generation. The  second phase of their decomposition method consists of solving the mixed-integer programming formulation using constraint generation, which is warm-started with the cuts  generated in the first phase. At the crux of their two-phase Benders decomposition method  are custom algorithms for computing cuts, including a $\mathcal{O}(KN^2)$ algorithm for generating cuts in the first phase and an $\mathcal{O}(KN)$ algorithm for generating cuts in the second phase,  
where $N$ is the number of products and $K$ is the number of rankings.\looseness=-1   

Despite significant efforts in subsequent years, there has been limited progress on developing  solution methods that are more practically efficient than those of \cite{bertsimas2019exact} in the context of problems with large numbers of products and rankings.  For example,  \cite{ma2023assortment} introduced a new class of valid inequalities to tighten the mixed-integer programming formulation from \cite{bertsimas2019exact}; 
however, Ma found that adding those constraints does not lead to faster solution times, because the resulting formulation has ``\emph{$\ldots$ quadratically many constraints in $K$  instead of linearly many, and hence the relaxation itself can be slower to solve if $K$ is large}" \citep[Page 2103]{ma2023assortment}. This issue is particularly acute because the approximation gap of the sample average approximation decreases as the number of samples---and thus the number of rankings---becomes large.   Our main contribution of this paper contends with that fundamental challenge by developing solution methods that scale more gracefully with the number of products and rankings. 
\looseness=-1 

\subsection{Contributions} \label{sec:contributions}
In this paper,  we make the  sample average approximation a more viable approach to solving assortment optimization problems. We do this  by developing the first solution methods for the sample average approximation that are theoretical improvements over the  state-of-the-art  methods by \cite{bertsimas2019exact}. Our developments include a new mixed-integer programming formulation as well as a novel algorithm for computing cuts in Benders decomposition.  Empirically, we show that these developments lead to significant speedups in problems with many products and/or samples in real-world settings from the literature. Our theoretical and empirical results thus expand, as well as  offer a more nuanced understanding of, the viability of using the sample average approximation to solve assortment optimization problems for which no practically efficient distribution-specific algorithms are known.

In greater detail, the  main contributions of this paper are as follows.\looseness=-1
\begin{enumerate}

    \item We propose a new mixed-integer programming formulation for assortment optimization under the ranking-based choice model (Section~\ref{sec:exclusion_set}). Our new formulation, which we dub the ``{exclusion set formulation}'', is valuable because it is both  \emph{stronger} and \emph{more compact} than the formulation of \cite{bertsimas2019exact}. In other words, our formulation has a tighter linear programming relaxation, and we show that our formulation has fewer decision variables and constraints, both for finite numbers of samples as well as  asymptotically  as the number of samples tends to infinity. As such, our new formulation can be solved in faster computation times as the number of samples grows large.   At the crux of our new formulation is an aggregation of decision variables and constraints from the formulation of \cite{bertsimas2019exact} that is made possible by adding to their formulation a particular subset of the valid inequalities proposed by \cite{ma2023assortment}.\looseness=-1 
    \vspace{0.5em}

\item Our second development is an acceleration of the two-phase Benders decomposition method proposed by \cite{bertsimas2019exact} (Section~\ref{sec:main_alg}). In general,  the practical efficiency of any Benders decomposition method depends on the computation time for generating cuts and on the strength of the generated cuts. In Section~\ref{sec:results:toubia}, we  show that those two factors---computation time to generate cuts and strength of those  cuts---were the primary contributors to the computation time of the two-phase Benders decomposition method in the numerical experiments studied by \cite{bertsimas2019exact}. Motivated by these empirical findings, our paper develops an acceleration of  the two-phase Benders decomposition method by  developing  faster algorithms for computing optimal cuts, and, in particular, computing optimal cuts that are the \emph{strongest possible} from a theoretical standpoint. Our developments are as follows.\looseness=-1 
    \vspace{0.5em}
    
\begin{enumerate}
    \item We show that solving the dual of the inner problem of \cite{bertsimas2019exact} can be transformed into solving a separable convex minimization problem over chain constraints, which is a generalization of the isotonic regression problem from statistics (Section~\ref{sec:main_alg:reform}). 
        \vspace{0.5em}
        
    \item We use this reformulation to obtain  faster algorithms for computing optimal cuts in the two-phase Benders decomposition method (Section~\ref{sec:main_alg:algs}). In comparison to the algorithms from \cite{bertsimas2019exact}, our algorithms reduce the computation time for computing optimal Benders cuts from $\mathcal{O}(K N^2)$ to $\mathcal{O}(\sum_{k=1}^K L_k \log L_k)$  in the first phase and from $\mathcal{O}(NK)$ to $\mathcal{O}(\sum_{k=1}^K L_k)$  in the second phase, where $L_k \le N+1$ is the position of the no-purchase option for each ranking $k$. These algorithms can thus lead to  reductions in computation time per iteration of the Benders decomposition method when the numbers of products and rankings are large or when $L_k \ll N$ for most rankings $k$. Our algorithm for the first phase is also  simple, consisting of a reduction  to an algorithm by \cite{ahuja2001fast}. 
        \vspace{0.5em}
        
    \item We present the first characterization of  \emph{Pareto-optimal cuts} for the two-phase Benders decomposition method  (Section~\ref{sec:characterization_pareto}).   Pareto cuts, which are the gold standard property for cuts in Benders decomposition,  are cuts for which there do not exist any other cuts that are strictly stronger; see  \cite{magnanti1981accelerating}.  More specifically, we introduce four properties of Benders cuts, and we prove that a Benders cut is a Pareto cut if and only if it satisfies those four properties simultaneously (Theorems~\ref{thm:transformation} and \ref{thm:sufficient} in Section~\ref{sec:characterization_pareto}). 
    \vspace{0.5em}
    
    \item Equipped with our characterization of Pareto cuts, we present an $\mathcal{O}(\sum_{k=1}^K L_k)$ algorithm for transforming optimal Benders cuts  into  Pareto-optimal cuts (Section~\ref{sec:transformation}). Our algorithm thus accelerates the Benders decomposition method by  reducing the number of cuts needed to obtain an optimal solution for the sample average approximation.\looseness=-1
\end{enumerate}
\end{enumerate}

We demonstrate empirically that our improved  solution methods can lead to {significant} speedups in representative assortment planning problems, thereby enabling the sample average approximation to be solvable in practical computation times in a much wider scope of problem settings. 

\begin{itemize}
    
\item First, we consider assortment optimization under the multinomial logit with rank cutoffs, a parametric joint probability distribution for the random utility vector   proposed by \cite{bai2024assortment}. This is a representative setting for assortment optimization because the multinomial logit with rank cutoffs captures the well-known phenomenon that customers often have small consideration sets, while at the same time this class of assortment optimization problems is computationally demanding. In numerical experiments across various settings, our new algorithms lead to significant speedups over existing solution methods, where the speedups of our new algorithms is most pronounced when the number of samples is large.  
This showcases the value of our improved algorithms in enabling the sample average approximation to solve assortment optimization problems for which no practically-efficient distribution-specific algorithms are known.\looseness=-1
    \vspace{0.5em}
    
\item Second, to investigate the performance of our algorithms in setting with huge numbers of products and samples, we  consider a widely-studied product line design problem by \cite{belloni2008optimizing}. The setting involves more than 3000 products and more than 300 samples which are obtained using conjoint analysis, and thus provides a realistic approximation of settings in which samples are drawn from a known probability distribution. The time required to find an assortment with a cardinality constraint of five products for this setting using the mixed-integer programming formulation of \cite{belloni2008optimizing} was reported to be one week.  \cite{bertsimas2019exact}, using their Benders decomposition method, reduced the computation time to six minutes.\footnote{The running time of six minutes for \cite{bertsimas2019exact},  shown in Section~\ref{sec:results:toubia}, reflects improvements in computer hardware as well as improvements in the Gurobi solver. }  Without parameter tuning or heuristics, our accelerated Benders decomposition method in this setting reduces the computation time down to around {18 seconds}. These results show the viability of the sample average approximation in realistic settings with thousands of products. 
\end{itemize}

The rest of the paper is organized as follows. In Section~\ref{sec:prelim}, we formalize the problem setting and review existing solution methods. In Section~\ref{sec:exclusion_set}, we present our new mixed-integer programming formulation. In Section~\ref{sec:main_alg}, we develop the accelerated Benders decomposition method. In Section~\ref{sec:experiments}, we present numerical experiments. In Section~\ref{sec:conclusion}, we conclude and discuss directions for future work. All omitted proofs are found in the appendices.

\section{Preliminaries}\label{sec:prelim}
In this section, we formalize the problem setting of assortment optimization under the random utility maximization model and the sample average approximation, as well as review the state-of-the-art solution methods for the sample average approximation. 
\subsection{Problem Setting} \label{sec:rum}
We consider assortment optimization under the {random utility maximization} model.  These assortment optimization problems can be represented as  stochastic programming problems of the form
\begin{equation} \label{prob:rum}
\begin{aligned}
&\underset{S  \in \mathscr{S}}{\textnormal{maximize}} && \Exp \left[ \sum_{i \in S} r_i \mathbb{I} \left \{ U_i = \max_{j \in S} U_j  \right \}\right]
\end{aligned}
\end{equation}
The above stochastic program is defined by a universe of products  denoted by $[N+1] \equiv \{1,\ldots,N+1\}$. We assume without loss of generality that product $N+1$ is the no-purchase option, and the revenues of the products are denoted by $r_1,\ldots,r_N > 0$ and $r_{N+1} = 0$.  A subset of products $S \subseteq [N+1]$ that satisfies $N+1 \in S$ is referred to as an assortment, and we denote the set of feasible assortments that the firm can offer to its customers by $\mathscr{S} \subseteq \{ S \subseteq \{1,\ldots,N+1\}: N+1 \in S\}$. 
\looseness=-1 

The random utility maximization model is captured in the stochastic program~\eqref{prob:rum} by the joint probability distribution of the random utility vector $(U_1,\ldots,U_{N+1}) \in \R^{N+1}$. The random utility vector has the interpretation  that each randomly arriving customer will purchase a product if and only if that product has the maximum utility among all products in the assortment. We do not make any structural assumptions on the joint probability distribution, in the sense that we assume that the utilities $U_1,\ldots,U_{N+1}$ are random variables that can be correlated in possibly complex ways. Our only assumption about the random utility vector is the usual requirement that the utilities are unique almost surely; see, e.g., \citet[Section III]{blockmarschak}.
\begin{assumption} \label{ass:rum} 
The distribution of $(U_1,\ldots,U_{N+1})$ satisfies $U_i \neq U_j$ for all $i \neq j$ almost surely.\looseness=-1
\end{assumption} 
\vspace{-0.5em}

A large portion of the assortment optimization problems that have been  studied in revenue management are special cases of \eqref{prob:rum} with different parametric families of joint probability distributions. For example, if the random utilities $U_1,\ldots,U_{N+1}$ are independent Gumbel random variables, then \eqref{prob:rum} is an assortment optimization problem under the multinomial logit (MNL) model. Other popular classes of assortment optimization problems that are special cases of  \eqref{prob:rum} include  the Markov-chain choice model \citep{blanchet2016markov}, the mixed MNL choice model \citep{rusmevichientong2014assortment}, the exponomial choice model~\citep{aouad2023},  the Mallows smoothed choice model~\citep{desir2021mallows}, and  neural network-based models~\citep{aouad2022representing}, among many others.

As discussed in Section~\ref{sec:intro}, the computational difficulty of the stochastic programming problem~\eqref{prob:rum} depends on  the joint probability distribution of the random utility vector as well as the constraints on the set of feasible assortments.  In special cases of joint probability distributions in the random utility maximization model---like those that correspond to the MNL model,  mixed MNL model,  and Markov chain choice model---it is possible to derive  compact, closed-form representations for the expectation  in the objective function of \eqref{prob:rum}. But even when its objective function has a compact representation,  \eqref{prob:rum} is often challenging to solve in the presence of simple  constraints on the set of feasible assortments. The challenge of solving \eqref{prob:rum} is further exacerbated by the fact that the expectation of \eqref{prob:rum} does not in general have a simple closed-form structure. Against this backdrop, our focus in this paper is on methods for obtaining near-optimal solutions to \eqref{prob:rum} that do not make any parametric assumptions on the joint probability distribution of the random utility vector.

\subsection{Sample Average Approximation} \label{sec:saa}

 This paper considers the combination of the sample average approximation and Monte-Carlo simulation for solving the assortment optimization problem~\eqref{prob:rum}.  The primitive of sample average approximation in the context of assortment optimization is a set of samples that are drawn independently from the joint probability distribution of the random utility vector. We denote these samples  by $$(U^{\tilde{k}}_1,\ldots,U^{\tilde{k}}_{N+1}) \text{ for } \tilde{k} \in \left\{1,\ldots,\tilde{K}\right\}$$ 
It is worthwhile to emphasize that this paper is \emph{not} focused on a setting in which the joint probability distribution is unknown and the samples come directly from historical data. Rather, we assume throughout this paper that we are in the typical setting of assortment planning, where the joint probability distribution of the random utility vector has been estimated by a firm (for instance, by fitting a possibly complex parametric choice model to historical data), and where our goal is to solve \eqref{prob:rum} under that estimated joint probability distribution. For this setting, we assume that the samples are generated from the estimated joint probability distribution using Monte-Carlo simulation.\looseness=-1

Given the set of samples,  we approximate \eqref{prob:rum}  by replacing the true joint probability
distribution with the empirical probability distribution constructed from the samples. This approximation, which is known as the sample average approximation, is denoted by the optimization problem  
\begin{equation} \label{prob:saa}
\begin{aligned}
&\underset{S  \in \mathscr{S}}{\textnormal{maximize}} && \frac{1}{\tilde{K}} \sum_{\tilde{k}=1}^{\tilde{K}} \sum_{i \in S} r_i \mathbb{I} \left \{ U_i^{\tilde{k}} = \max_{j \in S} U_j^{\tilde{k}}  \right \} 
\end{aligned}
\end{equation}
The optimal objective value of \eqref{prob:saa} serves as our estimate of the optimal objective value of \eqref{prob:rum}, and the assortment obtained by solving \eqref{prob:saa} constitutes our approximate solution for \eqref{prob:rum}. This algorithmic technique of using Monte-Carlo simulation and the sample average approximation to solve stochastic programming problems with known distributions was popularized by \cite{kleywegt2002sample}, and this algorithmic technique is widely considered to be a foundational tool in the stochastic programming literature.  For a modern introduction to using the sample average approximation and Monte-Carlo simulation to solve stochastic programs, see  \cite{kim2015guide} and \cite{shapiro2021lectures}.\looseness=-1

The statistical properties  of the sample average approximation for   stochastic discrete optimization  have been studied extensively over the past several decades, and these  general results     apply readily to the specific case of \eqref{prob:saa}. In particular, it follows from  existing results that the optimal objective value and set of optimal solutions for the sample average approximation~\eqref{prob:saa} 
will converge to those of the stochastic program~\eqref{prob:rum},  almost surely, as the number of samples $\tilde{K}$ tends to infinity \citep{kleywegt2002sample}. As such,  the sample average approximation~\eqref{prob:saa} can approximate  the stochastic program~\eqref{prob:rum} to any desired level of accuracy by choosing a sufficiently large number of samples $\tilde{K}$ to generate using Monte-Carlo simulation. While the number of samples needed to obtain a desired level of accuracy  is not known \emph{a priori}, we can empirically assess the approximation gap of \eqref{prob:saa} for any finite choice of $\tilde{K}$ by using an out-of-sample validation set,  which can also be obtained using Monte-Carlo simulation; see Section~\ref{sec:experiments} for details.

We conclude Section~\ref{sec:saa} by noting that there is an extensive literature on  convergence rates and asymptotics of the sample average approximation for stochastic discrete optimization, all of which immediately apply to \eqref{prob:saa}. For these and related  results,  we refer the interested reader to \cite{kleywegt2002sample} and \citet[Chapter 5]{shapiro2021lectures}. Because the scope of this paper is centered on algorithm design, and because the aforementioned guarantees apply to \eqref{prob:saa}, we do not focus in this paper on developing improved convergence rates of the optimal objective value of \eqref{prob:saa}, with the exception of discussions of future work in Section~\ref{sec:conclusion}.  
We also remark that our numerical experiments suggest that the number of samples needed to achieve a small approximation gap can be reasonably small in practice; see Section~\ref{sec:experiments} for details.\looseness=-1

\subsection{Assortment Optimization under the Ranking-Based Choice Model} \label{sec:intro:methods}
The state-of-the-art methods for solving the sample average approximation are based on reformulating \eqref{prob:saa} as an assortment optimization problem under the ranking-based choice model. In the present Section~\ref{sec:intro:methods}, we formally derive this reformulation.

The ranking-based choice model is a choice model defined  by a distribution over {preference rankings}. A ranking in our context refers to a 
 bijection of the form $\sigma: \{1,\ldots,N+1\} \to \{1,\ldots,N+1\}$, with the interpretation that a customer has preferences that correspond to ranking $\sigma$ if and only if  the customer prefers product $i$ to product $j$ for all $i,j \in \{1,\ldots,N+1\}$ that satisfy $\sigma(i) < \sigma(j)$. It follows from Assumption~\ref{ass:rum}  that each sample $(U^{\tilde{k}}_1,\ldots,U^{\tilde{k}}_{N+1})$ of the random utility vector corresponds to exactly one ranking, almost surely.\footnote{Specifically, the corresponding ranking  $\sigma$ is the ranking that sorts $(U^{\tilde{k}}_1,\ldots,U^{\tilde{k}}_{N+1})$  in descending order.} 
 Given a set of samples $(U^{\tilde{k}}_1,\ldots,U^{\tilde{k}}_{N+1})$ for $\tilde{k} \in \{1,\ldots,\tilde{K}\}$,  we denote the unique  rankings that correspond to those samples by $\sigma_1,\ldots,\sigma_K$ with $K \le \tilde{K}$.   For each $k \in \{1,\ldots,K\}$, we let the proportion of samples that map to  ranking $\sigma_k$  be denoted by
\begin{align*}
    \lambda_k \triangleq \frac{1}{\tilde{K}} \left| \left \{ \tilde{k} \in \left\{1,\ldots,\tilde{K} \right \}: \sigma_k(i) < \sigma_k(j)  \text{ for all } i,j \text{ that satisfy } U^{\tilde{k}}_i > U^{\tilde{k}}_j \right \}  \right| 
\end{align*}
It is easy to see that the rankings $\sigma_1,\ldots,\sigma_K$ and proportions $\lambda_1,\ldots,\lambda_K$ can be computed efficiently from the set of samples and that $\sum_{k=1}^K \lambda_k = 1$.  It thus follows from the above reasoning and from algebra that the sample average approximation~\eqref{prob:saa} can be reformulated as the following assortment optimization problem under the ranking-based choice model.\looseness=-1
\begin{equation} \label{prob:ranking}
\begin{aligned}
&\underset{S  \in \mathscr{S}}{\textnormal{maximize}} && \sum_{k=1}^{K} \lambda_k \sum_{i \in S} r_i \mathbb{I} \left \{ i = \argmin_{j \in S} \sigma_k(j) \right \}
\end{aligned}
\end{equation}

 For notational convenience,  we will let the number of products that are preferred by ranking $k \in [K] \equiv \{1,\ldots,K\}$ to the no-purchase option be denoted by $L_k \triangleq \sigma_k^{-1}(N+1) - 1$.  We will also make the following standing assumption. 
    \begin{assumption} \label{ass:L_k}
        $L_k \ge 1$ for all $k \in [K]$. 
    \end{assumption}
    This assumption is without loss of generality because if $\sigma_{k}^{-1}(N+1) = 1$, then the no-purchase option is the most preferred option by ranking $k$. This implies that ranking $k$ will never purchase a product from the assortment other than the no-purchase option, which in turn implies that that ranking can be removed from the assortment optimization problem~\eqref{prob:ranking}.

\subsection{Existing Solution Methods}
The state-of-the-art general solution methods for  assortment optimization  under the ranking-based choice model~\eqref{prob:ranking} are those of  \cite{bertsimas2019exact}.  Their first solution method is a mixed-integer programming formulation of \eqref{prob:ranking}, and their second solution method is a two-phase Benders decomposition method for solving \eqref{prob:ranking} to optimality. We review these two solution methods below.

\subsubsection{Mixed-integer programming formulation.} \label{sec:review:mip} The  mixed-integer programming formulation of  \eqref{prob:ranking}  from \citet[Section 3.2]{bertsimas2019exact}  requires the following notation. For each ranking $k \in [K]$, let $i_{k,1},\ldots,i_{k,N+1}$ denote the products  sorted in order of preference, in the sense that $
    \sigma_k(i_{k,1}) < \cdots < \sigma_k(i_{k,N+1})$. In other words, let $i_{k,1}$ be the most preferred product by ranking $k$, let $i_{k,2}$ be the second-most preferred product, and so on and so forth. To simplify our  exposition, we will assume for the remainder of this section that there are no constraints on the set of feasible assortments: that is, $\mathscr{S} = \{S \subseteq \{1,\ldots,N+1\}: N+1 \in S \}$. 
Given that assumption, we will let the set of binary vectors corresponding to feasible assortments be denoted by  $\mathcal{X} \triangleq \{\bx \in \{0,1\}^{N+1}: x_{N+1} = 1\}$.  Given this notation, the mixed-integer programming  formulation  of \eqref{prob:ranking} given by \citet[Section 3.2]{bertsimas2019exact} is equivalent to
\begin{subequations} \label{prob:misic}
\begin{align}
\underset{\bx \in \mathcal{X},\by \in \R^{K \times (N+1)}}{\textnormal{maximize}} \quad & \sum_{k=1}^K 
 \sum_{\ell=1}^{N+1} r_{i_{k,\ell}} y_{k,\ell}  \lambda_k \label{prob:misic:objective}\\
 \textnormal{subject to} \quad & \sum_{\ell=1}^{N+1} y_{k,\ell} = 1 && \forall k \in [K]\label{prob:misic:atmostone}\\
 &x_{i_{k,\ell}} \le \sum_{\ell'=1}^{\ell}  y_{k,\ell'}  && \forall k \in [K], \ell \in [N+1]\label{prob:misic:pushup}\\
&  y_{k,\ell}  \le x_{i_{k,\ell}} &&  \forall k \in [K], \ell \in [N+1]\label{prob:misic:pushdown}\\
&   y_{k,\ell}  \ge 0 &&  \forall k \in [K], \ell \in [N+1]\label{prob:misic:nonneg}
\end{align}
\end{subequations}

The above formulation can be interpreted as follows. The first type of decision variables, $x_i \in \{0,1\}$, specify whether product $i$ is included in the assortment. The second type of decision variables, $y_{k,\ell} \in \R$,  encodes whether ranking $k$ purchases its $\ell$th most preferred product, i.e., whether the  product in the assortment that is  purchased by ranking $k$ is product $i_{k,\ell}$. The constraints \eqref{prob:misic:atmostone} enforce that each ranking purchases exactly one product. The constraints \eqref{prob:misic:pushup} enforce that if product $i_{k,\ell}$ is in the assortment,  then ranking $k$ must purchase either product $i_{k,\ell}$ or purchase a product that is preferred to product $i_{k,\ell}$.  The constraints \eqref{prob:misic:pushdown} and \eqref{prob:misic:nonneg} enforce that ranking $k$ purchases product $i_{k,\ell}$ only if product $i_{k,\ell}$ is in the assortment.  All combined, the constraints of \eqref{prob:misic} enforce that there exists an optimal solution in which each decision variable $y_{k,\ell}$ is binary and satisfies $y_{k,\ell} = 1$ if and only if product $i_{k,\ell}$ is purchased by ranking $k$. 
\begin{remark}
The mixed-integer optimization formulation given by \citet[Section 3.2]{bertsimas2019exact} uses slightly different notation than \eqref{prob:misic}; a formal equivalence of their formulation and \eqref{prob:misic} is given in Appendix~\ref{appendix:equivalence}. 
\end{remark}
\begin{remark}We observe that any constraints on the set of feasible assortments can be incorporated into \eqref{prob:misic} by adding constraints on the binary decision variables $x_1,\ldots,x_N$.
\end{remark} 
\subsubsection{Benders decomposition method.} \label{sec:benders} The Benders decomposition method from \citet[Section 4]{bertsimas2019exact} finds an optimal solution for \eqref{prob:misic} by separating it into an outer problem, in which we optimize over  $\bx$, and  inner problems, in which we calculate the revenue received from each ranking based on the assortment corresponding to $\bx$. In greater detail, their Benders decomposition method is based on the observation that \eqref{prob:misic} can be represented equivalently as
\begin{equation} \label{prob:outer}
\begin{aligned}
\underset{\bx \in \mathcal{X}, \bq \in \R^K}{\textnormal{maximize}} \quad & \sum_{k=1}^K  \lambda_k q_k   \\
 \textnormal{subject to} \quad & q_k \le \rho_k(\bx) && \forall k \in [K]
 \end{aligned}
\end{equation}
where $\rho_k(\bx)$ is defined as 
\begin{equation} \label{prob:inner}
\begin{aligned}
\rho_k(\bx) \triangleq \quad &\underset{\by \in \R^{N+1}}{\textnormal{maximize}} & &  
 \sum_{\ell=1}^{N+1} r_{i_{k,\ell}} y_{\ell} \\
& \textnormal{subject to} & & \sum_{\ell=1}^{N+1} y_{\ell} = 1 \\
 &&&x_{i_{k,\ell}} \le \sum_{\ell'=1}^{\ell} y_{\ell'}  && \forall  \ell \in [N+1]\\
&&&  0 \le y_{\ell}  \le x_{i_{k,\ell}} &&  \forall \ell \in [N+1]
\end{aligned}
\end{equation}
We observe that the dual of the linear program~\eqref{prob:inner} is\looseness=-1
\begin{equation} \label{prob:dual}
\begin{aligned}
\rho_k(\bx) \triangleq \quad &\underset{\balpha, \bbeta \in \R^{N+1}, \gamma \in \R}{\textnormal{minimize}} & &  
\gamma + \sum_{\ell=1}^{N+1} \left( \alpha_\ell - \beta_\ell \right) x_{i_{k,\ell}}  \\
& \textnormal{subject to} & & \gamma + \alpha_\ell - \sum_{\ell' = \ell}^{N+1}  \beta_{\ell'} \ge r_{i_{k,\ell}}  && \forall \ell \in [N+1]\\
&&& \alpha_\ell,\beta_\ell \ge 0 && \forall \ell \in [N+1]
\end{aligned}
\end{equation}
and we let the set of feasible solutions for \eqref{prob:dual} be denoted by $\mathcal{D}_k$.

The solution method proposed in \citet[Section 4]{bertsimas2019exact} finds an optimal solution for \eqref{prob:outer} using a two-phase Benders decomposition method. Phase 1 of their method consists of applying Benders decomposition to the linear programming relaxation of \eqref{prob:outer}, i.e., the modification of \eqref{prob:outer} in which $\mathcal{X}$ is replaced with its convex hull $\mathcal{X}^c \triangleq \{\bx \in [0,1]^{N+1}: x_{N+1} = 1\}$.  The Benders cuts obtained from Phase 1 are then used to warm start Phase 2, in which Benders decomposition is applied to \eqref{prob:outer} with the integrality constraints on $\bx$. In both phases, cuts are obtained by solving the linear program~\eqref{prob:dual} for each ranking. \cite{bertsimas2019exact} develop an an $\mathcal{O}(N)$ algorithm for solving  \eqref{prob:dual} when $\bx \in \mathcal{X}$ and an $\mathcal{O}(N^2)$ algorithm for solving \eqref{prob:dual} when $\bx \in \mathcal{X}^c$.  A formal description of Phase 1 and Phase 2 from \citet[Section 4]{bertsimas2019exact}, which provably converges to an optimal solution for \eqref{prob:outer} after finitely many iterations, can be found in Appendix~\ref{appx:benders_review}.\looseness=-1

\section{The Exclusion Set Formulation}\label{sec:exclusion_set}
In this section, we present our new mixed-integer programming formulation of \eqref{prob:ranking}, which we dub the \emph{exclusion set formulation}. We prove that this new formulation   is  stronger  and more compact  than the mixed-integer programming formulation~\eqref{prob:misic}. By \emph{stronger}, we mean that the new formulation has a tighter linear programming relaxation than \eqref{prob:misic}. By \emph{more compact}, we mean that the new formulation has fewer decision variables and constraints than~\eqref{prob:misic}, both for finitely many products and samples as well as asymptotically as the number of samples tends to infinity.  For these reasons, the exclusion set formulation is viewed as a theoretical improvement over \eqref{prob:misic} and can lead to faster solve times using modern mixed-integer programming solvers (see Section~\ref{sec:experiments:MNL_Cutoff}).

\subsection{The New Formulation} \label{sec:derivation:xset}
In this section, we derive our new mixed-integer programming formulation of \eqref{prob:ranking}. The derivation of our new formulation is relatively simple, and consists of reducing the size of the formulation \eqref{prob:misic} from  \cite{bertsimas2019exact} by adding a particular subset of the valid inequalities proposed by \cite{ma2023assortment}.
Our formulation of \eqref{prob:ranking} is derived in three steps. 

\subsubsection{Step 1.}  \label{sec:exclusionset:step1} We begin our derivation  by  removing from \eqref{prob:misic} the decision variables $y_{k,\ell}$ for all products $i_{k,\ell}$ that are less preferred than the no-purchase option.  Indeed, consider any $\bx \in \mathcal{X}$ that is feasible for \eqref{prob:misic}. It follows from the definition of $\mathcal{X}$  that $x_{N+1} = 1$, and so it follows from the definition of $L_k$ that $x_{i_{k,L_k+1}} = 1$.\footnote{Recall from Section~\ref{sec:intro:methods} that $L_k$ is the number of products that are preferred by ranking $k$ to the no-purchase option, which implies that  $i_{k,L_k+1} =N+1$ for each ranking $k$.} Therefore, it follows from constraints~\eqref{prob:misic:atmostone}, \eqref{prob:misic:pushup}, and \eqref{prob:misic:nonneg} that every feasible solution for \eqref{prob:misic} satisfies $y_{k,L_k+2} = \cdots = y_{k,N+1} = 0$.  By removing those decision variables from \eqref{prob:misic} for each ranking $k$, and by removing the decision variable $x_{N+1}$, we conclude that \eqref{prob:misic} can be written equivalently as
\begin{equation}\label{prob:misic_no_extra}
\begin{aligned}
\underset{\bx \in \{0,1\}^N,\by}{\textnormal{maximize}} \quad & \sum_{k=1}^K 
 \sum_{\ell=1}^{L_k} r_{i_{k,\ell}} y_{k,\ell}  \lambda_k \\
 \textnormal{subject to} \quad & \sum_{\ell=1}^{L_k} y_{k,\ell} \le 1 && \forall k \in [K] \\
 &x_{i_{k,\ell}} \le \sum_{\ell'=1}^{\ell}  y_{k,\ell'}  && \forall k \in [K], \ell \in [L_k] \\
& 0 \le   y_{k,\ell}  \le x_{i_{k,\ell}} &&  \forall k \in [K], \ell \in [L_k]
\end{aligned}
\end{equation}

\subsubsection{Step 2.} \label{sec:exclusionset:step2} We next add a particular subset of the valid inequalities proposed by \cite{ma2023assortment} into the formulation \eqref{prob:misic_no_extra}. Specifically, consider the following valid inequalities:
\begin{align} 
&\sum_{\ell=1}^{L}  y_{k, \ell} \ge \sum_{\ell=1}^L  y_{k', \ell} \qquad \forall k,k' \in [K], L \in [L_k]: \{i_{k,1},\ldots,i_{k,L} \} = \{i_{k',1},\ldots,i_{k',L} \} \label{prob:equality_nonsimple}
\end{align}
The fact that the inequalities~\eqref{prob:equality_nonsimple} are valid for \eqref{prob:misic_no_extra} follows from \citet[Proposition 12]{ma2023assortment}.  The above constraints can be interpreted as requiring that the sum of the decision variables $y_{k,\ell}$ for the $L$ most preferred products by ranking $k$ must be equal to the sum of the decision variables $y_{k',\ell}$ for the $L$ most preferred products by ranking $k'$, whenever the sets of $L$ most preferred products for both rankings are equal. It follows from symmetry that the  inequalities in \eqref{prob:equality_nonsimple} can be replaced by equalities, as formalized by the following lemma, the proof of which is found in Appendix~\ref{appx:proof:lem:inequality_to_equality}. 
\begin{lemma} \label{lem:inequality_to_equality}
    The constraints \eqref{prob:equality_nonsimple} are equivalent to
    \begin{align}
        \sum_{\ell=1}^{L}  y_{k,\ell} = \sum_{\ell=1}^L  y_{k',\ell} \quad \forall k,k' \in [K], L \in [ L_k]: \{i_{k,1},\ldots,i_{k,L} \} = \{i_{k',1},\ldots,i_{k',L} \} \label{line:equality_simple_cuts}
    \end{align}
\end{lemma}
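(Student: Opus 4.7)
The plan is to establish the two directions of the equivalence separately, with the ``$\Leftarrow$'' direction being immediate from the fact that an equality is stronger than the corresponding inequality, and the ``$\Rightarrow$'' direction following from the symmetry of the indexing condition in $k$ and $k'$.

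First I would note that if $(k,k',L)$ satisfies the condition $\{i_{k,1},\ldots,i_{k,L}\} = \{i_{k',1},\ldots,i_{k',L}\}$ in \eqref{prob:equality_nonsimple} and $L \in [L_k]$, then in fact $L \in [L_{k'}]$ as well. This is because $L \le L_k$ means none of $i_{k,1},\ldots,i_{k,L}$ can be the no-purchase option $N+1$ (by definition of $L_k$ in Section~\ref{sec:intro:methods}). Hence $N+1 \notin \{i_{k',1},\ldots,i_{k',L}\}$ either, which by definition of $L_{k'}$ forces $L \le L_{k'}$. Thus the triple $(k',k,L)$ also satisfies the indexing condition of \eqref{prob:equality_nonsimple}, and \eqref{prob:equality_nonsimple} applied to this swapped triple yields the reverse inequality $\sum_{\ell=1}^L y_{k',\ell} \ge \sum_{\ell=1}^L y_{k,\ell}$. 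Combining with the original inequality gives equality, establishing \eqref{line:equality_simple_cuts}.

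The reverse direction (\eqref{line:equality_simple_cuts} implies \eqref{prob:equality_nonsimple}) is immediate, since any equality $A = B$ trivially implies $A \ge B$.

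I do not anticipate a significant obstacle here — the argument is essentially a one-line symmetry observation. The only subtle point to be careful about is verifying that swapping $k$ and $k'$ preserves membership in the index set of \eqref{prob:equality_nonsimple}, which requires the short verification above that $L \le L_k$ together with the set equality forces $L \le L_{k'}$. Everything else is routine, so the proof in the appendix should be quite short.
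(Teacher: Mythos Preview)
Your proposal is correct and follows essentially the same approach as the paper's proof: both verify that the indexing condition is symmetric in $k$ and $k'$ by showing that $L\le L_k$ together with the set equality forces $L\le L_{k'}$ (since $N+1$ cannot appear among the first $L$ products), so that the reverse inequality is also present in \eqref{prob:equality_nonsimple}. The paper's argument is just as short as you anticipate.
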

In view of Lemma~\ref{lem:inequality_to_equality}, we observe  that \eqref{prob:misic_no_extra} with the  valid inequalities from \eqref{prob:equality_nonsimple}   can be written as\looseness=-1
\begin{equation} \label{prob:misic_extra}
\begin{aligned}
&\underset{\substack{\textbf{x} \in \{0,1\}^N,\textbf{y}}}{\textnormal{maximize}} && \sum_{k=1}^K  \sum_{\ell=1}^{L_k}  r_{i_{k,\ell}} y_{k,\ell}  \lambda_k \\
&\textnormal{subject to}&&\begin{aligned}[t]
    &\sum_{\ell=1}^{L}  y_{k,\ell} = \sum_{\ell=1}^L  y_{k',\ell} && \forall k,k' \in [K], L \in [ L_k]: \{i_{k,1},\ldots,i_{k,L} \} = \{i_{k',1},\ldots,i_{k',L} \}\\
& \sum_{\ell=1}^{L_k} y_{k,\ell} \le 1 && \forall k \in [K]\\
 &x_{i_{k,\ell}} \le \sum_{\ell'=1}^{\ell}  y_{k,\ell'}  && \forall k \in [K], \ell \in [L_k]\\
& 0 \le   y_{k,\ell}  \le x_{i_{k,\ell}} &&  \forall k \in [K], \ell \in [L_k]
\end{aligned}
\end{aligned}
\end{equation}
We observe that \eqref{prob:misic_extra} is  at least as strong as  \eqref{prob:misic_no_extra}, since the constraints in \eqref{prob:misic_extra} are a superset of the constraints in \eqref{prob:misic_no_extra}. Moreover, the linear programming relaxation of \eqref{prob:misic_extra} can be strictly tighter than the linear programming relaxation of \eqref{prob:misic_no_extra}. In  Appendix~\ref{appx:example:tighter_relaxation}, we give a simple example in which the optimal objective value of the linear programming relaxation of \eqref{prob:misic_extra} is strictly less than that of \eqref{prob:misic_no_extra}.\looseness=-1

While \eqref{prob:misic_extra} is a stronger formulation than \eqref{prob:misic_no_extra}, the former has more constraints. Specifically, the number of constraints in \eqref{prob:misic_no_extra} grows linearly in the number of rankings, while the number of constraints in \eqref{prob:misic_extra} grows quadratically in the number of rankings. Our main result of Section~\ref{sec:exclusion_set}, which will be formalized in Step 3 as Theorem~\ref{Theorem:Formulation}, consists of  showing that \eqref{prob:misic_extra} can be reformulated as a mixed-integer linear program with fewer decision variables and constraints than \eqref{prob:misic_no_extra}. 

\subsubsection{Step 3.} Our final step consists of showing that \eqref{prob:misic_extra} can be reformulated as a mixed-integer linear program in which the numbers of decision variables and constraints grow \emph{sublinearly} in the number of rankings $K$.   Our  reformulation of \eqref{prob:misic_extra} is motivated by an observation that if two different rankings $k,k'$ have the same sets of $L$ most preferred products, $E = \{i_{k,1},\ldots,i_{k,L} \} = \{i_{k',1},\ldots,i_{k',L} \}$, then we can introduce a single aggregated decision variable $z_E = \sum_{\ell=1}^L y_{k,\ell} = \sum_{\ell=1}^L y_{k',\ell}$ to encode whether each of the two rankings purchase one of their $L$ most preferred products.   These aggregated decision variables are useful because they will enable  bookkeeping in a mixed-integer linear program formulation to be performed on \emph{subsets of products}  instead of \emph{permutations of products}. In what follows, we will refer to these subsets of products as \emph{exclusion sets}.

The derivation of our reformulation   requires the following additional  notation and terminology.  We will say that a subset of products $E \subseteq [N]$ is an {exclusion set} if and only if there exists $k \in [K]$ and $L \in \{0,\ldots,L_k\}$ for which $E$ is the set of the $L$ most preferred products for ranking $k$. We let the collection of all exclusion sets be denoted by\looseness=-1
\begin{align*}
    \mathscr{E} \triangleq \left \{ E \in 2^{[N]}: \exists k \in [K], L \in \{0,\ldots,L_k\} \textnormal{ such that } E = \{i_{k,1},\ldots,i_{k,L}  \} \right \}
\end{align*}
Given an exclusion set $E \in \mathscr{E}$ and a product $i \notin E$, we will say that product $i$ is a \emph{continuation} of  exclusion set $E$ if and only if there exists a ranking  for which $E$ is the set of the ranking's $L$ most preferred products and for which $i$ is the ranking's $L+1$th most preferred product. We let the collection of all exclusion set and continuation pairs be denoted by 
\begin{align*}
\mathscr{P} \triangleq \left \{ (E,i) \in \mathscr{E} \times [N]: \; \exists k \in [K], L \in \{0,\ldots,L_k-1\}\textnormal{ such that } E = \{i_{k,1},\ldots,i_{k,L} \} \text{ and } i = i_{k,L+1} \right \}
\end{align*}
Finally, let the probability of an exclusion set and continuation pair $(E,i) \in \mathscr{P}$ be defined as
\begin{align*}
\lambda_{E,i}  \triangleq \sum_{k=1}^K \mathbb{I} \left \{ 
      E = \{i_{k,1},\ldots,i_{k,|E|}\} \text{ and } i = i_{k,|E|+1}   \right \}  \lambda_k
\end{align*}

Equipped with the above notation, we now state our main mixed-integer  programming formulation of Section~\ref{sec:exclusion_set}, which we refer to as the `exclusion set formulation': 
\begin{equation} 
\label{prob:exclusionset}
\begin{aligned}
\underset{\textbf{x} \in \{0,1\}^N,\textbf{z}}{\textnormal{maximize}} \quad & \sum_{(E,i) \in \mathscr{P}} r_i \lambda_{E,i} \left(z_{E \cup \{i \}}  - z_E \right) \\
\textnormal{subject to}\quad 
&0 \le z_{E \cup \{i \}}  - z_E \le x_i  & \forall (E,i) \in \mathscr{P}\\
&x_i \le z_{E \cup \{i \}} & \forall (E,i) \in \mathscr{P} \\
&z_E \le 1 & \forall E \in \mathscr{E}\\
&z_\emptyset = 0 
\end{aligned}
\end{equation}
Our main theorem, the proof of which
 can be found in Appendix~\ref{appx:exclusion_set:proof},  is the following.  
\begin{theorem}\label{Theorem:Formulation}
The mixed-integer linear programs \eqref{prob:misic_extra}  and  \eqref{prob:exclusionset} are equivalent.
\end{theorem}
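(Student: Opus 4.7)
The plan is to exhibit an explicit objective-preserving bijection between the feasible regions of \eqref{prob:misic_extra} and \eqref{prob:exclusionset}. The forward map takes a feasible $(\bx,\by)$ for \eqref{prob:misic_extra} and defines
$$z_E \;\triangleq\; \sum_{\ell=1}^{|E|} y_{k,\ell}, \qquad z_\emptyset \triangleq 0,$$
where $k$ is any ranking for which $E = \{i_{k,1},\ldots,i_{k,|E|}\}$. The key point here is that the equality constraints \eqref{line:equality_simple_cuts}---precisely those added in Step 2 of Section~\ref{sec:derivation:xset}---make this definition independent of the choice of $k$, so $z_E$ is well-defined. This is really the only place where the Ma-type valid inequalities enter the equivalence, and is the one conceptual step that requires care; without those constraints the map would be multi-valued.

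Given well-definedness, I would verify feasibility for \eqref{prob:exclusionset} as follows. For any $(E,i) \in \mathscr{P}$, pick a witness ranking $k$ with $E = \{i_{k,1},\ldots,i_{k,L}\}$ and $i = i_{k,L+1}$; the increment telescopes to $z_{E\cup\{i\}} - z_E = y_{k,L+1}$, so the box constraints $0 \le y_{k,L+1} \le x_{i_{k,L+1}}$ of \eqref{prob:misic_extra} yield $0 \le z_{E\cup\{i\}} - z_E \le x_i$. The pushup constraint $x_{i_{k,L+1}} \le \sum_{\ell'=1}^{L+1} y_{k,\ell'}$ gives $x_i \le z_{E\cup\{i\}}$, and $\sum_{\ell=1}^{L_k} y_{k,\ell}\le 1$ yields $z_E \le 1$ for all $E \in \mathscr{E}$.

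For the inverse direction, I would reverse the telescoping: given $(\bx,\bz)$ feasible for \eqref{prob:exclusionset}, set $y_{k,\ell} \triangleq z_{\{i_{k,1},\ldots,i_{k,\ell}\}} - z_{\{i_{k,1},\ldots,i_{k,\ell-1}\}}$ for $k \in [K]$ and $\ell \in [L_k]$. The box constraints on the increments give $0 \le y_{k,\ell} \le x_{i_{k,\ell}}$; summing shows $\sum_{\ell'=1}^{\ell} y_{k,\ell'} = z_{\{i_{k,1},\ldots,i_{k,\ell}\}}$, from which the pushup constraint, the bound $\sum_\ell y_{k,\ell} \le 1$, and the equalities \eqref{line:equality_simple_cuts} all follow immediately (the last because both sides equal $z_E$ for the common prefix $E$). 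Composing the two maps recovers the identity, so they are inverses.

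Finally, I would match the objectives by regrouping. Writing the objective of \eqref{prob:misic_extra} as $\sum_{k=1}^K \lambda_k \sum_{\ell=1}^{L_k} r_{i_{k,\ell}} \bigl(z_{E_{k,\ell}} - z_{E_{k,\ell-1}}\bigr)$, where $E_{k,\ell}=\{i_{k,1},\ldots,i_{k,\ell}\}$, and reindexing the double sum by the pair $(E_{k,\ell-1}, i_{k,\ell}) \in \mathscr{P}$ collects a coefficient of $r_i (z_{E\cup\{i\}} - z_E)$ weighted by $\sum_{k} \mathbb{I}\{E = \{i_{k,1},\ldots,i_{k,|E|}\}, \, i = i_{k,|E|+1}\}\,\lambda_k$, which is exactly $\lambda_{E,i}$ by definition. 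The main obstacle is purely notational bookkeeping of prefix sets and their continuations; the conceptual content is entirely captured by the well-definedness argument in the first paragraph.
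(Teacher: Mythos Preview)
Your proposal is correct and follows essentially the same approach as the paper: both introduce $z_E = \sum_{\ell=1}^{|E|} y_{k,\ell}$, use the equality constraints \eqref{line:equality_simple_cuts} to argue well-definedness, recover $y_{k,\ell}$ as the telescoping increment $z_{E_{k,\ell}} - z_{E_{k,\ell-1}}$, and regroup the objective over $(E,i)\in\mathscr{P}$ to obtain the $\lambda_{E,i}$ coefficients. The paper presents this as a chain of equivalent reformulations rather than an explicit bijection between feasible sets, but the mathematical content is identical.
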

We conclude Section~\ref{sec:derivation:xset} by giving an interpretation of the exclusion set formulation~\eqref{prob:exclusionset}.  Indeed, it follows from the proof of Theorem~\ref{Theorem:Formulation} that every feasible solution for \eqref{prob:exclusionset} satisfies the equality $z_E = \max_{j \in E} x_j$ for all $E \in \mathscr{E}$. Therefore,  for each $(E,i) \in \mathscr{P}$, we have that $z_{E \cup \{i\}} - z_E \in \{0,1\}$ is equal to one if and only if $x_j = 0$ for all $j \in E$ and $x_i = 1$. Our terminology of `exclusion set' comes from the interpretation  that if $E$ is the unordered set of a  ranking's $|E|$ most preferred products, and if $i$ is that ranking's $|E|+1$th most preferred product, then that ranking can purchase product $i$ only if all products $j \in E$ are excluded from the assortment.

\subsection{Comparison of Two Formulations} \label{sec:exclusion_set:comparison}
 We conclude Section~\ref{sec:exclusion_set} by discussing the settings in which the exclusion set formulation~\eqref{prob:exclusionset} offers practical value over the original mixed-integer programming formulation \eqref{prob:misic_no_extra}. 

 The primary advantages of \eqref{prob:exclusionset} compared to \eqref{prob:misic_no_extra} are two-fold. First, we observe that \eqref{prob:exclusionset} is a stronger formulation than \eqref{prob:misic_no_extra}. Indeed,  Theorem~\ref{Theorem:Formulation} shows that \eqref{prob:exclusionset} is equivalent to \eqref{prob:misic_extra}, and we showed in Section~\ref{sec:exclusionset:step2} that \eqref{prob:misic_extra} is stronger than \eqref{prob:misic_no_extra}.  Second, we observe that \eqref{prob:exclusionset} has fewer decision variables and constraints than \eqref{prob:misic_no_extra}, both for finite as well as asymptotic number of rankings. Indeed, for any finite number of rankings $K$, we observe that \eqref{prob:misic_no_extra} has approximately 
 $\sum_{k=1}^K L_k$ decision variables and $3 \sum_{k=1}^K (L_k+1)$ constraints. In contrast, the exclusion set formulation~\eqref{prob:exclusionset} has approximately $| \mathscr{E}|$ decision variables and $3| \mathscr{P}| + | \mathscr{E}|$ constraints, where $| \mathscr{E}|-1 \le | \mathscr{P}| \le \sum_{k=1}^K L_k$.  Moreover, as the number of samples in the sample average approximation~\eqref{prob:saa} grows to infinity, we observe that the exclusion set formulation~\eqref{prob:exclusionset} will converge to at most $\mathcal{O}(2^N)$ decision variables and  $\mathcal{O}(N 2^{N})$ constraints (since the number of subsets of products  is at most $2^N$),   whereas \eqref{prob:misic_no_extra} will have at most $\mathcal{O}(N!)$ decision variables and constraints (since there are $N!$ possible rankings). 

The advantages of the exclusion set formulation can be expected to be most significant in settings where the rankings that correspond to the samples in \eqref{prob:saa} have significant numbers of what we will henceforth refer to as \emph{collisions}. Specifically, we say that a set of rankings $\mathcal{K} \subseteq [K]$ collide at an exclusion set $E \in \mathscr{E}$ if $\{i_{k,1},\ldots,i_{k,|E|} \} = E$ for all rankings $k \in \mathcal{K}$. The notion of collisions is useful because a large number of collisions translates to a smaller number of decision variables and constraints in \eqref{prob:exclusionset} as well as more of the valid equalities \eqref{line:equality_simple_cuts} that are included in formulation~\eqref{prob:misic_extra}. 

When considering the sample average approximation~\eqref{prob:saa}, there are various situations in which a large number of collisions can be expected. One situation is that in which the number of samples $\tilde{K}$ is very large relative to the number of products $N$. 
Another situation is where the most preferred products are often similar across many samples. A third situation is where the joint probability distribution in \eqref{prob:rum} typically produces small consideration sets, meaning that the rankings corresponding to the samples frequently have $L_k \ll N$. These situations in which the numbers of collisions are expected to be large are studied empirically in Section~\ref{sec:experiments:mnl:xset}.\looseness=-1

We conclude by noting two  structural difference between \eqref{prob:exclusionset} and \eqref{prob:misic_no_extra}. First, in contrast to \eqref{prob:misic_no_extra}, the revenue calculations for each ranking in \eqref{prob:exclusionset} are not decomposable, because \eqref{prob:exclusionset} inherits the linking constraints in \eqref{prob:misic_extra} across rankings. Because of this, it is not straightforward to develop a Benders decomposition method for \eqref{prob:exclusionset} in which a cut can be computed separately  for each ranking. For this reason, our accelerated Benders decomposition method in Section~\ref{sec:main_alg} focuses on the original mixed-integer programming formulation. Second, we note that the advantages of \eqref{prob:exclusionset} with respect to formulation strength appear to be most significant in assortment optimization problems where there either is no cardinality constraint, or where the cardinality constraint is not particularly small. This observation is formalized by the following proposition, the proof of which is found in Appendix~\ref{appx:proof:prop:integrality_budget_one}, which shows that the relative strength of the exclusion set formulation~\eqref{prob:exclusionset} compared to \eqref{prob:misic_no_extra} disappears for assortment optimization problems that have a cardinality constraint of a single product.\looseness=-1  
\begin{proposition} \label{prop:integrality_budget_one}
    If the set of feasible assortments is $\mathscr{S} = \{S  \subseteq \{1,\ldots,N+1\}: N + 1 \in S \text{ and }|S| \le 2\}$, then the linear programming relaxations of \eqref{prob:exclusionset} and \eqref{prob:misic_no_extra} are equivalent and are both integral. 
\end{proposition}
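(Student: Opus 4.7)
The strategy is to show that both LP relaxations attain the same optimal objective value for every revenue vector, and that this value is always achieved at an integer-feasible $\bx$. Let
$v_i \triangleq r_i \sum_{k:\, i \in \{i_{k,1},\ldots,i_{k,L_k}\}} \lambda_k$, which is exactly the expected revenue generated by the assortment $\{i, N+1\}$ under \eqref{prob:ranking}. Because the cardinality constraint forces the assortment to be either $\{N+1\}$ or $\{i, N+1\}$ for some $i \in [N]$, the integer optimum of \eqref{prob:ranking} equals $\max\{0, \max_{i \in [N]} v_i\}$, which is attained by $\bx = \beee_{i^*}$ for any $i^* \in \arg\max_i v_i$. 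So it suffices to show that both LP relaxations have this same optimal value.

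For \eqref{prob:misic_no_extra} augmented with the cardinality constraint $\sum_{i=1}^N x_i \le 1$, I would use the inequalities $y_{k,\ell} \le x_{i_{k,\ell}}$ to obtain, for any LP-feasible $(\bx, \by)$,
\begin{align*}
\sum_{k=1}^K \lambda_k \sum_{\ell=1}^{L_k} r_{i_{k,\ell}} y_{k,\ell}
\;\le\; \sum_{k=1}^K \lambda_k \sum_{\ell=1}^{L_k} r_{i_{k,\ell}} x_{i_{k,\ell}}
\;=\; \sum_{i=1}^N v_i\, x_i
\;\le\; \max_{i \in [N]} v_i,
\end{align*}
where the final inequality uses $\sum_i x_i \le 1$ together with $v_i \ge 0$ (since $r_i > 0$). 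The integer assortment $\bx = \beee_{i^*}$, paired with $y_{k,\sigma_k(i^*)} = 1$ whenever $i^* \in \{i_{k,1},\ldots,i_{k,L_k}\}$ and $\by_k = \bzero$ otherwise, is LP-feasible and attains this upper bound, so the LP relaxation has optimal value $\max\{0, \max_i v_i\}$ and admits an integer optimum.

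For \eqref{prob:exclusionset}, I would apply the analogous inequality $z_{E \cup \{i\}} - z_E \le x_i$ to obtain
\begin{align*}
\sum_{(E,i) \in \mathscr{P}} r_i \lambda_{E,i}\left(z_{E \cup \{i\}} - z_E\right)
\;\le\; \sum_{(E,i) \in \mathscr{P}} r_i \lambda_{E,i}\, x_i
\;=\; \sum_{i=1}^N v_i\, x_i
\;\le\; \max_{i \in [N]} v_i.
\end{align*}
The middle equality is the one delicate step and rests on the identity $\sum_{E:(E,i) \in \mathscr{P}} \lambda_{E,i} = \sum_{k:\, i \in \{i_{k,1},\ldots,i_{k,L_k}\}} \lambda_k$, which I would verify by unpacking the definitions of $\mathscr{P}$ and $\lambda_{E,i}$ from Section~\ref{sec:derivation:xset} and noting that each ranking $k$ for which $i = i_{k,L+1}$ for some $L \in \{0,\ldots,L_k-1\}$ contributes exactly one pair, namely $(\{i_{k,1},\ldots,i_{k,L}\}, i)$, to $\mathscr{P}$. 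The upper bound is attained by the integer solution $\bx = \beee_{i^*}$ extended via $z_E = \mathbb{I}\{i^* \in E\}$, whose feasibility for \eqref{prob:exclusionset} is routine to check against all four constraint families. Consequently both LP relaxations share the common optimal value $\max\{0, \max_i v_i\}$ and both admit integer optima, which simultaneously establishes their equivalence and their integrality.
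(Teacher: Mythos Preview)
Your argument is correct. The treatment of \eqref{prob:misic_no_extra} is essentially identical to the paper's: both upper-bound the objective via $y_{k,\ell}\le x_{i_{k,\ell}}$, rewrite the result as $\sum_i v_i x_i$, and observe that this linear objective over the simplex is maximized at a vertex. The only substantive difference is how \eqref{prob:exclusionset} is handled. The paper does not analyze \eqref{prob:exclusionset} directly at all; instead it invokes Theorem~\ref{Theorem:Formulation} (which gives $\text{LP}\eqref{prob:exclusionset}=\text{LP}\eqref{prob:misic_extra}$) together with the fact that \eqref{prob:misic_extra} is at least as tight as \eqref{prob:misic_no_extra}, so the chain $\text{IP}\le \text{LP}\eqref{prob:exclusionset}\le \text{LP}\eqref{prob:misic_no_extra}$ collapses once integrality of \eqref{prob:misic_no_extra} is established. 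Your route bounds \eqref{prob:exclusionset} directly via $z_{E\cup\{i\}}-z_E\le x_i$ and the combinatorial identity $\sum_{E:(E,i)\in\mathscr{P}}\lambda_{E,i}=\sum_{k:\,i\in\{i_{k,1},\ldots,i_{k,L_k}\}}\lambda_k$, then exhibits the integer optimizer $z_E=\mathbb{I}\{i^*\in E\}$. The paper's sandwich is shorter and reuses machinery already in place; your direct argument is self-contained and does not depend on the equivalence theorem, at the cost of verifying one extra identity and one extra feasibility check.
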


\section{The Accelerated Benders Decomposition} \label{sec:main_alg}

At a high level, the practical efficiency of the Benders decomposition method in Section~\ref{sec:benders} (see also Appendix~\ref{appx:benders_review}) depends on two  factors. The first factor is the computation time for generating optimal cuts, i.e., the time required to find an optimal solution for the linear program~\eqref{prob:dual} for each ranking in each iteration of Phase 1 and Phase 2. The second factor is  the {strength} of the optimal cuts that are generated in each iteration.   Considering the strength of cuts will be important in our context because, as we will show throughout Section~\ref{sec:main_alg},  the linear program~\eqref{prob:dual} will often have \emph{many} optimal solutions, and the strength of the cuts corresponding to those optimal solutions  can vary widely. Stronger cuts improve the practical efficiency of Benders decomposition because they can lead to a fewer total number of cuts for the Benders decomposition method to converge to an optimal solution.\looseness=-1

We focus in particular in Section~\ref{sec:main_alg} on the gold-standard class of cuts in the Benders decomposition literature, which are known as  \emph{Pareto cuts}  \citep[Section 2]{magnanti1981accelerating}. Pareto cuts are attractive because they are the {strongest possible} cuts, in the sense that a cut is a Pareto cut if and only if there do not exist any other  cuts that are strictly stronger. Recall from Section~\ref{sec:benders} that $\mathcal{D}_k$ denotes the set of feasible solutions for the linear program~\eqref{prob:dual}. The definition of Pareto cuts in  our context of \eqref{prob:dual} from Section~\ref{sec:benders} is as follows. 
\begin{definition} \label{defn:pareto}We say that   $(\balpha,\bbeta,\gamma) \in \mathcal{D}_k$ dominates  $(\balpha',\bbeta',\gamma') \in \mathcal{D}_k$ if 
\begin{align*}
 \gamma + \sum_{\ell=1}^{N+1} \left(\alpha_\ell - \beta_\ell \right) x_{i_{k,\ell}} &\le   \gamma' + \sum_{\ell=1}^{N+1} \left(\alpha_\ell' - \beta_\ell' \right) x_{i_{k,\ell}}  \text{ for all } \bx \in \mathcal{X}^c
\end{align*}
with the above inequality being strict at some $\bar{\bx} \in \mathcal{X}^c$.\footnote{It follows from linearity that each instance of $\mathcal{X}^c$ in Definition~\ref{defn:pareto} can without loss of generality be replaced by $\mathcal{X}$.}  We say that $(\balpha,\bbeta,\gamma) \in \mathcal{D}_k$ is a Pareto cut if there does not exist $(\balpha',\bbeta',\gamma') \in \mathcal{D}_k$ that dominates it. 
\end{definition}

In view of the above, our main contribution  of Section~\ref{sec:main_alg} are new algorithms for computing optimal solutions for the linear program~\eqref{prob:dual}.   Our algorithms are attractive for two reasons. The first reason is  their {computational efficiency}.  Specifically, our algorithms compute an optimal solution for \eqref{prob:dual} in $\mathcal{O}(L_k \log L_k)$ time for any $\bx \in \mathcal{X}^c$ and in $\mathcal{O}(L_k)$ time for any $\bx \in \mathcal{X}$, where we recall from Section~\ref{sec:intro:methods} that $L_k \le N$ is the number of products that are preferred to the no-purchase option in ranking $k$. Our algorithms are thus significantly faster than those from \cite{bertsimas2019exact} if the number of products is large or if the no-purchase option satisfies $L_k \ll N$.  The second attractive aspect of our algorithms is that they output optimal solutions to \eqref{prob:dual} that   are guaranteed to be {Pareto cuts} in the sense of Definition~\ref{defn:pareto}.  This  property is not satisfied by any previous algorithms  from the literature, and we show in Section~\ref{sec:experiments} that the Pareto optimality of our cuts leads to significant reductions in the total number of cuts  for Benders decomposition to find optimal solutions in Phase 1 and Phase 2.\looseness=-1

The rest of Section~\ref{sec:main_alg} is organized as follows. In Section~\ref{sec:main_alg:reform}, we   derive a new reformulation of the linear program~\eqref{prob:dual}. In Section~\ref{sec:main_alg:algs}, we use that new reformulation to develop faster algorithms for computing optimal solutions for \eqref{prob:dual}. In Section~\ref{sec:characterization_pareto}, we  use the reformulation from Section~\ref{sec:main_alg:reform} to characterize the structure of Pareto cuts. In Section~\ref{sec:transformation}, we exploit the structure of Pareto cuts to develop an efficient  algorithm for obtaining optimal solutions for \eqref{prob:dual} that are Pareto cuts.\looseness=-1
 \subsection{A New Reformulation}  \label{sec:main_alg:reform}
Our algorithms in Section~\ref{sec:main_alg} are based on a novel reformulation of \eqref{prob:dual}. Specifically, we show through a transformation of decision variables that \eqref{prob:dual} can be represented as a {separable convex optimization problem under chain constraints}. By utilizing this reformulation of \eqref{prob:dual}, we will derive a number of structural and algorithmic results for the linear program~\eqref{prob:dual}.

Our novel reformulation of \eqref{prob:dual} requires the following notation. Recall that $L_k =\sigma_k^{-1}(N+1) - 1$ denotes the number of products that are preferred by ranking $k$ to the no-purchase option, and recall from Section~\ref{sec:review:mip} that $i_{k,1},\ldots,i_{k,N+1}$ denote the products sorted by preference for ranking $k$.    Let the unique revenues among the products that are not less preferred than the no-purchase option for ranking $k$ be denoted by $$\mathcal{R}_k \triangleq \{r: \exists \ell \in \{1,\ldots,L_k+1\} \text{ such that } r = r_{i_{k,\ell}} \}$$ 
Let the highest revenue among the products that are not less preferred than the no-purchase option for ranking $k$ be denoted by $$\bar{r}_k \triangleq \max \{r \in \mathcal{R}_k\}$$
It follows from $r_1,\ldots,r_N > 0$ and  Assumption~\ref{ass:L_k} that the inequalities $|\mathcal{R}_k| \ge 2$ and $\bar{r}_k > 0$ always hold.\looseness=-1

Equipped with the above notation, we now introduce our main optimization problem of Section~\ref{sec:main_alg}:
    \begin{equation} \label{prob:dual_reform}
\begin{aligned}
&\underset{\bdelta \in \R^{L_k+1}}{\textnormal{minimize}} \quad &&J_k(\bx,\bdelta) \triangleq \delta_{L_k+1}+ \sum_{\ell=1}^{L_k}  \left( \max \left \{0, r_{i_{k,\ell}} - \delta_\ell \right \} - \left( \delta_{\ell+1} - \delta_\ell \right) \right) x_{i_{k,\ell}} \\
&\textnormal{subject to} \quad && 0 \le \delta_1 \le \cdots \le \delta_{L_k+1} \le \bar{r}_k
\end{aligned}
    \end{equation}
Let the set of feasible solutions for \eqref{prob:dual_reform} be denoted by $\Delta_k$, and note  that $J_k(\bx,\bdelta)$ refers to the objective function of the above optimization problem.  We readily observe that $J_k(\bx,\cdot)$ is a convex function for any fixed $\bx$, and it is easy to show (see  Section~\ref{sec:alg:phase1}) that \eqref{prob:dual_reform} is an instance of a particular class of optimization problems known as \emph{separable convex optimization under chain constraints}.\looseness=-1

Our main result of Section~\ref{sec:main_alg:reform}, which is stated below as  Theorem~\ref{thm:reform}, establishes the equivalence of  \eqref{prob:dual_reform} and \eqref{prob:dual}.   This  result will make use of the following analogous definition of Pareto cuts for \eqref{prob:dual_reform}.\looseness=-1
\begin{definition} \label{defn:pareto_reform} We say that $\bdelta \in \Delta_k$ dominates $\bdelta' \in \Delta_k$ if 
\begin{align*}
    J_k(\bx,\bdelta) \le J_k(\bx,\bdelta') \text{ for all }\bx \in \mathcal{X}^c
\end{align*}
where the above inequality is strict at some $\bar{\bx} \in \mathcal{X}^c$.\footnote{It follows from linearity that each instance of $\mathcal{X}^c$ in Definition~\ref{defn:pareto_reform} can without loss of generality be replaced by $\mathcal{X}$.} We say that $\bdelta \in \Delta_k$ is a Pareto cut if there does not exist $\bdelta' \in \Delta_k$ that dominates it. 
\end{definition}
\vspace{-1em}
In view of the above definition, our main result of Section~\ref{sec:main_alg:reform} is the following. 
     \begin{theorem} \label{thm:reform}
        If $\bdelta \in \Delta_k$ is an optimal solution for \eqref{prob:dual_reform}, then there exists $(\balpha,\bbeta,\gamma) \in \mathcal{D}_k$ that is an optimal solution for \eqref{prob:dual} and satisfies
\begin{align*}
              J_k(\bx,\bdelta) = \gamma + \sum_{\ell=1}^{N+1} \left( \alpha_\ell - \beta_\ell \right) x_{i_{k,\ell}}  \quad \forall \bx \in \mathcal{X}^c
          \end{align*}
         Moreover, if $\bdelta$ is a Pareto cut in the sense of Definition~\ref{defn:pareto_reform}, then $(\balpha,\bbeta,\gamma)$ is a Pareto cut in the sense of Definition~\ref{defn:pareto}.\looseness=-1
    \end{theorem}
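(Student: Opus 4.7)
The plan is to establish the theorem by exhibiting two maps between $\Delta_k$ and $\mathcal{D}_k$: a forward map $\phi$ that produces a dual cut whose objective coincides exactly with $J_k(\bx,\bdelta)$ on all of $\mathcal{X}^c$, and a reverse map $\psi$ that, starting from any dual cut, produces a $\bdelta \in \Delta_k$ whose $J_k(\bx,\bdelta)$ is pointwise upper-bounded by the given dual cut's objective. Together with LP duality, these two maps will pin down $\min_{\bdelta \in \Delta_k} J_k(\bx,\bdelta) = \rho_k(\bx)$, upgrade an optimal $\bdelta$ to an optimal dual cut, and transfer the Pareto property.

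For the forward map, I would define $\phi(\bdelta) = (\balpha,\bbeta,\gamma)$ by setting $\gamma \triangleq \max\{\delta_{L_k+1},\, \max_{\ell > L_k+1} r_{i_{k,\ell}}\}$, with $\alpha_\ell = \max\{0,\, r_{i_{k,\ell}} - \delta_\ell\}$ and $\beta_\ell = \delta_{\ell+1} - \delta_\ell$ for $\ell \in [L_k]$, $\alpha_{L_k+1}=0$ and $\beta_{L_k+1} = \gamma - \delta_{L_k+1}$, and $\alpha_\ell = \beta_\ell = 0$ for $\ell > L_k+1$. Nonnegativity of $\balpha$ and $\bbeta$ is immediate from the chain constraint on $\bdelta$ and from the choice of $\gamma$. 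The dual feasibility inequality at each $\ell \in [L_k]$ simplifies, after telescoping the $\beta_{\ell'}$ sum, to the tautology $\max\{\delta_\ell, r_{i_{k,\ell}}\} \ge r_{i_{k,\ell}}$, while the inequalities at $\ell \ge L_k+1$ hold by construction of $\gamma$. Matching of the two linear functions of $\bx$ is then verified by direct expansion: the $\beta_{L_k+1}$ term cancels the inflation of $\gamma$ beyond $\delta_{L_k+1}$, and the $\alpha_\ell - \beta_\ell$ for $\ell \in [L_k]$ reproduce the coefficients of $x_{i_{k,\ell}}$ in $J_k$.

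For the reverse map, I would let $\tilde\delta_\ell \triangleq \gamma - \sum_{\ell' = \ell}^{N+1}\beta_{\ell'}$, which is nondecreasing in $\ell$ since $\bbeta \ge \bzero$, and set $\psi(\balpha,\bbeta,\gamma) = \bdelta$ with $\delta_\ell = \min\{\bar{r}_k,\, \max\{0,\, \tilde\delta_\ell\}\}$ for $\ell \in [L_k+1]$. In the generic case where $\tilde\delta_\ell \in [0, \bar{r}_k]$ for every $\ell$, a direct calculation shows the gap between the dual objective at $(\balpha,\bbeta,\gamma)$ and $J_k(\bx,\bdelta)$ decomposes as
\[ \alpha_{L_k+1} \;+\; \sum_{\ell=1}^{L_k}\bigl[\alpha_\ell - \max\{0,\, r_{i_{k,\ell}} - \tilde\delta_\ell\}\bigr] x_{i_{k,\ell}} \;+\; \sum_{\ell > L_k+1}\bigl[\alpha_\ell x_{i_{k,\ell}} + \beta_\ell (1 - x_{i_{k,\ell}})\bigr], \]
each summand of which is nonnegative by dual feasibility and by $\bx \in [0,1]^{N+1}$. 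The hard part will be handling the clipping regime: when $\tilde\delta_\ell$ exits $[0, \bar{r}_k]$, the corresponding $\delta_\ell$'s stagnate and the clean decomposition above breaks down. The inequality is preserved because $r_{i_{k,\ell}} \le \bar{r}_k$ for $\ell \in [L_k+1]$ kills the $\max$ term past the upper clip, and because truncation at $0$ only amplifies $\max\{0,\, r_{i_{k,\ell}} - \delta_\ell\}$, which is in turn dominated by $\alpha_\ell$; a short case analysis will close the argument.

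With both maps in hand, the theorem follows. The forward map gives $\rho_k(\bx) \le \min_{\bdelta} J_k(\bx,\bdelta)$, the reverse map gives the opposite inequality, and LP duality yields $\min_{\bdelta} J_k(\bx,\bdelta) = \rho_k(\bx)$. Hence, when $\bdelta$ is optimal for \eqref{prob:dual_reform}, $\phi(\bdelta)$ is a dual feasible cut whose objective equals $\rho_k(\bx)$ for every $\bx$, and is therefore optimal for \eqref{prob:dual}. For the Pareto claim, suppose $\bdelta$ is a Pareto cut in the sense of Definition~\ref{defn:pareto_reform} but $\phi(\bdelta)$ is dominated in $\mathcal{D}_k$ by some $(\balpha',\bbeta',\gamma')$; then $\psi(\balpha',\bbeta',\gamma') \in \Delta_k$ would satisfy $J_k(\bx,\psi(\balpha',\bbeta',\gamma')) \le J_k(\bx,\bdelta)$ for every $\bx \in \mathcal{X}^c$ with strict inequality at the witness $\bar{\bx}$, contradicting the Pareto property of $\bdelta$.
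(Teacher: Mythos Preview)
Your proposal is correct and follows essentially the same route as the paper: the paper likewise proves the theorem by exhibiting a forward map $\Delta_k \to \mathcal{D}_k$ that matches objectives exactly on $\mathcal{X}^c$ (the paper uses $\gamma = \max_i r_i$ rather than your slightly tighter choice, but the construction of $\alpha_\ell,\beta_\ell$ is identical) and a reverse map $\mathcal{D}_k \to \Delta_k$ given by the same clipped telescoping formula $\delta_\ell = \min\{\bar r_k,\max\{0,\gamma-\sum_{\ell'\ge\ell}\beta_{\ell'}\}\}$, then combines the two to obtain both the optimality and the Pareto transfer exactly as you describe. The only place the paper invests more effort than your sketch is the clipping case of the reverse inequality, which it handles via a telescoping bound on $\max\{\zeta_{\ell+1}-\bar r_k,0\}-\max\{\zeta_\ell-\bar r_k,0\}$; your outline of why clipping preserves the inequality is in the right spirit, though the full case analysis is a bit more delicate than ``short.''
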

    
The above theorem can be interpreted as follows.  First, Theorem~\ref{thm:reform} establishes the equivalence of \eqref{prob:dual} and \eqref{prob:dual_reform}; that is, it implies that the optimal objective values for the two problems are equal and that every optimal solution for \eqref{prob:dual_reform} yields an optimal solution for \eqref{prob:dual}. Second, Theorem~\ref{thm:reform} says that if we can find an optimal solution for \eqref{prob:dual_reform} that is a Pareto cut in the sense of Definition~\ref{defn:pareto_reform}, then we have found an optimal solution for \eqref{prob:dual} that is a Pareto cut in the sense of Definition~\ref{defn:pareto}. 
For these reasons, Theorem~\ref{thm:reform}  implies that we can reduce the problem of finding an optimal solution for \eqref{prob:dual} that is a Pareto cut to the problem of finding an optimal solution for \eqref{prob:dual_reform} that is a Pareto cut.

Our proof of Theorem~\ref{thm:reform} is found in Appendix~\ref{appx:proof_reformulation}. The difficulty of proving Theorem~\ref{thm:reform}  stems from the lower bound $0 \le \delta_1$ and the upper bound $\delta_{L_k+1} \le \bar{r}_k$ on the decision variables in \eqref{prob:dual_reform}. These bounds play a critical role in Sections~\ref{sec:characterization_pareto} and \ref{sec:transformation} by enabling a succinct characterization of feasible solutions  that are Pareto cuts. However, these bounds require the use of a piecewise linear mapping from the decision variables of \eqref{prob:dual} to the decision variables of \eqref{prob:dual_reform}, which in turn leads to numerous cases to analyze when comparing the objective functions of \eqref{prob:dual} and \eqref{prob:dual_reform}. Our approach that circumvents a tedious case-by-case analysis  can be found in Proposition~\ref{prop:reform_hard:obj} in Appendix~\ref{appx:reform_hard}.

 \subsection{Fast Algorithms for Computing Optimal Cuts} \label{sec:main_alg:algs}
Equipped with the reformulation~\eqref{prob:dual_reform} from Section~\ref{sec:main_alg:reform}, we now present two efficient algorithms for computing an optimal solution for \eqref{prob:dual_reform}. The first  is an $\mathcal{O}(L_k \log L_k)$ time algorithm for computing optimal solution for \eqref{prob:dual_reform} that can be applied to any $\bx \in \mathcal{X}^c$. The second is a $\mathcal{O}(L_k)$ time algorithm for computing an optimal solution for \eqref{prob:dual_reform} that can be applied to any $\bx \in \mathcal{X}$.  The algorithms can thus be applied in Phase 1 and Phase 2  of the Benders decomposition method from Section~\ref{sec:benders}.  

\subsubsection{Algorithm for Phase 1.} \label{sec:alg:phase1}
We begin with our algorithm  for solving \eqref{prob:dual_reform} for any $\bx \in \mathcal{X}^c$. It follows from algebra that the objective function of \eqref{prob:dual_reform}  can be  rewritten as 
\begin{align*}
   J_k(\bx,\bdelta) &=  \underbrace{\left( \max \left \{ 0, r_{i_{k,1}} - \delta_1 \right \} + \delta_1 \right) x_{i_{k,1}} }_{C_{k,1}(\bx,\delta_1)} \\
   &\quad +  \sum_{\ell=2}^{L_k} \underbrace{\left( \max \left \{ 0, r_{i_{k,\ell}} - \delta_\ell \right \} + \delta_\ell \right)x_{i_{k,\ell}} -  \delta_\ell x_{i_{k,\ell-1}}}_{C_{k,\ell}(\bx,\delta_\ell)}  + \underbrace{\left( 1 - x_{i_{k,L_k}} \right) \delta_{L_k+1}}_{C_{k,L_k+1} (\bx,\delta_{L_k+1})} 
\end{align*}
It follows from the above equality  that \eqref{prob:dual_reform} is equivalent to
\begin{equation*}
\begin{aligned}
&\underset{\bdelta}{\textnormal{minimize}} \quad && \sum_{\ell=1}^{L_k+1} C_{k,\ell}(\bx,\delta_\ell)\\
&\text{subject to} \quad && 0 \le  \delta_1 \le \cdots \le \delta_{L_k+1} \le \bar{r}_k
\end{aligned}
\end{equation*}
We observe for any fixed $\bx \in \mathcal{X}^c$ that the functions $C_{k,\ell}(\bx,\cdot)$ for $\ell \in [L_k+1]$ are piecewise linear convex functions, which makes \eqref{prob:dual_reform}  a separable convex optimization problem with chain constraints. 

This particular class of separable convex optimization problems under chain constraints has been extensively studied in statistics  as a generalization of the  \emph{isotonic regression} problem, i.e., the problem of finding a regression line for a dependent variable that is monotonically increasing in the independent variable. The most common algorithm for solving separable convex optimization problems under chain constraints is known as the pool adjacent violators (PAV) algorithm; see \cite{de2010isotone}. The idea of this algorithm is to first solve \eqref{prob:dual_reform} without any constraints, and then iteratively add in violated constraints by merging the convex functions. In our context, 
we observe for any fixed $\bx \in \mathcal{X}^c$ that each of the  convex functions  $C_{k,\ell}(\bx,\cdot)$ for $\ell \in [L_k+1]$ is piecewise linear and can be evaluated in $\mathcal{O}(1)$ time. As such, the implementation of the PAV algorithm given by \citet[Theorem 3]{ahuja2001fast} can be applied to solve \eqref{prob:dual_reform} in $\mathcal{O}(L_k \log L_k)$ time. Our algorithm for solving \eqref{prob:dual_reform} is thus the algorithm  of  \citet[Theorem 3]{ahuja2001fast}. 

\subsubsection{Algorithm for Phase 2.} For the case where $\bx \in \mathcal{X}$, we have a simple $\mathcal{O}(L_k)$ algorithm for solving  \eqref{prob:dual_reform}. Specifically, given $\bx \in \mathcal{X}$ and ranking $k \in [K]$, we first compute the most-preferred product by ranking $k$ that is in the assortment $\bx$; that is, we compute the index
\begin{align}
    \ell^* \leftarrow \min \{\ell \in [L_k+1]: x_{i_{k,\ell}} = 1 \} \label{line:phase2:ell_star}
\end{align}
It is clear that $\ell^*$ can be computed in $\mathcal{O}(L_k)$ time, and it follows from the fact that $\bx \in \mathcal{X}$ that $x_{i_{k,L_k+1}} = x_{N+1} = 1$, which implies that the index $\ell^*$ is  well defined. Given that index, we then construct a vector $\bdelta \in \R^{L_k+1}$ that is defined for each $\ell \in [L_k+1]$ as\looseness=-1
\begin{align}
    \delta_\ell \leftarrow \begin{cases}
        r_{i_{k,\ell^*}},&\text{if } \ell \in \{1,\ldots,\ell^*\},\\
        \bar{r}_k,&\text{if } \ell \in \{\ell^*+1,\ldots,L_k+1\}
    \end{cases} \label{line:phase2:delta_ell}
\end{align}
We observe from the above definition that $0 \le \delta_1 = \cdots = \delta_{\ell^*} \le \delta_{\ell^*+1} = \cdots =  \delta_{L_k+1} = \bar{r}_k$, which implies that $\bdelta$ is a feasible solution for \eqref{prob:dual_reform}. One can also clearly compute $\bdelta$ in $\mathcal{O}(L_k)$ time. The optimality of this solution is formalized by the following proposition, the proof of which  is found in Appendix~\ref{appx:proof:phase2:optimal}. 

\begin{proposition} \label{prop:phase2:optimal}
    Let $k \in [K]$ and $\bx \in \mathcal{X}$. If $\bdelta$ is defined by  \eqref{line:phase2:ell_star} and \eqref{line:phase2:delta_ell}, then $\bdelta$ is an optimal solution for \eqref{prob:dual_reform}. 
\end{proposition}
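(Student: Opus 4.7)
The approach is to exhibit $\bdelta$ as a feasible solution whose objective matches the known optimal value of the reformulated dual, where the optimal value is identified via LP duality with the primal~\eqref{prob:inner}. Specifically, I would establish three facts in sequence: $\bdelta \in \Delta_k$, $J_k(\bx,\bdelta) = r_{i_{k,\ell^*}}$, and $\min_{\bdelta' \in \Delta_k} J_k(\bx,\bdelta') \ge r_{i_{k,\ell^*}}$.

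First, for feasibility, I note that $r_{i_{k,\ell^*}} \in \mathcal{R}_k$ by definition of $\ell^*$ and $\mathcal{R}_k$, so $0 \le r_{i_{k,\ell^*}} \le \bar{r}_k$; construction \eqref{line:phase2:delta_ell} then yields a weakly increasing sequence contained in $[0,\bar{r}_k]$, so $\bdelta \in \Delta_k$.

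Second, I would evaluate $J_k(\bx,\bdelta)$ by splitting into the cases $\ell^* = L_k+1$ and $\ell^* \le L_k$. If $\ell^* = L_k + 1$, then $r_{i_{k,\ell^*}} = r_{N+1} = 0$, hence every $\delta_\ell = 0$, and since $x_{i_{k,\ell}} = 0$ for all $\ell \in [L_k]$ by minimality of $\ell^*$, one directly reads off $J_k(\bx,\bdelta) = 0 = r_{i_{k,\ell^*}}$. If $\ell^* \le L_k$, then for $\ell < \ell^*$ the summand vanishes because $x_{i_{k,\ell}} = 0$; for $\ell = \ell^*$, using $\delta_{\ell^*} = r_{i_{k,\ell^*}}$ and $\delta_{\ell^*+1} = \bar{r}_k$, the summand equals $r_{i_{k,\ell^*}} - \bar{r}_k$; and for $\ell > \ell^*$, the summand vanishes because $\delta_\ell = \delta_{\ell+1} = \bar{r}_k$ and $r_{i_{k,\ell}} \le \bar{r}_k$. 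Adding $\delta_{L_k+1} = \bar{r}_k$ gives $J_k(\bx,\bdelta) = r_{i_{k,\ell^*}}$.

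Third, to lower bound the optimal value of \eqref{prob:dual_reform}, I would apply LP duality between \eqref{prob:inner} and \eqref{prob:dual}. The primal candidate $y_{\ell^*} = 1$ and $y_\ell = 0$ for $\ell \ne \ell^*$ is feasible for \eqref{prob:inner} (the cumulative constraints and the bounds $0 \le y_\ell \le x_{i_{k,\ell}}$ follow from $x_{i_{k,\ell^*}} = 1$ and $x_{i_{k,\ell}} = 0$ for $\ell < \ell^*$), with objective $r_{i_{k,\ell^*}}$, so $\rho_k(\bx) \ge r_{i_{k,\ell^*}}$. Since $\Delta_k$ is compact and $J_k(\bx,\cdot)$ is continuous, \eqref{prob:dual_reform} attains its optimum at some $\bdelta^*$; Theorem~\ref{thm:reform} applied to $\bdelta^*$ produces a matching optimal solution of \eqref{prob:dual} with objective $\rho_k(\bx)$, so $\min_{\bdelta' \in \Delta_k} J_k(\bx,\bdelta') = \rho_k(\bx) \ge r_{i_{k,\ell^*}} = J_k(\bx,\bdelta)$, proving optimality.

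The only real obstacle is the bookkeeping in the case analysis for $J_k(\bx,\bdelta)$; the proposition is otherwise an immediate consequence of Theorem~\ref{thm:reform} combined with the transparent structure of the primal LP at a binary $\bx$.
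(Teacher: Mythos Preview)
Your proposal is correct and follows essentially the same approach as the paper: verify feasibility, compute $J_k(\bx,\bdelta) = r_{i_{k,\ell^*}}$ by direct evaluation, and compare to the optimal value. The only difference is that the paper simply asserts ``the optimal objective value of \eqref{prob:dual_reform} is clearly equal to $r_{i_{k,\ell^*}}$,'' whereas you make this explicit via Theorem~\ref{thm:reform} and a primal feasible point for \eqref{prob:inner}; your case split at $\ell^* = L_k+1$ is also slightly more careful than the paper's single chain of equalities.
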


\subsection{Characterization of Pareto cuts} \label{sec:characterization_pareto}
Our algorithms from Section~\ref{sec:main_alg:algs} are guaranteed to yield  optimal solutions for \eqref{prob:dual_reform}, but those  optimal solutions are not guaranteed to be Pareto cuts in the sense of Definition~\ref{defn:pareto_reform}. To address this,  Sections~\ref{sec:characterization_pareto} and \ref{sec:transformation} develop an efficient algorithm for transforming optimal solutions for \eqref{prob:dual_reform} into optimal solutions for \eqref{prob:dual_reform} that are Pareto cuts.  The present  Section~\ref{sec:characterization_pareto} establishes an exact characterization of feasible solutions for \eqref{prob:dual_reform} that are Pareto cuts, and Section~\ref{sec:transformation} uses that characterization to develop an algorithm for transforming optimal solutions for \eqref{prob:dual_reform} into Pareto cuts. 

Our characterization of Pareto cuts for \eqref{prob:dual_reform} makes use of four properties, which are denoted below by Properties~\ref{property:notspecial}, \ref{property:forward}, \ref{property:gap}, and \ref{property:reverse}.  For each  $\bdelta \in \Delta_k$, let $T_k(\bdelta) \in \{1,\ldots,L_k+1\}$ denote the maximum integer $L$ that satisfies $\delta_L \le r_{i_{k,L}}$, that is,
\begin{align}
    T_k(\bdelta) \triangleq \max \left \{ L \in \{1,\ldots,L_k+1\}: \delta_L \le r_{i_{k,L}} \right \} \label{defn:T_k_delta}
\end{align}We observe that $T_k(\bdelta)$ is  well defined for all $\bdelta \in \Delta_k$.\footnote{We recall from the definition of \eqref{prob:dual_reform} that if $\bdelta \in \Delta_k$, then $\delta_1,\ldots,\delta_{L_k+1} \le \bar{r}_k$. We also recall from   Section~\ref{sec:main_alg:reform} that $\bar{r}_k = \max \{r \in \mathcal{R}_k \} > 0$, which together with the definition of $\mathcal{R}_k$ and the fact that $r_{i_{k,L_k+1}} = 0$ implies that there exists $\hat{L} \in \{1,\ldots,L_k \}$ that satisfies $r_{i_{k,\hat{L}}} = \bar{r}_k$. Therefore, it follows from \eqref{defn:T_k_delta} that  $\hat{L} \le T_k(\bdelta) \le L_k+1$ for all $\bdelta \in \Delta_k$. }  Equipped with that notation, we  now  state  four properties that may be satisfied by a feasible solution $\bdelta \in \Delta_k$. 
\begin{property} \label{property:notspecial}
$T_k(\bdelta) > 1$.
\end{property}
\begin{property}
\label{property:forward}
    $\delta_1 \le r_{i_{k,1}}$.
\end{property}
\begin{property} \label{property:gap}
For all $\ell \in \{2,\ldots,L_k\}$, we have that if $\delta_\ell < r_{i_{k,\ell}}$, then $\delta_\ell = \delta_{\ell+1}$.
\end{property}
\begin{property} \label{property:reverse}
$\delta_{T_k(\bdelta)} = \cdots = \delta_{L_k+1}$.
\end{property}
In Appendix~\ref{sec:interpretation:four}, we provide an interpretation of Properties~\ref{property:notspecial}, \ref{property:forward}, \ref{property:gap}, and \ref{property:reverse} by discussing  four examples. Each example in Appendix~\ref{sec:interpretation:four} shows how removing any one of those four properties allows for feasible solutions for \eqref{prob:dual_reform} that are not Pareto cuts in the sense of Definition~\ref{defn:pareto_reform}.

Our main result of Section~\ref{sec:characterization_pareto} shows that Properties~\ref{property:notspecial}, \ref{property:forward}, \ref{property:gap}, and \ref{property:reverse} provide an exact characterization of the structure of Pareto cuts. That is, we show that the combination of those four properties constitutes a \emph{necessary} as well as \emph{sufficient} condition for a feasible solution of \eqref{prob:dual_reform} to be a Pareto cut in the sense of Definition~\ref{defn:pareto_reform}. This main result is split into the following two theorems.\looseness=-1  
\begin{theorem}\label{thm:transformation}
If $\bdelta \in \Delta_k$ does not  satisfy  Properties~\ref{property:notspecial}, \ref{property:forward}, \ref{property:gap}, and \ref{property:reverse}, then $\bdelta$ is not a Pareto cut.  
\end{theorem}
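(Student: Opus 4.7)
The plan is to prove the contrapositive: for any $\bdelta \in \Delta_k$ violating at least one of Properties~\ref{property:notspecial}--\ref{property:reverse}, I will exhibit an explicit $\bdelta' \in \Delta_k$ that dominates $\bdelta$ in the sense of Definition~\ref{defn:pareto_reform}. The key tool will be a reparameterization of the objective via the gap variables $u_m \triangleq \delta_m - \delta_{m-1}$ for $m \in \{2, \ldots, L_k+1\}$; a telescoping argument applied to $J_k$ will yield
\[
J_k(\bx, \bdelta) = \delta_1 + \sum_{m=2}^{L_k+1} u_m (1 - x_{i_{k,m-1}}) + \sum_{\ell=1}^{L_k} \max\{0, r_{i_{k,\ell}} - \delta_\ell\} x_{i_{k,\ell}},
\]
in which every summand on the right-hand side is non-negative on $\mathcal{X}^c$. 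Consequently, any feasible modification that shrinks some positive $u_m$ without activating a new positive max term will decrease $J_k$ pointwise.

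With this representation in hand, each failed property suggests an explicit perturbation. If Property~\ref{property:forward} fails, setting $\delta'_1 = r_{i_{k,1}}$ and keeping all other entries fixed gives $J_k(\bx,\bdelta) - J_k(\bx,\bdelta') = (\delta_1 - r_{i_{k,1}})(1 - x_{i_{k,1}})$. If Property~\ref{property:gap} fails at some $\ell \in \{2,\ldots,L_k\}$, I will \emph{raise} $\delta_\ell$ by a small $\epsilon > 0$: the gain in $u_\ell$ cancels against the max decrease and the loss in $u_{\ell+1}$, leaving $\Delta J_k = -\epsilon \, x_{i_{k,\ell-1}}$. If Property~\ref{property:reverse} fails, I will pick the smallest $m^* > T_k(\bdelta)$ with $\delta_{m^*-1} < \delta_{m^*}$ and lower the tail $\delta_{m^*}, \ldots, \delta_{L_k+1}$ by a small $\epsilon$; only $u_{m^*}$ is affected, so $\Delta J_k = -\epsilon(1 - x_{i_{k,m^*-1}})$. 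Finally, if Property~\ref{property:notspecial} fails (so $T_k(\bdelta) = 1$), I will lower $\delta_2, \ldots, \delta_{L_k+1}$ by a small $\epsilon$ (also lowering $\delta_1$ in the subcase $\delta_1 = \delta_2$), producing $\Delta J_k = -\epsilon(1 - x_{i_{k,1}})$. In every case, $\Delta J_k \le 0$ for all $\bx \in \mathcal{X}^c$ and strict at some $\bar{\bx} \in \mathcal{X}^c$, so $\bdelta'$ dominates $\bdelta$.

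The main obstacle will be Property~\ref{property:reverse}. The hazard is that lowering some $\delta_m$ below $r_{i_{k,m}}$ would activate a new positive max term and potentially \emph{raise} $J_k$; this pitfall rules out naively collapsing the tail to $\delta_{T_k(\bdelta)}$. My remedy will be to constrain $\epsilon$ to be strictly less than $\min\{\delta_{m^*}-\delta_{m^*-1},\, \min_{m \in \{m^*, \ldots, L_k\}}(\delta_m - r_{i_{k,m}})\}$, which is strictly positive precisely because the maximality in the definition of $T_k(\bdelta)$ forces $\delta_m > r_{i_{k,m}}$ for every $m \in \{m^*, \ldots, L_k\}$. A similar safeguard on $\epsilon$ will handle the Property~\ref{property:notspecial} case when $\delta_1$ must also be lowered, by keeping the increase in $r_{i_{k,1}} - \delta_1$ exactly offset by the direct decrease in the constant $\delta_1$ term.
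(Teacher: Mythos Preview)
Your approach is correct and genuinely different from the paper's. The paper proves Theorem~\ref{thm:transformation} constructively via the four subroutines of Algorithm~\ref{alg:transformation_algorithm}: each subroutine applies a \emph{full} (non-infinitesimal) transformation that completely resolves one property violation while preserving all previously-established properties, and the subroutines are applied \emph{sequentially}. This is more elaborate because it does double duty---it both proves the theorem and furnishes the $\mathcal{O}(L_k)$ transformation algorithm actually used inside the accelerated Benders method to produce Pareto cuts. Your proof instead handles each property violation \emph{independently} via a small $\epsilon$-perturbation, which is more elementary and entirely sufficient for the bare theorem statement (one dominating $\bdelta'$ is all that is needed). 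Your telescoping identity for $J_k$ is a clean device that makes the sign analysis transparent; the paper carries out each domination argument directly from the original definition of $J_k$, which is lengthier. What your approach does \emph{not} deliver is the full transformation to a Pareto cut---your $\bdelta'$ may still violate several properties---so it would not replace Section~\ref{sec:transformation} for algorithmic purposes.

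One small arithmetic slip: in the Property~\ref{property:forward} case, the correct difference is $J_k(\bx,\bdelta) - J_k(\bx,\bdelta') = (\delta_1 - r_{i_{k,1}})\,x_{i_{k,1}}$, not $(\delta_1 - r_{i_{k,1}})(1 - x_{i_{k,1}})$. (Lowering $\delta_1$ decreases the constant term by $\delta_1 - r_{i_{k,1}}$ but \emph{increases} $u_2$ by the same amount, and the max term at $\ell=1$ stays at zero; the net is $-(\delta_1 - r_{i_{k,1}})x_{i_{k,1}}$ for $J_k(\bx,\bdelta')-J_k(\bx,\bdelta)$.) This does not affect your conclusion---the expression is still nonnegative on $\mathcal{X}^c$ and strictly positive at $\bar{x}_{i_{k,1}}=1$---but the witness $\bar{\bx}$ should have $\bar{x}_{i_{k,1}}=1$ rather than $0$.
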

\begin{theorem} \label{thm:sufficient}
    If $\bdelta \in \Delta_k$ satisfies Properties~\ref{property:notspecial}, \ref{property:forward}, \ref{property:gap}, and \ref{property:reverse}, then $\bdelta$ is a Pareto cut. 
\end{theorem}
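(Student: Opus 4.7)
My plan is to prove Theorem~\ref{thm:sufficient} by contradiction. Assume $\bdelta \in \Delta_k$ satisfies Properties~\ref{property:notspecial}--\ref{property:reverse}, and suppose $\bdelta' \in \Delta_k$ dominates $\bdelta$. Write $T := T_k(\bdelta)$ and $v := \delta_{L_k+1}$; Properties~\ref{property:notspecial} and \ref{property:reverse} yield $T \ge 2$ and $\delta_T = \delta_{T+1} = \cdots = \delta_{L_k+1} = v$, while the definition of $T$ gives $v \le r_{i_{k,T}}$ and $v > r_{i_{k,\ell}}$ for each $\ell \in \{T+1,\ldots,L_k+1\}$. Since $J_k(\bx,\cdot)$ is affine in $\bx$ and $\mathcal{X}^c$ is a polytope, domination is equivalent to $g(\bx) := J_k(\bx,\bdelta) - J_k(\bx,\bdelta') \ge 0$ at every vertex of $\mathcal{X}^c$, with strict inequality at some vertex. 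Writing $a_\ell(\bdelta) := \max\{\delta_\ell, r_{i_{k,\ell}}\} - \delta_{\ell+1}$, the coefficient of $x_{i_{k,\ell}}$ in $g$ is $a_\ell(\bdelta) - a_\ell(\bdelta')$ and the constant term is $v - \delta'_{L_k+1}$, so I aim to show that all of these vanish, contradicting strictness.

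The first main step evaluates $g \ge 0$ at the vertex $\bx = \mathbf{1}_T$ (the assortment containing exactly $i_{k,T},\ldots,i_{k,L_k}$ and the no-purchase option). A direct calculation using Properties~\ref{property:notspecial} and \ref{property:reverse} gives $J_k(\mathbf{1}_T,\bdelta) = r_{i_{k,T}}$. For $\bdelta'$, using $\max\{\delta'_T, r_{i_{k,T}}\} \ge r_{i_{k,T}}$, $\max\{\delta'_\ell, r_{i_{k,\ell}}\} \ge \delta'_\ell$ for $\ell > T$, and telescoping the chain $\delta'_T \le \cdots \le \delta'_{L_k+1}$, I obtain $J_k(\mathbf{1}_T,\bdelta') \ge r_{i_{k,T}}$. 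Domination forces equality throughout, which simultaneously pins down $\delta'_T \le r_{i_{k,T}}$, $\delta'_\ell \ge r_{i_{k,\ell}}$ for every $\ell \in \{T+1,\ldots,L_k\}$, and $\sum_{m=T}^{L_k} a_m(\bdelta') = r_{i_{k,T}} - \delta'_{L_k+1}$ (matching the corresponding sum for $\bdelta$ once $\delta'_{L_k+1}=v$ is established).

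The second step pins down the remaining variables iteratively from position $T-1$ down to $1$ by evaluating $g \ge 0$ at $\bx = \mathbf{1}_\ell$ for each $\ell < T$ and at $\bx = \bzero$. Using the identity $J_k(\mathbf{1}_\ell,\bdelta) = r_{i_{k,T}} + \sum_{m=\ell}^{T-1} a_m(\bdelta)$ (which follows from Step 1) and the analogous expression for $\bdelta'$, each inequality reduces to a constraint on $\sum_{m=\ell}^{T-1} a_m(\bdelta')$. In each regime of Property~\ref{property:gap} (either $\delta_\ell \ge r_{i_{k,\ell}}$ giving $a_\ell(\bdelta) = \delta_\ell - \delta_{\ell+1}$, or $\delta_\ell < r_{i_{k,\ell}}$ with $\delta_\ell = \delta_{\ell+1}$ giving $a_\ell(\bdelta) = r_{i_{k,\ell}} - \delta_\ell$) and in the base case of Property~\ref{property:forward}, I would combine these constraints with the chain constraints on $\bdelta'$ to simultaneously propagate the lower bound $\delta'_{L_k+1} \ge v$ and the equalities $a_\ell(\bdelta') = a_\ell(\bdelta)$ down to every position $\ell$. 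Combining with $\delta'_{L_k+1} \le v$ from evaluating $g(\bzero) \ge 0$ then gives $\delta'_{L_k+1} = v$ and hence $g \equiv 0$, the desired contradiction.

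The main obstacle I anticipate is the coupling between positions in the case analysis of Property~\ref{property:gap}: each position $\ell \in \{2,\ldots,T-1\}$ can independently fall into either of two regimes, and the specific $\bx$ required to extract the tight constraint differs between regimes. A potentially cleaner alternative, which I would pursue if the direct case analysis becomes unwieldy, is a Magnanti--Wong-style argument: explicitly construct a point $\bar{\bx} \in \relint(\mathcal{X}^c)$ at which $\bdelta$ minimizes $J_k(\bar{\bx},\cdot)$ over $\Delta_k$. Because $J_k(\bar{\bx},\cdot)$ is a separable convex function under chain constraints (Section~\ref{sec:alg:phase1}), the pool-adjacent-violators optimality conditions translate into slope inequalities on the weights $\bar{x}_{i_{k,\ell}}$; Properties~\ref{property:notspecial}--\ref{property:reverse} then read off as a consistent set of such slope inequalities for which the desired $\bar{\bx}$ exists, and any strictly dominating $\bdelta'$ would strictly beat $\bdelta$ at $\bar{\bx}$, yielding the contradiction.
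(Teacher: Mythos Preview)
Your primary approach has a concrete gap in Step~2: the nested test points $\mathbf{1}_\ell$ together with $\bzero$ are not enough to force $a_\ell(\bdelta') = a_\ell(\bdelta)$, even after combining with the chain constraints on $\bdelta'$. Take $L_k = 4$, revenues $(r_{i_{k,1}},\ldots,r_{i_{k,5}}) = (10,3,8,2,0)$, and $\bdelta = (5,5,5,5,5)$; then $T = 3$ and Properties~\ref{property:notspecial}--\ref{property:reverse} hold. For $\bdelta' = (4,4,5,5,5) \in \Delta_k$ one computes $g(\bx) = -x_{i_{k,1}} + x_{i_{k,2}}$, so $g(\bzero)=g(\mathbf{1}_3)=g(\mathbf{1}_1)=0$ and $g(\mathbf{1}_2)=1$, yet $a_2(\bdelta') = -1 \ne 0 = a_2(\bdelta)$. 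The domination hypothesis does exclude this $\bdelta'$ (since $g = -1$ at the singleton assortment $\{i_{k,1}\}$), but your argument never evaluates $g$ there. The structural reason is that every $\mathbf{1}_\ell$ satisfies $x_{i_{k,1}} \le x_{i_{k,2}} \le \cdots$, so the nested family cannot detect sign information at positions where $a_\ell(\bdelta) \le 0$ (equivalently, where $\delta_\ell \ge r_{i_{k,\ell}}$); there you need a witness with $x_{i_{k,\ell-1}} = 1$ and $x_{i_{k,\ell}} = 0$. The obstacle you anticipate is therefore real and is not resolved by chain constraints alone.

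The paper takes a different route that sidesteps this: it first uses Theorem~\ref{thm:transformation} (proved independently via the transformation algorithm) to reduce to dominators $\bdelta'$ that \emph{also} satisfy Properties~\ref{property:notspecial}--\ref{property:reverse}. With both vectors structured, the proof splits into five exhaustive cases on the signs of $\delta'_{L_k+1}-\delta_{L_k+1}$, $\delta'_2-\delta_2$, and $\delta'_{T_k(\bdelta)}-\delta'_{L_k+1}$, and in each case exhibits a single $\bx \in \mathcal{X}$ with $J_k(\bx,\bdelta') > J_k(\bx,\bdelta)$. Notably, in two of those cases (Propositions~\ref{prop:case3} and~\ref{prop:case5}) the witness is non-nested---a singleton $\{i_{k,\ell^*}\}$ or a sparse set $\mathcal{L}\cup\{\ell^*\}$ of positions below $T_k(\bdelta)$---precisely the kind of assortment missing from your scheme. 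Your Magnanti--Wong alternative is sound in principle, but you would still need to exhibit a strictly interior $\bar{\bx}$ at which $\bdelta$ is optimal for \eqref{prob:dual_reform}; that construction is not carried out, and the paper does not pursue that route.
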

Stated in words, Theorem~\ref{thm:transformation} shows that the combination of Properties~\ref{property:notspecial}, \ref{property:forward}, \ref{property:gap}, and \ref{property:reverse} is a necessary condition for a feasible solution for \eqref{prob:dual_reform} to be a Pareto cut, in the sense of Definition~\ref{defn:pareto_reform}, and Theorem~\ref{thm:sufficient} shows that condition is also sufficient. 
The proof of Theorem~\ref{thm:transformation} is constructive and found in Section~\ref{sec:transformation}. The  proof of  Theorem~\ref{thm:sufficient}, which is found in Appendix~\ref{appx:thm:sufficient}, consists of an intricate construction of  five mutually exclusive and collectively exhaustive cases that relate any two vectors $\bdelta,\bdelta' \in \Delta_k$ that satisfy Properties~\ref{property:notspecial}, \ref{property:forward}, \ref{property:gap}, and \ref{property:reverse}.

\subsection{The Transformation Algorithm} \label{sec:transformation}
In this section, we prove Theorem~\ref{thm:transformation}. We do this by developing an $\mathcal{O}(L_k)$ algorithm for  transforming any feasible solution for \eqref{prob:dual_reform} that does not simultaneously satisfy Properties~\ref{property:notspecial}, \ref{property:forward}, \ref{property:gap}, and \ref{property:reverse} into a feasible solution for \eqref{prob:dual_reform}  that dominates the original feasible solution and  satisfies Properties~\ref{property:notspecial}, \ref{property:forward}, \ref{property:gap}, and \ref{property:reverse}.  We will henceforth refer to this algorithm as the `transformation' algorithm. Because the feasible solution obtained by the transformation algorithm satisfies Properties~\ref{property:notspecial}, \ref{property:forward}, \ref{property:gap}, and \ref{property:reverse}, it follows from Theorem~\ref{thm:sufficient} that the output of the transformation algorithm is a Pareto cut.   Moreover, the fact that the transformation algorithm  runs in $\mathcal{O}(L_k)$ time implies that this transformation algorithm can be applied to optimal solutions obtained in Phase 1 and Phase 2 without increasing the computation time of the algorithms from Section~\ref{sec:main_alg:algs}. Our algorithms from Section~\ref{sec:main_alg} thus consists of using the algorithms from Section~\ref{sec:main_alg:algs} to find an optimal solution for \eqref{prob:dual_reform}, followed by using the transformation algorithm from Section~\ref{sec:transformation} to obtain an optimal solution for \eqref{prob:dual_reform} that is a Pareto cut.\looseness=-1

Our transformation algorithm consists of the following steps. The input to the transformation algorithm is a feasible solution $\bdelta \in \Delta_k$  for   \eqref{prob:dual_reform}. We then modify that feasible solution by applying four subroutines, which are presented as Subroutines 1, 2, 3, and 4 in Algorithm~\ref{alg:transformation_algorithm}. We apply the subroutines sequentially, in the sense the feasible solution  $\bdelta$ is the input to Subroutine 1, the output of  Subroutine 1 is the input into Subroutine 2, the output of  Subroutine 2 is the input into Subroutine 3, and the output of  Subroutine 3 is the input into Subroutine 4. The  output of Subroutine 4 is the output of the transformation algorithm. It is straightforward to show that each of the four subroutines from Algorithm~\ref{alg:transformation_algorithm} can be implemented in $\mathcal{O}(L_k)$ time.

The four examples in Appendix~\ref{sec:interpretation:four} provide an illustration of the four subroutines from Algorithm~\ref{alg:transformation_algorithm}. Specifically, Example~\ref{example:property1:violated} from  Appendix~\ref{sec:interpretation:four} shows an example of an input to Subroutine 1 and the corresponding output of that subroutine. Example~\ref{example:property2:violated} from  Appendix~\ref{sec:interpretation:four}   shows an example of an input to Subroutine 2 and the corresponding output. Examples~\ref{example:property3:violated} and \ref{example:property4:violated} from  Appendix~\ref{sec:interpretation:four}  show similar examples for Subroutines 3 and 4. These four examples thus illustrate  not only  the necessity of Properties~\ref{property:notspecial},  \ref{property:forward},  \ref{property:gap}, and \ref{property:reverse}, but also  the correctness of  Subroutines 1, 2, 3, and 4.\looseness=-1  

\afterpage{%
\null
\vfill
\begin{algorithm}[H]
\begin{center}
\fbox{%
    \begin{minipage}[t][0.55\textwidth][t]{0.47\textwidth}
      \begin{center}
          \underline{\textbf{Subroutine 1}}
      \end{center}
\vspace{1em}

\textbf{Input}:
\begin{itemize}
\item A vector $\bdelta \in \Delta_k$. 
\end{itemize}
\vspace{1em}

\textbf{Output}:
\begin{itemize}
\item A vector $\bdelta' \in \Delta_k$ that satisfies Property~\ref{property:notspecial}. Moreover, if $\bdelta' \neq \bdelta$, then $\bdelta'$  dominates $\bdelta$. 
\end{itemize}
\vspace{1em}

\textbf{Procedure}:
  \begin{enumerate}
\item Let  $\bar{r}_k' \leftarrow \max \{r \in \mathcal{R}_k: r < \bar{r}_k \}$ denote the second-highest revenue  in $\mathcal{R}_k$.
      \item If $T_k(\bdelta) > 1$, then let $\bdelta' \leftarrow \bdelta$ and return $\bdelta'$.
      \item \label{line:subroutine1:else} Else, define $\bdelta'$ as
      \begin{align*}
          \delta'_{\ell} \leftarrow \min \left \{ \delta_\ell, \bar{r}_k' \right \} \quad \forall \ell \in \{1,\ldots,L_k+1\}
      \end{align*}
      and return $\bdelta'$. 
\end{enumerate}
\end{minipage}}\;
\fbox{%
    \begin{minipage}[t][0.55\textwidth][t]{0.47\textwidth}
      \begin{center}
          \underline{\textbf{Subroutine 2}}
      \end{center}
\vspace{1em}

\textbf{Input}:
\begin{itemize}
\item A vector $\bdelta \in \Delta_k$ that satisfies Property~\ref{property:notspecial}. 
\end{itemize}
\vspace{1em}

\textbf{Output}:
\begin{itemize}
\item A vector $\bdelta' \in \Delta_k$ that satisfies  Properties~\ref{property:notspecial} and \ref{property:forward}. Moreover, if $\bdelta' \neq \bdelta$, then $\bdelta'$ dominates $\bdelta$. 
\end{itemize}
\vspace{1em}

\textbf{Procedure}:
  \begin{enumerate}
        \item Let $\bdelta' \leftarrow \bdelta$. 
        \item If $\delta'_1 > r_{i_{k,1}}$, then let  $\delta'_1 \leftarrow r_{i_{k,1}}$. 
        \item Return $\bdelta'$.
\end{enumerate}
\end{minipage}} 
\\
\vspace{0.75em}
\fbox{%
    \begin{minipage}[t][0.55\textwidth][t]{0.47\textwidth}
      \begin{center}
          \underline{\textbf{Subroutine 3}}
      \end{center}
\vspace{1em}

\textbf{Input}:
\begin{itemize}
\item A vector $\bdelta \in \Delta_k$ that satisfies  Properties~\ref{property:notspecial} and \ref{property:forward}. 
\end{itemize}
\vspace{1em}

\textbf{Output}:
\begin{itemize}
\item A vector $\bdelta' \in \Delta_k$ that satisfies  Properties~\ref{property:notspecial},  \ref{property:forward}, and \ref{property:gap}. Moreover, if $\bdelta' \neq \bdelta$, then $\bdelta'$  dominates $\bdelta$. 
\end{itemize}
\vspace{1em}

\textbf{Procedure}:
  \begin{enumerate}
  \item Let $\delta'_{L_k+1} \leftarrow \delta_{L_k+1}$
        \item For $\ell \leftarrow L_k,\ldots,2$, let
        \begin{align*}
            \delta'_\ell \leftarrow  \begin{cases}
   \delta_\ell,&\text{if } \delta_\ell \ge r_{i_{k,\ell}},\\
   \min \left \{ r_{i_{k,\ell}}, \delta'_{\ell+1} \right \},&\text{if } \delta_\ell < r_{i_{k,\ell}} 
   \end{cases}
        \end{align*}
\item Let $\delta'_1 \leftarrow \delta_1$. 
        \item Return $\bdelta'$.
\end{enumerate}
\end{minipage}}\;
\fbox{%
    \begin{minipage}[t][0.55\textwidth][t]{0.47\textwidth}
      \begin{center}
          \underline{\textbf{Subroutine 4}}
      \end{center}
\vspace{1em}

\textbf{Input}:
\begin{itemize}
\item A vector $\bdelta \in \Delta_k$ that satisfies  Properties~\ref{property:notspecial},  \ref{property:forward}, and \ref{property:gap}. 
\end{itemize}
\vspace{1em}

\textbf{Output}:
\begin{itemize}
\item A vector $\bdelta' \in \Delta_k$ that satisfies  Properties~\ref{property:notspecial},  \ref{property:forward}, \ref{property:gap}, and \ref{property:reverse}. Moreover, if $\bdelta' \neq \bdelta$, then $\bdelta'$  dominates $\bdelta$.  
\end{itemize}
\vspace{1em}

\textbf{Procedure}:
  \begin{enumerate}
  \item Let $L \leftarrow T_k(\bdelta)$.
  \item Let $\hat{r} \leftarrow \max \{ \delta_L, \max_{\ell \in \{L+1,\ldots,L_k+1\}} r_{i_{k,\ell}} \}$. 
        \item For $\ell \leftarrow 1,\ldots,L_k+1$, let  $\delta'_\ell \leftarrow \min \left \{ \delta_\ell, \hat{r} \right \}$.
        \item Return $\bdelta'$.
\end{enumerate}
\end{minipage}}
\end{center}
\vspace{1.5em}
\caption{The four subroutines constitute the transformation algorithm from Section~\ref{sec:transformation}. Note for Subroutine 1 that the fact that $|\mathcal{R}_k| \ge 2$ is established in Section~\ref{sec:main_alg:reform}. }\label{alg:transformation_algorithm}
\end{algorithm}
\null
\vfill
\clearpage
}%

We conclude Section~\ref{sec:transformation} by  proving the correctness of the four subroutines from Algorithm~\ref{alg:transformation_algorithm}. Specifically, the following Propositions~\ref{prop:subroutine:1}, \ref{prop:subroutine:2}, \ref{prop:subroutine:3}, and \ref{prop:subroutine:4} establish that Subroutines 1, 2, 3, and 4 are guaranteed to give their specified outputs. Moreover, the combination of the following propositions constitutes the proof of Theorem~\ref{thm:transformation}, since they show that any $\bdelta \in \Delta_k$ that does not simultaneously satisfy Properties~\ref{property:notspecial},  \ref{property:forward},  \ref{property:gap}, and \ref{property:reverse} can, by using Subroutines 1, 2, 3, and 4, be transformed into a feasible solution $\bdelta' \in \Delta_k$ that satisfies Properties~\ref{property:notspecial},  \ref{property:forward},  \ref{property:gap}, and \ref{property:reverse} and dominates $\bdelta$. The proofs of the following propositions are found in Appendices~\ref{appx:proof:subroutine:1}, \ref{appx:proof:subroutine:2}, \ref{appx:proof:subroutine:3}, and \ref{appx:proof:subroutine:4}.\looseness=-1
\begin{proposition} \label{prop:subroutine:1}
        Let $\bdelta \in \Delta_k$. If $\bdelta$ is the input to Subroutine 1, then the output $\bdelta'$ of Subroutine 1 satisfies $\bdelta' \in \Delta_k$ and  satisfies Property~\ref{property:notspecial}. Moreover,  if $\bdelta' \neq \bdelta$, then $\bdelta'$ dominates  $\bdelta$. 
\end{proposition}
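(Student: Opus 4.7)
The plan is to split the argument on the two branches of Subroutine 1. In the \emph{if} branch (step 2), the subroutine returns $\bdelta' = \bdelta$, so $\bdelta' \in \Delta_k$ and $T_k(\bdelta') = T_k(\bdelta) > 1$ are inherited verbatim, and the dominance clause is vacuous because $\bdelta' = \bdelta$. The substantive work therefore sits in the \emph{else} branch, where $T_k(\bdelta) = 1$.

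For the else branch, I begin with a structural consequence of $T_k(\bdelta) = 1$ that drives the whole analysis: since every feasible $\bdelta \in \Delta_k$ satisfies $\delta_\ell \le \bar{r}_k$, any position $\ell$ at which the revenue $\bar{r}_k$ is attained automatically has $\delta_\ell \le r_{i_{k,\ell}}$, so $T_k(\bdelta) = 1$ forces $\bar{r}_k$ to be attained only at position $\ell = 1$. Equivalently, $r_{i_{k,1}} = \bar{r}_k$ and $r_{i_{k,\ell}} \le \bar{r}_k'$ for every $\ell \ge 2$. Feasibility $\bdelta' \in \Delta_k$ is then immediate: the map $t \mapsto \min\{t,\bar{r}_k'\}$ is non-decreasing (so the chain is preserved), $\bar{r}_k' \ge 0$ (because $r_{i_{k,L_k+1}} = 0 \in \mathcal{R}_k$), and $\bar{r}_k' \le \bar{r}_k$ by construction. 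For Property~\ref{property:notspecial}, the structural consequence above says that $\bar{r}_k'$ is attained at some $\ell^\circ \ge 2$ (it lies in $\mathcal{R}_k$, is distinct from $\bar{r}_k$, and so cannot be attained at position $1$), giving $\delta'_{\ell^\circ} \le \bar{r}_k' = r_{i_{k,\ell^\circ}}$ and hence $T_k(\bdelta') \ge \ell^\circ \ge 2$.

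The main content is dominance when $\bdelta' \neq \bdelta$, and the idea is to use the same structural consequence to put $J_k(\bx,\bdelta)$ and $J_k(\bx,\bdelta')$ into identical closed form. For every $\ell \ge 2$, a brief case-split on whether $\delta_\ell > \bar{r}_k'$ (combined with $r_{i_{k,\ell}} \le \bar{r}_k'$ and $T_k(\bdelta) = 1$) yields $\delta'_\ell \ge r_{i_{k,\ell}}$ and $\delta_\ell > r_{i_{k,\ell}}$, so the terms $\max\{0, r_{i_{k,\ell}} - \delta_\ell\}$ and $\max\{0, r_{i_{k,\ell}} - \delta'_\ell\}$ both vanish for $\ell \ge 2$. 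Coupled with $\delta_1, \delta'_1 \le r_{i_{k,1}}$ and the telescoping identity $\delta_{L_k+1} = \delta_1 + \sum_{\ell=1}^{L_k}(\delta_{\ell+1} - \delta_\ell)$, both objectives reduce to the common shape
\begin{equation*}
J_k(\bx, \bdelta) \;=\; \delta_1(1 - x_{i_{k,1}}) \;+\; r_{i_{k,1}} x_{i_{k,1}} \;+\; \sum_{\ell=1}^{L_k} (\delta_{\ell+1} - \delta_\ell)(1 - x_{i_{k,\ell}}),
\end{equation*}
and the analogous identity holds for $\bdelta'$. Writing $u_\ell := \delta_\ell - \delta'_\ell = \max\{0, \delta_\ell - \bar{r}_k'\}$, which is non-decreasing in $\ell$ by monotonicity of $\bdelta$, subtraction yields $J_k(\bx,\bdelta) - J_k(\bx,\bdelta') = u_1(1 - x_{i_{k,1}}) + \sum_{\ell=1}^{L_k}(u_{\ell+1} - u_\ell)(1 - x_{i_{k,\ell}}) \ge 0$ for every $\bx \in \mathcal{X}^c$, since all factors are non-negative. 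Evaluating at $\bar{\bx} \in \mathcal{X}^c$ with $\bar{x}_{i_{k,\ell}} = 0$ for $\ell \le L_k$ collapses the difference to $u_{L_k+1}$, which is strictly positive whenever $\bdelta' \neq \bdelta$ (some $u_\ell$ is positive, and monotonicity of $u$ pushes the gap to the top index). The main obstacle is not any heavy computation but the structural observation that $T_k(\bdelta) = 1$ pins $\bar{r}_k$ at position $1$; once that is in hand, the two objectives match term-by-term and dominance is almost immediate.
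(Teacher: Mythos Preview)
Your proof is correct and follows the same structural skeleton as the paper's: both begin by observing that $T_k(\bdelta)=1$ forces $r_{i_{k,1}}=\bar r_k$ and $r_{i_{k,\ell}}\le \bar r_k'$ for $\ell\ge 2$, then handle feasibility and Property~\ref{property:notspecial} identically, and for dominance both kill the $\max\{0,r_{i_{k,\ell}}-\cdot\}$ terms for $\ell\ge 2$ before subtracting.

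Your dominance argument is somewhat cleaner than the paper's, however. By introducing $u_\ell=\max\{0,\delta_\ell-\bar r_k'\}$ and noting it is non-decreasing in $\ell$, you obtain $J_k(\bx,\bdelta)-J_k(\bx,\bdelta')=u_1(1-x_{i_{k,1}})+\sum_{\ell=1}^{L_k}(u_{\ell+1}-u_\ell)(1-x_{i_{k,\ell}})\ge 0$ in one stroke, and strictness at the all-zero assortment reduces to $u_{L_k+1}>0$. The paper instead reaches the same conclusion via a two-case split on whether $\bar r_k'<\delta_1$ or $\delta_1\le \bar r_k'$, introducing in the latter case a crossing index $L$ with $\delta_L\le\bar r_k'<\delta_{L+1}$ and computing each block explicitly. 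The monotonicity of your $u_\ell$ is precisely what the paper's case split is unpacking, so your version is a tidy repackaging of the same idea rather than a genuinely different route; it buys brevity at no cost.
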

\begin{proposition}\label{prop:subroutine:2}
        Let $\bdelta \in \Delta_k$ satisfy Property~\ref{property:notspecial}. If $\bdelta$ is the input to Subroutine 2, then the output $\bdelta'$ of Subroutine 2 satisfies   $\bdelta' \in \Delta_k$ and satisfies  Properties~\ref{property:notspecial} and \ref{property:forward}. Moreover, if $\bdelta' \neq \bdelta$, then $\bdelta'$ dominates $\bdelta$.
\end{proposition}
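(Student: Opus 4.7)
The plan is to split along the conditional in Subroutine~2. In Case~1, if $\delta_1 \le r_{i_{k,1}}$, then $\bdelta' = \bdelta$, so Property~\ref{property:notspecial} is inherited from the input, Property~\ref{property:forward} holds because $\delta'_1 = \delta_1 \le r_{i_{k,1}}$, feasibility $\bdelta' \in \Delta_k$ is trivial, and the dominance clause is vacuous since its hypothesis $\bdelta' \neq \bdelta$ fails. The substance therefore lies in Case~2, where $\delta_1 > r_{i_{k,1}}$ and Subroutine~2 sets $\delta'_1 \leftarrow r_{i_{k,1}}$ while leaving $\delta'_\ell = \delta_\ell$ for every $\ell \ge 2$.

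In Case~2 I would verify the three structural claims in turn. For $\bdelta' \in \Delta_k$, the only inequality in the chain $0 \le \delta'_1 \le \cdots \le \delta'_{L_k+1} \le \bar{r}_k$ that is not directly inherited from $\bdelta$ is $\delta'_1 \le \delta'_2$; this follows from $\delta'_2 = \delta_2 \ge \delta_1 > r_{i_{k,1}} = \delta'_1$, and the endpoint conditions $\delta'_1 \ge 0$ and $\delta'_1 \le \bar{r}_k$ follow from $r_{i_{k,1}} \in \mathcal{R}_k \subseteq [0,\bar{r}_k]$. Property~\ref{property:forward} is immediate from the construction $\delta'_1 = r_{i_{k,1}}$. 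Property~\ref{property:notspecial} is preserved because $\delta'_\ell = \delta_\ell$ for all $\ell \ge 2$, so the set $\{L \ge 2 : \delta'_L \le r_{i_{k,L}}\}$ coincides with $\{L \ge 2 : \delta_L \le r_{i_{k,L}}\}$, and the latter is nonempty by the hypothesis $T_k(\bdelta) > 1$; hence $T_k(\bdelta') \ge 2$ as well.

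For dominance I would compute $J_k(\bx,\bdelta') - J_k(\bx,\bdelta)$ directly from the definition in \eqref{prob:dual_reform}. All terms indexed by $\ell \ge 2$ cancel because $\delta'_\ell = \delta_\ell$ there, and the constant $\delta'_{L_k+1} - \delta_{L_k+1}$ vanishes. The $\ell = 1$ contribution splits into two pieces. The $\max$ piece is zero, since $\delta'_1 = r_{i_{k,1}} < \delta_1$ forces $\max\{0, r_{i_{k,1}} - \delta_1\} = \max\{0, r_{i_{k,1}} - \delta'_1\} = 0$. The telescoping piece simplifies to $-(\delta'_2 - \delta'_1) + (\delta_2 - \delta_1) = -(\delta_2 - r_{i_{k,1}}) + (\delta_2 - \delta_1) = r_{i_{k,1}} - \delta_1 < 0$. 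Multiplying by $x_{i_{k,1}}$ yields the clean identity
\[
J_k(\bx,\bdelta') - J_k(\bx,\bdelta) = -(\delta_1 - r_{i_{k,1}})\, x_{i_{k,1}},
\]
which is $\le 0$ on $\mathcal{X}^c$ and strict at $\bar{\bx} = \bone \in \mathcal{X}^c$ because $x_{i_{k,1}} = 1 > 0$. Thus $\bdelta'$ dominates $\bdelta$ in the sense of Definition~\ref{defn:pareto_reform}.

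I do not anticipate any deep obstacle; the proposition reduces to a bookkeeping exercise once the case split is made. The only delicate point is that the $\max$ contribution at $\ell = 1$ must collapse to zero for both $\bdelta$ and $\bdelta'$, which it does precisely because we are in the regime $\delta_1 > r_{i_{k,1}}$ that triggered the modification. Note also that Property~\ref{property:notspecial} on the input is used only through the existence of some $\ell \ge 2$ with $\delta_\ell \le r_{i_{k,\ell}}$, a fact that passes unchanged to $\bdelta'$ since the subroutine touches only the first coordinate.
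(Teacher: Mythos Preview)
Your proof is correct and follows essentially the same approach as the paper: the same case split on whether $\delta_1 \le r_{i_{k,1}}$, the same direct computation yielding $J_k(\bx,\bdelta') - J_k(\bx,\bdelta) = (r_{i_{k,1}} - \delta_1)\,x_{i_{k,1}}$, and the same conclusion. The only cosmetic difference is that the paper exhibits the strict-inequality witness as the sparser vector $\bar{x}_{i_{k,\ell}} = \mathbb{I}\{\ell \in \{1,L_k+1\}\}$ rather than your $\bar{\bx} = \bone$, but both choices work since all that matters is $x_{i_{k,1}} > 0$.
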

\begin{proposition}\label{prop:subroutine:3}
        Let $\bdelta \in \Delta_k$ satisfy Properties~\ref{property:notspecial} and \ref{property:forward}. If $\bdelta$ is the input to Subroutine 3, then the output $\bdelta'$ of Subroutine 3 satisfies   $\bdelta' \in \Delta_k$ and satisfies  Properties~\ref{property:notspecial},  \ref{property:forward}, and \ref{property:gap}. Moreover, if $\bdelta' \neq \bdelta$, then $\bdelta'$ dominates $\bdelta$. 
\end{proposition}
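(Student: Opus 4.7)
The plan is to establish three things about the output $\bdelta'$ of Subroutine~3: feasibility $\bdelta'\in\Delta_k$; that $\bdelta'$ satisfies Properties~\ref{property:notspecial},~\ref{property:forward}, and~\ref{property:gap}; and that $\bdelta'$ dominates $\bdelta$ whenever $\bdelta'\neq\bdelta$. The strategy is to first prove one auxiliary backward-induction lemma, $\delta'_\ell \ge \delta_\ell$ for every $\ell\in\{1,\ldots,L_k+1\}$ (with equality at $\ell=1$ and $\ell=L_k+1$); this single inequality drives both feasibility and domination.

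For the lemma, the base case $\ell=L_k+1$ is immediate from $\delta'_{L_k+1}=\delta_{L_k+1}$. In the inductive step from $\ell+1$ to $\ell\in\{2,\ldots,L_k\}$, Case~A of Subroutine~3 preserves the value trivially, while in Case~B we have $\delta'_{\ell+1}\ge\delta_{\ell+1}\ge\delta_\ell$ by the inductive hypothesis and $r_{i_{k,\ell}}>\delta_\ell$ by the case condition, so $\delta'_\ell=\min\{r_{i_{k,\ell}},\delta'_{\ell+1}\}\ge\delta_\ell$. Monotonicity $\delta'_\ell\le\delta'_{\ell+1}$ then follows by a second backward induction: in Case~A, $\delta'_\ell=\delta_\ell\le\delta_{\ell+1}\le\delta'_{\ell+1}$; in Case~B, $\delta'_\ell$ is a minimum that already includes $\delta'_{\ell+1}$. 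The edge case at $\ell=1$ when Case~B holds at $\ell=2$ is handled by observing that $\delta_1\le\delta_2<r_{i_{k,2}}$ and $\delta_1\le\delta_3\le\delta'_3$ (using the lemma), whence $\delta'_1=\delta_1\le\min\{r_{i_{k,2}},\delta'_3\}=\delta'_2$. Combined with $\delta'_1=\delta_1\ge 0$ and $\delta'_{L_k+1}=\delta_{L_k+1}\le\bar{r}_k$, this yields $\bdelta'\in\Delta_k$.

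Property~\ref{property:forward} is trivially preserved since $\delta'_1=\delta_1\le r_{i_{k,1}}$. For Property~\ref{property:notspecial}, pick any $L>1$ witnessing $T_k(\bdelta)>1$: in Case~A at $\ell=L$ we have $\delta'_L=\delta_L\le r_{i_{k,L}}$, and in Case~B, $\delta'_L=\min\{r_{i_{k,L}},\delta'_{L+1}\}\le r_{i_{k,L}}$, so in either case $T_k(\bdelta')\ge L>1$. For Property~\ref{property:gap}, suppose $\delta'_\ell<r_{i_{k,\ell}}$: Case~A would force $\delta'_\ell=\delta_\ell\ge r_{i_{k,\ell}}$, a contradiction, so we must be in Case~B; within Case~B, if $\delta'_{\ell+1}\ge r_{i_{k,\ell}}$ then $\delta'_\ell=r_{i_{k,\ell}}$, again a contradiction, so $\delta'_{\ell+1}<r_{i_{k,\ell}}$ and $\delta'_\ell=\delta'_{\ell+1}$ as required.

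Finally, for domination I would use the decomposition $J_k(\bx,\bdelta)=\sum_{\ell=1}^{L_k+1} C_{k,\ell}(\bx,\delta_\ell)$ from Section~\ref{sec:alg:phase1}. Subroutine~3 modifies $\delta_\ell$ only for $\ell\in\{2,\ldots,L_k\}$ and only in Case~B, where both $\delta_\ell$ and $\delta'_\ell$ lie in $[0,r_{i_{k,\ell}}]$; on this range the maximum in $C_{k,\ell}$ collapses and $C_{k,\ell}(\bx,\delta_\ell)=r_{i_{k,\ell}}x_{i_{k,\ell}}-\delta_\ell x_{i_{k,\ell-1}}$, so replacing $\delta_\ell$ by $\delta'_\ell\ge\delta_\ell$ decreases $C_{k,\ell}$ by the nonnegative amount $(\delta'_\ell-\delta_\ell)x_{i_{k,\ell-1}}$. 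Summing gives $J_k(\bx,\bdelta')\le J_k(\bx,\bdelta)$ for all $\bx\in\mathcal{X}^c$; for strict inequality when $\bdelta'\neq\bdelta$, select an index $\ell^\star$ with $\delta'_{\ell^\star}>\delta_{\ell^\star}$ and evaluate at any $\bar{\bx}\in\mathcal{X}$ with $\bar x_{i_{k,\ell^\star-1}}=1$. The main obstacle is isolating the right backward-induction invariant ($\delta'_\ell\ge\delta_\ell$), which is simultaneously strong enough to yield monotonicity of $\bdelta'$ and to keep both $\delta_\ell$ and $\delta'_\ell$ inside the affine regime $[0,r_{i_{k,\ell}}]$ where the domination inequality becomes transparent.
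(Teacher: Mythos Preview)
Your proposal is correct and follows the same overall skeleton as the paper: the key backward-induction lemma $\delta'_\ell\ge\delta_\ell$ is exactly the paper's Lemma~\ref{lem:subroutine3:induction}, and your verifications of feasibility and of Properties~\ref{property:notspecial}--\ref{property:gap} mirror the paper's Appendices~\ref{appx:proof:subroutine:3:delta_prime_feasible}--\ref{appx:proof:subroutine:3:property_3} (one small omission: in your Property~\ref{property:notspecial} argument you implicitly assume the witness $L$ lies in $\{2,\ldots,L_k\}$; the case $L=L_k{+}1$ needs a separate line, since Subroutine~3's Case~A/B branching does not apply there---but it is immediate from $\delta'_{L_k+1}=\delta_{L_k+1}$).

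Where you genuinely diverge is the domination step. The paper proves a term-by-term inequality on the coefficients $\max\{0,r_{i_{k,\ell}}-\delta_\ell\}-(\delta_{\ell+1}-\delta_\ell)$ (Lemma~\ref{lem:subroutine3:new}), carrying a case analysis for each $\ell$. You instead invoke the separable decomposition $J_k(\bx,\bdelta)=\sum_\ell C_{k,\ell}(\bx,\delta_\ell)$ from Section~\ref{sec:alg:phase1} and observe that Subroutine~3 only perturbs coordinates in Case~B, where both $\delta_\ell$ and $\delta'_\ell$ live in $[0,r_{i_{k,\ell}}]$; on that interval $C_{k,\ell}(\bx,\cdot)$ is affine with slope $-x_{i_{k,\ell-1}}\le 0$, so increasing $\delta_\ell$ can only decrease $J_k$. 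This is cleaner: separability isolates the dependence on each $\delta_\ell$ in a single term, so the monotonicity is a one-line consequence of $\delta'_\ell\ge\delta_\ell$ rather than a fresh lemma. The paper's route has the mild advantage of not referencing the $C_{k,\ell}$ machinery, keeping the appendix self-contained; yours buys simplicity by reusing structure already developed for the Phase~1 algorithm.
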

\begin{proposition} \label{prop:subroutine:4}
        Let $\bdelta \in \Delta_k$  satisfy Properties~\ref{property:notspecial},  \ref{property:forward}, and \ref{property:gap}. If $\bdelta$ is the input to Subroutine 4, then the output $\bdelta'$ of Subroutine 4  satisfies   $\bdelta' \in \Delta_k$ and satisfies  Properties~\ref{property:notspecial},  \ref{property:forward},  \ref{property:gap}, and \ref{property:reverse}. Moreover if $\bdelta' \neq \bdelta$, then $\bdelta'$ dominates $\bdelta$. 
\end{proposition}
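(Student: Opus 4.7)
The plan is to analyze the output $\bdelta'$ of Subroutine 4 by parameterizing it through the threshold index $m \triangleq \min\{\ell \in \{1,\ldots,L_k+1\}: \delta_\ell > \hat{r}\}$, with the convention $m = L_k+2$ when no such index exists. Because $\hat{r} \ge \delta_L$ by Step 2 of Subroutine 4, the monotonicity of $\bdelta$ gives $m > L$ and the clean structural identity $\delta'_\ell = \delta_\ell$ for $\ell < m$ and $\delta'_\ell = \hat{r}$ for $\ell \ge m$. The case $m = L_k+2$ corresponds exactly to $\bdelta' = \bdelta$, and the combination of monotonicity with $\delta_{L_k+1} \le \hat{r}$ then forces $\hat{r} = \delta_L$ and $\delta_L = \cdots = \delta_{L_k+1}$; so in that situation Property~\ref{property:reverse} is immediate and the dominance claim is vacuous, and henceforth I would assume $\bdelta' \neq \bdelta$.

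Feasibility and Properties~\ref{property:notspecial}--\ref{property:gap} follow quickly. Since $\min\{\cdot,\hat{r}\}$ preserves monotonicity and every term in the definition of $\hat{r}$ is bounded above by $\bar{r}_k$, one obtains $\bdelta' \in \Delta_k$. Property~\ref{property:notspecial} holds because $\delta'_L = \delta_L \le r_{i_{k,L}}$ yields $T_k(\bdelta') \ge L > 1$ via the input assumption, and Property~\ref{property:forward} holds because $\delta'_1 \le \delta_1 \le r_{i_{k,1}}$. For Property~\ref{property:gap}, I would argue by cases on $\ell$ relative to $m$: for $\ell < m-1$ the property is inherited from the input since both $\delta'_\ell$ and $\delta'_{\ell+1}$ agree with the unprimed quantities; for $\ell \ge m$ it is vacuous because $\delta'_\ell = \hat{r} \ge r_{i_{k,\ell}}$ (using $\ell > L$ and the definition of $\hat{r}$); and for $\ell = m-1$, the input version of Property~\ref{property:gap} combined with $\delta_{\ell+1} = \delta_m > \hat{r} \ge \delta_\ell$ derives a contradiction from the hypothesis $\delta_\ell < r_{i_{k,\ell}}$, so the implication is vacuous in that case as well.

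For Property~\ref{property:reverse}, I would split on whether $\hat{r} = \delta_L$ or $\hat{r} > \delta_L$. In the first case, monotonicity of $\bdelta$ forces $\delta_\ell = \delta_L = \hat{r}$ throughout $\{L,\ldots,m-1\}$, so $\delta'_\ell = \hat{r}$ for all $\ell \ge L$; combined with $T_k(\bdelta') \ge L$, this delivers Property~\ref{property:reverse}. In the second case, $\hat{r} = r_{i_{k,\ell^*}}$ for some $\ell^* > L$ with $\delta_{\ell^*} > \hat{r}$, so $\ell^* \ge m$ and $\delta'_{\ell^*} = \hat{r} = r_{i_{k,\ell^*}}$; hence $T_k(\bdelta') \ge \ell^*$, and the structural identity gives $\delta'_\ell = \hat{r}$ for every $\ell \ge \ell^* \ge m$, completing Property~\ref{property:reverse}.

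Finally, for dominance I would compute $J_k(\bx,\bdelta') - J_k(\bx,\bdelta)$ directly using the structural identity for $\bdelta'$. The $\max$-terms contribute nothing: they cancel for $\ell < m$ (since $\delta'_\ell = \delta_\ell$) and both vanish for $\ell \ge m$ (since $r_{i_{k,\ell}} \le \hat{r} < \delta_\ell$ whenever $\ell > L$). What remains is the telescoping expression
\begin{align*}
J_k(\bx,\bdelta') - J_k(\bx,\bdelta) = (\hat{r} - \delta_{L_k+1}) + (\delta_m - \hat{r})\,x_{i_{k,m-1}} + \sum_{\ell=m}^{L_k} (\delta_{\ell+1} - \delta_\ell)\,x_{i_{k,\ell}}.
\end{align*}
Maximizing the right-hand side over $\bx \in \mathcal{X}^c$ by setting every nonnegatively-weighted coordinate to one collapses the positive terms via telescoping to $\delta_{L_k+1} - \hat{r}$, which exactly cancels the constant $\hat{r} - \delta_{L_k+1}$, yielding an upper bound of zero. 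At $\bar{\bx} = \beee_{N+1}$ (the pure no-purchase assortment) every product coordinate vanishes and only $\hat{r} - \delta_{L_k+1} < 0$ survives, since $\delta_{L_k+1} \ge \delta_m > \hat{r}$. The main obstacle is precisely this telescoping bookkeeping: showing that the apparent positive correction at $m-1$, together with the chain $\sum_{\ell=m}^{L_k}(\delta_{\ell+1}-\delta_\ell)$, is offset exactly (and no more) by $\hat{r} - \delta_{L_k+1}$, which is where the monotonic structure of $\bdelta$ plays the essential role.
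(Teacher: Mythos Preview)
Your proof is correct and proceeds along a different, more unified route than the paper. The paper splits into two top-level cases depending on which argument achieves the maximum in $\hat{r}$ (i.e., $\delta_L \ge \max_{\ell > L} r_{i_{k,\ell}}$ versus $\delta_L < \max_{\ell > L} r_{i_{k,\ell}}$), proves a separate structural lemma in each (Lemma~\ref{lem:subroutine4:ge} and Lemmas~\ref{lem:subroutine4:strict_less:L_bar_in_between}--\ref{lem:subroutine4:strict_less:delta_prime_ell}), and then re-derives Properties~\ref{property:notspecial}, \ref{property:gap}, \ref{property:reverse} and dominance from scratch within each case via fairly long direct calculations of $J_k(\bx,\bdelta')$ and $J_k(\bx,\bdelta)$. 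You instead introduce the single threshold index $m$, obtain the uniform structural identity $\delta'_\ell = \delta_\ell$ for $\ell < m$ and $\delta'_\ell = \hat{r}$ for $\ell \ge m$, and use it to handle feasibility and Properties~\ref{property:notspecial}--\ref{property:gap} without any case split; only Property~\ref{property:reverse} needs the brief dichotomy on whether $\hat{r} = \delta_L$. Your dominance argument is also noticeably cleaner: computing $J_k(\bx,\bdelta') - J_k(\bx,\bdelta)$ directly and telescoping gives a one-line upper bound of zero, whereas the paper bounds $J_k(\bx,\bdelta')$ and $J_k(\bx,\bdelta)$ separately in each case. The paper's two-case organization makes explicit what value $\bdelta'$ is flattened to ($\delta_L$ versus $\max_{\ell>L} r_{i_{k,\ell}}$), which may aid intuition, but your threshold-index approach buys substantial economy and avoids repetition. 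One minor point: your claim that $m=L_k+2$ forces $\hat{r}=\delta_L$ relies implicitly on the definition of $T_k(\bdelta)$ (to rule out $\delta_{\ell^*} \le r_{i_{k,\ell^*}}$ for the $\ell^* > L$ achieving $\max_{\ell>L} r_{i_{k,\ell}}$), which you might state explicitly.
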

Despite the simplicity of Subroutines 1, 2, 3, and 4, the proofs of the correctness of these subroutines are fairly involved. The difficulty of the proofs of Propositions~\ref{prop:subroutine:1} and \ref{prop:subroutine:3} (Appendices~\ref{appx:proof:subroutine:1} and \ref{appx:proof:subroutine:3}), for example, lies in showing that the outputs of Subroutines 1 and 3 dominate their inputs. The proof of Proposition~\ref{prop:subroutine:4} (Appendix~\ref{appx:proof:subroutine:4}), in particular, requires handling the two cases in the definition of $\hat{r}$ from Subroutine 4, and showing for each case that the properties of the input (Properties~\ref{property:notspecial}, \ref{property:forward}, and \ref{property:gap}) are retained in the process of obtaining an output that satisfies Property~\ref{property:reverse} and dominates the input.

\section{Numerical Experiments} \label{sec:experiments}
In this section, we conduct numerical experiments using synthetic and real-world data. Our goal in these experiments is to investigate the practical efficiency of our proposed algorithms, and the viability of the sample average approximation to solve assortment optimization problems more generally. 
A detailed discussion about the implementation of the various solution methods can be found in Appendix~\ref{appx:extra_experiments}. All experiments are performed on a MacBook Pro with $16$GB RAM and $10$ Cores. The Gurobi version is $12.0.1$.

\subsection{Experiments with Synthetic Data}\label{sec:experiments:MNL_Cutoff}

In this section, we consider assortment optimization under the multinomial logit with rank cutoffs, a parametric joint probability distribution for the random utility vector   proposed by \cite{bai2024assortment}. 
This is a representative setting for assortment optimization because the multinomial logit with rank cutoffs captures the well-known phenomenon that customers often have small consideration sets, while at the same time this class of assortment optimization problems is computationally demanding. 
This is also a natural setting to investigate because it offers several parameters that can be tuned, from the size of the consideration sets to the attraction parameters of the underlying multinomial logit model, making it attractive for investigating the practical efficiency of our proposed algorithms.

\subsubsection{Data Generation.}\label{sec:data_generation}
The multinomial logit model with rank cutoffs is a random utility maximization model. Similar to a traditional multinomial logit model, the utilities $U_1,\ldots,U_{N+1}$ are independent Gumbel random variables.\footnote{In a multinomial logit model, the random utilities $U_1,\ldots,U_{N+1}$  are of the form $U_i = \nu_i + \epsilon_i$, where the deterministic scalars $\nu_1,\ldots,\nu_{N+1}$ are referred to as the  {attraction parameters} and $\epsilon_1,\ldots,\epsilon_{N+1}$ are independent standard Gumbel random variables.} However, if an assortment does not contain any of the $L$ products with the highest utilities, then the customer does not make any purchase. Said another way, this joint probability distribution consists of drawing independent Gumbel random variables $U_1,\ldots,U_{N+1}$, and if there are more than $L$ products that satisfy $U_i > U_{N+1}$, then $U_{N+1}$ is increased so that it has the $(L+1)$th highest utility among all products. The fact that this is a random utility maximization model is established by \citet[Section 3]{bai2024assortment}. 

Our process for generating instances of the multinomial logit model with rank cutoffs begins by selecting integer parameters $N, M, L$. The integer $N$  denotes the number of products. The integer $M$ is parameter that is used to control the spread of the attraction parameters  in the multinomial logit model, with larger choices of $M$ generally leading to a smaller spread.\footnote{In Appendix~\ref{appx:M_spread}, we  demonstrate through numerical experiments that choosing larger values for the parameter $M$ generally leads to a smaller spread of the attraction parameters $\nu_1,\ldots,\nu_{N+1}$ in the underlying multinomial logit model.\looseness=-1} The integer $L$ denotes the rank cutoff, i.e., the maximum size of the consideration sets.  Given these parameters, our process for generating a random instance of a multinomial logit model with rank cutoffs is as follows.  \begin{itemize}
    \item \emph{Step 1}: Our process begins by  generating a ranking-based choice model that is supported on $M$ rankings. Specifically, we first select random rankings over the set of products $[N+1]$, with these rankings  $\sigma_1,\ldots,\sigma_M$ selected uniformly at random over the space of all $(N+1)!$ permutations. We then draw a random arrival probability for each of these rankings, $\lambda_1,\ldots,\lambda_M$, by sampling uniformly at random over  the probability simplex.\footnote{This is accomplished by drawing independent exponential random variables $\beta_1,\ldots,\beta_M \sim \text{Exponential}(1)$, and then defining $\lambda_m = \beta_m / (\sum_{j \in [M]} \beta_j)$. } 
    \vspace{0.5em} 
    
    \item \emph{Step 2:} We next fit a multinomial logit model to the underlying ranking-based choice model. We do this by generating transactional data, and then fitting a multinomial logit model to the transactional data. Specifically, we first generate $25000$ random assortments, where each product $i \in [N]$ is  included in the assortment with probability $0.05$; the no-purchase option $N+1$ is always included in the assortment. For each random assortment, we generate a transaction by selecting a random ranking $\sigma_1,\ldots,\sigma_M$ using the probabilities $\lambda_1,\ldots,\lambda_M$, and then selecting the most preferred product according to that ranking that is available in the assortment. We then fit the attraction parameters in the multinomial logit model to this transaction data using maximum likelihood estimation,  with the attraction parameter of the no-purchase option fixed to be equal to zero.
    \vspace{0.5em}
    
    \item \emph{Step 3}: To assign revenues to each of the products, we first sort the products by the attraction parameters that were obtained from the maximum likelihood estimation. We then generate random revenues $r_1,\ldots,r_N$ uniformly over $\{1,\ldots,10000\}$.  These revenues are then assigned to the attraction parameters in ascending order, with the lowest revenue being assigned to the highest attraction parameter, the second lowest revenue being assigned to the second-highest attraction parameter, and so on.\footnote{After performing Step 3, the attraction parameters in the underlying multinomial logit model satisfy $\nu_1 \ge \cdots \ge \nu_N$ and the revenues of the products satisfy $r_1 \le \cdots \le r_N$. We perform this step in order to make the downstream assortment optimization problem less trivial, since if high-revenue products also have high values for attraction parameters, then optimal assortments are likely to simply contain the highest revenue products. }
    \vspace{0.5em}

    \item \emph{Step 4}: To draw a sample from the multinomial logit model with rank cutoffs, we first draw the utilities $U_1,\ldots,U_{N+1}$ as independent Gumbel random variables with the attraction parameters obtained from the maximum likelihood estimation in Step 2. If $|\{i \in [N]: U_i > U_{N+1}\}| > L$, then we increase $U_{N+1}$ so that it is the $(L+1)$th highest utility.  
\end{itemize}

To solve the resulting assortment optimization problem with the sample average approximation, we follow the process outlined Section~\ref{sec:saa}. That is, we first select an integer $\tilde{K}$, and then we generate the set of $\tilde{K}$ samples using the process in Step 4. To evaluate the out-of-sample performance of  assortments obtained by the sample average approximation, we independently generate a validation set of $10^4$ samples using the same process as in Step 4 (see Section~\ref{sec:experiments:mnl:accelerated}). 
\cite{bai2024assortment} developed an algorithm for these assortment optimization problems that finds an $(1-\epsilon)$-factor approximate solution that runs in $\mathcal{O}((N/\epsilon)^{\mathcal{O}(1/\epsilon^2)})$ time. Because we are interested in computing approximate solutions that are within several percentage points of optimal for problems with moderate-to-large numbers of products $N$, we focus in the following numerical experiments on methods based on the sample average approximation. Specifically, we compare the algorithms proposed in this paper for solving the sample average approximation to those from \cite{bertsimas2019exact}.\looseness=-1

\subsubsection{Experiment  1.} \label{sec:experiments:mnl:xset} In our first set of experiments, we investigate a setting of assortment optimization problems in which there is a moderate number of products, $N = 50$, and where we choose a very large number of samples $\tilde{K}$ with the goal of the sample average approximation producing an assortment that is  close to optimal.  This setting is essentially that described in Section~\ref{sec:exclusion_set:comparison} in which the exclusion set formulation is intended to be efficacious. 

Table~\ref{tab:runtime-comparison}  compares the computation times  for three different solution methods for the sample average approximation. The first solution method is our exclusion set formulation~\eqref{prob:exclusionset} from Section~\ref{sec:exclusion_set}. The second solution method is our accelerated Benders decomposition method from Section~\ref{sec:main_alg}. The third solution method is the mixed-integer programming formulation~\eqref{prob:misic_no_extra} from \cite{bertsimas2019exact}. We solve \eqref{prob:misic_no_extra} from Section~\ref{sec:exclusionset:step1} instead of their original formulation \eqref{prob:misic} because the multinomial logit model with rank-cutoffs leads to small consideration sets, which makes  the original formulation~\eqref{prob:misic} much slower to solve. Table~\ref{tab:runtime-comparison} does not compare to the original two-phase Benders decomposition method by \cite{bertsimas2019exact} from Section~\ref{sec:benders} because it is slower than our accelerated Benders decomposition method (see Sections~\ref{sec:experiments:mnl:accelerated} and \ref{sec:results:toubia}), which itself is slower than the exclusion set formulation in Table~\ref{tab:runtime-comparison}. All results in Table~\ref{tab:runtime-comparison} are averaged over ten replications.\looseness=-1

\begin{table}[t]
\TABLE{Experiments from \S\ref{sec:experiments:mnl:xset} - Computation Time. \label{tab:runtime-comparison}}
{\centering
\begin{tabular}{cccccrrr}
\toprule
& & & & & \multicolumn{3}{c}{Computation Time} \\
\cmidrule(lr){6-8}
$N$ & $M$ & $\tilde{K}$ & $L$ & Budget  &\shortstack{XSET}& \shortstack{MIP}& \shortstack{ABD}\\
\midrule
50 & 5 & 25000 & 5 & Inf & 3.45 & 23.01 & 70.01  \\ 
50 & 5 & 50000 & 5 & Inf & 8.51 & 297.58 & 324.47  \\ 
50 & 5 & 100000 & 5 & Inf & 19.50 & 356.43 & 551.52  \\ 
50 & 15 & 25000 & 5 & Inf & 5.80 & 36.82 & 144.08  \\ 
50 & 15 & 50000 & 5 & Inf & 15.17 & 86.91 & 388.73  \\ 
50 & 15 & 100000 & 5 & Inf & 30.51 & 329.10 & 724.62  \\ 
50 & 25 & 25000 & 5 & Inf & 6.71 & 28.52 & 102.25  \\ 
50 & 25 & 50000 & 5 & Inf & 13.46 & 66.79 & 293.68  \\ 
50 & 25 & 100000 & 5 & Inf & 34.78 & 397.20 & 720.32  \\ 
\bottomrule
\end{tabular}}
{Computation times in seconds of our exclusion set formulation~\eqref{prob:exclusionset} from \S\ref{sec:exclusion_set} (XSET), the original mixed-integer programming formulation \eqref{prob:misic_no_extra} by \cite{bertsimas2019exact} (MIP), and our accelerated Benders decomposition method from \S\ref{sec:main_alg} (ABD). Results are averaged over ten replications and rounded to two decimal places.}
\end{table}

There are several main observations from Table~\ref{tab:runtime-comparison}. First, we observe that the relative speedups of the exclusion set formulation~\eqref{prob:exclusionset} compared to the other solution methods increases with the number of samples $\tilde{K}$. For example, when $M = 5$, the ratio of computation times of  the existing mixed-integer programming formulation~\eqref{prob:misic_no_extra} to the exclusion set formulation~\eqref{prob:exclusionset} increases from a ratio of $23.01/3.45 \approx 6.67$ (in the case of $\tilde{K} = 2.5 \times 10^4$) to a ratio of  $356.43/19.50 \approx 18.27$ (in the case of $\tilde{K} = 10^5$). This can be attributed to the fact that the number of `collisions' of rankings in the exclusion set formulation increases as the number of samples $\tilde{K}$ increases.\footnote{The notion of collisions is found in Section~\ref{sec:exclusion_set:comparison}.} This explanation is corroborated by  Table~\ref{tab:problem_size}, which shows that  increasing the number of samples $\tilde{K}$ leads to larger ratio in problem sizes between \eqref{prob:exclusionset} and \eqref{prob:misic_no_extra}. For example, when $M = 5$, Table~\ref{tab:problem_size} shows that the ratio of the number of decision variables of the existing mixed-integer programming formulation~\eqref{prob:misic_no_extra} to the exclusion set formulation~\eqref{prob:exclusionset} increases from a ratio of $121537 / 41933 \approx 2.9$ (in the case of $\tilde{K} = 2.5 \times 10^4$) to a ratio of $472752 / 117683 \approx 4.0$ (in the case of $\tilde{K} = 10^5$). A large number of collisions not only implies that \eqref{prob:exclusionset} has a smaller problem size than \eqref{prob:misic_no_extra}; it also implies  that \eqref{prob:exclusionset} generally has a tighter linear programming relaxation than \eqref{prob:misic_no_extra}.\footnote{Additional  results regarding the  linear programming relaxation gaps of \eqref{prob:misic_no_extra} and \eqref{prob:exclusionset} can be found in Appendix~\ref{appx:exclusion_set_budget}.\looseness=-1}\looseness=-1 

Table~\ref{tab:runtime-comparison} also highlights the impact of the spread of the attraction parameters in the multinomial logit model. As discussed in Section~\ref{sec:data_generation}, larger choices of $M$ generally lead to a lower spread of the attraction parameters. Indeed,  we observe from Table~\ref{tab:runtime-comparison} that the ratio of computation times of the existing mixed-integer programming formulation~\eqref{prob:misic_no_extra} to the exclusion set formulation~\eqref{prob:exclusionset}  increases as $M$ decreases, everything else held constant. This observation is further corroborated by comparing the ratios of problem sizes of \eqref{prob:exclusionset} and \eqref{prob:misic_no_extra}  in Table~\ref{tab:problem_size}.  We emphasize  that  even in the case of $M=5$, the resulting attraction parameters are still close enough for the problem to be nontrivial, in the sense that all $N$ products still appear  among the $L=5$ most preferred products in at least some of the generated samples; see Appendix~\ref{appx:M_spread} for details.

\begin{table}[t]
\TABLE{Experiments from \S\ref{sec:experiments:mnl:xset} - Problem Size. \label{tab:problem_size}}
{\centering 
\begin{tabular}{ccccc   
                rr    
                rr}   
\toprule
& & & & &
\multicolumn{2}{c}{\shortstack{Number of \\Decision Variables}} & 
\multicolumn{2}{c}{\shortstack{Number of \\ Constraints}} \\
\cmidrule(lr){6-7} \cmidrule(lr){8-9}
$N$ & $M$& $T$& $\tilde{K}$ & $L$ & \shortstack{XSET} & MIP & XSET & MIP \\
\midrule
50 & 5 & 25000 & 25000 & 5 & 41933 & 121537 & 225085 & 389083  \\ 
50 & 5 & 25000 & 50000 & 5 & 56817 & 234661 & 330831 & 751297  \\ 
50 & 5 & 25000 & 100000 & 5 & 117683 & 472752 & 690470 & 1513579  \\ 
50 & 15 & 25000 & 25000 & 5 & 52779 & 118578 & 263446 & 379954  \\ 
50 & 15 & 25000 & 50000 & 5 & 94916 & 238601 & 489692 & 764422  \\ 
50 & 15 & 25000 & 100000 & 5 & 160192 & 481439 & 871195 & 1541985  \\ 
50 & 25 & 25000 & 25000 & 5 & 56244 & 119008 & 275483 & 381340  \\ 
50 & 25 & 25000 & 50000 & 5 & 100621 & 239326 & 510527 & 766761  \\ 
50 & 25 & 25000 & 100000 & 5 & 174470 & 482741 & 925476 & 1546146  \\  
\bottomrule
\end{tabular}
}{Number of decision variables and constraints for our exclusion set formulation~\eqref{prob:exclusionset} from \S\ref{sec:exclusion_set} (XSET) and the original mixed-integer programming formulation \eqref{prob:misic_no_extra} by \cite{bertsimas2019exact} (MIP). Results are averaged over ten replications and rounded to nearest integer.}
\end{table}

In summary, this first set of experiments shows that the exclusion set formulation can offer significant speedups for solving the sample average approximation, where the speedups are most pronounced in settings with large numbers of samples. This showcases the value of the exclusion set formulation in applications with small consideration sets, particularly in realistic settings where some products  appear in consideration sets more often than others. We note that the aforementioned tables compare the solution methods in settings without constraints on the set of feasible assortments; additional experiments  for the exclusion set formulation for settings with cardinality constraints on the set of feasible assortments can be found in 
Appendix~\ref{appx:exclusion_set_budget}.

\subsubsection{Experiment 2.} \label{sec:experiments:mnl:accelerated}
In our second set of experiments, we investigate assortment optimization problems in which there are larger numbers of products, $N \in \{100,500\}$, and where the rank cutoff satisfies $L \in \{10,15\}$. This setting contrasts to that of Section~\ref{sec:experiments:mnl:xset} because  the inflated size of the assortment optimization problems in the present Section~\ref{sec:experiments:mnl:accelerated}  prevents the formulations  \eqref{prob:exclusionset} and \eqref{prob:misic_no_extra}  from being solvable in practical computation times when the number of samples $\tilde{K}$ is large.  This second set of experiments also provides a natural setting for   empirically assessing   the out-of-sample performance of assortments obtained by the sample average approximation as a function of the number of samples $\tilde{K}$ in problems with hundreds of products.\looseness=-1 
\begin{table}[t]
\TABLE{Experiments from \S\ref{sec:experiments:mnl:accelerated} - Computation Time.\label{tab:runtime-comparison-ab}}
{\centering  
\begin{tabular}{cccccrr}
\toprule
& & & & & \multicolumn{2}{c}{\shortstack{Computation Time}} \\
\cmidrule(lr){6-7}
$N$ & $M$ & $\tilde{K}$ & $L$ & Budget  &  ABD & BD  \\
\midrule
100 & 50 & 2500 & 10 & 5 & 2.40 & 39.40  \\ 
100 & 50 & 5000 & 10 & 5 & 10.48 & 277.76  \\ 
100 & 50 & 10000 & 10 & 5 & 61.13 & 945.10  \\ 
100 & 50 & 20000 & 10 & 5 & 125.97 & 2398.48  \\ 
500 & 50 & 2500 & 15 & 5 & 2.06 & 91.63  \\ 
500 & 50 & 5000 & 15 & 5 & 3.85 & 302.44  \\ 
500 & 50 & 10000 & 15 & 5 & 12.10 & 987.97  \\ 
500 & 50 & 20000 & 15 & 5 & 11.85 & 4805.63  \\ 
\bottomrule
\end{tabular}
}{Computation times in seconds of our accelerated Benders decomposition method from \S\ref{sec:main_alg} (ABD) and the original two-phase Benders decomposition method from \S\ref{sec:benders} by \cite{bertsimas2019exact} (BD). Results are averaged over ten replications and rounded to two decimal places.}
\end{table}

Table~\ref{tab:runtime-comparison-ab}   compares the computation times  for the two   decomposition methods for the sample average approximation; namely, our accelerated Benders decomposition method, which is implemented as described in Section~\ref{sec:main_alg}, and the original two-phase Benders decomposition method from \cite{bertsimas2019exact} from Section~\ref{sec:benders}.\footnote{Our implementation of the  original two-phase Benders decomposition method from Section~\ref{sec:benders} is based on the publicly available code provided by \cite{bertsimas2019exact}; see Appendix~\ref{appx:misic_benders_remark} for details.}. The results in Table~\ref{tab:runtime-comparison-ab} show  significant speedups of our accelerated Benders decomposition method  over the original two-phase Benders decomposition method.\looseness=-1

As discussed in Section~\ref{sec:main_alg}, the improved computation times of our accelerated Benders decomposition method can be attributed to two factors: our faster algorithms for generating cuts, and the strength of the generated cuts. To better understand the role of those two factors in  driving the speedups of the accelerated Benders decomposition method in Table~\ref{tab:runtime-comparison-ab}, we present  granular results for the two solution methods in Tables~\ref{tab:time_by_phase} and \ref{tab:small_benders_cuts_added}. Table~\ref{tab:time_by_phase} shows the average computation time for each solution method for Phase 1 and Phase 2, and Table~\ref{tab:small_benders_cuts_added} shows the average number of cuts generated by each solution method in Phase 1 and Phase 2. These tables show significant  reductions in computation time and in number of generated cuts by our accelerated Benders decomposition method in both phases. In particular, we draw several insights from Tables~\ref{tab:time_by_phase} and \ref{tab:small_benders_cuts_added}. 

    \begin{table}[t]
\TABLE{Experiments from \S\ref{sec:experiments:mnl:accelerated} - Computation Time by Phase.\label{tab:time_by_phase}}
{\centering 
\begin{tabular}{cccccrrrr}                 
\toprule
& & & & & \multicolumn{4}{c}{\shortstack{Computation Time}} \\
\cmidrule(lr){6-9}
&&&&& \multicolumn{2}{c}{ABD} & 
\multicolumn{2}{c}{BD}\\
\cmidrule(lr){6-7} \cmidrule(lr){8-9}
 $N$ & $M$ & $\tilde{K}$ & $L$ & Budget &Phase 1 & Phase 2 
&Phase 1 & Phase 2 \\
\midrule
100 & 50 & 2500 & 10 & 5 & 0.67 & 1.73 & 18.73 & 20.67  \\ 
100 & 50 & 5000 & 10 & 5 & 1.85 & 8.63 & 68.01 & 209.75  \\ 
100 & 50 & 10000 & 10 & 5 & 12.18 & 48.95 & 134.04 & 811.06  \\ 
100 & 50 & 20000 & 10 & 5 & 11.45 & 114.52 & 618.38 & 1780.1  \\ 
500 & 50 & 2500 & 15 & 5 & 1.18 & 0.88 & 76.38 & 15.25  \\ 
500 & 50 & 5000 & 15 & 5 & 3.01 & 0.85 & 247.34 & 55.09  \\ 
500 & 50 & 10000 & 15 & 5 & 9.73 & 2.37 & 894.57 & 93.39  \\ 
500 & 50 & 20000 & 15 & 5 & 2.68 & 9.17 & 4464.64 & 340.99  \\ 
\bottomrule
\end{tabular}
}{Computation times in seconds of our accelerated Benders decomposition method from \S\ref{sec:main_alg} (ABD) and the original two-phase Benders decomposition method from \S\ref{sec:benders} by \cite{bertsimas2019exact} (BD), split by Phase 1 and Phase 2. Results are averaged over ten replications and rounded to two decimal places.}
\end{table}
\begin{table}[t]
\TABLE{Experiments from \S\ref{sec:experiments:mnl:accelerated} - Number of Cuts Added by Phase.\label{tab:small_benders_cuts_added}}
{\centering
\begin{tabular}{ccccc
                r 
                r 
                r
                r}                 
\toprule
&&&&&\multicolumn{4}{c}{\shortstack{Number of Generated Cuts}} \\
\cmidrule(lr){6-9}
&&&&& \multicolumn{2}{c}{ABD} & \multicolumn{2}{c}{BD}\\
\cmidrule(lr){6-7} \cmidrule(lr){8-9}
 $N$ & $M$ & $\tilde{K}$ & $L$ & Budget &Phase 1 & Phase 2 
& Phase 1 & Phase 2 \\
\midrule
100 & 50 & 2500 & 10 & 5 & 2790 & 341 & 8235 & 14156  \\ 
100 & 50 & 5000 & 10 & 5 & 5492 & 1406 & 16357 & 33617  \\ 
100 & 50 & 10000 & 10 & 5 & 10872 & 4636 & 32732 & 74950  \\ 
100 & 50 & 20000 & 10 & 5 & 21402 & 5933 & 66355 & 100202  \\ 
500 & 50 & 2500 & 15 & 5 & 2609 & 14 & 7686 & 3267  \\ 
500 & 50 & 5000 & 15 & 5 & 5164 & 62 & 15246 & 6810  \\ 
500 & 50 & 10000 & 15 & 5 & 10275 & 268 & 30380 & 18660  \\ 
500 & 50 & 20000 & 15 & 5 & 20479 & 720 & 60722 & 23181  \\ 
\bottomrule
\end{tabular}}
{Number of cuts generated in the accelerated Benders decomposition method from \S\ref{sec:main_alg} (ABD) and the original two-phase Benders decomposition method from \S\ref{sec:benders} by \cite{bertsimas2019exact} (BD), split by Phase 1 and Phase 2. Results are averaged over ten replications and rounded to nearest integer.}
\end{table}

The first insight is that the speedups of the accelerated Benders decomposition method in Phase 1 can be attributed in large part to the reduction in computation times for generating cuts.  Specifically, Table~\ref{tab:time_by_phase} shows that the ratios of computation times of   Phase 1 of the original two-phase Benders decomposition method to Phase 1 of our accelerated Benders decomposition method range from $134.04/12.18\approx 11 $ (in the case of $N = 100$ and $\tilde{K} = 10000$) to $4464.64 / 2.68 \approx 1666 $ (in the case of $N = 500$ and $\tilde{K} = 20000$). These ratios in computation times are much larger than the ratios in the number of cuts generated in Phase 1 between the two solution methods shown in Table~\ref{tab:small_benders_cuts_added}, which range from $7686 / 2609 \approx 2.95$ (in the case of $N = 500$ and $\tilde{K} = 2500$) to $66355 / 21402 \approx 3.10$ (in the case of $N = 100$ and $\tilde{K} = 20000$). Furthermore, it follows from  Table~\ref{tab:time_by_phase} that in the case of $N = 500$, the total computation times in Table~\ref{tab:runtime-comparison-ab} is primarily driven by the computation time of  Phase 1. These results underscore the value of our faster algorithms from Section~\ref{sec:main_alg:algs} for generating cuts, which reduced the computation time in Phase 1 from $\mathcal{O}(KN^2)$ to $\mathcal{O}(\sum_{k=1}^K L_k \log L_k)$. 

The second insight is that the improved strength of our generated cuts in the accelerated Benders decomposition method also plays a significant role in reducing the overall computation time compared to the original two-phase Benders decomposition method. Indeed, Table~\ref{tab:small_benders_cuts_added} shows a significant reduction in the accelerated Benders decomposition method in the total number of cuts generated across Phase 1 and Phase 2 compared to the original two-phase Benders decomposition method. This difference is seen in Table~\ref{tab:small_benders_cuts_added} most prominently in Phase 2, which reflects the strength of the cuts from both Phase 1 and Phase 2. For example, when $N = 500$ and $\tilde{K} = 2500$, the number of Phase 2 cuts reduces from 3267 in the original two-phase Benders decomposition method to 14 in the accelerated Benders decomposition  method.  

It is worthwhile to make a couple of additional comments about Table~\ref{tab:small_benders_cuts_added} and our implementation of the solution methods. First, we note that the numbers of cuts shown in this table refers only to the number of generated cuts, i.e., it does not include the number of initial cuts. To avoid unboundedness of the outer problem~\eqref{prob:outer}, both of the two solution methods initialize the outer problem~\eqref{prob:outer} in Phase 1 with a single constraint for each ranking (see Appendix~\ref{appx:extra_experiments:initial_cuts} for details). In both solution methods, these initial as well as the generated cuts in Phase 1 are passed as warm starts into Phase 2.   Second, we observe that the number of cuts in Phase 1 in Table~\ref{tab:small_benders_cuts_added} for the accelerated Benders decomposition method is typically close to the number of samples $\tilde{K}$. This suggests that only a  small number of cuts were actually required in these experiments to determine the optimal solution for the linear programming relaxation of \eqref{prob:outer} that is solved in Phase 1. However, this fact is not identified by the original two-phase Benders decomposition method because its cuts are weaker, as demonstrated by the high number of cuts for Phase 1 of that solution method in Table~\ref{tab:small_benders_cuts_added}. This underscores the value of the Pareto-optimal cuts that are generated by  our accelerated Benders decomposition method.

We conclude by calculating the approximation gap of the assortments obtained by the sample average approximation.  Given any assortment $S \in \mathscr{S}$, let $J^*(S) \triangleq  \Exp \left[ \sum_{i \in S} r_i \mathbb{I} \left \{ U_i = \max_{j \in S} U_j  \right \}\right]$ denote the expected revenue, and let $v^* \triangleq \sup_{S \in \mathscr{S}} J^*(S)$ denote the optimal objective value of the stochastic program~\eqref{prob:rum}.  Let $\hat{v}_{\tilde{K}}$ and $\hat{S}_{\tilde{K}}$ denote the optimal objective value and optimal solution of the sample average approximation~\eqref{prob:saa}, which is shown in Sections~\ref{sec:intro:methods} and \ref{sec:benders} to be equivalent to the outer problem~\eqref{prob:outer}.  We recall that the optimal objective value of the sample average approximation $\hat{v}_{\tilde{K}}$  is an upward biased estimate of the optimal objective value of the stochastic program $v^*$ \cite[Proposition 5.6]{shapiro2021lectures}. Moreover, an unbiased estimate of $J^*(\hat{S}_{\tilde{K}})$ can be obtained by using Monte-Carlo simulation to generate an out-of-sample  validation set of samples $(\breve{U}^{\tilde{k}}_1,\ldots,\breve{U}^{\tilde{k}}_{N+1})$ for $\tilde{k} \in \{1,\ldots,\breve{K}\}$ and calculating $\breve{J}_{\breve{K}}(\hat{S}_{\tilde{K}}) \triangleq  \frac{1}{\breve{K}} \sum_{\tilde{k}=1}^{\breve{K}}\sum_{i \in \hat{S}_{\tilde{K}}} r_i \mathbb{I} \left \{ \breve{U}^{\tilde{k}}_i = \max_{j \in \hat{S}_{\tilde{K}}} \breve{U}^{\tilde{k}}_j  \right \}$.   A lower bound estimate of the approximation gap $J^*(\hat{S}_{\tilde{K}}) / v^* \times 100\%$ for the  sample average approximation~\eqref{prob:saa} is thus given by $\breve{J}_{\tilde{K}'}(\hat{S}_{\tilde{K}})  / \hat{v}_{\tilde{K}} \times 100\%$. 

\begin{table}[t]
\TABLE{Experiments from \S\ref{sec:experiments:mnl:accelerated} - Estimated Approximation Gap.\label{tab:saa-convergence}}
{\centering
\begin{tabular}{ccccccc}
\toprule
$N$ & $M$ & $\tilde{K}$ & $L$ & Budget & $\breve{K}$ & \shortstack{Approximation Gap}\\
\midrule
100 & 50 & 2500 & 10 & 5 & 10000 & 91.28\%  \\ 
100 & 50 & 5000 & 10 & 5 & 10000 & 94.21\%  \\ 
100 & 50 & 10000 & 10 & 5 & 10000 & 95.71\%  \\ 
100 & 50 & 20000 & 10 & 5 & 10000 & 97.99\%  \\ 
500 & 50 & 2500 & 15 & 5 & 10000 & 77.80\%  \\ 
500 & 50 & 5000 & 15 & 5 & 10000 & 83.85\%  \\ 
500 & 50 & 10000 & 15 & 5 & 10000 & 87.68\%  \\ 
500 & 50 & 20000 & 15 & 5 & 10000 & 91.88\%  \\ 
\bottomrule
\end{tabular}}
{Estimated approximation gap of optimal assortments obtained by solving the sample average approximation~\eqref{prob:saa} using the accelerated Benders decomposition method from \S\ref{sec:main_alg}. Calculations performed as described in \S\ref{sec:experiments:mnl:accelerated}. Results averaged over ten replications and rounded to two decimal places. }
\end{table}

Table~\ref{tab:saa-convergence} reports the lower bound estimate of the approximation gap of the assortments obtained from solving the sample average approximation, using a validation set of $\breve{K} =10000$ samples. As expected, the results show that as the number of products $N$ and the rank cutoff $L$ increase, a larger number of samples $\tilde{K}$ are required for the sample average approximation~\eqref{prob:saa} to yield assortments that are close to optimal with respect to the stochastic program~\eqref{prob:rum}. That said, Tables~\ref{tab:runtime-comparison-ab} and \ref{tab:saa-convergence} together show for these experiments that assortments that are within $90\%$ of optimal can be found within one minute. This underscores the value of our accelerated Benders decomposition method, and the potential of the sample average approximation more generally, for solving assortment optimization problems with hundreds of products.

\subsection{Experiments with Real Data} \label{sec:results:toubia}
 In this section, we evaluate the performance of our accelerated Benders decomposition method (Section~\ref{sec:main_alg})  and the original two-phase Benders decomposition method of \cite{bertsimas2019exact} (Section~\ref{sec:benders}) using a widely studied dataset from  \cite{toubia2003fast}. This setting is  valuable for three reasons. First, it represents a real-world application, where the samples are generated using individual-level questionnaire responses designed from conjoint analysis. As such, this setting is representative of the complexity and scale typically encountered in practical assortment optimization  problems, even though the samples in this dataset are obtained by surveys instead of Monte-Carlo simulation. Second, in contrast to the experiments from Section~\ref{sec:experiments:MNL_Cutoff}, the samples in this real-world dataset do not have small consideration sets, which provides a different setting for exploring the efficacy of the accelerated Benders decomposition method. Third, this dataset has been used by \cite{bertsimas2019exact} to benchmark their own Benders decomposition method due to its large size ($N = 3584, \tilde{K} = 330)$, thereby offering an established point of comparison for assessing the practical efficiency of our accelerated Benders decomposition method. Additional numerical results on the out-of-sample performance of the sample average approximation on the dataset from \cite{toubia2003fast} can be found in Appendix~\ref{appx:cross_fold}.

\begin{table}[t]
\TABLE{Experiments from \S\ref{sec:results:toubia} - Computation Time and Objective. \label{tab:total_time_objective}}
{
\begin{tabular}{crrrr}
\toprule
{Constraints} & 
\multicolumn{2}{c}{\shortstack{Computation Time}} & 
\multicolumn{2}{c}{Objective} \\
\cmidrule(lr){2-3} \cmidrule(lr){4-5}
& \shortstack{ABD} 
& \shortstack{BD} 
& Phase 1 & Phase 2 \\
\midrule
$\sum x_i = 2$ & 3.40 & 106.47 & 59.22 & 59.22 \\ 
$\sum x_i = 3$ & 5.87 & 276.21 & 66.57 & 66.29 \\ 
$\sum x_i = 4$ & 14.05 & 392.50 & 71.21 & 70.24 \\ 
$\sum x_i = 5$ & 18.09 & 387.86 & 73.85 & 72.82 \\ 
$\sum x_i = 6$ & 167.14 & 705.85 & 75.59 & 74.33 \\ 
$\sum x_i = 7$ & 157.88 & 861.02 & 76.89 & 75.32 \\ 
$\sum x_i = 8$ & 371.61 & 1396.28 & 77.88 & 76.12 \\ 
$\sum x_i = 9$ & 508.31 & 2177.62 & 78.68 & 76.88 \\ 
$\sum x_i = 10$ & 2215.70 & 4240.28 & 79.36 & 77.41 \\ 
\bottomrule
\end{tabular}
}{Results under different cardinality constraints for the computation time in seconds for the accelerated Benders decomposition method from \S\ref{sec:main_alg} (ABD), the computation time in seconds of the original two-phase Benders decomposition method from \S\ref{sec:benders} by \cite{bertsimas2019exact} (BD), the optimal objective value of the linear programming relaxation (Phase 1), and the optimal objective value of the mixed-integer program (Phase 2). Results are averaged over three replications to remove  variability in solve times, and results are rounded to two decimal places. }
\end{table}

\begin{table}[t]
\TABLE{Experiments from \S\ref{sec:results:toubia} - Computation Time by Phase.\label{tab:large_benders_time_phases}}
{\centering 
\vspace{0.5em}
\begin{tabular}{l r r r r
                } 
\toprule
&\multicolumn{4}{c}{\shortstack{Computation Time}} \\
\cmidrule(lr){2-5}
Constraints & 
\multicolumn{2}{c}{\shortstack{ABD}} & 
\multicolumn{2}{c}{\shortstack{BD}} 
 \\
\cmidrule(lr){2-3} \cmidrule(lr){4-5}
& \multicolumn{1}{c}{Phase 1} & \multicolumn{1}{c}{Phase 2} 
& \multicolumn{1}{c}{Phase 1} & \multicolumn{1}{c}{Phase 2} \\
\midrule
$\sum x_i = 2$ & 2.78 & 0.62 & 102.66 & 3.80 \\ 
$\sum x_i = 3$ & 3.31 & 2.55 & 263.99 & 12.22 \\ 
$\sum x_i = 4$ & 3.40 & 10.65 & 332.14 & 60.36 \\ 
$\sum x_i = 5$ & 4.22 & 13.87 & 333.92 & 53.94 \\ 
$\sum x_i = 6$ & 4.05 & 163.09 & 404.12 & 301.73 \\ 
$\sum x_i = 7$ & 4.75 & 153.12 & 464.12 & 396.90 \\ 
$\sum x_i = 8$ & 4.94 & 366.68 & 497.50 & 898.79 \\ 
$\sum x_i = 9$ & 5.23 & 503.08 & 590.07 & 1587.55 \\ 
$\sum x_i = 10$ & 5.39 & 2210.31 & 651.65 & 3588.63 \\ 
\bottomrule
\end{tabular}}
{Results under different cardinality constraints for the computation time in seconds for the accelerated Benders decomposition method from \S\ref{sec:main_alg} (ABD) and the computation time in seconds of the original two-phase Benders decomposition method from \S\ref{sec:benders} by \cite{bertsimas2019exact} (BD), split by Phase 1 and Phase 2. Results are averaged over three replications to remove  variability in solve times, and results are rounded to two decimal places. }
\end{table}

    The results of these experiments using the dataset from \cite{toubia2003fast} are shown in Tables~\ref{tab:total_time_objective}, \ref{tab:large_benders_time_phases}, and \ref{tab:large_benders_cuts_added}.  Table~\ref{tab:total_time_objective} presents the computation times of the two solution methods, as well as gives the optimal objective values after Phase 1 and after Phase 2.\footnote{In Table~\ref{tab:total_time_objective},  Objective for Phase 1 is the optimal objective value of the linear programming relaxation of \eqref{prob:outer}, and  Objective for Phase 2 is the optimal objective value of \eqref{prob:outer}. } In the particular case of the cardinality constraint of five products ($\sum_i x_i = 5$),     our accelerated Benders decomposition method reduces the computation time from around six minutes to around 18 seconds. Tables~\ref{tab:large_benders_time_phases} and \ref{tab:large_benders_cuts_added} compares the computation times and number of generated cuts of the two solution methods separated by Phase 1 and Phase 2. These tables show that the speedups from the accelerated Benders decomposition method can be attributed in large part to our faster algorithm for generating cuts in Phase 1. In particular, the time to generate cuts in Phase 1 for our accelerated Benders decomposition method remained less than five seconds across all budget settings, demonstrating that our accelerated Benders decomposition method can rapidly generate high quality upper bounds.  
    
    In summary, these experiments show the capability of our accelerated Benders decomposition method in solving real-world large-scale problems without any heuristics or parameter tuning. We emphasize that although this setting has considerably fewer rankings than products, we show using cross validation in  Appendix~\ref{appx:cross_fold} that the relatively small number of samples is sufficient in this real-world dataset to obtain a near-optimal assortment for the true assortment optimization problem~\eqref{prob:rum}. As such, these experiments show the potential of the sample average approximation, in combination with our algorithms, in obtaining near-optimal solutions to problems with thousands of products. 

\begin{table}[t]
\TABLE{Experiments from \S\ref{sec:results:toubia} - Number of Cuts Added by Phase.\label{tab:large_benders_cuts_added}}
{\centering 
\begin{tabular}{l r r  r r
                }                 
\toprule
&\multicolumn{4}{c}{\shortstack{Number of Generated Cuts}} \\
\cmidrule(lr){2-5}
Constraints & 
\multicolumn{2}{c}{\shortstack{ABD}} & 
\multicolumn{2}{c}{BD}\\
\cmidrule(lr){2-3} \cmidrule(lr){4-5} 
& \multicolumn{1}{c}{Phase 1} & \multicolumn{1}{c}{Phase 2} 
& \multicolumn{1}{c}{Phase 1} & \multicolumn{1}{c}{Phase 2} \\
\midrule
$\sum x_i = 2$ & 1140 & 0 & 1312 & 922 \\ 
$\sum x_i = 3$ & 1887 & 47 & 3014 & 2567 \\ 
$\sum x_i = 4$ & 1967 & 286 & 3636 & 3408 \\ 
$\sum x_i = 5$ & 2444 & 439 & 3776 & 5597 \\ 
$\sum x_i = 6$ & 2606 & 1857 & 4267 & 13220 \\ 
$\sum x_i = 7$ & 2887 & 3708 & 5072 & 17565 \\ 
$\sum x_i = 8$ & 3032 & 5682 & 5562 & 25744 \\ 
$\sum x_i = 9$ & 3361 & 3989 & 5851 & 24357 \\ 
$\sum x_i = 10$ & 3478 & 10970 & 6321 & 41677 \\ 
\bottomrule
\end{tabular}}{Number of cuts generated under different budget constraints by the accelerated Benders decomposition method from \S\ref{sec:main_alg} (ABD) and the original two-phase Benders decomposition method from \S\ref{sec:benders} by \cite{bertsimas2019exact} (BD), split by Phase 1 and Phase 2.}
\end{table}

\section{Conclusion and Future Work} \label{sec:conclusion}
In this paper, we make the sample average approximation a more viable approach to solving assortment optimization problems by developing faster algorithms for solving assortment optimization problems under the ranking-based choice model. We developed our algorithms  by drawing connections between  several streams of literature, ranging from Pareto optimality in Benders decomposition \citep{magnanti1981accelerating} to mechanism design \citep{ma2023assortment} and isotonic regression \citep{ahuja2001fast}. We believe that our algorithms and their accompanying theoretical guarantees provide new compelling evidence that the sample average approximation can offer value to industry, allowing firms to focus on the task of estimating accurate  discrete choice models. These results also raise a number of interesting directions for future work on the theory and practice of assortment optimization.\looseness=-1 
\begin{enumerate}
    \item \textbf{Theoretical convergence rates:}  It is well known for stochastic discrete optimization problems that the probability that the optimal solution set of the sample average approximation is a subset of the optimal solution set of the stochastic program converges to one exponentially fast as the number of samples tends to infinity \citep[Section 2.2]{kleywegt2002sample}. To the best of our knowledge, it is unknown for the particular application of assortment optimization~\eqref{prob:rum} whether the sample average approximation~\eqref{prob:saa} converges even faster to \eqref{prob:rum}, even in simple settings where the utilities  $U_1,\ldots,U_{N+1}$ are independent Gumbel random variables. Future work on  refined convergence rates would lead to an improved theoretical understanding of the settings where \eqref{prob:saa} can provide a close approximation of \eqref{prob:rum} with small or moderate numbers of samples.\looseness=-1  
    \vspace{0.5em}
    
    \item \textbf{Faster algorithms under independent random utilities.} Much of the assortment optimization literature has focused on random utility maximization models where the random utilities $U_1,\ldots,U_{N+1}$ are independent random variables. At the same time, these problems are known to be computationally demanding, even in the simple case where the utilities are independent exponential random variables \citep{aouad2023}. An interesting direction for future work would be to develop   faster Monte Carlo-based algorithms for solving \eqref{prob:rum} that are tailored to the setting where the utilities $U_1,\ldots,U_{N+1}$ are independent random variables. 
        \vspace{0.5em}

    \item \textbf{Faster algorithms for non-rational choice models}. The accelerated Benders decomposition from Section~\ref{sec:main_alg} introduces new connections between isotonic regression and develops structural results for Pareto-optimal cuts. These insights may prove to be useful in the development of Benders decomposition methods for classes of assortment optimization problems under irrational choice models; see \cite{chen2022decision} and \cite{akchen2021assortment}.   
\end{enumerate}

\bibliographystyle{plainnat} 
\bibliography{bibliography.bib}

\clearpage

\ECSwitch

\begin{APPENDICES}
\SingleSpacedXI 

\section{Additional details from Section~\ref{sec:prelim}}
\subsection{Equivalence of MIP formulations} \label{appendix:equivalence}
\citet[Section 3.2]{bertsimas2019exact} present the same mixed-integer programming formulation of \eqref{prob:ranking}, with the exception that the constraints
\begin{align*}
     &x_{i_{k,\ell}} \le \sum_{\ell'=1}^{\ell}  y_{k,\ell'}  && \forall k \in [K], \ell \in [N+1]\tag{\ref{prob:misic:pushup}}
\end{align*}
from the mixed-integer program~\eqref{prob:misic} are replaced in \citet[Section 3.2]{bertsimas2019exact} by
\begin{align*}
      & \sum_{\ell'=\ell+1}^{N+1}  y_{k,\ell'} \le 1 - x_{i_{k,\ell}}  && \forall k \in [K], \ell \in [N+1]\end{align*}
      We observe that these two sets of  constraints are equivalent because constraints~\eqref{prob:misic:atmostone} and \eqref{prob:misic:pushup} imply  that  $\sum_{\ell'=\ell+1}^{N+1} y_{k,\ell'} = 1 - \sum_{\ell'=1}^{\ell} y_{k,\ell'}$ for all $k \in [K]$ and $\ell \in [N+1]$.

\subsection{Benders decomposition method from \cite{bertsimas2019exact}} \label{appx:benders_review}
 As discussed in Section~\ref{sec:benders}, the Benders decomposition method from \cite{bertsimas2019exact} consists of two phases. In the first phase,  Benders decomposition is used to solve the linear programming relaxation of \eqref{prob:outer}. In the second phase, Benders decomposition is used to solve \eqref{prob:outer}, which is warm started with the cuts obtained from the first phase. Each iteration of Phase 1 and Phase 2 of the Benders decomposition method of \cite{bertsimas2019exact} requires solving the linear program~\eqref{prob:dual} to optimality  for each ranking $k$. 

In greater detail, the Benders decomposition method  consists of the following two phases.

\paragraph{Phase 1:} In the first phase of the Benders decomposition method, we solve the linear programming relaxation of \eqref{prob:outer} using  constraint generation. Indeed,  it follows from strong duality that the linear programming relaxation of \eqref{prob:outer}  is equivalent to 
\begin{equation} \label{prob:outer_seminfinite:lp}
\begin{aligned}
\underset{\bx \in \mathcal{X}^c, \bq}{\textnormal{maximize}} \quad & \sum_{k=1}^K  \lambda_k q_k   \\
 \textnormal{subject to} \quad & q_k \le \gamma + \sum_{\ell=1}^{N+1} \left(\alpha_\ell - \beta_\ell \right) x_{i_{k,\ell}} && \forall k \in [K], (\balpha,\bbeta,\gamma) \in \mathcal{D}_k
 \end{aligned}
\end{equation}
where we recall from Section~\ref{sec:benders} that  $\mathcal{D}_k$ is the set of feasible solutions for \eqref{prob:dual}. In each iteration of the constraint generation method in Phase 1, we  solve a relaxation of  \eqref{prob:outer_seminfinite:lp} obtained by, for each ranking $k \in [K]$, replacing  $\mathcal{D}_k$ with a finite subset $\hat{\mathcal{D}}_k \subset \mathcal{D}_k$. Given the optimal solution  $(\hat{\bx},\hat{\bq})$ for that relaxation, we then loop over the rankings, and, for each ranking $k$, we solve   \eqref{prob:dual} to obtain a  vector $(\hat{\balpha}_k, \hat{\bbeta}_k,\hat{\gamma}_k) \in \mathcal{D}_k$ that satisfies $\rho_k(\hat{\bx}) = \hat{\gamma}_k + \sum_{\ell=1}^{N+1} (\hat{\alpha}_{k,\ell} - \hat{\beta}_{k,\ell}) \hat{x}_{i_{k,\ell}}$. If  $\hat{q}_k \le \rho_k(\hat{\bx})$ for every ranking $k \in [K]$, then we conclude that $(\hat{\bx}, \hat{\bq})$ is an optimal solution for  \eqref{prob:outer_seminfinite:lp}.  Otherwise, for each ranking that satisfies  $\hat{q}_k > \rho_k(\hat{\bx})$, we add  $(\hat{\balpha}_k, \hat{\bbeta}_k,\hat{\gamma}_k)$ into $\hat{\mathcal{D}}_k$, and we then continue to the next iteration.\looseness=-1

\paragraph{Phase 2:} In the second phase of the Benders decomposition method, we solve \eqref{prob:outer} using constraint generation, which is warm-started using the constraints generated in Phase 1.   Indeed,  it follows from strong duality that  \eqref{prob:outer}  is equivalent to 
\begin{equation} \label{prob:outer_seminfinite:mip}
\begin{aligned}
\underset{\bx \in \mathcal{X}, \bq}{\textnormal{maximize}} \quad & \sum_{k=1}^K  \lambda_k q_k   \\
 \textnormal{subject to} \quad & q_k \le \gamma + \sum_{\ell=1}^{N+1} \left(\alpha_\ell - \beta_\ell \right) x_{i_{k,\ell}} && \forall k \in [K], (\balpha,\bbeta,\gamma) \in \mathcal{D}_k \end{aligned}
\end{equation}
Similarly as in Phase 1, each iteration of the constraint generation method for Phase 2 consists of solving \eqref{prob:outer_seminfinite:mip}, followed by  solving \eqref{prob:dual}  for each ranking $k$ to see if there is a $(\balpha_k, \bbeta_k,\gamma_k) \in \mathcal{D}_k$ for which the constraints are violated. If so, we add the violated constraints and solve again; otherwise, we conclude that the current integer solution  is an optimal solution for \eqref{prob:outer}. The  purpose of the first phase is to generate constraints to warm start the constraint generation method for solving \eqref{prob:outer_seminfinite:mip}.

\begin{remark} \citet[Appendix~EC.2]{bertsimas2019exact} shows that the constraint generation methods for Phase 1 and Phase 2 are guaranteed to converge after finite numbers of iterations and output optimal solutions for \eqref{prob:outer_seminfinite:lp} and \eqref{prob:outer_seminfinite:mip}, respectively. We note that constraint generation in Phase 2 can implemented using lazy constraints within branch-and-bound; see \citet[Section 4.1]{bertsimas2019exact}. 
\end{remark}

\section{Omitted Details from Section~\ref{sec:exclusion_set} }

\subsection{Proof  of Lemma~\ref{lem:inequality_to_equality}} \label{appx:proof:lem:inequality_to_equality}
\begin{proof}{Proof of Lemma~\ref{lem:inequality_to_equality}.}
Consider any $k,k' \in [K]$, $L \in [L_k]$, and suppose that $\{i_{k,1},\ldots,i_{k,L}\} = \{i_{k',1},\ldots,i_{k',L}\}$. It follows from the fact that $L \in [L_k]$ that $i_{k,\ell} < N+1$ for all $\ell \in \{1,\ldots,L\}$, which implies that $i_{k',\ell} < N+1$ for all $\ell \in \{1,\ldots,L\}$. We thus conclude that $L \le L_{k'}$,  which implies that  \eqref{prob:equality_nonsimple} contains the constraints $\sum_{\ell=1}^{L}  y_{k,\ell} \ge \sum_{\ell=1}^L  y_{k',\ell}$ and   $\sum_{\ell=1}^{L}  y_{k',\ell} \ge \sum_{\ell=1}^L  y_{k,\ell}$. 
\halmos \end{proof}
\subsection{Strength of  formulation \eqref{prob:misic_extra}} \label{appx:example:tighter_relaxation}
In the following example, we show that the linear programming relaxation of \eqref{prob:misic_extra} can be strictly tighter than the linear programming relaxation of \eqref{prob:misic_no_extra}. Indeed, consider an example in which there are $N= 3$ products and $K = 2$ rankings. The two rankings are given by 
\begin{align*}
i_{1,1} &= 1, & i_{2,1} &= 2,\\
 i_{1,2}&= 2, & i_{2,2} &= 1,\\
 i_{1,3} &= 3, & i_{2,3} &= 4,\\
 i_{1,4} &= 4, & i_{2,4} &= 3,
\end{align*}
the probabilities of the two rankings are $\lambda_1 = \lambda_2 = 0.5$, and the revenues of the products are $ r_1 = r_2 = \$100$, and $r_3 = \$150$.  For this example, we observe that  \eqref{prob:misic_no_extra} is 
\begin{equation}\label{prob:misic:example}
\begin{aligned}
\underset{\bx \in \{0,1\}^3,\by}{\textnormal{maximize}} \quad &\left( 100 y_{1,1}  + 100 y_{1,2} + 150 y_{1,3}\right) \times 0.5 + \left( 100 y_{2,1}  + 100 y_{2,2} \right) \times 0.5 \\
 \textnormal{subject to} \quad &\begin{aligned}[t] 
 &y_{1,1} + y_{1,2} + y_{1,3} \le 1\\
  &y_{2,1} + y_{2,2} \le 1\\
  &x_{1} \le y_{1,1}\\
  & x_{2} \le y_{1,1} + y_{1,2}\\
  &x_{3} \le y_{1,1} + y_{1,2} + y_{1,3}\\
  &x_{2} \le y_{2,1}\\
  &x_{1} \le y_{2,1} + y_{2,2}\\
& 0 \le   y_{k,\ell}  \le x_{i_{k,\ell}} &&  \forall k \in \{1,2\}, \ell \in [L_k]
\end{aligned}
\end{aligned}
\end{equation}
We now make several claims.
\begin{claim} \label{claim:tightness_example:1}
The optimal objective value of \eqref{prob:misic:example} is equal to $\$100$ and  $\bx \in \{0,1\}^3$ is an optimal solution for \eqref{prob:misic:example} if and only if $x_{1} = 1$ or $x_{2} = 1$.
\end{claim}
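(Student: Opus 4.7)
The plan is to enumerate all eight binary vectors $\bx \in \{0,1\}^3$ and, for each one, read off the objective value of \eqref{prob:misic:example} by observing that the constraints pin down $\by$ uniquely. First, I would verify that for any fixed binary $\bx$, the combination of the push-up constraints $x_{i_{k,\ell}} \le \sum_{\ell' \le \ell} y_{k,\ell'}$, the push-down constraints $y_{k,\ell} \le x_{i_{k,\ell}}$, and the partition constraints $\sum_\ell y_{k,\ell} \le 1$ forces $y_{k,\ell}$ to take the binary value $1$ if and only if $i_{k,\ell}$ is the most preferred product by ranking $k$ that is present in the assortment, and $0$ otherwise. This reproduces the interpretation of the MIP given in Section~\ref{sec:review:mip} and implies that, restricted to binary $\bx$, the objective of \eqref{prob:misic:example} coincides with the expected revenue of the corresponding assortment under the two rankings.

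Second, I would read off the purchase behavior of each ranking directly from its preference order. Ranking $1$ has preference order $1 \succ 2 \succ 3 \succ 4$, so its realized revenue is $\$100$ when $x_1 = 1$, otherwise $\$100$ when $x_2 = 1$, otherwise $\$150$ when $x_3 = 1$, and otherwise $\$0$. Ranking $2$ has preference order $2 \succ 1 \succ 4 \succ 3$; since the no-purchase option $4$ is preferred to product $3$, ranking $2$ never purchases product $3$, and its realized revenue is $\$100$ when $x_2 = 1$, otherwise $\$100$ when $x_1 = 1$, and otherwise $\$0$.

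Finally, I would weight each ranking's contribution by $\lambda_1 = \lambda_2 = 0.5$ and compare the resulting totals across three qualitatively distinct cases: (i) if $x_1 = 1$ or $x_2 = 1$, both rankings yield $\$100$ and the objective equals $0.5 \times 100 + 0.5 \times 100 = \$100$; (ii) if $x_1 = x_2 = 0$ and $x_3 = 1$, only ranking $1$ makes a purchase, and the objective equals $0.5 \times 150 + 0.5 \times 0 = \$75$; (iii) if $\bx = \bzero$, both rankings are forced to $\by = \bzero$ and the objective equals $\$0$. Taking the maximum over these three cases yields the claimed optimal value of $\$100$ together with the characterization that $\bx \in \{0,1\}^3$ is optimal if and only if $x_1 = 1$ or $x_2 = 1$. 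The only substantive step is the uniqueness verification in the first paragraph; the rest of the argument is routine case-by-case accounting, so I do not anticipate any real obstacle.
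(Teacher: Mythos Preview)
Your proposal is correct and follows essentially the same approach as the paper's proof: both reduce the MIP over binary $\bx$ to the ranking-based revenue interpretation, then split on whether $x_1 = 1$ or $x_2 = 1$ versus $x_1 = x_2 = 0$, obtaining $\$100$ in the former case and at most $\$75$ in the latter. Your version is slightly more explicit (you verify that $\by$ is pinned down uniquely by the constraints and enumerate the $x_1=x_2=0$ cases separately), whereas the paper simply invokes the interpretation of \eqref{prob:misic} from Section~\ref{sec:review:mip} and bounds the $x_1=x_2=0$ case by $0.5\times 150 + 0.5\times 0$ without further subdivision.
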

\begin{proof}{Proof of Claim~\ref{claim:tightness_example:1}.}
Suppose that $x_1 = x_2 = 0$. Then we observe that the revenue from the second ranking is equal to \$0 (because the second ranking prefers the no-purchase option 4 to product 3) and that the revenue from the first ranking is at most $\$150$. However, if $x_1 = 1$ or $x_2 = 1$, then the revenue we get from the first ranking is $\$100$ and the revenue we get from the second ranking is $\$100$. Since $\lambda_1 = \lambda_2 = 0.5$, then we conclude that every optimal solution of \eqref{prob:misic:example} must satisfy $x_1 = 1$ or $x_2 = 1$ and that the optimal objective value of \eqref{prob:misic:example} is $\$100$. 
\halmos \end{proof}
\begin{claim} \label{claim:tightness_example:2}
The optimal objective value of the linear programming relaxation of \eqref{prob:misic:example} is equal to $\$112.5$ and  $\bx \in [0,1]^3$ is an optimal solution for the linear programming relaxation of \eqref{prob:misic:example} if and only if $x_{1} = x_{2} = 0.5$ and $x_3 \ge 0.5$. 
\end{claim}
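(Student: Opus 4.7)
The plan is to prove Claim~\ref{claim:tightness_example:2} by matching a feasible primal solution attaining objective $\$112.5$ against dual certificates that upper bound the LP value by the same number, and then using tightness to characterize the optimal set. Because the inner LP \eqref{prob:inner} decomposes by ranking, I would invoke the reformulation \eqref{prob:dual_reform} from Section~\ref{sec:main_alg:reform} to obtain convenient linear upper bounds on $\rho_1(\bx)$ and $\rho_2(\bx)$ by evaluating $J_k(\bx, \bdelta)$ at a short list of explicit $\bdelta$ vectors, avoiding a fully explicit dual LP construction.

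For the primal, I would set $x_1 = x_2 = 0.5$, take any $x_3 \in [0.5, 1]$, and choose $(y_{1,1}, y_{1,2}, y_{1,3}) = (0.5, 0, 0.5)$ together with $(y_{2,1}, y_{2,2}) = (0.5, 0.5)$. A direct check shows feasibility in \eqref{prob:misic:example}, and the objective evaluates to $0.5(100 \cdot 0.5 + 0 + 150 \cdot 0.5) + 0.5(100 \cdot 0.5 + 100 \cdot 0.5) = 112.5$. For the upper bound, evaluating $J_1$ at $\bdelta \in \{(100,100,100,100),\;(100,150,150,150),\;(100,100,150,150)\} \subset \Delta_1$ yields $\rho_1(\bx) \le \min\{100 + 50 x_3,\; 150 - 50 x_1,\; 150 - 50 x_2\}$, and evaluating $J_2$ at $\bdelta \in \{(0,100,100),\;(0,0,0)\} \subset \Delta_2$ yields $\rho_2(\bx) \le \min\{100,\; 100 (x_1 + x_2)\}$.

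To finish, I would split into two cases. When $x_1 + x_2 \le 1$, combining $\rho_1(\bx) \le 150 - 50 \max(x_1, x_2)$ with $\rho_2(\bx) \le 100 (x_1 + x_2)$ gives the bound $75 + 25 \max(x_1, x_2) + 50 \min(x_1, x_2)$, whose maximum over $\{(x_1,x_2) : x_1,x_2 \ge 0,\ x_1 + x_2 \le 1\}$ equals $\$112.5$ and is uniquely attained at $x_1 = x_2 = 0.5$. When $x_1 + x_2 > 1$, so that $\max(x_1, x_2) > 1/2$, the same $\rho_1$ bound combined with $\rho_2(\bx) \le 100$ gives a strict upper bound below $\$112.5$. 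Finally, attaining $\rho_1(\bx) = 125$ at $x_1 = x_2 = 0.5$ requires $100 + 50 x_3 \ge 125$, i.e., $x_3 \ge 0.5$, which together with the primal construction pins down the optimal set as $\{(0.5, 0.5, x_3) : x_3 \in [0.5, 1]\}$. The main obstacle I expect is the case analysis on the boundary $x_1 + x_2 = 1$, where ruling out asymmetric optima requires a careful scalar optimization over the triangle $\{(x_1,x_2) : x_1,x_2 \ge 0,\ x_1+x_2 \le 1\}$; once this is handled, the bounds derived from \eqref{prob:dual_reform} reduce the proof to routine algebra.
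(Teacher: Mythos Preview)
Your proof is correct and takes a genuinely different route from the paper. The paper's argument is an informal primal sketch: it observes directly that every feasible solution satisfies $y_{2,1}=x_2$, computes the optimal second-ranking revenue as $100\min\{x_1+x_2,1\}$, exhibits the primal solution $(y_{1,1},y_{1,2},y_{1,3})=(0.5,0,0.5)$ at $x_1=x_2=0.5$, $x_3\ge 0.5$, and then argues optimality by a local perturbation analysis (increasing or decreasing $x_1$ or $x_2$ by $\epsilon$ and comparing the marginal changes across rankings). You instead construct dual certificates by evaluating $J_k(\bx,\bdelta)$ at well-chosen $\bdelta$ vectors from the reformulation~\eqref{prob:dual_reform}, obtaining global linear upper bounds $\rho_1(\bx)\le\min\{100+50x_3,\,150-50x_1,\,150-50x_2\}$ and $\rho_2(\bx)\le\min\{100,\,100(x_1+x_2)\}$, and then close with a clean global case split on $x_1+x_2\lessgtr 1$.

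Your approach is more rigorous and fully global, sidestepping the paper's somewhat heuristic local-perturbation step; it also illustrates nicely how the machinery of Section~\ref{sec:main_alg:reform} yields tight certificates almost for free. The tradeoff is that you invoke results (Theorem~\ref{thm:reform_easy}) proved later in the paper, whereas the paper's sketch is self-contained and elementary, relying only on direct inspection of the primal constraints. There is no circularity, since the example is not used in the proof of Theorem~\ref{thm:reform}, but a reader encountering the appendices in order would not yet have~\eqref{prob:dual_reform} available.
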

\begin{proof}{Proof of Claim~\ref{claim:tightness_example:2}.}
While this claim can be verified numerically, it is useful to sketch out the intuition. Indeed, we observe for all $\bx \in [0,1]^3$ that every feasible solution for the linear programming relaxation of 
\eqref{prob:misic:example} must satisfy  $y_{2,1} = x_2$. Therefore, we observe that every optimal solution will satisfy $y_{2,2} =  \min \{x_1, 1 - y_{2,1} \}= \min \{x_1, 1 - x_2 \}$, and so the revenue from the second ranking will be
\begin{align*}
100 y_{2,1} + 100 y_{2,2} = 100 x_2 + 100  \min \{x_1, 1 - x_2 \} = 100 \min \{x_1 + x_2, 1 \}
\end{align*}
We further observe that if $x_1 = x_2 = 0.5$ and $x_3 \ge 0.5$, then there exists a feasible solution in which $y_{1,1} = 0.5$, $y_{1,2} = 0$, and $y_{1,3} = 0.5$, and so the revenue from the first ranking will be
\begin{align*}
100 y_{1,1} + 100 y_{1,2}  + 150 y_{1,3} = 125
\end{align*}
We observe that increasing $x_{1} = 0.5$ or $x_{2} = 0.5$ by $\epsilon > 0$ will result in the revenue from the first ranking decreasing by $150\epsilon - 100\epsilon = 50 \epsilon$ and the revenue from the second product being unchanged. Conversely, decreasing $x_{1} = 0.5$ or $x_{2} = 0.5$ by $\epsilon > 0$ will result in the revenue from the first ranking increasing by at most $150 \epsilon - 100 \epsilon = 50 \epsilon$ and the revenue from the second ranking decreasing by $100 \epsilon$. We thus conclude that $x_{1} = 0.5$ and $x_{2} = 0.5$ is optimal, and we readily observe that every optimal solution will satisfy $x_3 \ge 0.5$. 
\halmos \end{proof}

If follows from Claims~\ref{claim:tightness_example:1} and \ref{claim:tightness_example:2} that the linear programming relaxation of \eqref{prob:misic:example} has an optimal objective value ($0.5 \times \$125 + 0.5 \times \$100 = \$112.5$) that is strictly greater than the optimal objective value of \eqref{prob:misic:example} ($\$100$). Moreover, it follows from  Claim~\ref{claim:tightness_example:2} that every optimal solution for the linear programming relaxation of \eqref{prob:misic:example}  is violated by the constraint  $y_{1,1} + y_{1,2} = y_{2,1} + y_{2,2}$. We have thus shown for this example that the linear programming relaxation of \eqref{prob:misic_extra} is tighter than the linear programming relaxation of \eqref{prob:misic_no_extra}.

\subsection{Proof of Theorem~\ref{Theorem:Formulation}} \label{appx:exclusion_set:proof}
\begin{proof}{Proof of Theorem~\ref{Theorem:Formulation}.}

Recall the optimization problem~\eqref{prob:misic_extra},  repeated below for convenience.
\begin{equation} \tag{\ref{prob:misic_extra}}
\begin{aligned}
&\underset{\substack{\textbf{x} \in \{0,1\}^N,\textbf{y}}}{\textnormal{maximize}} && \sum_{k=1}^K  \sum_{\ell=1}^{L_k}  r_{i_{k,\ell}} y_{k,\ell}  \lambda_k \\
&\textnormal{subject to}&&\begin{aligned}[t]
    &\sum_{\ell=1}^{L}  y_{k,\ell} = \sum_{\ell=1}^L  y_{k',\ell} && \forall k,k' \in [K], L \in [L_k]: \{i_{k,1},\ldots,i_{k,L} \} = \{i_{k',1},\ldots,i_{k',L} \}\\
& \sum_{\ell=1}^{L_k} y_{k,\ell} \le 1 && \forall k \in [K]\\
 &x_{i_{k,\ell}} \le \sum_{\ell'=1}^{\ell}  y_{k,\ell'}  && \forall k \in [K], \ell \in [L_k]\\
& 0 \le   y_{k,\ell}  \le x_{i_{k,\ell}} &&  \forall k \in [K], \ell \in [L_k]
\end{aligned}
\end{aligned}
\end{equation}
We observe that we can, without loss of generality, introduce a decision variable $z_{E}$ into \eqref{prob:misic_extra} with the constraint that $z_E = \sum_{\ell=1}^{L} y_{k,\ell}$ for all  $k \in [K]$ and $L \in [L_k]$ that satisfy $E = \{i_{k,1},\ldots,i_{k,L} \}$. Note that adding these decision variables and constraints into \eqref{prob:misic_extra} is without loss of generality because the constraint $\sum_{\ell=1}^{|E|} y_{k,\ell}= \sum_{\ell=1}^{|E|} y_{k',\ell}$ is imposed for all $k,k'$ that satisfy $E =\{ i_{k,1},\ldots,i_{k,|E|} \}=\{ i_{k',1},\ldots,i_{k',|E|} \}$. Therefore, the above optimization problem can be written as
\begin{equation*}
\begin{aligned}
&\underset{\substack{\textbf{x} \in \{0,1\}^N,\textbf{y}, \bz}}{\textnormal{maximize}} && \sum_{k=1}^K  \sum_{\ell=1}^{L_k}  r_{i_{k,\ell}} y_{k,\ell}  \lambda_k \\
&\textnormal{subject to}&&\begin{aligned}[t]
    &\sum_{\ell=1}^{L}  y_{k, \ell} = z_{\{i_{k,1},\ldots,i_{k,L} \}} && \forall k \in [K], L \in [L_k]\\
& \sum_{\ell=1}^{L_k} y_{k,\ell} \le 1 && \forall k \in [K]\\
 &x_{i_{k,\ell}} \le \sum_{\ell'=1}^{\ell}  y_{k,\ell'}  && \forall k \in [K], \ell \in [L_k]\\
& 0 \le   y_{k,\ell}  \le x_{i_{k,\ell}} &&  \forall k \in [K], \ell \in [L_k]
\end{aligned}
\end{aligned}
\end{equation*}
We next observe that the nonnegativity of the decision variables $y_{k,\ell}$ and the constraints $\sum_{\ell=1}^{L_k} y_{k,\ell} \le 1$ together imply that $\sum_{\ell=1}^{L} y_{k,\ell} \le 1$ for all $k \in [K]$ and $L \in [L_k]$, and so the above problem is equivalent to 
\begin{equation*}
\begin{aligned}
&\underset{\substack{\textbf{x} \in \{0,1\}^N,\textbf{y}, \bz}}{\textnormal{maximize}} && \sum_{k=1}^K  \sum_{\ell=1}^{L_k}  r_{i_{k,\ell}} y_{k,\ell}  \lambda_k \\
&\textnormal{subject to}&&\begin{aligned}[t]
    &\sum_{\ell=1}^{L}  y_{k, \ell} = z_{\{i_{k,1},\ldots,i_{k,L} \}} && \forall k \in [K], L \in [L_k]\\
& \sum_{\ell=1}^{L} y_{k,\ell} \le 1 && \forall k \in [K], L \in [L_k]\\
 &x_{i_{k,\ell}} \le \sum_{\ell'=1}^{\ell}  y_{k,\ell'}  && \forall k \in [K], \ell \in [L_k]\\
& 0 \le   y_{k,\ell}  \le x_{i_{k,\ell}} &&  \forall k \in [K], \ell \in [L_k]
\end{aligned}
\end{aligned}
\end{equation*}
It follows from substitution that the above problem is equivalent to 
\begin{equation} \label{prob:misic_extra:2a}
\begin{aligned}
&\underset{\substack{\textbf{x} \in \{0,1\}^N,\textbf{y}, \bz}}{\textnormal{maximize}} && \sum_{k=1}^K  \sum_{\ell=1}^{L_k}  r_{i_{k,\ell}} y_{k,\ell}  \lambda_k \\
&\textnormal{subject to}&&\begin{aligned}[t]
    &\sum_{\ell=1}^{L}  y_{k, \ell} = z_{\{i_{k,1},\ldots,i_{k,L} \}} && \forall k \in [K], L \in [L_k]\\
& z_{\left \{i_{k,1},\ldots,i_{k,L} \right\}}\le 1 && \forall k \in [K], L \in [L_k]\\
 &x_{i_{k,\ell}} \le z_{\{i_{k,1},\ldots,i_{k,\ell} \}}  && \forall k \in [K], \ell \in [L_k]\\
& 0 \le   y_{k,\ell}  \le x_{i_{k,\ell}} &&  \forall k \in [K], \ell \in [L_k]
\end{aligned}
\end{aligned}
\end{equation}
We also observe that the constraints 
\begin{align*}
    \sum_{\ell=1}^{L}  y_{k, \ell} = z_{\{i_{k,1},\ldots,i_{k,L} \}} \quad \forall k \in [K], L \in [L_k]
\end{align*}
can be rewritten equivalently as 
\begin{align*}
    z_{\{i_{k,1},\ldots,i_{k,\ell} \}} - z_{\{i_{k,1},\ldots,i_{k,\ell-1} \}} &= y_{k,\ell} \quad \forall k \in [K], \ell \in [L_k]\\
      z_{\emptyset} &= 0 
    \end{align*}
Therefore, \eqref{prob:misic_extra:2a} is equivalent to
\begin{equation} \label{prob:misic_extra:3a}
\begin{aligned}
&\underset{\substack{\textbf{x} \in \{0,1\}^N,\textbf{z}}}{\textnormal{maximize}} && \sum_{k=1}^K  \sum_{\ell=1}^{L_k}  r_{i_{k,\ell}} \left(     z_{\{i_{k,1},\ldots,i_{k,\ell} \}} - z_{\{i_{k,1},\ldots,i_{k,\ell-1} \}} \right)  \lambda_k \\
&\textnormal{subject to}&&\begin{aligned}[t]
& z_{\left \{i_{k,1},\ldots,i_{k,L} \right\}}\le 1 && \forall k \in [K], L \in [L_k]\\
 &x_{i_{k,\ell}} \le z_{\{i_{k,1},\ldots,i_{k,\ell} \}}  && \forall k \in [K], \ell \in [L_k]\\
& 0 \le       z_{\{i_{k,1},\ldots,i_{k,\ell} \}} - z_{\{i_{k,1},\ldots,i_{k,\ell-1} \}}  \le x_{i_{k,\ell}} &&  \forall k \in [K], \ell \in [L_k]\\
&z_\emptyset = 0
\end{aligned}
\end{aligned}
\end{equation}
It follows from the definitions of $\mathscr{E}$ and  $\mathscr{P}$ that the constraints of \eqref{prob:misic_extra:3a} can be written equivalently  as
\begin{equation} \label{prob:misic_extra:4a}
\begin{aligned}
&\underset{\substack{\textbf{x} \in \{0,1\}^N,\textbf{z}}}{\textnormal{maximize}} && \sum_{k=1}^K  \sum_{\ell=1}^{L_k}  r_{i_{k,\ell}} \left(     z_{\{i_{k,1},\ldots,i_{k,\ell} \}} - z_{\{i_{k,1},\ldots,i_{k,\ell-1} \}} \right)  \lambda_k \\
&\textnormal{subject to}&&\begin{aligned}[t]
& z_{E}\le 1 && \forall E \in \mathscr{E}\\
 &x_{i} \le z_{E \cup \{i\}} && \forall (E,i) \in \mathscr{P}\\
& 0 \le       z_{E \cup \{i\}} - z_{E}  \le x_{i} &&  \forall (E,i) \in \mathscr{P}\\
&z_\emptyset = 0
\end{aligned}
\end{aligned}
\end{equation}
    To conclude the proof of Theorem~\ref{Theorem:Formulation}, we reformulate the objective function of \eqref{prob:misic_extra:4a}. Indeed, we observe that the objective function of \eqref{prob:misic_extra:4a} satisfies 
    \begin{align*}
   & \sum_{k=1}^K  \sum_{\ell=1}^{L_k}  r_{i_{k,\ell}} \left(     z_{\{i_{k,1},\ldots,i_{k,\ell} \}} - z_{\{i_{k,1},\ldots,i_{k,\ell-1} \}} \right)  \lambda_k \\
       & =   \sum_{(E,i) \in \mathscr{P}} \left(\sum_{k=1}^K  \sum_{\ell=1}^{L_k}  r_{i_{k,\ell}} \left(     z_{\{i_{k,1},\ldots,i_{k,\ell} \}} - z_{\{i_{k,1},\ldots,i_{k,\ell-1} \}} \right)  \lambda_k \mathbb{I} \left \{E = \{i_{k,1},\ldots,i_{k,\ell-1} \} \text{ and } i = i_{k,\ell} \right \}   \right)\\
             & =   \sum_{(E,i) \in \mathscr{P}} \left(\sum_{k=1}^K  \sum_{\ell=1}^{L_k}  r_{i_{k,\ell}} \left(     z_{E \cup \{i\}} - z_{E} \right)  \lambda_k \mathbb{I} \left \{E = \{i_{k,1},\ldots,i_{k,\ell-1} \} \text{ and } i = i_{k,\ell} \right \}   \right)\\             
             & =   \sum_{(E,i) \in \mathscr{P}}  r_{i_{k,\ell}} \left(     z_{E \cup \{i\}} - z_{E} \right)  \left(\sum_{k=1}^K  \sum_{\ell=1}^{L_k}  \mathbb{I} \left \{E = \{i_{k,1},\ldots,i_{k,\ell-1} \} \text{ and } i = i_{k,\ell} \right \}  \lambda_k  \right)\\
             & =   \sum_{(E,i) \in \mathscr{P}}  r_{i_{k,\ell}} \left(     z_{E \cup \{i\}} - z_{E} \right)  \left(\sum_{k=1}^K   \mathbb{I} \left \{E = \{i_{k,1},\ldots,i_{k,|E|} \} \text{ and } i = i_{k,|E|+1} \right \}  \lambda_k  \right)\\
             & =   \sum_{(E,i) \in \mathscr{P}}  r_{i_{k,\ell}} \left(     z_{E \cup \{i\}} - z_{E} \right)  \lambda_{E,i}
    \end{align*}
    where first equality follows from the definition of $\mathscr{P}$, the second, third, and fourth equalities follow from algebra, and the fifth equality is the definition of $\lambda_{E,i}$. That concludes our proof of Theorem~\ref{Theorem:Formulation}. 
\Halmos \end{proof}

\subsection{Proof of Proposition~\ref{prop:integrality_budget_one}} \label{appx:proof:prop:integrality_budget_one}
\begin{proof}{Proof of Proposition~\ref{prop:integrality_budget_one}.}
We recall that the exclusion set formulation~\eqref{prob:exclusionset} is equivalent to \eqref{prob:misic_extra} (Theorem~\ref{Theorem:Formulation}), and we recall from Section~\ref{sec:exclusionset:step2} that \eqref{prob:misic_extra} is stronger than \eqref{prob:misic_no_extra}. Therefore, to  prove Proposition~\ref{prop:integrality_budget_one}, it suffices to show that \eqref{prob:misic_no_extra} is integral in the case where we have a cardinality constraint that is equal to one. Indeed, consider the formulation~\eqref{prob:misic_no_extra} with this cardinality constraint:
\begin{equation}\label{prob:misic_no_extra:cardinality_one}
\begin{aligned}
\underset{\bx \in \{0,1\}^N,\by}{\textnormal{maximize}} \quad & \sum_{k=1}^K 
 \sum_{\ell=1}^{L_k} r_{i_{k,\ell}} y_{k,\ell}  \lambda_k \\
 \textnormal{subject to} \quad & \sum_{\ell=1}^{L_k} y_{k,\ell} \le 1 && \forall k \in [K] \\
 &x_{i_{k,\ell}} \le \sum_{\ell'=1}^{\ell}  y_{k,\ell'}  && \forall k \in [K], \ell \in [L_k] \\
& 0 \le   y_{k,\ell}  \le x_{i_{k,\ell}} &&  \forall k \in [K], \ell \in [L_k]\\
&\sum_{i=1}^N x_i \le 1
\end{aligned}
\end{equation}
We now consider the linear programming relaxation of \eqref{prob:misic_no_extra:cardinality_one}. Indeed, it follows from the fact that $r_1,\ldots,r_N > 0$ that there exists an optimal solution for the linear programming relaxation that satisfies $\sum_{i=1}^N x_i = 1$. Now consider any $\bx \in [0,1]^N$ that satisfies $\sum_{i=1}^N x_i = 1$. It follows from the constraints $y_{k,\ell} \le x_{i_{k,\ell}}$ and from the fact that $r_1,\ldots,r_N > 0$ that an upper bound on \eqref{prob:misic_no_extra:cardinality_one} is achieved by letting $y_{k,\ell} = x_{i_{k,\ell}}$ for all $k \in [K]$ and $\ell \in [L_k]$, i.e., the optimal objective value of the linear programming relaxation of \eqref{prob:misic_no_extra:cardinality_one} is upper bounded by
\begin{align*}
    \sum_{k=1}^K \sum_{\ell=1}^{L_k} r_{i_{k,\ell}} x_{i_{k,\ell}} \lambda_k
\end{align*}
We also observe that letting $y_{k,\ell} = x_{i_{k,\ell}}$ for each ranking $k \in [K]$ and $\ell \in [L_k]$ is a {feasible solution} for the linear programming relaxation of \eqref{prob:misic_no_extra:cardinality_one}, since 
\begin{align*}
    &\sum_{\ell=1}^{L_k} y_{k,\ell} =  \sum_{\ell=1}^{L_k} x_{i_{k,\ell}} \le 1 && \forall k \in [K]\\
    &x_{i_{k,\ell}} \le \sum_{\ell'=1}^\ell x_{i_{k,\ell'}} = \sum_{\ell'=1}^\ell y_{k,\ell'}  && \forall k \in [K], \ell \in [L_k]\\
    &0 \le y_{k,\ell} = x_{i_{k,\ell}} \le 1 &&\forall k \in [K], \ell \in [L_k]
\end{align*}
Therefore, the optimal objective value of \eqref{prob:misic_no_extra:cardinality_one} is lower bounded by 
\begin{align*}
    \sum_{k=1}^K \sum_{\ell=1}^{L_k} r_{i_{k,\ell}} x_{i_{k,\ell}} \lambda_k
\end{align*}
Since the above reasoning holds for all $\bx \in [0,1]^N$ that satisfy $\sum_{i=1}^N x_i = 1$, we conclude that the linear programming relaxation of \eqref{prob:misic_no_extra:cardinality_one} is equivalent to
\begin{equation}\label{prob:misic_no_extra:cardinality_one_reform}
\begin{aligned}
\underset{\bx \in [0,1]^N: \sum_{i=1}^N x_i \le 1}{\textnormal{maximize}} \quad & \sum_{k=1}^K 
 \sum_{\ell=1}^{L_k} r_{i_{k,\ell}} x_{i_{k,\ell}} \lambda_k 
 \end{aligned}
\end{equation}
Because there exists an optimal solution for \eqref{prob:misic_no_extra:cardinality_one_reform} that is integral, we conclude that the linear programming relaxations of \eqref{prob:exclusionset} and \eqref{prob:misic_no_extra} are equivalent and integral when we have a cardinality constraint of a single product.  \halmos \end{proof}
\section{Proof of Theorem~\ref{thm:reform}}\label{appx:proof_reformulation}
    Our proof of Theorem~\ref{thm:reform} is split into the following two theorems.  \begin{theorem} \label{thm:reform_easy}
      If $\bdelta \in \Delta_k$, then there exists $(\balpha,\bbeta,\gamma) \in \mathcal{D}_k$ that satisfies 
          \begin{align*}
              J_k(\bx,\bdelta) = \gamma + \sum_{\ell=1}^{N+1} \left( \alpha_\ell - \beta_\ell \right) x_{i_{k,\ell}}  \quad \forall \bx \in \mathcal{X}^c
          \end{align*}
\end{theorem}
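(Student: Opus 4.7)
The plan is to exhibit an explicit construction of $(\balpha,\bbeta,\gamma)$ from $\bdelta$ and then verify three things: (i) feasibility $(\balpha,\bbeta,\gamma)\in\mathcal{D}_k$, i.e., nonnegativity of $\balpha,\bbeta$ and the dual inequality at every position $\ell\in[N+1]$; (ii) the objective-matching identity for every $\bx\in\mathcal{X}^c$; and (iii) correct use of the $\bdelta\in\Delta_k$ preconditions ($0\le\delta_1\le\cdots\le\delta_{L_k+1}\le\bar{r}_k$). The construction decomposes the $N+1$ positions into three groups: $\ell\in[L_k]$ (products preferred to the no-purchase option), $\ell=L_k+1$ (the no-purchase option itself), and $\ell\in\{L_k+2,\ldots,N+1\}$ (products less preferred than the no-purchase option).

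For $\ell\in[L_k]$, I would set $\alpha_\ell=\max\{0,r_{i_{k,\ell}}-\delta_\ell\}$ and $\beta_\ell=\delta_{\ell+1}-\delta_\ell$, which are nonnegative by monotonicity of $\bdelta$ and are designed so that $\alpha_\ell-\beta_\ell$ exactly matches the coefficient of $x_{i_{k,\ell}}$ in $J_k(\bx,\bdelta)$. For $\ell\in\{L_k+2,\ldots,N+1\}$, I would set $\alpha_\ell=\beta_\ell=0$; since these coordinates of $\bx$ are free in $\mathcal{X}^c$, the corresponding objective coefficient must vanish. Finally, letting $R_k=\max\{r_{i_{k,\ell}}:\ell\in\{L_k+2,\ldots,N+1\}\}$ (with $R_k=0$ if no such $\ell$ exists), I would set $\gamma=\max\{\delta_{L_k+1},R_k\}$, $\alpha_{L_k+1}=0$, and $\beta_{L_k+1}=\gamma-\delta_{L_k+1}\ge 0$. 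The objective identity then follows by direct expansion: the $\ell\in[L_k]$ terms match by design, the $\ell>L_k+1$ terms contribute zero, and the constant part evaluates to $\gamma+(\alpha_{L_k+1}-\beta_{L_k+1})x_{N+1}=\gamma-\beta_{L_k+1}=\delta_{L_k+1}$ because $x_{N+1}=1$.

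Verification of the dual inequalities is then a case-by-case calculation driven by the telescoping identity $\sum_{\ell'=\ell}^{L_k}(\delta_{\ell'+1}-\delta_{\ell'})=\delta_{L_k+1}-\delta_\ell$: at positions $\ell\in[L_k]$, the left-hand side simplifies to $\max\{\delta_\ell,r_{i_{k,\ell}}\}\ge r_{i_{k,\ell}}$; at position $L_k+1$, it reduces to $\delta_{L_k+1}\ge 0$, which holds since $\bdelta\in\Delta_k$; and at positions $\ell>L_k+1$, it reduces to $\gamma\ge r_{i_{k,\ell}}$, which holds by our choice of $\gamma\ge R_k$.

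The main obstacle, and the key structural observation, lies in the treatment of positions $\ell>L_k+1$. Because the revenues at these positions are not bounded by $\bar{r}_k$, the naive choice $\gamma=\delta_{L_k+1}$ can fail the dual constraints there. The remedy is to inflate $\gamma$ by $\gamma-\delta_{L_k+1}$ and absorb the excess into $\beta_{L_k+1}$; thanks to $x_{N+1}=1$, this increase is invisible to the objective, and since the additional $\beta_{L_k+1}$ appearing in the dual constraints at positions $\ell\in[L_k]$ is exactly cancelled by the same additional $\gamma$, feasibility at those positions is preserved as well. This cancellation is what makes the construction feasible without altering the objective identity.
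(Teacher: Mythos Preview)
Your proof is correct and follows essentially the same construction as the paper's: set $\alpha_\ell=\max\{0,r_{i_{k,\ell}}-\delta_\ell\}$ and $\beta_\ell=\delta_{\ell+1}-\delta_\ell$ on $[L_k]$, zero out positions beyond $L_k+1$, and absorb the remaining slack into $\gamma$ and $\beta_{L_k+1}$ so that the excess cancels via $x_{N+1}=1$. The only cosmetic difference is that the paper takes $\gamma=\bar{r}^*\triangleq\max_{i\in[N+1]}r_i$ uniformly, whereas you choose the tighter $\gamma=\max\{\delta_{L_k+1},R_k\}$; both choices satisfy $\gamma\ge r_{i_{k,\ell}}$ at positions $\ell>L_k+1$ and leave the telescoping argument at positions $\ell\le L_k+1$ untouched.
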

\begin{theorem}\label{thm:reform_hard}
            If $(\balpha, \bbeta, \gamma) \in \mathcal{D}_k$, then there exists  $\bdelta \in \Delta_k$  that satisfies 
          \begin{align*}
              J_k(\bx,\bdelta) \le \gamma + \sum_{\ell=1}^{N+1} \left( \alpha_\ell - \beta_\ell \right) x_{i_{k,\ell}}  \quad \forall \bx \in \mathcal{X}^c
          \end{align*} 
          \end{theorem}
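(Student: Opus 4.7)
The plan is to construct, for each dual-feasible $(\balpha, \bbeta, \gamma) \in \mathcal{D}_k$, an explicit $\bdelta \in \Delta_k$ whose induced affine function of $\bx$ lies pointwise below $\gamma + \sum_{\ell=1}^{N+1}(\alpha_\ell - \beta_\ell)x_{i_{k,\ell}}$ on $\mathcal{X}^c$. Define the cumulative ``dual potential'' $\tilde{\tau}_\ell \triangleq \gamma - \sum_{\ell'=\ell}^{N+1}\beta_{\ell'}$ for $\ell \in \{1,\ldots,N+2\}$; this sequence is nondecreasing in $\ell$ because $\bbeta \ge \bzero$ and satisfies $\tilde{\tau}_{\ell+1} - \tilde{\tau}_\ell = \beta_\ell$ with $\tilde{\tau}_{N+2} = \gamma$. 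The natural candidate, suggested by the box constraints of~\eqref{prob:dual_reform}, is the clipping of this sequence into $[0,\bar{r}_k]$:
\[
\delta_\ell \,:=\, \max\!\bigl\{0,\; \min\{\bar{r}_k,\, \tilde{\tau}_\ell\}\bigr\} \qquad \text{for } \ell \in \{1,\ldots,L_k+1\}.
\]
Monotonicity of clipping combined with monotonicity of $\tilde{\tau}$ immediately gives $0 \le \delta_1 \le \cdots \le \delta_{L_k+1} \le \bar{r}_k$, so $\bdelta \in \Delta_k$. Note that this mapping is piecewise linear in $(\balpha, \bbeta, \gamma)$, which is precisely the source of the case analysis the authors warn about.

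The substance of the proof is to verify $J_k(\bx, \bdelta) \le \gamma + \sum_{\ell=1}^{N+1}(\alpha_\ell - \beta_\ell)x_{i_{k,\ell}}$ for every $\bx \in \mathcal{X}^c$. Since $\bx \ge \bzero$ and the right-hand side is nondecreasing in each $\alpha_\ell$, I first reduce to the saturated case $\alpha_\ell = \max\{0, r_{i_{k,\ell}} - \tilde{\tau}_\ell\}$, under which the right-hand side depends only on $(\gamma, \bbeta)$. Both sides are then affine in $\bx$, so the inequality over $\mathcal{X}^c$ reduces to comparing constant terms and coefficients of each $x_{i_{k,\ell}}$, subject to $x_{N+1} = x_{i_{k,L_k+1}} = 1$.

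At this point a naive coefficient comparison would split into three regions per index depending on whether $\tilde{\tau}_\ell$ lies below $0$, inside $[0,\bar{r}_k]$, or above $\bar{r}_k$. To sidestep this proliferation---mirroring the strategy hinted at in Proposition~\ref{prop:reform_hard:obj}---I would rely on two region-free inequalities for the clipping operator: (i) the 1-Lipschitz property $\delta_{\ell+1} - \delta_\ell \le \beta_\ell$, and (ii) for any $r \in [0,\bar{r}_k]$, the domination $\max\{0, r - \delta_\ell\} \le \max\{0, r - \tilde{\tau}_\ell\}$ of the clipped positive part by the unclipped one. Because $r_{i_{k,\ell}} \in [0,\bar{r}_k]$ for every $\ell \le L_k$ by the definition of $\bar{r}_k$, both inequalities apply uniformly across all such $\ell$. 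They do not yield termwise dominance of left-hand coefficients by right-hand coefficients on their own---the slack $\beta_\ell - (\delta_{\ell+1} - \delta_\ell) \ge 0$ produced by clipping has the ``wrong'' sign at the coefficient level---but telescoping it across indices, together with the $x_{N+1} = 1$ boundary contribution, recovers an aggregate inequality that can be matched against the constant terms.

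The hard part will be closing this aggregation cleanly, in particular handling the tail indices $\ell \in \{L_k+2,\ldots,N+1\}$. These tail indices contribute $(\alpha_\ell - \beta_\ell)x_{i_{k,\ell}}$ to the right-hand side but nothing to $J_k$, and after saturation $\alpha_\ell$ can be large when $\tilde{\tau}_\ell$ is small. The plan is to absorb the tail using the slack $\gamma - \delta_{L_k+1} \ge 0$ accumulated by clipping at the top of the range---nonnegative precisely because $\bar{r}_k = \max \mathcal{R}_k$ is chosen to be the largest revenue among the products not less preferred than the no-purchase option---together with the telescoping of $\bbeta$ through the tail, so that every ``excess'' right-hand-side mass beyond what $J_k$ captures remains nonnegative uniformly in $\bx$. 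Verifying that this aggregation closes without reverting to a per-index case split on $\tilde{\tau}_\ell$ is the technical crux of the argument and the reason the proof is deferred to the dedicated Proposition~\ref{prop:reform_hard:obj} in the appendix.
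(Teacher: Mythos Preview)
Your construction of $\bdelta$ via clipping $\tilde{\tau}_\ell$ into $[0,\bar{r}_k]$ is exactly the paper's construction~\eqref{line:reform_hard:defn}, and your feasibility argument and saturation step (replacing $\alpha_\ell$ by $\max\{0,r_{i_{k,\ell}}-\tilde{\tau}_\ell\}$) match the paper's line~\eqref{line:last_call} verbatim. The overall architecture---split the range $\{1,\ldots,L_k\}$ from the tail $\{L_k+1,\ldots,N+1\}$, telescope, and reconcile the two pieces against the constant terms---is precisely the decomposition into Claims~\ref{claim:back_from_travel_1} and~\ref{claim:back_from_travel_2}.

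One small calibration: your hope of avoiding a per-index case split via the region-free inequalities (i) and (ii) does not quite pan out in the paper's execution. Those two inequalities are true and are implicitly used, but by themselves they do not deliver the telescoping increment. What the paper actually proves term-by-term in Claim~\ref{claim:back_from_travel_1} is the sharper bound
\[
\bigl[\max\{0,r_{i_{k,\ell}}-\tilde{\tau}_\ell\}-\beta_\ell\bigr]-\bigl[\max\{0,r_{i_{k,\ell}}-\delta_\ell\}-(\delta_{\ell+1}-\delta_\ell)\bigr]\;\ge\;-\bigl(\max\{\tilde{\tau}_{\ell+1}-\bar{r}_k,0\}-\max\{\tilde{\tau}_\ell-\bar{r}_k,0\}\bigr),
\]
and establishing this does require splitting on whether $\tilde{\tau}_\ell$ lies above, inside, or below $[0,\bar{r}_k]$ (see the chain leading to~\eqref{line:back_from_travel}). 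Once this per-term bound is in hand, the right side telescopes cleanly against $x_{i_{k,\ell}}\le 1$, and the tail piece (Claim~\ref{claim:back_from_travel_2}) supplies exactly the compensating $\max\{\tilde{\tau}_{L_k+1}-\bar{r}_k,0\}$. So the ``technical crux'' you flagged is resolved by a tidy three-case argument rather than being sidestepped entirely.
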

The proofs of Theorems~\ref{thm:reform_easy} and \ref{thm:reform_hard} are  found in Appendices~\ref{appx:reform_easy} and \ref{appx:reform_hard}. For the sake of completeness, we show in Appendix~\ref{appx:reform_bringing_together} that Theorem~\ref{thm:reform}  follows from Theorems~\ref{thm:reform_easy} and \ref{thm:reform_hard}. \looseness=-1

\subsection{Proof of Theorem~\ref{thm:reform_easy}} \label{appx:reform_easy} 
Let  $\bar{r}^* \triangleq \max_{i \in [N+1]} r_i$ denote the maximum revenue among all products.
Our proof of Theorem~\ref{thm:reform_easy} is constructive. Indeed, consider any  $\bdelta \in \Delta_k$, and define $(\balpha,\bbeta,\gamma)$ as
\begin{subequations} \label{line:reform_easy:defn}
\begin{align}
   \alpha_\ell&= \begin{cases}
        \max \left \{0, r_{i_{k,\ell}}- \delta_\ell \right \}, &\text{if } \ell \in\{1,\ldots,L_k+1\},\\
      0,&\text{if } \ell \in\{L_k+2,\ldots,N+1\}
    \end{cases}\\
    \beta_\ell &= \begin{cases}
        \delta_{\ell+1} - \delta_\ell,&\text{if } \ell \in \{1,\ldots,L_k\},\\
        \bar{r}^* - \delta_{L_k+1},&\text{if } \ell = L_k+1,\\
        0,&\text{if } \ell \in \{L_k+2,\ldots,N+1\}
    \end{cases}\\
    \gamma &= \bar{r}^*
\end{align}
\end{subequations}
In what follows, we complete the proof of Theorem~\ref{thm:reform_easy} by  showing that the solution $(\balpha,\bbeta,\gamma)$ constructed in \eqref{line:reform_easy:defn} satisfies $(\balpha,\bbeta,\gamma) \in\mathcal{D}_k$  as well as  satisfies $            J_k(\bx,\bdelta) = \gamma + \sum_{\ell=1}^{N+1} \left( \alpha_\ell - \beta_\ell \right) x_{i_{k,\ell}}$ for all $\bx \in \mathcal{X}^c$. We prove these two results in the following Propositions~\ref{prop:reform_easy:feas} and \ref{prop:reform_easy:obj}.   
\begin{proposition}\label{prop:reform_easy:feas}
$(\balpha,\bbeta,\gamma) \in \mathcal{D}_k$. 
\end{proposition}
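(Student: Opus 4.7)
The plan is to directly verify that the constructed triple $(\balpha, \bbeta, \gamma)$ satisfies each of the three types of constraints defining $\mathcal{D}_k$: nonnegativity of the $\alpha_\ell$'s, nonnegativity of the $\beta_\ell$'s, and the inequality $\gamma + \alpha_\ell - \sum_{\ell'=\ell}^{N+1} \beta_{\ell'} \ge r_{i_{k,\ell}}$ for every $\ell \in [N+1]$. Nonnegativity of the $\alpha_\ell$'s is immediate from the definition in \eqref{line:reform_easy:defn}, since each $\alpha_\ell$ is either $\max\{0, r_{i_{k,\ell}} - \delta_\ell\}$ or $0$. For the $\beta_\ell$'s, the key point is that $\bdelta \in \Delta_k$ means $0 \le \delta_1 \le \cdots \le \delta_{L_k+1} \le \bar{r}_k \le \bar{r}^*$, so $\beta_\ell = \delta_{\ell+1} - \delta_\ell \ge 0$ for $\ell \in \{1,\ldots,L_k\}$, and $\beta_{L_k+1} = \bar{r}^* - \delta_{L_k+1} \ge 0$, while the remaining $\beta_\ell$'s are zero.

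The main inequality is handled by computing the tail sum $\sum_{\ell'=\ell}^{N+1} \beta_{\ell'}$ via telescoping, in three cases. For $\ell \in \{1,\ldots,L_k\}$, the sum telescopes to $(\delta_{L_k+1} - \delta_\ell) + (\bar{r}^* - \delta_{L_k+1}) = \bar{r}^* - \delta_\ell$, so the left-hand side of the constraint equals
\[
\bar{r}^* + \max\{0, r_{i_{k,\ell}} - \delta_\ell\} - (\bar{r}^* - \delta_\ell) = \max\{\delta_\ell, r_{i_{k,\ell}}\} \ge r_{i_{k,\ell}}.
\]
For $\ell = L_k+1$, the tail sum is $\bar{r}^* - \delta_{L_k+1}$ and the left-hand side simplifies to $\max\{0, r_{i_{k,L_k+1}} - \delta_{L_k+1}\} + \delta_{L_k+1} = \delta_{L_k+1} \ge 0 = r_{i_{k,L_k+1}}$, using that $i_{k,L_k+1} = N+1$ so $r_{i_{k,L_k+1}} = 0$, together with $\delta_{L_k+1} \ge 0$. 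For $\ell \in \{L_k+2,\ldots,N+1\}$, the tail sum is $0$, so the left-hand side is just $\gamma = \bar{r}^* \ge r_{i_{k,\ell}}$ by definition of $\bar{r}^*$.

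There is no genuine obstacle here: the proof is a direct verification and the only substantive ingredient is the telescoping identity combined with the chain structure $0 \le \delta_1 \le \cdots \le \delta_{L_k+1} \le \bar{r}_k \le \bar{r}^*$ coming from $\bdelta \in \Delta_k$. The use of $\bar{r}^*$ (rather than $\bar{r}_k$) in $\gamma$ and in $\beta_{L_k+1}$ is precisely what ensures the constraint for indices $\ell \ge L_k+2$, where the products may have revenues exceeding $\bar{r}_k$ but never exceeding $\bar{r}^*$.
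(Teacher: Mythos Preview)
Your proof is correct and follows essentially the same approach as the paper: direct verification of the three constraint types defining $\mathcal{D}_k$, with the main inequality established via a telescoping computation of the tail sum $\sum_{\ell'=\ell}^{N+1}\beta_{\ell'}$. The only cosmetic difference is that the paper treats $\ell \in \{1,\ldots,L_k+1\}$ in a single case (obtaining $\delta_\ell + \max\{0, r_{i_{k,\ell}} - \delta_\ell\} \ge r_{i_{k,\ell}}$ uniformly), whereas you split off $\ell = L_k+1$ separately; both arrive at the same conclusion.
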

\begin{proof}{Proof of Proposition~\ref{prop:reform_easy:feas}.}
Our proof consists of showing that  $(\balpha,\bbeta,\gamma)$ satisfies the constraints of the linear program~\eqref{prob:dual}. Indeed, for all $\ell \in \{1,\ldots,L_k +1\}$, we have
\begin{align}
    \gamma + \alpha_\ell - \sum_{\ell'=\ell}^{N+1} \beta_{\ell'} &=   \gamma + \alpha_\ell - \sum_{\ell'=\ell}^{L_k} \beta_{\ell'} - \beta_{L_k+1} -  \sum_{\ell'=L_k+2}^{N+1} \beta_{\ell'} \notag \\
    &=   \bar{r}^* + \max \left \{ 0, r_{i_{k,\ell}} - \delta_\ell \right \} - \sum_{\ell'=\ell}^{L_k} \left( \delta_{\ell'+1} - \delta_{\ell'} \right) - \left(\bar{r}^* - \delta_{L_k+1} \right)  -  \sum_{\ell'=L_k+2}^{N+1} 0 \notag \\
      &=   \delta_\ell + \max \left \{ 0, r_{i_{k,\ell}} - \delta_\ell \right \} \notag \\
      &\ge r_{i_{k,\ell}}\label{line:wolves}
\end{align}
The first equality follows from algebra. The second equality follows from our  construction of $(\balpha,\bbeta,\gamma)$. The third equality from canceling terms. The  inequality follows from algebra.   Moreover, for $\ell \in \{L_k+2,\ldots,N+1\}$, we have
\begin{align}
    \gamma + \alpha_\ell - \sum_{\ell'=\ell}^{N+1} \beta_{\ell'} 
    &=   \bar{r}^* + 0  -   \sum_{\ell'=\ell}^{N+1} 0 \ge r_{i_{k,\ell}} \label{line:wolves2}
\end{align}
The first equality follows from our construction of $(\balpha,\bbeta,\gamma)$, and the inequality follows from the definition of $\bar{r}^*$. 
Combining \eqref{line:wolves} and \eqref{line:wolves2}, we have shown that 
\begin{align*}
     \gamma + \alpha_\ell - \sum_{\ell'=\ell}^{N+1} \beta_{\ell'} \ge r_{i_{k,\ell}} \quad \forall \ell \in \{1,\ldots,N+1\}
\end{align*}
It is straightforward to show that $\alpha_\ell\ge 0$ for all $\ell \in \{1,\ldots,N+1\}$. Moreover, for all $\ell \in \{1,\ldots,N+1\}$,
\begin{align*}
    \beta_\ell &\ge \begin{cases}
        \delta_{\ell} - \delta_\ell,&\text{if } \ell \in \{1,\ldots,L_k\},\\
        \bar{r}^* - \delta_{L_k+1},&\text{if } \ell = L_k+1,\\
        0,&\text{if } \ell \in \{L_k+2,\ldots,N+1\}
    \end{cases}\\
    &\ge 0
\end{align*}
where the first inequality follows from the fact that $\bdelta \in \Delta_k$, and the second inequality follows from algebra and from the definition of $\bar{r}^*$. We have thus shown that $(\balpha,\bbeta,\gamma)$ satisfies the constraints of the linear program~\eqref{prob:dual}, which implies that $(\balpha,\bbeta,\gamma) \in \mathcal{D}_k$. Our proof of Proposition~\ref{prop:reform_easy:feas} is thus complete. 
\halmos \end{proof}
\begin{proposition}\label{prop:reform_easy:obj}
$J_k(\bx,\bdelta) = \gamma + \sum_{\ell=1}^{N+1} \left( \alpha_\ell - \beta_\ell \right) x_{i_{k,\ell}}$ for all $\bx \in \mathcal{X}^c$
\end{proposition}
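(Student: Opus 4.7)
{Proof Proposal for Proposition~\ref{prop:reform_easy:obj}.}
The plan is a direct verification by substituting the definitions in \eqref{line:reform_easy:defn} into the right-hand side and splitting the sum over $\ell$ into three ranges that match the piecewise definitions of $\balpha$ and $\bbeta$: the ``interior'' range $\ell \in \{1,\ldots,L_k\}$, the ``boundary'' index $\ell = L_k+1$, and the ``tail'' range $\ell \in \{L_k+2,\ldots,N+1\}$.

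First, I would fix any $\bx \in \mathcal{X}^c$ and observe two facts that govern the boundary and tail terms. The boundary index corresponds to the no-purchase option: since $i_{k,L_k+1} = N+1$, we have $x_{i_{k,L_k+1}} = x_{N+1} = 1$ (by definition of $\mathcal{X}^c$) and $r_{i_{k,L_k+1}} = r_{N+1} = 0$. Combined with $\delta_{L_k+1} \ge 0$ (which holds because $\bdelta \in \Delta_k$), this gives $\alpha_{L_k+1} = \max\{0, -\delta_{L_k+1}\} = 0$, so the boundary contribution reduces to $(\alpha_{L_k+1}-\beta_{L_k+1})\,x_{i_{k,L_k+1}} = -(\bar{r}^* - \delta_{L_k+1})$. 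For the tail range, $\alpha_\ell = \beta_\ell = 0$ by construction, so its contribution is $0$.

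Next, I would plug in and compute
\begin{align*}
\gamma + \sum_{\ell=1}^{N+1}(\alpha_\ell - \beta_\ell) x_{i_{k,\ell}}
&= \bar{r}^* + \sum_{\ell=1}^{L_k}\left(\max\{0, r_{i_{k,\ell}} - \delta_\ell\} - (\delta_{\ell+1}-\delta_\ell)\right) x_{i_{k,\ell}} \\
&\quad{} - (\bar{r}^* - \delta_{L_k+1}) + 0 \\
&= \delta_{L_k+1} + \sum_{\ell=1}^{L_k}\left(\max\{0, r_{i_{k,\ell}} - \delta_\ell\} - (\delta_{\ell+1}-\delta_\ell)\right) x_{i_{k,\ell}},
\end{align*}
where the $\bar{r}^*$ terms cancel. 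The right-hand side is precisely $J_k(\bx,\bdelta)$ by the definition in \eqref{prob:dual_reform}, completing the verification.

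There is no real obstacle here: the identity reduces to bookkeeping once the sum is split correctly at $\ell = L_k+1$. The only points requiring care are (i) using $x_{N+1}=1$ to evaluate the boundary term (which is why the identity only needs to hold on $\mathcal{X}^c$, not on all of $[0,1]^{N+1}$), and (ii) using $\delta_{L_k+1}\ge 0$ together with $r_{i_{k,L_k+1}}=0$ to conclude $\alpha_{L_k+1}=0$, so that the $\bar{r}^*$ absorbed into $\beta_{L_k+1}$ telescopes cleanly against $\gamma$.
\Halmos
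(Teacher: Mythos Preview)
Your proof is correct and takes essentially the same approach as the paper: splitting the sum at $\ell = L_k+1$, evaluating the boundary term using $x_{N+1}=1$, $r_{N+1}=0$, and $\delta_{L_k+1}\ge 0$, and noting the tail vanishes. The paper's argument is line-by-line the same, just presented with the three ranges treated in separate displayed computations before being combined.
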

\begin{proof}{Proof of Proposition~\ref{prop:reform_easy:obj}.}
Consider any $\bx \in \mathcal{X}^c$. For each $\ell \in \{1,\ldots,L_k\}$, it follows from our construction of $(\balpha,\bbeta,\gamma)$ that
\begin{align*}
    \left( \alpha_\ell - \beta_\ell \right) x_{i_{k,\ell}}&= \left( \max \left \{0, r_{i_{k,\ell}} - \delta_\ell \right \} - \left( \delta_{\ell+1} - \delta_\ell \right) \right) x_{i_{k,\ell}} \end{align*}
For the case of $\ell = L_k+1$, we have 
\begin{align*}
    \left( \alpha_\ell - \beta_\ell \right)x_{i_{k,\ell}} &=   \left( \max \left \{0, r_{i_{k,L_k+1}} - \delta_{L_k+1} \right \} - \left(\bar{r}^* - \delta_{L_k+1}  \right)  \right)x_{i_{k,L_k+1}} \\
    &=   \left( \max \left \{0, r_{N+1} - \delta_{L_k+1} \right \} - \left(\bar{r}^* - \delta_{L_k+1}  \right)  \right)x_{N+1} \\
 &=   \left( \max \left \{0, 0 - \delta_{L_k+1} \right \} - \left(\bar{r}^* - \delta_{L_k+1}  \right)  \right)1 \\
 &=   \delta_{L_k+1} - \bar{r}^* 
    \end{align*}
The first equality follows from our construction of $(\balpha,\bbeta,\gamma)$. The second equality follows from algebra and from the fact that $i_{k,L_k+1} = N+1$. The third equality follows from the fact that $r_{N+1} = 0$ and from the fact that $\bx \in \mathcal{X}^c$ (which implies that $x_{N+1} = 1$). The fourth equality follows from the fact that $\bdelta \in \Delta_k$ (which implies that $\delta_{L_k+1} \ge 0$). For each  $\ell \in \left \{ L_k+2,\ldots,N+2\right \}$, it follows from our construction of $(\balpha,\bbeta,\gamma)$ that 
\begin{align*}
    \left( \alpha_\ell - \beta_\ell \right)x_{i_{k,\ell}} &=   \left( 0 - 0 \right)x_{i_{k,\ell}} = 0
    \end{align*}
We have thus shown that 
\begin{align*}
   \gamma + \sum_{\ell=1}^{N+1} \left( \alpha_\ell - \beta_\ell \right) x_{i_{k,\ell}}    &=    \gamma + \sum_{\ell=1}^{L_k} \left( \alpha_\ell - \beta_\ell \right) x_{i_{k,\ell}}  + \left( \alpha_{L_k+1} - \beta_{L_k+1} \right) x_{i_{k,L_k+1}} + \sum_{\ell=L_k+2}^{N+1} \left( \alpha_\ell - \beta_\ell \right) x_{i_{k,\ell}}    \\
   &= \bar{r}^* + \sum_{\ell=1}^{L_k} \left( \max \left \{0, r_{i_{k,\ell}} - \delta_\ell \right \} - \left( \delta_{\ell+1} - \delta_\ell \right)\right)  x_{i_{k,\ell}} + \left( \delta_{L_k+1} - \bar{r}^* \right) + \sum_{\ell=L_k+2}^{N+1} 0 \\
  &= \delta_{L_k+1} + \sum_{\ell=1}^{L_k} \left( \max \left \{0, r_{i_{k,\ell}} - \delta_\ell \right \} - \left( \delta_{\ell+1} - \delta_\ell \right)\right)  x_{i_{k,\ell}} 
\end{align*}
which completes our proof of Proposition~\ref{prop:reform_easy:obj}. 
\halmos \end{proof}

\subsection{Proof of Theorem~\ref{thm:reform_hard}} \label{appx:reform_hard} 

Our proof of Theorem~\ref{thm:reform_hard} is constructive. Indeed, consider any  $(\balpha,\bbeta,\gamma) \in \mathcal{D}_k$, and define $\bdelta$ as
\begin{align}
    \delta_\ell &= \max \left \{ \min \left \{ \gamma - \sum_{\ell'=\ell}^{N+1} \beta_{\ell'}, \bar{r}_k \right \}, 0 \right \}   \quad \forall \ell \in \{1,\ldots,L_k+1\}\label{line:reform_hard:defn}
\end{align}
In what follows, we complete the proof of Theorem~\ref{thm:reform_hard} by  showing that the solution $\bdelta$ constructed in \eqref{line:reform_hard:defn} satisfies $\bdelta \in\Delta_k$  as well as  satisfies $            J_k(\bx,\bdelta) \le \gamma + \sum_{\ell=1}^{N+1} \left( \alpha_\ell - \beta_\ell \right) x_{i_{k,\ell}}$ for all $\bx \in \mathcal{X}^c$. We prove these two results in the following Propositions~\ref{prop:reform_hard:feas} and \ref{prop:reform_hard:obj}.   
\begin{proposition} \label{prop:reform_hard:feas}
    $\bdelta \in \Delta_k$. 
\end{proposition}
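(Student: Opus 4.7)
The plan is to verify the three defining requirements of membership in $\Delta_k$, namely nonnegativity of $\delta_1$, the chain (monotonicity) inequalities $\delta_1 \le \cdots \le \delta_{L_k+1}$, and the upper bound $\delta_{L_k+1} \le \bar{r}_k$. The explicit formula \eqref{line:reform_hard:defn} defines each $\delta_\ell$ as the clip of the quantity $\gamma - \sum_{\ell'=\ell}^{N+1}\beta_{\ell'}$ to the interval $[0,\bar{r}_k]$, so the first and third conditions fall out immediately: $\delta_1 \ge 0$ by the outer $\max$ with $0$, and $\delta_{L_k+1} \le \bar{r}_k$ by the inner $\min$ with $\bar{r}_k$.

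The remaining step is monotonicity. For this I would observe that since $(\balpha,\bbeta,\gamma) \in \mathcal{D}_k$, the constraints of \eqref{prob:dual} imply $\beta_\ell \ge 0$ for every $\ell$. Consequently, for each $\ell \in \{1,\ldots,L_k\}$,
\[
\gamma - \sum_{\ell'=\ell}^{N+1}\beta_{\ell'} \;=\; \left(\gamma - \sum_{\ell'=\ell+1}^{N+1}\beta_{\ell'}\right) - \beta_\ell \;\le\; \gamma - \sum_{\ell'=\ell+1}^{N+1}\beta_{\ell'}.
\]
The map $t \mapsto \max\{\min\{t,\bar{r}_k\},0\}$ is monotone non-decreasing on $\R$ (it is the pointwise composition of two monotone operations), so applying it to both sides of the above inequality yields $\delta_\ell \le \delta_{\ell+1}$, which is exactly the chain condition.

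Combining the three bullet points gives $0 \le \delta_1 \le \cdots \le \delta_{L_k+1} \le \bar{r}_k$, hence $\bdelta \in \Delta_k$. There is no real obstacle here: the entire argument is a direct consequence of the dual nonnegativity constraints $\beta_\ell \ge 0$ together with the fact that \eqref{line:reform_hard:defn} is defined by a monotone clipping of a partial-sum expression. The substantive work of Theorem~\ref{thm:reform_hard} will instead live in Proposition~\ref{prop:reform_hard:obj}, where the piecewise structure of the clipping interacts nontrivially with the objective $J_k(\bx,\bdelta)$; Proposition~\ref{prop:reform_hard:feas} itself is a short, structural verification.
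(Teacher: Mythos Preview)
Your proof is correct and follows essentially the same approach as the paper: both argue that the bounds $0 \le \delta_1$ and $\delta_{L_k+1} \le \bar{r}_k$ are immediate from the clipping in \eqref{line:reform_hard:defn}, and that monotonicity follows from $\beta_\ell \ge 0$ together with the monotonicity of the map $t \mapsto \max\{\min\{t,\bar{r}_k\},0\}$. Your presentation is slightly cleaner in that you explicitly invoke the monotonicity of the clipping map rather than writing out the composed expression, but the substance is identical.
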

\begin{proof}{Proof of Proposition~\ref{prop:reform_hard:feas}.}
It follows immediately from \eqref{line:reform_hard:defn}  that $\delta_1 \ge 0$ and that $\delta_{L_k+1} \le \bar{r}_k$. Moreover, for each $\ell \in \{1,\ldots,L_k\}$, it follows from \eqref{line:reform_hard:defn} that
\begin{align*}
    \delta_{\ell+1} &= \max \left \{ \min \left \{ \gamma  - \sum_{\ell'=\ell}^{N+1} \beta_{\ell'} + \beta_{\ell+1}, \bar{r}_k \right \}, 0 \right \}  \ge  \max \left \{ \min \left \{ \gamma  - \sum_{\ell'=\ell}^{N+1} \beta_{\ell'}, \bar{r}_k \right \}, 0 \right \}   = \delta_{\ell}
\end{align*}
where the inequality follows from the fact that $(\balpha,\bbeta,\gamma) \in \mathcal{D}_k$, which implies that $\beta_{\ell+1} \ge 0$. We have thus shown that $\bdelta \in \Delta_k$.
\halmos \end{proof}
\begin{proposition} \label{prop:reform_hard:obj}
      $J_k(\bx,\bdelta) \le \gamma + \sum_{\ell=1}^{N+1} \left( \alpha_\ell - \beta_\ell \right) x_{i_{k,\ell}}$ for all  $\bx \in \mathcal{X}^c$. 
\end{proposition}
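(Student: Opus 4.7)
The plan is to establish the pointwise inequality $F(\bx) \ge J_k(\bx, \bdelta)$ on $\mathcal{X}^c$, where $F(\bx) \triangleq \gamma + \sum_{\ell=1}^{N+1}(\alpha_\ell - \beta_\ell)\,x_{i_{k,\ell}}$, by analyzing the difference $F(\bx) - J_k(\bx, \bdelta)$ directly. I will introduce the unclipped shifts $\tilde{\delta}_\ell \triangleq \gamma - \sum_{\ell'=\ell}^{N+1}\beta_{\ell'}$, so that dual feasibility $(\balpha, \bbeta, \gamma) \in \mathcal{D}_k$ translates into $\alpha_\ell \ge \max\{0, r_{i_{k,\ell}} - \tilde{\delta}_\ell\}$ together with $\tilde{\delta}_1 \le \cdots \le \tilde{\delta}_{N+1}$. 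By construction, $\delta_\ell$ in \eqref{line:reform_hard:defn} is the entrywise projection of $\tilde{\delta}_\ell$ onto $[0, \bar{r}_k]$, so the discrepancies $\epsilon_\ell \triangleq \tilde{\delta}_\ell - \delta_\ell$ form a nondecreasing sequence and the consecutive gaps satisfy $0 \le \delta_{\ell+1} - \delta_\ell \le \beta_\ell$ by nonexpansiveness.

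Substituting $\gamma = \tilde{\delta}_{L_k+1} + \sum_{\ell' \ge L_k+1}\beta_{\ell'}$, collecting terms by the coefficient of $x_{i_{k,\ell}}$, and using $x_{i_{k,L_k+1}} = x_{N+1} = 1$, the difference rewrites as
\begin{equation*}
F(\bx) - J_k(\bx, \bdelta) = (\alpha_{L_k+1} + \epsilon_{L_k+1}) + \sum_{\ell=L_k+2}^{N+1}\bigl[\alpha_\ell x_{i_{k,\ell}} + \beta_\ell(1 - x_{i_{k,\ell}})\bigr] + \sum_{\ell=1}^{L_k}(A_\ell + B_\ell)\,x_{i_{k,\ell}},
\end{equation*}
with $A_\ell \triangleq \alpha_\ell - \max\{0, r_{i_{k,\ell}} - \delta_\ell\}$ and $B_\ell \triangleq (\delta_{\ell+1} - \delta_\ell) - \beta_\ell$. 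The tail sum is immediately nonnegative because $\alpha, \beta \ge 0$ and each $x_{i_{k,\ell}} \in [0, 1]$. The boundary term $\alpha_{L_k+1} + \epsilon_{L_k+1}$ is also nonnegative by a short case check on $\tilde{\delta}_{L_k+1}$ against the thresholds $0$ and $\bar{r}_k$, using the dual constraint at $\ell = L_k+1$, which reads $\alpha_{L_k+1} \ge \max\{0, -\tilde{\delta}_{L_k+1}\}$ because $r_{i_{k,L_k+1}} = 0$.

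The main obstacle is controlling the mid-sum: a case analysis on $\delta_\ell$ yields $A_\ell \ge 0$ always, with the strengthened bound $A_\ell \ge -\tilde{\delta}_\ell$ in the lower-clipping regime, while nonexpansiveness forces $B_\ell \le 0$, so $A_\ell + B_\ell$ can become negative whenever upper clipping is active. Since $F(\bx) - J_k(\bx, \bdelta)$ is affine in $\bx$, its minimum over $\mathcal{X}^c$ is attained at a vertex in $\mathcal{X}$, which reduces verification to a termwise worst-case bound. The plan is then to partition $\{1, \ldots, L_k+1\}$ into the three clipping regimes---which, by monotonicity of $\tilde{\delta}$, appear left-to-right as lower, unclipped, upper---and telescope: interior lower indices and the lower-to-unclipped transition contribute nonnegatively via $A_\ell \ge -\tilde{\delta}_\ell$; interior unclipped indices contribute zero; the unclipped-to-upper transition together with all interior upper indices aggregates to a deficit of at most $\epsilon_{L_k+1}$ because $\epsilon_{q+1} + \sum_{\ell=q+1}^{L_k}\beta_\ell = \tilde{\delta}_{L_k+1} - \bar{r}_k = \epsilon_{L_k+1}$, where $q$ marks the last index outside the upper regime. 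This residual deficit is precisely absorbed by $\epsilon_{L_k+1}$ inside the boundary term $\alpha_{L_k+1} + \epsilon_{L_k+1}$, leaving $\alpha_{L_k+1} \ge 0$ and yielding $F(\bx) - J_k(\bx, \bdelta) \ge 0$.
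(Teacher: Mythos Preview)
Your proposal is correct and follows essentially the same approach as the paper's proof: both introduce the unclipped shifts $\tilde\delta_\ell$ (the paper's $\zeta_\ell$), perform a case analysis over the three clipping regimes, and telescope to bound the mid-range deficit by $\max\{\tilde\delta_{L_k+1}-\bar r_k,0\}$, which is then absorbed by the boundary contribution at $\ell\ge L_k+1$. Your organization via the explicit decomposition into tail, boundary, and mid-sum---together with the $\epsilon_\ell$ monotonicity and the reduction to vertices of $\mathcal{X}^c$---is a clean repackaging of the paper's Claims~\ref{claim:back_from_travel_1} and~\ref{claim:back_from_travel_2}.
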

\begin{proof}{Proof of Proposition~\ref{prop:reform_hard:obj}.}
Consider any $\bx \in \mathcal{X}^c$. It follows from fact that $(\balpha,\bbeta,\gamma) \in \mathcal{D}_k$ that
\begin{align}
\gamma + \sum_{\ell=1}^{N+1} \left( \alpha_\ell - \beta_\ell \right) x_{i_{k,\ell}} \ge \gamma + \sum_{\ell=1}^{N+1} \left( \max \left \{ r_{i_{k,\ell}} - \left( \gamma - \sum_{\ell'=\ell}^{N+1} \beta_{\ell'} \right) , 0 \right \} - \beta_\ell \right) x_{i_{k,\ell}} \label{line:last_call} 
\end{align}
 For notational convenience, define
          \begin{align*}
              \zeta_\ell \triangleq \gamma - \sum_{\ell'=\ell}^{N+1} \beta_{\ell'} \quad \forall \ell \in \{1,\ldots,N+2\}
          \end{align*}
It follows from the above definition that $\beta_{\ell} = \zeta_{\ell+1} - \zeta_\ell$ for all $\ell \in \{1,\ldots,N+1\}$ and that
\begin{align}
    \delta_\ell &= \max \left \{ \min \left \{ \zeta_\ell, \bar{r}_k \right \},0 \right \} \quad \forall \ell \in \{1,\ldots,L_k+1\}\label{line:alt_defn_delta}
\end{align}
In view of the above notation and \eqref{line:last_call}, to complete the proof of Proposition~\ref{prop:reform_hard:obj},  it suffices for us to show that 
                    \begin{align} \label{line:reform_hard:main_inequality}
\gamma + \sum_{\ell=1}^{N+1} \left( \max \left \{ r_{i_{k,\ell}} - \zeta_\ell, 0 \right \} - \left( \zeta_{\ell+1} - \zeta_\ell \right) \right) x_{i_{k,\ell}} \ge J_k(\bx,\bdelta)
          \end{align} 
Our proof of \eqref{line:reform_hard:main_inequality}  will make use of the following Claims~\ref{claim:back_from_travel_1} and \ref{claim:back_from_travel_2}. 
          \begin{claim} \label{claim:back_from_travel_1}
\begin{align*}
&\sum_{\ell=1}^{L_k} \left( \max \left \{r_{i_{k,\ell}} - \zeta_\ell, 0 \right \} - \left( \zeta_{\ell+1} - \zeta_{\ell} \right) \right)x_{i_{k,\ell}} -  \sum_{\ell=1}^{L_k} \left( \max \left \{r_{i_{k,\ell}} - \delta_\ell, 0 \right \} - \left(\delta_{\ell+1} - \delta_\ell \right) \right) x_{i_{k,\ell}} \\
&\ge - \max \left \{\zeta_{L_k+1} - \bar{r}_k, 0 \right \} \end{align*}  
          \end{claim}
          \begin{proof}{Proof of Claim~\ref{claim:back_from_travel_1}.}
      Consider any $\ell \in \{1,\ldots,L_k\}$. We observe that
          \begin{align}
             & \left( \max \left \{r_{i_{k,\ell}} - \zeta_\ell, 0 \right \} - \left(\zeta_{\ell+1} - \zeta_\ell \right)  \right) -  \left( \max \left \{r_{i_{k,\ell}} - \delta_\ell, 0 \right \} - \left(\delta_{\ell+1} - \delta_\ell \right) \right)\notag  \\
              &= \begin{cases}
\left( \max \left \{r_{i_{k,\ell}} - \zeta_\ell, 0 \right \} - \left(\zeta_{\ell+1} - \zeta_\ell \right) \right) -  \left( \max \left \{r_{i_{k,\ell}} - \bar{r}_k, 0 \right \} - \left(\bar{r}_k - \bar{r}_k\right) \right) ,&\text{if } \zeta_\ell > \bar{r}_k,\\
\left( \max \left \{r_{i_{k,\ell}} - \zeta_\ell, 0 \right \} - \left(\zeta_{\ell+1} - \zeta_\ell \right) \right) -  \left( \max \left \{r_{i_{k,\ell}} - \zeta_\ell, 0 \right \} - \left(\delta_{\ell+1} - \zeta_\ell \right) \right) ,&\text{if } 0 \le \zeta_\ell \le \bar{r}_k,\\
\left( \max \left \{r_{i_{k,\ell}} - \zeta_\ell, 0 \right \} - \left(\zeta_{\ell+1} - \zeta_\ell \right) \right) -  \left( \max \left \{r_{i_{k,\ell}} - 0, 0 \right \} - \left(\delta_{\ell+1} - 0 \right) \right) ,&\text{if } \zeta_\ell < 0
              \end{cases}\notag \\
 &= \begin{cases}
 -\left(\zeta_{\ell+1} - \zeta_\ell\right) ,&\text{if } \zeta_\ell > \bar{r}_k,\\
-\left( \zeta_{\ell+1} - \delta_{\ell+1} \right),&\text{if } 0 \le \zeta_\ell \le \bar{r}_k,\\
-\left( \zeta_{\ell+1} - \delta_{\ell+1} \right) ,&\text{if } \zeta_\ell < 0
\end{cases} \notag \\
 &= \begin{cases}
-\left( \zeta_{\ell+1} - \zeta_\ell \right),&\text{if } \zeta_\ell > \bar{r}_k,\\
-\left( \zeta_{\ell+1} - \delta_{\ell+1} \right) ,&\text{if } \zeta_\ell \le \bar{r}_k
              \end{cases}\notag \\
 &= \begin{cases}
-\left( \zeta_{\ell+1} - \zeta_\ell \right),&\text{if } \zeta_\ell > \bar{r}_k,\\
-\left( \zeta_{\ell+1} - \delta_{\ell+1} \right) ,&\text{if } \zeta_\ell \le \bar{r}_k < \zeta_{\ell+1},\\
-\left( \zeta_{\ell+1} - \delta_{\ell+1} \right) ,&\text{if } \zeta_\ell \le \zeta_{\ell+1} \le \bar{r}_k
\end{cases}\notag \\
 &= \begin{cases}
-\left( \zeta_{\ell+1} - \zeta_\ell \right),&\text{if } \zeta_\ell > \bar{r}_k,\\
-\left( \zeta_{\ell+1} - \bar{r}_k \right) ,&\text{if } \zeta_\ell \le \bar{r}_k < \zeta_{\ell+1},\\
- \left( \zeta_{\ell+1} - \max \left \{ \zeta_{\ell+1}, 0 \right \} \right) ,&\text{if } \zeta_\ell \le \zeta_{\ell+1} \le \bar{r}_k
\end{cases}\notag \\
 &= \begin{cases}
-\left( \zeta_{\ell+1} - \zeta_\ell \right),&\text{if } \zeta_\ell > \bar{r}_k,\\
-\left( \zeta_{\ell+1} - \bar{r}_k \right) ,&\text{if } \zeta_\ell \le \bar{r}_k < \zeta_{\ell+1},\\
 \max \left \{0,- \zeta_{\ell+1} \right \}  ,&\text{if } \zeta_\ell \le \zeta_{\ell+1} \le \bar{r}_k
\end{cases}\notag \\
 &\ge \begin{cases}
-\left( \zeta_{\ell+1} - \zeta_\ell \right),&\text{if } \zeta_\ell > \bar{r}_k,\\
-\left( \zeta_{\ell+1} - \bar{r}_k \right) ,&\text{if } \zeta_\ell \le \bar{r}_k < \zeta_{\ell+1},\\
0,&\text{if } \zeta_\ell \le \zeta_{\ell+1} \le \bar{r}_k
\end{cases}\notag \\
&= - \left(\max \left \{ \zeta_{\ell+1} - \bar{r}_k, 0 \right \} - \max \left \{ \zeta_{\ell} - \bar{r}_k, 0 \right \} \right)  \label{line:back_from_travel}
          \end{align}
The first equality follows from \eqref{line:alt_defn_delta}. The second equality follows from algebra and the definition of $\bar{r}_k$. The third and fourth equalities follow from algebra. The fifth equality follows from \eqref{line:alt_defn_delta}. The sixth equality, the inequality, and the seventh equality follow from algebra. Using the above inequality, we have
\begin{align*}
    &\sum_{\ell=1}^{L_k} \left( \max \left \{r_{i_{k,\ell}} - \zeta_\ell, 0 \right \} - \left( \zeta_{\ell+1} - \zeta_\ell \right) \right)x_{i_{k,\ell}} -  \sum_{\ell=1}^{L_k} \left( \max \left \{r_{i_{k,\ell}} - \delta_\ell, 0 \right \} - \left(\delta_{\ell+1} - \delta_\ell \right) \right) x_{i_{k,\ell}} \\
    &\ge - \sum_{\ell=1}^{L_k} \left(\max \left \{ \zeta_{\ell+1} - \bar{r}_k, 0 \right \} - \max \left \{ \zeta_{\ell} - \bar{r}_k, 0 \right \} \right)x_{i_{k,\ell}}\\
&\ge - \sum_{\ell=1}^{L_k} \left(\max \left \{ \zeta_{\ell+1} - \bar{r}_k, 0 \right \} - \max \left \{ \zeta_{\ell} - \bar{r}_k, 0 \right \} \right) 1\\
&=  - \left( \max \left \{ \zeta_{L_k+1} - \bar{r}_k, 0 \right \} - \max \left \{ \zeta_{1} - \bar{r}_k, 0 \right \} \right)\\
&\ge  -  \max \left \{ \zeta_{L_k+1} - \bar{r}_k, 0 \right \}
\end{align*}
The first inequality follows from \eqref{line:back_from_travel}. The second inequality follows from the fact that $\bx \in \mathcal{X}^c$ (which implies that $x_{i_{k,\ell}} \le 1$) and the fact that $(\balpha,\bbeta,\gamma) \in \mathcal{D}_k$ (which implies that $\zeta_{\ell+1} \ge \zeta_{\ell}$). The equality follows from algebra, and the third inequality follows from algebra. This completes our proof of Claim~\ref{claim:back_from_travel_1}.  
         \halmos  \end{proof}
                   \begin{claim} \label{claim:back_from_travel_2}
\begin{align*}\gamma + \sum_{\ell=L_k+1}^{N+1} \left( \max \left \{r_{i_{k,\ell}} - \zeta_\ell, 0 \right \} - \left( \zeta_{\ell+1} - \zeta_\ell \right) \right)x_{i_{k,\ell}} - \delta_{L_k+1} \ge \max \left \{\zeta_{L_k+1} - \bar{r}_k, 0 \right \} \end{align*}  
          \end{claim}
          \begin{proof}{Proof of Claim~\ref{claim:back_from_travel_2}.}
          We observe that
          \begin{align}
&\left( \max \left \{r_{i_{k,L_k+1}} - \zeta_{L_k+1}, 0 \right \} - \left( \zeta_{L_k+2} - \zeta_{L_k+1} \right) \right)x_{i_{k,L_k+1}} \notag \\
&=\left( \max \left \{0 - \zeta_{L_k+1}, 0 \right \} - \left( \zeta_{L_k+2} - \zeta_{L_k+1} \right) \right)1 \notag \\
&=\max \left \{\zeta_{L_k+1}, 0 \right \} - \zeta_{L_k+2} \label{line:waves}
          \end{align}
          The first equality follows from the definition of $L_k$ and from the fact that $\bx \in \mathcal{X}^c$ (which implies that $x_{i_{k,L_k+1}} = 1$), and the second equality follows from algebra. Therefore, 
\begin{align*}
&\gamma + \sum_{\ell=L_k+1}^{N+1} \left( \max \left \{r_{i_{k,\ell}} - \zeta_\ell, 0 \right \} - \left( \zeta_{\ell+1} - \zeta_\ell \right) \right)x_{i_{k,\ell}} - \delta_{L_k+1} \\
&=\gamma + \left( \max \left \{\zeta_{L_k+1}, 0 \right \} - \zeta_{L_k+2} \right) +  \sum_{\ell=L_k+2}^{N+1} \left( \max \left \{r_{i_{k,\ell}} - \zeta_\ell, 0 \right \} - \left( \zeta_{\ell+1} - \zeta_\ell \right) \right)x_{i_{k,\ell}} - \delta_{L_k+1} \\
&\ge \gamma + \left( \max \left \{\zeta_{L_k+1}, 0 \right \} - \zeta_{L_k+2} \right) - \sum_{\ell=L_k+2}^{N+1}  \left( \zeta_{\ell+1} - \zeta_\ell \right) x_{i_{k,\ell}} - \delta_{L_k+1} \\
&\ge \gamma + \left( \max \left \{\zeta_{L_k+1}, 0 \right \} - \zeta_{L_k+2} \right) - \sum_{\ell=L_k+2}^{N+1}  \left( \zeta_{\ell+1} - \zeta_\ell \right) 1 - \delta_{L_k+1} \\
&= \gamma + \max \left \{\zeta_{L_k+1}, 0 \right \} -  \zeta_{N+2} - \delta_{L_k+1} \\
&=  \max \left \{\zeta_{L_k+1}, 0 \right \} - \delta_{L_k+1} \\
&= \max \left \{\zeta_{L_k+1}, 0 \right \} - \min \left \{ \max \left \{ \zeta_{L_k+1}, 0 \right \}, \bar{r}_k \right \}  \\
&= \max \left \{ \max \left \{ \zeta_{L_k+1}, 0 \right \} - \bar{r}_k, 0 \right \} \\
&=  \max \left \{\zeta_{L_k+1} - \bar{r}_k , 0 \right \}
\end{align*}
The first equality follows from \eqref{line:waves}. The first inequality follows from algebra and from the fact that $\bx \in \mathcal{X}^c$ (which implies that $x_{i_{k,\ell}} \ge 0$). The second inequality follows from the fact that $(\balpha,\bbeta,\gamma) \in \mathcal{D}_k$ (which implies that $\zeta_{\ell+1} - \zeta_{\ell} \ge 0$) and the fact that $\bx \in \mathcal{X}^c$ (which implies that $x_{i_{k,\ell}} \le 1$). The second equality follows from algebra. The third equality follows from the fact that $\zeta_{N+2} = \gamma$. The fourth equality follows from \eqref{line:alt_defn_delta}. The fifth and sixth equalities follow from algebra.  Our proof of Claim~\ref{claim:back_from_travel_2} is complete. 
          \halmos \end{proof}
          Combining Claims~\ref{claim:back_from_travel_1} and \ref{claim:back_from_travel_2}, we have shown that
          \begin{align*}
             &\gamma + \sum_{\ell=1}^{N+1} \left( \max \left \{ r_{i_{k,\ell}} - \zeta_\ell, 0 \right \} - \left( \zeta_{\ell+1} - \zeta_\ell \right) \right) x_{i_{k,\ell}} - J_k(\bx,\bdelta) \\
              &= \left( \gamma + \sum_{\ell=L_k+1}^{N+1} \left( \max \left \{r_{i_{k,\ell}} - \zeta_\ell, 0 \right \} - \left( \zeta_{\ell+1} - \zeta_\ell \right) \right)x_{i_{k,\ell}} - \delta_{L_k+1} \right)\\
&\quad +\left(  \sum_{\ell=1}^{L_k} \left( \max \left \{r_{i_{k,\ell}} - \zeta_\ell, 0 \right \} - \left( \zeta_{\ell+1} - \zeta_{\ell} \right) \right)x_{i_{k,\ell}} -  \sum_{\ell=1}^{L_k} \left( \max \left \{r_{i_{k,\ell}} - \delta_\ell, 0 \right \} - \left(\delta_{\ell+1} - \delta_\ell \right) \right) x_{i_{k,\ell}}\right) \\
&\ge  \max \left \{\zeta_{L_k+1} - \bar{r}_k, 0 \right \}- \max \left \{\zeta_{L_k+1} - \bar{r}_k, 0 \right \} \\
&= 0
          \end{align*}
          where first equality follows from algebra and the definition of $J_k(\bx,\bdelta)$, the inequality follows from Claims~\ref{claim:back_from_travel_1} and \ref{claim:back_from_travel_2}, and the second equality follows from algebra.  We have thus shown that \eqref{line:reform_hard:main_inequality} holds, which completes our proof of Proposition~\ref{prop:reform_hard:obj}.   \halmos \end{proof}

\subsection{Proof of Theorem~\ref{thm:reform}} \label{appx:reform_bringing_together}
For the sake of completeness, we conclude Appendix~\ref{appx:proof_reformulation} by showing that Theorem~\ref{thm:reform} follows from Theorems~\ref{thm:reform_easy} and \ref{thm:reform_hard}. 
\vspace{0.5em}

\begin{proof}{Proof of Theorem~\ref{thm:reform}.}
We first show that every optimal solution for \eqref{prob:dual_reform} is an optimal solution for \eqref{prob:dual}. Indeed, it follows from Theorem~\ref{thm:reform_easy} that every optimal solution for \eqref{prob:dual_reform} can be transformed into a feasible (but possibly suboptimal) solution for \eqref{prob:dual} with the same objective value. This implies that the optimal objective value of \eqref{prob:dual_reform} is greater than or equal to the optimal objective value of \eqref{prob:dual}. Moreover, it follows from Theorem~\ref{thm:reform_hard} that every optimal solution for \eqref{prob:dual} can be transformed into a feasible (but possibly suboptimal) solution for \eqref{prob:dual_reform} with the same or better objective value. This implies that the optimal objective value of \eqref{prob:dual} is greater than or equal to the optimal objective value of \eqref{prob:dual_reform}. We thus conclude that the optimal objective values of \eqref{prob:dual} and \eqref{prob:dual_reform} are equal, and that every optimal solution for \eqref{prob:dual_reform} can be transformed into an optimal solution for \eqref{prob:dual}. 

Consider any $\bdelta \in \Delta_k$ that is a Pareto cut in the sense of Definition~\ref{defn:pareto_reform}. It follows from Theorem~\ref{thm:reform_easy} that there exists $(\balpha,\bbeta,\gamma) \in \mathcal{D}_k$ that satisfies $J_k(\bx,\bdelta) = \gamma + \sum_{\ell=1}^{N+1} (\alpha_\ell - \beta_\ell)x_{i_{k,\ell}}$ for all $\bx \in \mathcal{X}^c$. Now suppose for the sake of developing a contradiction that $(\balpha,\bbeta,\gamma)$ is \emph{not} a Pareto cut in the sense of Definition~\ref{defn:pareto}.  Then it follows from that supposition that there exists $(\balpha',\bbeta',\gamma') \in \mathcal{D}_k$ that dominates $(\balpha,\bbeta,\gamma)$. It  follows from Theorem~\ref{thm:reform_hard} that there exists $\bdelta' \in \Delta_k$ that satisfies that satisfies $J_k(\bx,\bdelta') \le \gamma' + \sum_{\ell=1}^{N+1} (\alpha'_\ell - \beta'_\ell) x_{i_{k,\ell}}$ for all $\bx \in \mathcal{X}^c$. We have thus shown that
\begin{align*}
    J_k(\bx,\bdelta') \le \gamma' + \sum_{\ell=1}^{N+1} (\alpha'_\ell - \beta'_\ell) x_{i_{k,\ell}} \le \gamma + \sum_{\ell=1}^{N+1} (\alpha_\ell - \beta_\ell)x_{i_{k,\ell}} = J_k(\bx,\bdelta) \;\; \text{for all } \bx \in \mathcal{X}^c
\end{align*}
where the second inequality is strict at some $\bar{\bx} \in \mathcal{X}^c$. We have thus shown that $\bdelta'$ dominates $\bdelta$, which contradicts the assumption that $\bdelta$ is a Pareto cut. This contradiction implies that $(\balpha,\bbeta,\gamma)$ is a Pareto cut in the sense of Definition~\ref{defn:pareto}, which completes the proof of Theorem~\ref{thm:reform}. 
\halmos \end{proof}

\section{Proof of Proposition~\ref{prop:phase2:optimal}}\label{appx:proof:phase2:optimal}
\begin{proof}{Proof of Proposition~\ref{prop:phase2:optimal}. }
Let $k \in [K]$ and $\bx \in \mathcal{X}$. We observe that $\bdelta$ is an optimal solution for \eqref{prob:dual_reform} because
\begin{align*}
J_k(\bx,\bdelta) &= \delta_{L_k+1}+ \sum_{\ell=\ell^*}^{L_k}  \left( \max \left \{0, r_{i_{k,\ell}} - \delta_\ell \right \} - \left( \delta_{\ell+1} - \delta_\ell \right) \right) x_{i_{k,\ell}} \\
&=\bar{r}_k+ \sum_{\ell=\ell^*+1}^{L_k}  \left( \max \left \{0, r_{i_{k,\ell}} - \bar{r}_k \right \} - \left( \bar{r}_k - \bar{r}_k \right) \right) x_{i_{k,\ell}} \\
 &\quad + \left( \max \left \{0, r_{i_{k,\ell^*}} - r_{i_{k,\ell^*}} \right \} - \left( \bar{r}_k - r_{i_{k,\ell^*}}\right) \right) x_{i_{k,\ell}} \\
 &=\bar{r}_k+ \sum_{\ell=\ell^*+1}^{L_k}  \left(0 - 0 \right) x_{i_{k,\ell}}  + \left( 0 - \left( \bar{r}_k - r_{i_{k,\ell^*}}\right) \right) x_{i_{k,\ell}} \\
  &=\bar{r}_k- \left( \bar{r}_k - r_{i_{k,\ell^*}}\right)  x_{i_{k,\ell}} \\
  &= r_{i_{k,\ell^*}} 
\end{align*}
The first equality holds because our definition of $\ell^*$ in \eqref{line:phase2:ell_star} implies that $x_{i_{k,1}}= \cdots = x_{i_{k,\ell^*-1}} = 0$. The second equality follows from our construction of  $\bdelta$ in \eqref{line:phase2:delta_ell}. The third equality follows from the fact that $r_{i_{k,\ell}} \le \bar{r}_k$ for all $\ell \in \{1,\ldots,L_k+1\}$. The fourth equality follows from algebra. The fifth equality holds because our definition of $\ell^*$ in \eqref{line:phase2:ell_star} implies that $x_{i_{k,\ell^*}} = 1$. Since the optimal objective value of \eqref{prob:dual_reform} is clearly equal to $r_{i_{k,\ell^*}}$, our proof of Proposition~\ref{prop:phase2:optimal} is complete.  
\halmos  \end{proof}
\section{Illustration for Properties~\ref{property:notspecial}, \ref{property:forward}, \ref{property:gap}, and \ref{property:reverse} } \label{sec:interpretation:four}
In this appendix, we provide intuition for Properties~\ref{property:notspecial}, \ref{property:forward}, \ref{property:gap}, and \ref{property:reverse} by presenting four illustrative examples. Each example   shows that removing any one of those four properties from Section~\ref{sec:characterization_pareto} can allow for solutions for \eqref{prob:dual_reform} that are not Pareto cuts, in the sense of Definition~\ref{defn:pareto_reform}. By showing how removing any of those properties allows for non-Pareto cuts, we motivate  each of the four properties as well as motivate the four subroutines in the transformation algorithm from Section~\ref{sec:transformation}.

\begin{example}[Property~\ref{property:notspecial} is violated] \label{example:property1:violated}
Suppose that there are $N = 2$ products, the revenues are $r_1 = \$10$ and $r_2 = \$5$,  and ranking $k$ satisfies $i_{k,1} = 1$, $i_{k,2} = 2$, and $i_{k,3} = 3$. Consider the two feasible solutions $
\bdelta = (10, 10, 10)$ and $\bdelta' = (5, 5, 5)$, which  correspond to 
\begin{align*}
    J_k(\bx,\bdelta) &= 10 + (\max \{0, 10 - 10\} - (10 - 10)) x_1 + (\max \{ 0, 5 - 10 \} - (10 - 10)) x_2 &&= 10\\
    J_k(\bx,\bdelta') &= 5 + (\max \{0, 10 - 5\} - (5 - 5)) x_1 + (\max \{ 0, 5 - 5 \} - (5 - 5)) x_2 &&= 5 + 5 x_1
\end{align*}
We observe that $\bdelta$ and $\bdelta'$ are optimal solutions for \eqref{prob:dual_reform} in the case of $\bx = (1,0,1)$. We also observe that $T_k(\bdelta) = 1$ (since $\delta_1 = 10 = r_{i_{k,1}}$, $\delta_2 = 10 > r_{i_{k,2}} = 5$, and $\delta_3 = 10 > r_{i_{k,1}} = 0$), and that  $T_k(\bdelta') = 2$ (since $\delta_2' = 5 = r_{i_{k,2}}$ and $\delta_3' = 5 > r_{i_{k,3}} = 0$). 
\begin{itemize}
        \item It follows from the fact that $T_k(\bdelta) = 1$  that $\bdelta$ does not satisfy Property~\ref{property:notspecial}, and  it follows from the fact that $T_k(\bdelta') = 2$ that  $\bdelta'$ satisfies Property~\ref{property:notspecial}.
    \item It follows from the fact that $\delta_1 = 10 = r_{i_{k,1}}$ and $\delta_1' = 5 \le r_{i_{k,1}}$ that $\bdelta$ and $\bdelta'$ satisfy Property~\ref{property:forward}. 
    \item It follows from the fact that $ \delta_2 = \delta_3$ and $\delta_2' = \delta_3'$ that $\bdelta$ and $\bdelta'$ satisfy Property~\ref{property:gap}. 
        \item It follows from the fact that $T_k(\bdelta) = 1$ and $\delta_1 = \delta_2 = \delta_3$ that $\bdelta$ satisfies Property~\ref{property:reverse}, and it follows from the fact that $T_k(\bdelta') = 2$ and $ \delta_2' = \delta_3'$ that $\bdelta'$ satisfies Property~\ref{property:reverse}. 
\end{itemize}
Finally, we observe that $J_k(\bx,\bdelta') = J_k(\bx,\bdelta)$ for all $\bx \in \mathcal{X}^c$ that satisfy $x_1 = 1$ and that $J_k(\bx,\bdelta') < J_k(\bx,\bdelta)$ for all $\bx \in \mathcal{X}^c$ that satisfy $x_1 \in [0,1)$. We have thus shown that  $\bdelta'$ dominates $\bdelta$. \halmos
\end{example}

\begin{example}[Property~\ref{property:forward} is violated] \label{example:property2:violated}
Suppose that there are $N = 2$ products, the revenues are $r_1 = \$5$ and $r_2 = \$10$,  and ranking $k$ satisfies $i_{k,1} = 1$, $i_{k,2} = 2$, and $i_{k,3} = 3$. Consider the two feasible solutions $
\bdelta = (10, 10, 10)$ and $\bdelta' = (5, 10, 10)$, which we observe correspond to 
\begin{align*}
    J_k(\bx,\bdelta) &= 10 + (\max \{0, 5 - 10\} - (10 - 10) x_1 + (\max \{ 0, 10 - 10 \} - (10 - 10)) x_2 &&= 10\\
    J_k(\bx,\bdelta') &= 10 + (\max \{0, 5 - 5\} - (10 - 5) x_1 + (\max \{ 0, 10 - 10 \} - (10 - 10)) x_2 &&= 10 - 5 x_1
\end{align*}
We observe that $\bdelta$ and $\bdelta'$ are both optimal solutions for \eqref{prob:dual_reform} in the case of $\bx = (0,1,1)$. 
\begin{itemize}
        \item We readily observe that $T_k(\bdelta) = T_k(\bdelta') = 2$, which implies that $\bdelta$ and $\bdelta'$ satisfy Property~\ref{property:notspecial}. 
    \item It follows from the fact that $\delta_1 = 10 > r_{i_{k,1}}$  that $\bdelta$ does not satisfy Property~\ref{property:forward}, and it follows from the fact that $\delta_1' = 5 = r_{i_{k,1}}$ that $\bdelta'$ satisfies  Property~\ref{property:forward}. 
    \item It follows from the fact that $ \delta_2 = \delta_3$ and $\delta_2' = \delta_3'$ that $\bdelta$ and $\bdelta'$ satisfy Property~\ref{property:gap}. 
    \item It follows from the fact that $ \delta_2 = \delta_3$ and $\delta_2' = \delta_3'$ that $\bdelta$ and $\bdelta'$ satisfies Property~\ref{property:reverse}.  
\end{itemize}
Finally, we observe that $J_k(\bx,\bdelta') = J_k(\bx,\bdelta)$ for all $\bx \in \mathcal{X}^c$ that satisfy $x_1 = 0$ and that $J_k(\bx,\bdelta') < J_k(\bx,\bdelta)$ for all $\bx \in \mathcal{X}^c$ that satisfy $x_1 \in (0,1]$. We have thus shown that  $\bdelta'$ dominates $\bdelta$. \halmos
\end{example}
\begin{example}[Property~\ref{property:gap} is violated] \label{example:property3:violated}
Suppose that there are $N = 3$ products, the revenues are $r_1 = r_2 = r_3 = \$10$,  and ranking $k$ satisfies $i_{k,1} = 1$, $i_{k,2} = 2$, $i_{k,3} = 3$, and $i_{k,4} = 4$. Consider the two feasible solutions $
\bdelta = (0, 0, 10, 10)$ and $\bdelta' = (0, 10, 10,10)$, which we observe correspond to 
\begin{align*}
    J_k(\bx,\bdelta) &= 10 + (\max \{0, 10 - 0\} - (0 - 0) x_1 \\
    &\quad + (\max \{ 0, 10 - 0 \} - (10 - 0)) x_2  + (\max \{ 0, 10 - 10 \} - (10 - 10)) x_3  &&= 10 + 10 x_1 \\
    J_k(\bx,\bdelta') &= 10 + (\max \{0, 10 - 0\} - (10 - 0) x_1 \\
    &\quad + (\max \{ 0, 10 - 10 \} - (10 - 10)) x_2  + (\max \{ 0, 10 - 10 \} - (10 - 10)) x_3  &&= 10 \end{align*}
We observe that $\bdelta$ and $\bdelta'$ are both optimal solutions for \eqref{prob:dual_reform} in the case of $\bx = (0,1,1,1)$. 
\begin{itemize}
        \item We readily observe that $T_k(\bdelta) = T_k(\bdelta') = 3$, which implies that $\bdelta$ and $\bdelta'$ satisfy Property~\ref{property:notspecial}. 
    \item It follows from the fact that $\delta_1 = \delta'_1 = 0 < 10 = r_{i_{k,1}}$   that $\bdelta$ and $\bdelta'$ satisfy  Property~\ref{property:forward}. 
    \item It follows from the fact that $\delta_2 < r_{i_{k,2}} = 10$ and $\delta_2 < \delta_3$    that $\bdelta$  does not satisfy Property~\ref{property:gap}. It follows from the fact that $\delta_2' = \delta_3' = \delta_4'$ that $\bdelta'$ satisfies Property~\ref{property:gap}. 
    \item It follows from the fact that $ \delta_3 = \delta_4$ and $\delta_3' = \delta_4'$ that $\bdelta$ and $\bdelta'$ satisfies Property~\ref{property:reverse}.  
\end{itemize}
Finally, we observe that $J_k(\bx,\bdelta') = J_k(\bx,\bdelta)$ for all $\bx \in \mathcal{X}^c$ that satisfy $x_1 = 0$ and that $J_k(\bx,\bdelta') < J_k(\bx,\bdelta)$ for all $\bx \in \mathcal{X}^c$ that satisfy $x_1 \in (0,1]$. We have thus shown that  $\bdelta'$ dominates $\bdelta$. \halmos
\end{example}
\begin{example}[Property~\ref{property:reverse} is violated] \label{example:property4:violated}
Suppose that there are $N = 2$ products, the revenues are $r_1 = \$10$ and $r_2 = \$5$,  and ranking $k$ satisfies $i_{k,1} = 1$, $i_{k,2} = 2$, and $i_{k,3} = 3$. Consider the two feasible solutions $
\bdelta = (5, 5, 10)$ and $\bdelta' = (5, 5, 5)$, which we observe correspond to 
\begin{align*}
    J_k(\bx,\bdelta) &= 10 + (\max \{0, 10 - 5\} - (5 - 5) x_1 + (\max \{ 0, 5 - 5 \} - (10 - 5)) x_2 &&= 10 + 5 x_1 - 5 x_2\\
    J_k(\bx,\bdelta') &= 5 + (\max \{0, 10 - 5\} - (5 - 5) x_1 + (\max \{ 0, 5 - 5 \} - (5 - 5)) x_2 &&= 5 + 5 x_1 
\end{align*}
We observe that $\bdelta$ and $\bdelta'$ are both optimal solutions for \eqref{prob:dual_reform} in the case of $\bx = (0,1,1)$. 
\begin{itemize}
        \item We readily observe that $T_k(\bdelta) = T_k(\bdelta') = 2$, which implies that $\bdelta$ and $\bdelta'$ satisfy Property~\ref{property:notspecial}. 
    \item It follows from the fact that $\delta_1 =\delta_1' = 5 < 10  = r_{i_{k,1}}$  that $\bdelta$ and $\bdelta'$ satisfy Property~\ref{property:forward}. 
    \item It follows from the fact that $\delta_2,\delta_2' \ge r_{i_{k,2}}$ that $\bdelta$ and $\bdelta'$  satisfy Property~\ref{property:gap}. 
    \item It follows from the fact that $T_k(\bdelta) = 2$ and $ \delta_2 < \delta_3$ that $\bdelta$ does not satisfy Property~\ref{property:reverse}, and it follows from the fact that  $\delta_2' = \delta_3'$ that $\bdelta'$ satisfies Property~\ref{property:reverse}.  
\end{itemize}
Finally, we observe that $J_k(\bx,\bdelta') = J_k(\bx,\bdelta)$ for all $\bx \in \mathcal{X}^c$ that satisfy $x_2 = 1$ and that $J_k(\bx,\bdelta') < J_k(\bx,\bdelta)$ for all $\bx \in \mathcal{X}^c$ that satisfy $x_2 \in [0,1)$. We have thus shown that  $\bdelta'$ dominates $\bdelta$. \halmos
\end{example}

\section{Proof of Theorem~\ref{thm:sufficient}} \label{appx:thm:sufficient}
Our proof of Theorem~\ref{thm:sufficient} in the present  Appendix~\ref{appx:thm:sufficient} shows that if $\bdelta \in \Delta_k$ satisfies Properties~\ref{property:notspecial}, \ref{property:forward}, \ref{property:gap}, and \ref{property:reverse}, then $\bdelta$ is a Pareto cut. Our proof   makes use of the notions of `Pareto cuts' and `domination' that are given in Definition~\ref{defn:pareto_reform} (Section~\ref{sec:main_alg:reform}) as well as makes use of Theorem~\ref{thm:transformation} (Section~\ref{sec:characterization_pareto}).  

We begin our proof of Theorem~\ref{thm:sufficient} with the following straightforward lemma. 
\begin{lemma} \label{lem:transformation:property_domination}
    Let $\bdelta \in \Delta_k$. We have that $\bdelta$ is a Pareto cut if and only if there does not exist $\bdelta' \in \Delta_k$ that  satisfies Properties~\ref{property:notspecial}, \ref{property:forward}, \ref{property:gap}, and \ref{property:reverse} and  dominates $\bdelta$.
\end{lemma}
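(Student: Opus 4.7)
The forward direction is essentially tautological: if $\bdelta$ is a Pareto cut, then by Definition~\ref{defn:pareto_reform} no vector in $\Delta_k$ dominates $\bdelta$ at all, and in particular no vector satisfying Properties~\ref{property:notspecial}, \ref{property:forward}, \ref{property:gap}, and \ref{property:reverse} dominates $\bdelta$. So the only work is in the contrapositive of the backward direction. The plan is to combine the transformation algorithm from Section~\ref{sec:transformation} with the transitivity of the domination relation.

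First I would record a short transitivity lemma for domination: if $\bdelta_1$ dominates $\bdelta_2$ and $\bdelta_2$ dominates $\bdelta_3$, then $\bdelta_1$ dominates $\bdelta_3$. This is immediate from Definition~\ref{defn:pareto_reform}, since $J_k(\bx,\bdelta_1)\le J_k(\bx,\bdelta_2)\le J_k(\bx,\bdelta_3)$ holds pointwise on $\mathcal{X}^c$, and picking any $\bar{\bx}\in\mathcal{X}^c$ at which $J_k(\bar{\bx},\bdelta_2)<J_k(\bar{\bx},\bdelta_3)$ yields $J_k(\bar{\bx},\bdelta_1)\le J_k(\bar{\bx},\bdelta_2)<J_k(\bar{\bx},\bdelta_3)$, so the strict inequality propagates.

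Next I would prove the backward direction by contraposition: assume $\bdelta$ is not a Pareto cut and produce a vector $\bdelta^{**}\in\Delta_k$ that satisfies the four properties and dominates $\bdelta$. Since $\bdelta$ is not a Pareto cut, there exists $\bdelta^{*}\in\Delta_k$ that dominates $\bdelta$. There are two cases. If $\bdelta^{*}$ already satisfies Properties~\ref{property:notspecial}, \ref{property:forward}, \ref{property:gap}, and \ref{property:reverse}, take $\bdelta^{**}=\bdelta^{*}$. Otherwise, feed $\bdelta^{*}$ through the four subroutines of Algorithm~\ref{alg:transformation_algorithm} in order; by Propositions~\ref{prop:subroutine:1}, \ref{prop:subroutine:2}, \ref{prop:subroutine:3}, and \ref{prop:subroutine:4}, the output $\bdelta^{**}\in\Delta_k$ satisfies all four properties, and at least one subroutine must produce a strictly different vector (because the input failed at least one property that its output is forced to satisfy), so at least one subroutine yields a vector dominating its input while the remaining subroutines weakly improve. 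Chaining the (possibly weak) improvements through the transitivity lemma shows $\bdelta^{**}$ dominates $\bdelta^{*}$, and a second application of transitivity shows $\bdelta^{**}$ dominates $\bdelta$, as desired.

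The only subtle point, and the main thing to state carefully, is the bookkeeping in the second case: each subroutine returns a vector that either equals its input or dominates it, and the composition must produce a strict dominator because somewhere along the chain a property that was violated gets enforced. I would phrase this by a simple induction over the four subroutines, using the transitivity lemma at each step to carry the strict-inequality guarantee forward. Beyond that, the proof is essentially a bookkeeping argument combining Theorem~\ref{thm:transformation} (in the strengthened constructive form given by Propositions~\ref{prop:subroutine:1}--\ref{prop:subroutine:4}) with transitivity, and no new structural insight about $J_k$ is required.
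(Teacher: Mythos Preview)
Your proposal is correct and follows essentially the same route as the paper: the forward direction is immediate from Definition~\ref{defn:pareto_reform}, and for the contrapositive of the backward direction the paper also takes a dominator $\bdelta'$ of $\bdelta$, invokes the constructive content of Theorem~\ref{thm:transformation} (i.e., Propositions~\ref{prop:subroutine:1}--\ref{prop:subroutine:4}) to produce a $\bdelta''$ satisfying all four properties that dominates $\bdelta'$, and then uses transitivity of domination to conclude. The only difference is that you spell out the transitivity step and the subroutine bookkeeping explicitly, whereas the paper leaves these implicit in its appeal to Theorem~\ref{thm:transformation}.
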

\begin{proof}{Proof of Lemma~\ref{lem:transformation:property_domination}.}
Let $\bdelta \in \Delta_k$. If $\bdelta$ is a Pareto cut, then it follows from Definition~\ref{defn:pareto_reform} that there does not exist $\bdelta' \in \Delta_k$ that dominates $\bdelta$, which proves the first direction of Lemma~\ref{lem:transformation:property_domination}. To prove the other direction, suppose that $\bdelta$ is not a Pareto cut. In that case, it follows from Definition~\ref{defn:pareto_reform} that there exists $\bdelta' \in \Delta_k$ that dominates $\bdelta$, in the sense that 
    $J_k(\bx,\bdelta') \le J_k(\bx,\bdelta) \text{ for all }\bx \in \mathcal{X}^c$, 
where that inequality is strict at some $\bar{\bx} \in \mathcal{X}^c$. It follows from Theorem~\ref{thm:transformation} that either $\bdelta'$ satisfies Properties~\ref{property:notspecial}, \ref{property:forward}, \ref{property:gap}, and \ref{property:reverse} or there exists $\bdelta'' \in \Delta_k$ that satisfies Properties~\ref{property:notspecial}, \ref{property:forward}, \ref{property:gap}, and \ref{property:reverse} and dominates $\bdelta'$. Therefore, we conclude in the latter case that  $\bdelta''$ satisfies Properties~\ref{property:notspecial}, \ref{property:forward}, \ref{property:gap}, and \ref{property:reverse} and dominates $\bdelta$, which completes the proof of the other direction of Lemma~\ref{lem:transformation:property_domination}.   \halmos \end{proof}

Equipped with the above lemma, we  now present the organization of the proof of Theorem~\ref{thm:sufficient}. Indeed, consider any $\bdelta,\bdelta' \in \Delta_k$ that satisfy Properties~\ref{property:notspecial}, \ref{property:forward}, \ref{property:gap}, and \ref{property:reverse}.  
Our proof of Theorem~\ref{thm:sufficient} is split into five cases, which are shown in the following table. 
\begin{center}
\begin{tabular}{c l l}
\toprule
\textbf{Case} & \textbf{If\ldots} & \textbf{then $\bdelta'$ does not dominate $\bdelta$, as shown in \ldots} \\
\midrule
1 & $\delta'_{L_k+1} > \delta_{L_k+1}$ & Proposition~\ref{prop:case1} in Appendix~\ref{appx:sufficient:case1} \\
2 & $\delta'_{L_k+1} = \delta_{L_k+1}$ and $\delta_2' < \delta_2$& Proposition~\ref{prop:case2} in Appendix~\ref{appx:sufficient:case2} \\
3 & $\delta'_{L_k+1} = \delta_{L_k+1}$ and $\delta_2' \ge \delta_2$& Proposition~\ref{prop:case3} in Appendix~\ref{appx:sufficient:case3} \\
4 & $\delta_{L_k+1}' < \delta_{L_k+1}$ and $\delta_{T_k(\bdelta)}' < \delta'_{L_k+1}$ &Proposition~\ref{prop:case4} in Appendix~\ref{appx:sufficient:case4}\\
5 & $\delta_{L_k+1}' < \delta_{L_k+1}$ and $\delta_{T_k(\bdelta)}' = \delta'_{L_k+1}$ &Proposition~\ref{prop:case5} in Appendix~\ref{appx:sufficient:case5}\\
\bottomrule
\end{tabular}
\end{center}

\vspace{1em}

\noindent We observe that the five regimes in the second column are mutually exclusive and collectively exhaustive.\footnote{Note that the case where $\delta_{T_k(\bdelta)}' > \delta'_{L_k+1}$ does not need to be considered because  $\bdelta' \in \Delta_k$ implies that $\delta_{T_k(\bdelta)}' \le \delta'_{L_k+1}$. }
Consequently, we conclude that Propositions~\ref{prop:case1}-\ref{prop:case5} in combination with Lemma~\ref{lem:transformation:property_domination} completes the proof of Theorem~\ref{thm:sufficient}. The remainder of Appendix~\ref{appx:thm:sufficient} consists of proving Propositions~\ref{prop:case1}-\ref{prop:case5}.

\subsection{Case 1}\label{appx:sufficient:case1}
 Our first case consists of proving the following proposition. 
\begin{proposition} \label{prop:case1} 
If $\delta_{L_k+1}' > \delta_{L_k+1}$, then $\bdelta'$ does not dominate $\bdelta$.
\end{proposition}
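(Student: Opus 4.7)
The plan is to establish the contrapositive direction of the domination definition by exhibiting a single $\bx \in \mathcal{X}^c$ at which $J_k(\bx,\bdelta') > J_k(\bx,\bdelta)$. Since Definition~\ref{defn:pareto_reform} demands that a dominating vector weakly improve the objective at \emph{every} $\bx \in \mathcal{X}^c$, producing one such $\bx$ is enough.

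The key observation is that in the expression
\[
J_k(\bx,\bdelta) = \delta_{L_k+1} + \sum_{\ell=1}^{L_k}\left(\max\{0, r_{i_{k,\ell}}-\delta_\ell\} - (\delta_{\ell+1}-\delta_\ell)\right)x_{i_{k,\ell}},
\]
the quantity $\delta_{L_k+1}$ plays the role of a constant term, while the coefficients of the linear part depend on $\delta_1,\ldots,\delta_{L_k+1}$. To isolate the constant term, I would simply zero out all the relevant $x$-variables, i.e.\ pick any $\bx \in \mathcal{X}^c$ with $x_{i_{k,1}} = \cdots = x_{i_{k,L_k}} = 0$ (this is consistent with the constraint $x_{N+1}=1$ of $\mathcal{X}^c$ because $i_{k,L_k+1} = N+1$, and the remaining coordinates $x_{i_{k,\ell}}$ for $\ell \ge L_k+2$ do not enter $J_k$ at all and may be set arbitrarily in $[0,1]$).

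At this $\bx$, every term in the summation vanishes and we obtain $J_k(\bx,\bdelta) = \delta_{L_k+1}$ and $J_k(\bx,\bdelta') = \delta'_{L_k+1}$. Combining with the hypothesis $\delta'_{L_k+1} > \delta_{L_k+1}$, we get $J_k(\bx,\bdelta') > J_k(\bx,\bdelta)$, contradicting the weak inequality required in Definition~\ref{defn:pareto_reform} for $\bdelta'$ to dominate $\bdelta$. Hence $\bdelta'$ does not dominate $\bdelta$.

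I do not anticipate any significant obstacle here: this case is essentially a direct consequence of the fact that $\delta_{L_k+1}$ appears as the intercept of the affine-in-$\bx$ function $J_k(\cdot,\bdelta)$, and Properties~\ref{property:notspecial}--\ref{property:reverse} play no role in the argument. The deeper parts of the overall proof of Theorem~\ref{thm:sufficient} will live in Cases 2--5, where $\delta'_{L_k+1} \le \delta_{L_k+1}$ and one must carefully exploit the structure imposed by the four properties, but Case 1 is handled by this one-line test point.
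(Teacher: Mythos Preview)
Your proposal is correct and matches the paper's proof essentially line for line: the paper also selects the vector $\bx$ with $x_{i_{k,\ell}} = 0$ for $\ell \le L_k$ and $x_{i_{k,L_k+1}} = 1$, reads off $J_k(\bx,\bdelta') = \delta'_{L_k+1} > \delta_{L_k+1} = J_k(\bx,\bdelta)$, and concludes.
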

\begin{proof}{Proof of Proposition~\ref{prop:case1}.}
Assume that $\delta_{L_k+1}' > \delta_{L_k+1}$. 
Let $\bx$ be the vector that is defined for each $\ell \in [N+1]$ as
    \begin{align*}
        x_{i_{k,\ell}} &= \begin{cases}
            1,&\text{if } \ell = L_k+1,\\
            0,&\text{otherwise}
        \end{cases}
    \end{align*}We observe that
    \begin{align*}
        J_k(\bx,\bdelta') = \delta_{L_k+1}'+ \sum_{\ell=1}^{L_k}  \left( \max \left \{0, r_{i_{k,\ell}} - \delta_\ell' \right \} - \left( \delta_{\ell+1}' - \delta_\ell' \right) \right) 0 = \delta'_{L_k+1}\\
      J_k(\bx,\bdelta) = \delta_{L_k+1}+ \sum_{\ell=1}^{L_k}  \left( \max \left \{0, r_{i_{k,\ell}} - \delta_\ell \right \} - \left( \delta_{\ell+1} - \delta_\ell \right) \right) 0 = \delta_{L_k+1}
    \end{align*}
    which implies that
    \begin{align*}
        J_k(\bx,\bdelta') > J_k(\bx,\bdelta)
    \end{align*}
   Since $\bx \in \mathcal{X}$, we have shown that $\bdelta'$ does not dominate $\bdelta$. 
\halmos \end{proof}

\subsection{Cases 2 and 3}\label{appx:sufficient:case2andcase3}
Our proofs in Appendices~\ref{appx:sufficient:case2} and \ref{appx:sufficient:case3} will make use of the following lemma. 
\begin{lemma} \label{lem:case2andcase3}
    If $\hat{\bdelta} \in \Delta_k$  satisfies Property~\ref{property:forward}, then for all $\bx \in \mathcal{X}^c$ we have
    \begin{align*}
        J_k(\bx,\hat{\bdelta}) &= \hat{\delta}_{L_k+1} + \left(r_{i_{k,1}} - \hat{\delta}_2 \right)x_{i_{k,1}} + \sum_{\ell=2}^{L_k}  \left( \max \left \{0, r_{i_{k,\ell}} - \hat{\delta}_\ell \right \} - \left( \hat{\delta}_{\ell+1} - \hat{\delta}_\ell \right) \right) x_{i_{k,\ell}}
    \end{align*}
\end{lemma}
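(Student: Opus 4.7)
The plan is to observe that the claimed identity differs from the definition of $J_k(\bx,\hat{\bdelta})$ only in how the $\ell=1$ summand is written. Concretely, the defining expression contributes
\[
\bigl(\max\{0,\, r_{i_{k,1}} - \hat{\delta}_1\} - (\hat{\delta}_2 - \hat{\delta}_1)\bigr) x_{i_{k,1}}
\]
in place of $(r_{i_{k,1}} - \hat{\delta}_2) x_{i_{k,1}}$, while the terms for $\ell \in \{2,\ldots,L_k\}$ and the constant term $\hat{\delta}_{L_k+1}$ are identical. So the lemma reduces to showing that the two $\ell=1$ expressions agree.

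To do this, I would simply invoke Property~\ref{property:forward}, which states $\hat{\delta}_1 \le r_{i_{k,1}}$, so that $r_{i_{k,1}} - \hat{\delta}_1 \ge 0$ and hence $\max\{0,\, r_{i_{k,1}} - \hat{\delta}_1\} = r_{i_{k,1}} - \hat{\delta}_1$. Plugging this into the $\ell=1$ summand yields
\[
\bigl((r_{i_{k,1}} - \hat{\delta}_1) - (\hat{\delta}_2 - \hat{\delta}_1)\bigr) x_{i_{k,1}} = (r_{i_{k,1}} - \hat{\delta}_2) x_{i_{k,1}},
\]
after cancellation. Substituting back into the definition of $J_k(\bx,\hat{\bdelta})$ gives exactly the claimed expression.

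There is no real obstacle here: the lemma is a one-line algebraic simplification whose sole content is that Property~\ref{property:forward} lets one drop the $\max\{0,\cdot\}$ from the leading term. The reason it is worth isolating as a lemma is that the resulting linear-in-$x_{i_{k,1}}$ form will be convenient in Appendices~\ref{appx:sufficient:case2} and \ref{appx:sufficient:case3}, where one needs to compare $J_k(\bx,\bdelta')$ with $J_k(\bx,\bdelta)$ at specifically chosen $\bx \in \mathcal{X}^c$ and exploit that the $\ell=1$ contribution depends only on $\hat{\delta}_2$ (and not on $\hat{\delta}_1$).
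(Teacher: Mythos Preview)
Your proposal is correct and takes essentially the same approach as the paper: both isolate the $\ell=1$ summand and use Property~\ref{property:forward} to remove the $\max\{0,\cdot\}$, after which the $\hat{\delta}_1$ terms cancel to leave $(r_{i_{k,1}} - \hat{\delta}_2)x_{i_{k,1}}$. The paper's proof differs only cosmetically, first rewriting the summand as $(\max\{\hat{\delta}_1, r_{i_{k,1}}\} - \hat{\delta}_2)x_{i_{k,1}}$ before invoking Property~\ref{property:forward}.
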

\begin{proof}{Proof of Lemma~\ref{lem:case2andcase3}.}
  Note that the inequality $1 < L_k+1$ follows from Assumption~\ref{ass:L_k}  (see  Section~\ref{sec:intro:methods}). Let $\hat{\bdelta} \in \Delta_k$ satisfy Property~\ref{property:forward}.  We observe  for all $\bx \in \mathcal{X}^c$ that
        \begin{align*}
      & \left( \max \left \{0, r_{i_{k,1}} - \hat{\delta}_1 \right \} - \left( \hat{\delta}_{2} - \hat{\delta}_1 \right) \right)x_{i_{k,1}}\\
            &= \left( \max \left \{\hat{\delta}_1, r_{i_{k,1}} \right \} - \hat{\delta}_{2}   \right)x_{i_{k,1}}\\
            &= \left(  r_{i_{k,1}} - \hat{\delta}_{2}   \right)x_{i_{k,1}}
    \end{align*}
    where the first equality follows from algebra and the second equality follows from the fact that $\hat{\bdelta}$ satisfies Property~\ref{property:forward}. The above reasoning together with the definition of $J_k(\bx,\hat{\bdelta})$ completes the proof of Lemma~\ref{lem:case2andcase3}. 
\halmos \end{proof}
Equipped with Lemma~\ref{lem:case2andcase3}, we proceed to prove Case 2 and Case 3. 
\subsubsection{Case 2. }\label{appx:sufficient:case2}
 Our second case consists of proving the following proposition. 
\begin{proposition} \label{prop:case2} 
If $\delta_{L_k+1}' = \delta_{L_k+1}$ and $\delta_2' < \delta_2$, then $\bdelta'$ does not dominate $\bdelta$.
\end{proposition}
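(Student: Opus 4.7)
The plan is to exhibit a single explicit vector $\bx \in \mathcal{X}^c$ at which $J_k(\bx,\bdelta') > J_k(\bx,\bdelta)$, which by Definition~\ref{defn:pareto_reform} immediately implies that $\bdelta'$ does not dominate $\bdelta$.

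The starting point will be Lemma~\ref{lem:case2andcase3}. Since both $\bdelta$ and $\bdelta'$ satisfy Property~\ref{property:forward} by the standing assumption of Appendix~\ref{appx:thm:sufficient}, that lemma lets me express
\begin{align*}
J_k(\bx,\bdelta)  &= \delta_{L_k+1}  + \left(r_{i_{k,1}} - \delta_2\right) x_{i_{k,1}}  + \sum_{\ell=2}^{L_k} \bigl(\max\{0, r_{i_{k,\ell}} - \delta_\ell\} - (\delta_{\ell+1} - \delta_\ell)\bigr) x_{i_{k,\ell}}, \\
J_k(\bx,\bdelta') &= \delta'_{L_k+1} + \left(r_{i_{k,1}} - \delta'_2\right) x_{i_{k,1}} + \sum_{\ell=2}^{L_k} \bigl(\max\{0, r_{i_{k,\ell}} - \delta'_\ell\} - (\delta'_{\ell+1} - \delta'_\ell)\bigr) x_{i_{k,\ell}}.
\end{align*}
The crucial feature is that in the expression at index $\ell=1$, the coefficient of $x_{i_{k,1}}$ is the clean linear term $r_{i_{k,1}} - \delta_2$ (respectively $r_{i_{k,1}} - \delta'_2$), so the dependence on $\delta_2$ is laid bare.

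Next, I would choose the test point $\bx \in \mathcal{X}^c$ defined by $x_{i_{k,1}} = 1$, $x_{i_{k,\ell}} = 0$ for all $\ell \in \{2,\ldots,L_k\}$, and $x_{i_{k,L_k+1}} = x_{N+1} = 1$ (the rest of the coordinates can be set to $0$). This $\bx$ lies in $\mathcal{X}^c$ (in fact in $\mathcal{X}$). Plugging into the two displays above, every term in the sum vanishes because $x_{i_{k,\ell}} = 0$ for $\ell \ge 2$, leaving
\begin{align*}
J_k(\bx,\bdelta)  &= \delta_{L_k+1}  + r_{i_{k,1}} - \delta_2, \\
J_k(\bx,\bdelta') &= \delta'_{L_k+1} + r_{i_{k,1}} - \delta'_2.
\end{align*}
Subtracting and using the Case 2 hypotheses $\delta'_{L_k+1} = \delta_{L_k+1}$ and $\delta'_2 < \delta_2$ gives $J_k(\bx,\bdelta') - J_k(\bx,\bdelta) = \delta_2 - \delta'_2 > 0$, so $\bdelta'$ cannot dominate $\bdelta$.

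There is no real obstacle in this case; the only thing one has to be mildly careful about is to confirm that the chosen $\bx$ lies in $\mathcal{X}^c$ (which requires $x_{N+1} = 1$) and that Assumption~\ref{ass:L_k} ensures $L_k \ge 1$, so that the index $\ell = 2$ and the coefficient $(r_{i_{k,1}} - \delta_2)$ actually appear in the Lemma~\ref{lem:case2andcase3} expansion. The power of Lemma~\ref{lem:case2andcase3} is precisely that Property~\ref{property:forward} eliminates the $\max$ at the first coordinate, isolating $\delta_2$ as the decisive variable and making the test point $x_{i_{k,1}} = 1$ with everything else off the natural witness.
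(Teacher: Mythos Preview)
Your proof is correct and essentially identical to the paper's: both use Lemma~\ref{lem:case2andcase3} to strip the $\max$ at $\ell=1$, then test at the vector $\bx$ with $x_{i_{k,1}} = x_{i_{k,L_k+1}} = 1$ and all other coordinates zero, obtaining $J_k(\bx,\bdelta') - J_k(\bx,\bdelta) = \delta_2 - \delta'_2 > 0$. The paper additionally notes that the Case~2 hypotheses force $L_k+1 > 2$ (since $L_k = 1$ would give $\delta_2' = \delta'_{L_k+1} = \delta_{L_k+1} = \delta_2$), but this is not needed for your argument to go through.
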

\begin{proof}{Proof of Proposition~\ref{prop:case2}.}
Assume that $\delta_{L_k+1}' = \delta_{L_k+1}$ and $\delta_2' < \delta_2$. It follows from the fact that $\delta_{L_k+1}' = \delta_{L_k+1}$ and $\delta_2' < \delta_2$ that $L_k + 1 > 2$. 
       Let $\bx$ be the vector that is defined for each $\ell \in [N+1]$ as
    \begin{align*}
        x_{i_{k,\ell}} &= \begin{cases}
            1,&\text{if } \ell \in \{1,L_k+1\},\\
            0,&\text{otherwise}
        \end{cases}
    \end{align*}
  We observe that
        \begin{align*}
        J_k(\bx,\bdelta') &= \delta_{L_k+1}'+ \left( r_{i_{k,1}} - \delta'_{2} \right) 1 + \sum_{\ell=2}^{L_k}  \left( \max \left \{0, r_{i_{k,\ell}} - \delta_\ell' \right \} - \left( \delta_{\ell+1}' - \delta_\ell' \right) \right) 0 \\
        &= \delta_{L_k+1}' + r_{i_{k,1}}  - \delta_{2}' \\
&> \delta_{L_k+1} + r_{i_{k,1}}  - \delta_{2} \\
&=  \delta_{L_k+1}+ \left( r_{i_{k,1}} - \delta_{2} \right) 1 + \sum_{\ell=2}^{L_k}  \left( \max \left \{0, r_{i_{k,\ell}} - \delta_\ell \right \} - \left( \delta_{\ell+1} - \delta_\ell \right) \right) 0 \\
&= J_k(\bx,\bdelta)
    \end{align*}
       The first equality follows from the fact that $\bdelta'$ satisfies Property~\ref{property:forward} and from Lemma~\ref{lem:case2andcase3} as well as the construction of $\bx$. The second equality follows from algebra. The inequality follows from the fact that $\delta'_{L_k+1} = \delta_{L_k+1}$ and $\delta'_2 < \delta_2$. The third equality follows from algebra. The fourth equality   follows from the fact that $\bdelta$ satisfies Property~\ref{property:forward} and from Lemma~\ref{lem:case2andcase3} as well as the construction of $\bx$.   Since $\bx \in \mathcal{X}$, we have shown that $\bdelta'$ does not dominate $\bdelta$.  \halmos \end{proof}
  \subsubsection{Case 3. }\label{appx:sufficient:case3}
 Our third case consists of proving the following proposition. 
\begin{proposition} \label{prop:case3} 
 If $\delta_{L_k+1}' = \delta_{L_k+1}$ and $\delta_2' \ge \delta_2$, then $\bdelta'$ does not dominate $\bdelta$.
\end{proposition}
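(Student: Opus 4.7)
Since both $\bdelta$ and $\bdelta'$ satisfy Property~\ref{property:forward}, Lemma~\ref{lem:case2andcase3} yields the linear-in-$\bx$ representation
\begin{align*}
J_k(\bx,\bdelta')-J_k(\bx,\bdelta) \;=\; (\delta_2-\delta_2')\,x_{i_{k,1}} + \sum_{\ell=2}^{L_k}\bigl(g_\ell(\bdelta')-g_\ell(\bdelta)\bigr)\,x_{i_{k,\ell}},
\end{align*}
where $g_\ell(\hat\bdelta) := \max\{\hat\delta_\ell,\,r_{i_{k,\ell}}\}-\hat\delta_{\ell+1}$ and the constant term vanishes because $\delta'_{L_k+1}=\delta_{L_k+1}$. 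Setting $h_\ell := \delta'_\ell - \delta_\ell$, the Case~3 hypotheses give $h_2\ge0$ and $h_{L_k+1}=0$. The plan is to split on the sign pattern of $(h_2,\dots,h_{L_k+1})$ and either exhibit some $\bx\in\mathcal{X}^c$ with $J_k(\bx,\bdelta')>J_k(\bx,\bdelta)$ or show that $J_k(\cdot,\bdelta')\equiv J_k(\cdot,\bdelta)$; either conclusion rules out domination in the sense of Definition~\ref{defn:pareto_reform}.

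\paragraph{Three sub-cases.} If $h\equiv0$ on $\{2,\dots,L_k+1\}$, the display above equals $0$ for every $\bx$ and domination fails. If $h\ge0$ coordinate-wise with $h_{m^\star}>0$ for some $m^\star$, I take $m^\star$ to be maximal in $\{2,\dots,L_k\}$, so that $\delta'_{m^\star+1}=\delta_{m^\star+1}$ and $\delta'_{m^\star}>\delta_{m^\star}$; I will invoke Property~\ref{property:gap} for both $\bdelta$ and $\bdelta'$ together with the monotonicity of $\bdelta'$ to conclude that $\delta'_{m^\star}>r_{i_{k,m^\star}}$, whence $g_{m^\star}(\bdelta')-g_{m^\star}(\bdelta)>0$, and the witness is the $\bx$ with $x_{i_{k,m^\star}}=x_{N+1}=1$ and every other coordinate zero. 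Finally, if some $h_j<0$, let $j$ be minimal; then $j\ge3$ and $h_{j-1}\ge0$, so
\begin{align*}
g_{j-1}(\bdelta')-g_{j-1}(\bdelta) = \bigl(\max\{\delta'_{j-1},r_{i_{k,j-1}}\}-\max\{\delta_{j-1},r_{i_{k,j-1}}\}\bigr) + (\delta_j-\delta'_j) > 0,
\end{align*}
because the first summand is nonnegative (by $h_{j-1}\ge0$) and the second is strictly positive; the witness is the $\bx$ with $x_{i_{k,j-1}}=x_{N+1}=1$ and every other coordinate zero.

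\paragraph{Main obstacle.} The delicate step is ruling out $\delta'_{m^\star}\le r_{i_{k,m^\star}}$ in the second sub-case. If $\delta'_{m^\star}<r_{i_{k,m^\star}}$, Property~\ref{property:gap} at $\bdelta'$ gives $\delta'_{m^\star}=\delta'_{m^\star+1}=\delta_{m^\star+1}$, while the chain $\delta_{m^\star}<\delta'_{m^\star}<r_{i_{k,m^\star}}$ combined with Property~\ref{property:gap} at $\bdelta$ yields $\delta_{m^\star}=\delta_{m^\star+1}=\delta'_{m^\star}$, contradicting $h_{m^\star}>0$. If instead $\delta'_{m^\star}=r_{i_{k,m^\star}}$, then Property~\ref{property:gap} at $\bdelta$ forces $\delta_{m^\star+1}=\delta_{m^\star}<r_{i_{k,m^\star}}$, so $\delta'_{m^\star+1}=\delta_{m^\star+1}<r_{i_{k,m^\star}}=\delta'_{m^\star}$, contradicting monotonicity of $\bdelta'$. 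Once this case split is settled, the remainder of the argument reduces to short algebraic verifications in the spirit of Propositions~\ref{prop:case1} and~\ref{prop:case2}.
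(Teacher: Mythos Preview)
Your proof is correct, but it proceeds by a genuinely different decomposition than the paper's. The paper does not split on the sign pattern of $h_\ell=\delta'_\ell-\delta_\ell$; instead, after disposing of the case $(\delta_2,\dots,\delta_{L_k+1})=(\delta'_2,\dots,\delta'_{L_k+1})$, it proves a short auxiliary lemma (Lemma~\ref{lem:idkmaybe}) showing that there must exist $\ell^*\in\{2,\dots,L_k\}$ with $\delta'_{\ell^*+1}-\delta'_{\ell^*}<\delta_{\ell^*+1}-\delta_{\ell^*}$, and then uses a single witness $\bx$ supported on $\{i_{k,\ell^*},N+1\}$. The key point in the paper's computation is that the strict inequality $\delta_{\ell^*}<\delta_{\ell^*+1}$, together with Property~\ref{property:gap} for $\bdelta$ \emph{only}, forces $\delta_{\ell^*}\ge r_{i_{k,\ell^*}}$, so $J_k(\bx,\bdelta)=\delta_{L_k+1}-(\delta_{\ell^*+1}-\delta_{\ell^*})$ exactly, while $J_k(\bx,\bdelta')\ge\delta_{L_k+1}-(\delta'_{\ell^*+1}-\delta'_{\ell^*})$ just by dropping the nonnegative $\max$-term.

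Your route trades that increment lemma for a three-way split on $h$. In your second sub-case you need Property~\ref{property:gap} for \emph{both} $\bdelta$ and $\bdelta'$ (your ``main obstacle'' paragraph), whereas the paper never invokes it for $\bdelta'$; conversely, your third sub-case needs no structural property at all beyond Lemma~\ref{lem:case2andcase3} and monotonicity of $\max$. The paper's argument is a bit more uniform (one witness construction, one invocation of Property~\ref{property:gap}); yours is slightly more hands-on but avoids the telescoping lemma and makes transparent exactly which hypothesis is doing the work in each regime.
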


Our proof of Proposition~\ref{prop:case3} will utilize the following lemma.
\begin{lemma} \label{lem:idkmaybe}
 If $\delta_{L_k+1}' = \delta_{L_k+1}$ and $\delta_2' \ge \delta_2$,  then either $(\delta_2,\ldots,\delta_{L_k+1}) = (\delta_2',\ldots,\delta_{L_k+1}')$ or there exists $\ell \in \{2,\ldots,L_k\}$ such that $\delta'_{\ell+1} - \delta'_\ell < \delta_{\ell+1} - \delta_{\ell}$.
\end{lemma}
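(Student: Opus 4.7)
The plan is to reduce the statement to a telescoping-sum argument. I will start from the identities
\[
\sum_{\ell=2}^{L_k}\left(\delta_{\ell+1}-\delta_\ell\right) \;=\; \delta_{L_k+1}-\delta_2,\qquad \sum_{\ell=2}^{L_k}\left(\delta'_{\ell+1}-\delta'_\ell\right) \;=\; \delta'_{L_k+1}-\delta'_2,
\]
and combine them with the hypotheses $\delta'_{L_k+1}=\delta_{L_k+1}$ and $\delta'_2\ge \delta_2$ to conclude that
\[
\sum_{\ell=2}^{L_k}\left(\delta'_{\ell+1}-\delta'_\ell\right) \;\le\; \sum_{\ell=2}^{L_k}\left(\delta_{\ell+1}-\delta_\ell\right),
\]
where the inequality is strict whenever $\delta'_2 > \delta_2$.

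Next I would split into two cases. In the strict case ($\delta'_2>\delta_2$), the displayed strict inequality between the two sums forces at least one index $\ell\in\{2,\ldots,L_k\}$ to satisfy $\delta'_{\ell+1}-\delta'_\ell < \delta_{\ell+1}-\delta_\ell$, and we are done. In the equality case ($\delta'_2=\delta_2$), the two sums are equal; if no index $\ell\in\{2,\ldots,L_k\}$ satisfies the strict inequality in the conclusion, then $\delta'_{\ell+1}-\delta'_\ell \ge \delta_{\ell+1}-\delta_\ell$ for all such $\ell$, and since the sums coincide, every term-wise inequality must actually be an equality. Combining these term-wise equalities with $\delta'_2=\delta_2$ and unwinding the telescope then yields $\delta'_\ell=\delta_\ell$ for every $\ell\in\{2,\ldots,L_k+1\}$, which is the first alternative in the conclusion.

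I do not anticipate a substantive obstacle here; the proof is essentially a pigeonhole / averaging observation once the telescoping identity is written down. The only thing to keep track of is handling both alternatives of the ``or'' in the statement simultaneously: that is, making sure that whenever the term-wise inequalities fail to witness the desired strict drop, the two vectors must in fact coincide on the relevant coordinates, which follows cleanly from the equality-in-the-sum argument described above.
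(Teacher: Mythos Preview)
Your proposal is correct and follows essentially the same telescoping-sum argument as the paper's proof: both compare $\sum_{\ell=2}^{L_k}(\delta'_{\ell+1}-\delta'_\ell)=\delta'_{L_k+1}-\delta'_2$ with $\sum_{\ell=2}^{L_k}(\delta_{\ell+1}-\delta_\ell)=\delta_{L_k+1}-\delta_2$, use the hypotheses to bound one sum by the other, and then argue that either some term-wise strict inequality must occur or all increments coincide (forcing the vectors to agree). The only cosmetic difference is that the paper phrases the argument as a single proof by contradiction rather than splitting on whether $\delta'_2>\delta_2$ or $\delta'_2=\delta_2$.
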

   \begin{proof}{Proof of Lemma~\ref{lem:idkmaybe}.}
  Assume that $\delta_{L_k+1}' = \delta_{L_k+1}$ and $\delta_2' \ge \delta_2$. We recall from Assumption~\ref{ass:L_k}  (see  Section~\ref{sec:intro:methods}) that $L_k \ge 1$, and the lemma clearly holds if $L_k= 1$. We thus assume throughout the rest of the proof of Lemma~\ref{lem:idkmaybe} that $L_k \ge 2$. Suppose that $(\delta_2,\ldots,\delta_{L_k+1}) \neq (\delta_2',\ldots,\delta_{L_k+1}')$ and 
   \begin{align}
      \delta'_{\ell'+1} - \delta'_{\ell'} \ge \delta_{\ell'+1} - \delta_{\ell'}\;  \forall \ell' \in \{2,\ldots,L_k\}.  \label{line:idkmaybe:claim:1}
   \end{align}
   In that case, we observe that
   \begin{align}
       \delta'_{L_k+1} - \delta'_{2} = \sum_{\ell'=2}^{L_k} \left( \delta'_{\ell'+1} - \delta'_{\ell'} \right) \ge \sum_{\ell'=2}^{L_k} \left( \delta_{\ell'+1} - \delta_{\ell'} \right) =  \delta_{L_k+1} - \delta_{2}. \label{line:idkmaybe:claim:2} 
   \end{align}
   We recall that $\delta_{L_k+1}' = \delta_{L_k+1}$ and $\delta'_2 \ge \delta_2$, and so it follows from \eqref{line:idkmaybe:claim:2} that
   \begin{align}
  \delta'_{L_k+1} - \delta'_{2}  =  \delta_{L_k+1} - \delta_{2}.    \label{line:idkmaybe:claim:3}
    \end{align}
       The only way that \eqref{line:idkmaybe:claim:1} and \eqref{line:idkmaybe:claim:3} can be true simultaneously is for the equality $\delta'_{\ell'+1} - \delta'_{\ell'} = \delta_{\ell'+1} - \delta_{\ell'}$ to hold for all $\ell' \in \{2,\ldots,L_k\}$. Since $\delta_{L_k+1}' = \delta_{L_k+1}$, it must be the case that
    $(\delta_2,\ldots,\delta_{L_k+1}) = (\delta_2',\ldots,\delta_{L_k+1}')$, which is a contradiction.  This concludes the proof of Lemma~\ref{lem:idkmaybe}. 
   \halmos \end{proof}

  Equipped with Lemma~\ref{lem:idkmaybe}, we now present our proof of Proposition~\ref{prop:case3}. 
\begin{proof}{Proof of Proposition~\ref{prop:case3}.}
Assume that   $\delta_{L_k+1}' = \delta_{L_k+1}$ and $\delta_2' \ge \delta_2$. If $(\delta_2,\ldots,\delta_{L_k+1}) = (\delta_2',\ldots,\delta_{L_k+1}')$, then it follows from Lemma~\ref{lem:case2andcase3} and from the fact that $\bdelta$ and $\bdelta'$ satisfy Property~\ref{property:forward} that $J_k(\bx,\bdelta) = J_k(\bx,\bdelta')$ for all $\bx \in \mathcal{X}^c$, which implies that $\bdelta'$ does not dominate $\bdelta$. Therefore, we assume henceforth that $(\delta_2,\ldots,\delta_{L_k+1}) \neq (\delta_2',\ldots,\delta_{L_k+1}')$, and so it follows from Lemma~\ref{lem:idkmaybe} that there exists   $\ell^* \in \{2,\ldots,L_k\}$ that satisfies $\delta'_{\ell^*+1} - \delta'_{\ell^*} < \delta_{\ell^*+1} - \delta_{\ell^*}$.    Let $\bx$ be the vector that is defined for each $\ell \in [N+1]$ as
    \begin{align*}
        x_{i_{k,\ell}} &= \begin{cases}
            1,&\text{if } \ell \in \{\ell^*,L_k+1\},\\
            0,&\text{otherwise}
        \end{cases}
    \end{align*}
 We observe that 
       \begin{align*}
        J_k(\bx,\bdelta')  &= {\delta}_{L_k+1}' + \left(r_{i_{k,1}} - {\delta}_2' \right)0 + \sum_{\ell \in \{2,\ldots,L_k\} \setminus \{ \ell^*\}}  \left( \max \left \{0, r_{i_{k,\ell}} - {\delta}_\ell' \right \} - \left( {\delta}_{\ell+1}' - {\delta}_\ell' \right) \right) 0\\
       &\quad + \left( \max \left \{0, r_{i_{k,\ell^*}} - \delta_{\ell^*}' \right \} - \left( \delta_{\ell^*+1}' - \delta_{\ell^*}' \right) \right)1\\
           &= \delta_{L_k+1}' + \max \left \{0, r_{i_{k,\ell^*}} - \delta_{\ell^*}' \right \} - \left( \delta_{\ell^*+1}' - \delta_{\ell^*}' \right) \\
           &\ge \delta_{L_k+1}' + 0 - \left( \delta_{\ell^*+1}' - \delta_{\ell^*}' \right) \\
        &= \delta_{L_k+1} - \left( \delta_{\ell^*+1}' - \delta_{\ell^*}' \right)  \\
     &> \delta_{L_k+1} - \left( \delta_{\ell^*+1} - \delta_{\ell^*} \right) 
    \end{align*}
   The first equality follows from the fact that $\bdelta'$ satisfies Property~\ref{property:forward} and from Lemma~\ref{lem:case2andcase3} as well as the construction of $\bx$. The second equality and the first inequality follow from algebra. The third equality holds because $\delta_{L_k+1} = \delta'_{L_k+1}$. The second inequality  follows from the definition of $\ell^*$. 
    Moreover, we observe that
       \begin{align*}
        J_k(\bx,\bdelta)
  &= {\delta}_{L_k+1} + \left(r_{i_{k,1}} - {\delta}_2 \right)0 + \sum_{\ell \in \{2,\ldots,L_k\} \setminus \{ \ell^*\}}  \left( \max \left \{0, r_{i_{k,\ell}} - {\delta}_\ell \right \} - \left( {\delta}_{\ell+1} - {\delta}_\ell \right) \right) 0\\
       &\quad + \left( \max \left \{0, r_{i_{k,\ell^*}} - \delta_{\ell^*} \right \} - \left( \delta_{\ell^*+1} - \delta_{\ell^*} \right) \right)1\\
           &= \delta_{L_k+1} + \max \left \{0, r_{i_{k,\ell^*}} - \delta_{\ell^*} \right \} - \left( \delta_{\ell^*+1} - \delta_{\ell^*} \right)  \notag \\
    &= \delta_{L_k+1} + 0 - \left( \delta_{\ell^*+1} - \delta_{\ell^*} \right)    
    \end{align*}
   The first equality follows from the fact that $\bdelta$ satisfies Property~\ref{property:forward} and from Lemma~\ref{lem:case2andcase3} as well as the construction of $\bx$. The second equality follows from algebra. The third equality holds because $\bdelta$ satisfies Property~\ref{property:gap} and because $0 \le \delta'_{\ell^*+1} - \delta'_{\ell^*} < \delta_{\ell^*+1} - \delta_{\ell^*}$ and because $\ell^* \in \{2,\ldots,L_k\}$, which together imply that $\delta_{\ell^*} \ge r_{i_{k,\ell^*}}$. Combining the above reasoning, we have shown that
   \begin{align*}
       J_k(\bx,\bdelta') > \delta_{L_k+1} - \left( \delta_{\ell^*+1} - \delta_{\ell^*} \right)  = J_k(\bx,\bdelta) 
   \end{align*}
   Since $\bx \in \mathcal{X}$, we have shown that $\bdelta'$ does not dominate $\bdelta$.
\halmos \end{proof}

\subsection{Case 4}\label{appx:sufficient:case4}
 Our fourth case consists of proving the following proposition.
 
\begin{proposition} \label{prop:case4} 
 If $\delta_{L_k+1}' < \delta_{L_k+1}$ and $\delta_{T_k(\bdelta)}' < \delta'_{L_k+1}$, then $\bdelta'$ does not dominate $\bdelta$.
\end{proposition}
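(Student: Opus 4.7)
The plan is to exhibit a specific $\bx \in \mathcal{X}$ satisfying $J_k(\bx,\bdelta') > J_k(\bx,\bdelta)$, which by Definition~\ref{defn:pareto_reform} immediately rules out $\bdelta'$ dominating $\bdelta$. Setting $T \triangleq T_k(\bdelta)$, my first step would be to observe that $T \in \{2,\ldots,L_k\}$: the lower bound $T \ge 2$ is Property~\ref{property:notspecial} for $\bdelta$, and the upper bound $T \le L_k$ follows because $T = L_k+1$ would force $\delta'_T = \delta'_{L_k+1}$ by monotonicity of $\bdelta'$, contradicting the Case~4 hypothesis $\delta'_T < \delta'_{L_k+1}$. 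The witness I would take is the vector $\bx$ defined by $x_{i_{k,T}} = x_{i_{k,L_k+1}} = 1$ and all other entries zero, which visibly lies in $\mathcal{X}$.

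Evaluating $J_k(\bx,\bdelta)$ is the easier of the two computations: Property~\ref{property:reverse} for $\bdelta$ gives $\delta_T = \delta_{T+1} = \delta_{L_k+1}$, and the definition of $T = T_k(\bdelta)$ gives $\delta_T \le r_{i_{k,T}}$, so substitution into the formula for $J_k$ collapses to $J_k(\bx,\bdelta) = r_{i_{k,T}}$.

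The main obstacle will be evaluating $J_k(\bx,\bdelta')$, which hinges on two auxiliary claims about $\bdelta'$ at positions $T$ and $T+1$. The first claim is $\delta'_T < r_{i_{k,T}}$, which I would derive by chaining the Case~4 hypothesis $\delta'_{L_k+1} < \delta_{L_k+1}$ with Property~\ref{property:reverse} for $\bdelta$ (so $\delta_{L_k+1} = \delta_T$) and the inequality $\delta_T \le r_{i_{k,T}}$ from the definition of $T$, then invoking monotonicity $\delta'_T \le \delta'_{L_k+1}$. The second claim is $\delta'_T = \delta'_{T+1}$, which I would prove by contradiction: if instead $\delta'_T < \delta'_{T+1}$, then Property~\ref{property:gap} applied to $\bdelta'$ at position $\ell = T \in \{2,\ldots,L_k\}$ would force $\delta'_T \ge r_{i_{k,T}}$, contradicting the first claim.

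Substituting these two claims into the formula for $J_k$ yields $J_k(\bx,\bdelta') = \delta'_{L_k+1} + (r_{i_{k,T}} - \delta'_T) - (\delta'_{T+1} - \delta'_T) = r_{i_{k,T}} + (\delta'_{L_k+1} - \delta'_T)$, and the remaining Case~4 hypothesis $\delta'_T < \delta'_{L_k+1}$ delivers $J_k(\bx,\bdelta') > r_{i_{k,T}} = J_k(\bx,\bdelta)$, completing the plan.
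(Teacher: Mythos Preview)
Your proposal is correct and follows essentially the same approach as the paper: the same witness $\bx$ (indicator of positions $T_k(\bdelta)$ and $L_k+1$), the same two auxiliary facts $\delta'_T < r_{i_{k,T}}$ and $\delta'_T = \delta'_{T+1}$ via Property~\ref{property:gap}, and the same endpoint computations yielding $J_k(\bx,\bdelta') = r_{i_{k,T}} + (\delta'_{L_k+1} - \delta'_T) > r_{i_{k,T}} = J_k(\bx,\bdelta)$. Your argument for $T \le L_k$ (direct contradiction with the hypothesis $\delta'_T < \delta'_{L_k+1}$) is slightly cleaner than the paper's, which instead argues $\delta_{L_k+1} > 0 = r_{i_{k,L_k+1}}$, but both are valid and the overall structure is the same.
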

\begin{proof}{Proof of Proposition~\ref{prop:case4}.}
Assume that   $\delta_{L_k+1}' < \delta_{L_k+1}$ and $\delta_{T_k(\bdelta)}' < \delta'_{L_k+1}$. For notational convenience, let $\ell^* \triangleq T_k(\bdelta)$.    We will make use of the following two intermediary claims.
\begin{claim}\label{claim:newslyadded}
$\delta'_{\ell^*} < \delta_{\ell^*} \le r_{i_{k,\ell^*}}$.
\end{claim}
\begin{proof}{Proof of Claim~\ref{claim:newslyadded}.}
We observe that 
\begin{align*}
    \delta'_{\ell^*} &\le \delta'_{L_k+1} < \delta_{L_k+1} = \delta_{\ell^*}\le r_{i_{k,\ell^*}}
\end{align*}
The first inequality follows from the fact that $\bdelta' \in \Delta_k$. The second inequality follows from assumption. The equality follows from the fact that $\bdelta$ satisfies Property~\ref{property:reverse} and the fact that $\ell^* = T_k(\bdelta)$. The third inequality follows from the definition of $T_k(\bdelta)$ and the fact that $\ell^* = T_k(\bdelta)$. That completes the proof of Claim~\ref{claim:newslyadded}. 
\halmos \end{proof}
\begin{claim}\label{claim:ell_star_between_two_and_L_k}
    $\ell^* \in \{2,\ldots,L_k\}$. 
\end{claim}
\begin{proof}{Proof of Claim~\ref{claim:ell_star_between_two_and_L_k}.}
We observe that 
\begin{align*}
    r_{i_{k,L_k+1}} = 0 \le \delta'_{\ell^*} < \delta_{\ell^*}
\end{align*}
where the equality follows from the fact that $i_{k,L_k+1} = N+1$, the first inequality follows from the fact that $\bdelta' \in \Delta_k$, and the second inequality follows from Claim~\ref{claim:newslyadded}. This shows that $\ell^* <L_k+1$. Moreover,  it also follows from the fact that $\bdelta$ satisfies Property~\ref{property:notspecial} that $\ell^* \ge 2$.  That completes the proof of Claim~\ref{claim:ell_star_between_two_and_L_k}. 
\halmos \end{proof}

Equipped with the above intermediary claims, we now proceed to prove Proposition~\ref{prop:case4}. Indeed, let $\bx$ be the vector that is defined for each $\ell \in [N+1]$ as
    \begin{align*}
        x_{i_{k,\ell}} &= \begin{cases}
            1,&\text{if } \ell \in \{\ell^*,L_k+1\},\\
            0,&\text{otherwise}
        \end{cases}
    \end{align*}
 Note that it follows from Claim~\ref{claim:ell_star_between_two_and_L_k} that $\ell^* < L_k+1$. We observe that 
  \begin{align*}
        J_k(\bx,\bdelta')  &= \delta_{L_k+1}'+ \sum_{\ell \in \{1,\ldots,L_k\} \setminus \{\ell^*\}}  \left( \max \left \{0, r_{i_{k,\ell}} - \delta_\ell' \right \} - \left( \delta_{\ell+1}' - \delta_\ell' \right) \right) 0 \\
       &\quad + \left( \max \left \{0, r_{i_{k,\ell^*}} - \delta_{\ell^*}' \right \} - \left( \delta_{\ell^*+1}' - \delta_{\ell^*}' \right) \right)1\\
           &= \delta_{L_k+1}' + \max \left \{0, r_{i_{k,\ell^*}} - \delta_{\ell^*}' \right \} - \left( \delta_{\ell^*+1}' - \delta_{\ell^*}' \right) \\
           &= \delta_{L_k+1}' + \left(  r_{i_{k,\ell^*}} - \delta_{\ell^*}' \right) - \left( \delta_{\ell^*+1}' - \delta_{\ell^*}' \right) \\
           &= \delta_{L_k+1}' + r_{i_{k,\ell^*}}  - \delta_{\ell^*+1}'  \\
           &= \delta_{L_k+1}' + r_{i_{k,\ell^*}}  - \delta_{\ell^*}'  \\
        &> \delta_{\ell^*}' + r_{i_{k,\ell^*}}  - \delta_{\ell^*}'  \\
        &=r_{i_{k,\ell^*}} 
    \end{align*}
    The first equality follows from the construction of $\bx$, and the second equality follows from algebra. The third equality  follows from Claim~\ref{claim:newslyadded}. The fourth equality follows from algebra. The fifth equality follows from the fact that $\delta'_{\ell^*} < r_{i_{k,\ell^*}}$ (Claim~\ref{claim:newslyadded}) and the fact that $\ell^*  \in \{2,\ldots,L_k\}$ (Claim~\ref{claim:ell_star_between_two_and_L_k}), which together with the fact that $\bdelta'$ satisfies Property~\ref{property:gap} implies that $\delta'_{\ell^*+1} = \delta'_{\ell^*}$. The inequality follows from the assumption that $\delta_{\ell^*}' < \delta'_{L_k+1}$, and the sixth equality follows from algebra.    Moreover, we observe that 
       \begin{align}
        J_k(\bx,\bdelta)
       &= \delta_{L_k+1}+ \sum_{\ell \in \{1,\ldots,L_k\} \setminus \{\ell^*\}}  \left( \max \left \{0, r_{i_{k,\ell}} - \delta_\ell \right \} - \left( \delta_{\ell+1} - \delta_\ell \right) \right) 0 \notag \\
       &\quad + \left( \max \left \{0, r_{i_{k,\ell^*}} - \delta_{\ell^*}\right \} - \left( \delta_{\ell^*+1} - \delta_{\ell^*} \right) \right)1 \notag\\
           &= \delta_{L_k+1} + \max \left \{0, r_{i_{k,\ell^*}} - \delta_{\ell^*} \right \} - \left( \delta_{\ell^*+1} - \delta_{\ell^*} \right)  \notag \\
    &= \delta_{L_k+1} + \left( r_{i_{k,\ell^*}} - \delta_{\ell^*} \right) - \left( \delta_{\ell^*+1} - \delta_{\ell^*} \right)  \notag \\
    &= \delta_{L_k+1} + \left( r_{i_{k,\ell^*}} - \delta_{L_k+1}\right) - \left( \delta_{L_k+1} -\delta_{L_k+1} \right)  \notag \\
        &=  r_{i_{k,\ell^*}}  \notag 
    \end{align}
    The first equality follows from the construction of $\bx$, and the second equality follows from algebra. The third equality  follows from Claim~\ref{claim:newslyadded}. The fourth equality follows from the fact that $\bdelta$ satisfies Property~\ref{property:reverse} and the fact that $\ell^* = T_k(\bdelta)$, which imply that $\delta_{\ell^*} = \delta_{\ell^*+1} = \cdots = \delta_{L_k+1}$. The fifth equality follows from algebra.  Combining the above reasoning, we have shown that 
    \begin{align*}
        J_k(\bx,\bdelta') > r_{i_{k,\ell^*}} = J_k(\bx,\bdelta).
    \end{align*}
   Since $\bx \in \mathcal{X}$, we have shown that $\bdelta'$ does not dominate $\bdelta$.
\halmos \end{proof}

\subsection{Case 5} \label{appx:sufficient:case5}
 Our fifth case consists of proving the following proposition. 

\begin{proposition} \label{prop:case5} 
If $\delta_{L_k+1}' < \delta_{L_k+1}$ and $\delta_{T_k(\bdelta)}' = \delta'_{L_k+1}$, then $\bdelta'$ does not dominate $\bdelta$.
\end{proposition}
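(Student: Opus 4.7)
} Let $\ell^* \triangleq T_k(\bdelta)$, and assume for the sake of contradiction that $\bdelta'$ dominates $\bdelta$. By Property~\ref{property:reverse} for $\bdelta$ we have $\delta_{\ell^*} = \cdots = \delta_{L_k+1}$, and combining the hypothesis $\delta'_{\ell^*} = \delta'_{L_k+1}$ with monotonicity of $\bdelta'$ (a feasible solution for \eqref{prob:dual_reform}) yields $\delta'_{\ell^*} = \cdots = \delta'_{L_k+1}$ as well. A short calculation (mirroring the manipulations in Appendix~\ref{appx:sufficient:case2andcase3}) shows that for any $\hat{\bdelta} \in \Delta_k$ obeying Properties~\ref{property:forward} and~\ref{property:gap},
\begin{equation*}
J_k(\bx, \hat{\bdelta}) = \hat{\delta}_{L_k+1} + \sum_{\ell=1}^{L_k} f_\ell(\hat{\bdelta})\,x_{i_{k,\ell}}, \qquad f_\ell(\hat{\bdelta}) \triangleq \max\{r_{i_{k,\ell}}, \hat{\delta}_\ell\} - \hat{\delta}_{\ell+1}.
\end{equation*}

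For each $\ell \in \{1,\ldots,L_k\}\setminus\{\ell^*\}$, introduce the test vector $\bx^{(\ell)} \in \mathcal{X}$ with $x_{i_{k,\ell}} = x_{i_{k,\ell^*}} = x_{i_{k,L_k+1}} = 1$ and all other coordinates zero. Using the plateau structure above together with $\delta'_{L_k+1} < \delta_{L_k+1} \le r_{i_{k,\ell^*}}$ (the second inequality from $\ell^* = T_k(\bdelta)$), one obtains $f_{\ell^*}(\bdelta) = r_{i_{k,\ell^*}} - \delta_{L_k+1}$ and $f_{\ell^*}(\bdelta') = r_{i_{k,\ell^*}} - \delta'_{L_k+1}$, so that the constant term and the index-$\ell^*$ contribution telescope and
\begin{equation*}
J_k(\bx^{(\ell)}, \bdelta') - J_k(\bx^{(\ell)}, \bdelta) = f_\ell(\bdelta') - f_\ell(\bdelta).
\end{equation*}
Because $\bdelta'$ dominates $\bdelta$, this quantity is nonpositive, i.e.\ $f_\ell(\bdelta) \ge f_\ell(\bdelta')$ for every $\ell \in \{1,\ldots,L_k\} \setminus \{\ell^*\}$. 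Writing $g_\ell \triangleq \max\{r_{i_{k,\ell}}, \delta_\ell\} - \max\{r_{i_{k,\ell}}, \delta'_\ell\}$ and $s_\ell \triangleq \delta_\ell - \delta'_\ell$, this reduces to $g_\ell \ge s_{\ell+1}$.

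I will derive a contradiction via backward induction on $\ell \in \{\ell^*-1,\ell^*-2,\ldots,1\}$ (valid since $\ell^* \ge 2$ by Property~\ref{property:notspecial}) showing $\delta_\ell > r_{i_{k,\ell}}$ and $s_\ell \ge s_{L_k+1}$. The base is $s_{\ell^*} = \delta_{L_k+1} - \delta'_{L_k+1} > 0$ from the plateau structure. For the inductive step, the hypothesis $s_{\ell+1} \ge s_{L_k+1} > 0$ together with $g_\ell \ge s_{\ell+1}$ gives $g_\ell > 0$; but if $\delta_\ell \le r_{i_{k,\ell}}$ then $\max\{r_{i_{k,\ell}}, \delta_\ell\} = r_{i_{k,\ell}} \le \max\{r_{i_{k,\ell}}, \delta'_\ell\}$, forcing $g_\ell \le 0$, a contradiction. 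Hence $\delta_\ell > r_{i_{k,\ell}}$, and a two-case split on whether $\delta'_\ell$ exceeds $r_{i_{k,\ell}}$ yields $s_\ell \ge g_\ell \ge s_{\ell+1}$ in each case. At $\ell = 1$ the induction delivers $\delta_1 > r_{i_{k,1}}$, contradicting Property~\ref{property:forward} for $\bdelta$. The main obstacle is bookkeeping: both the cancellation at $\bx^{(\ell)}$ and the inductive case-split depend on using Properties~\ref{property:forward}--\ref{property:reverse} simultaneously for $\bdelta$ and $\bdelta'$ in exactly the right way, but once the plateau structure has been extracted for $\bdelta'$, both steps proceed cleanly.
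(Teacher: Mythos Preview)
Your argument is correct and takes a genuinely different route from the paper. One small omission: you should record that $\ell^*\le L_k$ (otherwise the index $\ell^*$ does not appear in the sum defining $J_k$, the test vector $\bx^{(\ell)}$ collapses, and the telescoping step fails). This follows immediately from the hypothesis $\delta'_{L_k+1}<\delta_{L_k+1}$ together with $\bdelta'\in\Delta_k$: if $\ell^*=L_k+1$ then $\delta_{L_k+1}\le r_{i_{k,L_k+1}}=0$, forcing $\delta'_{L_k+1}<0$, a contradiction. With that in place, your induction goes through cleanly. As a side remark, the identity $\max\{0,r-\hat\delta_\ell\}-(\hat\delta_{\ell+1}-\hat\delta_\ell)=\max\{r,\hat\delta_\ell\}-\hat\delta_{\ell+1}$ is pure algebra and does not actually require Properties~\ref{property:forward} or~\ref{property:gap}; in fact your proof never uses Property~\ref{property:gap} for either $\bdelta$ or $\bdelta'$, and uses no structural property of $\bdelta'$ beyond $\bdelta'\in\Delta_k$ and the hypothesis of the proposition.

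The paper's proof is direct rather than by contradiction: it exhibits a single explicit $\bar\bx\in\mathcal{X}$ at which $J_k(\bar\bx,\bdelta')>J_k(\bar\bx,\bdelta)$. To do so it splits into two parts according to whether $\delta'_2<\delta_2$ or $\delta'_2\ge\delta_2$; in the first part $\bar\bx$ has ones at $\{1,\ell^*,L_k+1\}$, and in the second part $\bar\bx$ has ones at $\{\ell^*,L_k+1\}\cup\mathcal{L}$ where $\mathcal{L}=\{\ell\in\{2,\dots,\ell^*-1\}:\delta_\ell<\delta_{\ell+1}\}$, with Property~\ref{property:gap} for $\bdelta$ used to control the contribution of $\mathcal{L}$. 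Your approach trades this case split and the reliance on Property~\ref{property:gap} for a family of two-product test vectors $\bx^{(\ell)}$ and a short backward induction, which is arguably more uniform; the paper's approach, on the other hand, is constructive and identifies the witnessing assortment explicitly.
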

\begin{proof}{Proof of Proposition~\ref{prop:case5}.}
Assume that   $\delta_{L_k+1}' < \delta_{L_k+1}$ and $\delta_{T_k(\bdelta)}' = \delta'_{L_k+1}$. For notational convenience, let $\ell^* \triangleq T_k(\bdelta)$. 
We will utilize the following three intermediary claims.
\begin{claim}\label{claim:man_this_is_tedious}
    $\delta_{\ell^*}' = \cdots =   \delta_{L_k+1}' < \delta_{L_k+1}  = \cdots =   \delta_{\ell^*}  \le r_{i_{k,\ell^*}} $.
\end{claim}
\begin{proof}{Proof of Claim~\ref{claim:man_this_is_tedious}.}
The fact that $\delta'_{\ell^*} = \cdots = \delta'_{L_k+1}$ follows from the fact that $\bdelta' \in \Delta_k$ and from the assumption that $\delta'_{\ell^*} = \delta'_{L_k+1}$. The fact that $\delta'_{L_k+1} < \delta_{L_k+1}$ is an assumption. The fact that $\delta_{L_k+1} = \cdots = \delta_{\ell^*}$ follows from the fact that $\ell^* = T_k(\bdelta)$ and from the assumption that $\bdelta$ satisfies Property~\ref{property:reverse}. The fact that $\delta_{\ell^*} \le r_{i_{k,\ell^*}}$ follows from the fact that $\ell^* = T_k(\bdelta)$ and from the definition of $T_k(\bdelta)$. This completes our proof of Claim~\ref{claim:man_this_is_tedious}. 
\halmos \end{proof}
\begin{claim}\label{claim:bob_dylan}
$\ell^* \in \{2,\ldots, L_k\}$. 
\end{claim}
\begin{proof}{Proof of Claim~\ref{claim:bob_dylan}.} It follows from the fact that $\bdelta$ satisfies Property~\ref{property:notspecial} and from the fact that $\ell^* = T_k(\bdelta)$ that $\ell^* \ge 2$. Now suppose for the sake of developing a contradiction that $\ell^* = L_k+1$. If that were true, then it would follow from the fact that $\ell^* = T_k(\bdelta)$ and from the definition of $T_k(\bdelta)$ that $\delta_{L_k+1} \le r_{i_{k,L_k+1}} = 0$. However, we assumed that $\delta'_{L_k+1} < \delta_{L_k+1}$, which would imply that $\delta'_{L_k+1} < 0$, which contradicts the assumption that $\bdelta' \in \Delta_k$. Therefore, we have proven by contradiction that $\ell^* \le L_k$, which completes our proof of Claim~\ref{claim:bob_dylan}. 
\halmos \end{proof}
\begin{claim} \label{claim:hello_hassaan}
    If $\bx \in \mathcal{X}$ is a vector that satisfies $x_{i_{k,\ell^*}} = 1$ and $x_{i_{k,\ell}} = 0$ $\forall \ell \in \{\ell^*+1,\ldots,L_k\}$, then 
    \begin{align*}
   &\delta_{L_k+1}+ \sum_{\ell=\ell^*}^{L_k}  \left( \max \left \{0, r_{i_{k,\ell}} - \delta_\ell \right \} - \left( \delta_{\ell+1} - \delta_\ell \right) \right) x_{i_{k,\ell}}\notag  \\
    &=  \delta_{L_k+1}'+ \sum_{\ell=\ell^*}^{L_k}  \left( \max \left \{0, r_{i_{k,\ell}} - \delta_\ell' \right \} - \left( \delta_{\ell+1}' - \delta_\ell' \right) \right) x_{i_{k,\ell}}\\
    &= r_{i_{k,\ell^*}}
    \end{align*}
\end{claim}
\begin{proof}{Proof of Claim~\ref{claim:hello_hassaan}.}
 Let $\bx \in \mathcal{X}$ be a vector that satisfies $x_{i_{k,\ell^*}} = 1$ and $x_{i_{k,\ell}} = 0$ for all $\ell \in \{\ell^*+1,\ldots,L_k\}$. Then
 \begin{align*}
     &\delta_{L_k+1}+ \sum_{\ell=\ell^*}^{L_k}  \left( \max \left \{0, r_{i_{k,\ell}} - \delta_\ell \right \} - \left( \delta_{\ell+1} - \delta_\ell \right) \right) x_{i_{k,\ell}}\\
     &=  \delta_{L_k+1}+ \sum_{\ell = \ell^*+1}^{L_k}  \left( \max \left \{0, r_{i_{k,\ell}} - \delta_\ell \right \} - \left( \delta_{\ell+1} - \delta_\ell \right) \right) 0 \notag \\
       &\quad + \left( \max \left \{0, r_{i_{k,\ell^*}} - \delta_{\ell^*}\right \} - \left( \delta_{\ell^*+1} - \delta_{\ell^*} \right) \right)1 \notag\\
       &= \delta_{L_k+1} + \max \left \{0, r_{i_{k,\ell^*}} - \delta_{\ell^*}\right \} - \left( \delta_{\ell^*+1} - \delta_{\ell^*} \right) \\
    &= \delta_{L_k+1} + \left( r_{i_{k,\ell^*}} - \delta_{\ell^*} \right) - \left( \delta_{\ell^*+1} - \delta_{\ell^*} \right) \\
        &= \delta_{L_k+1} + \left( r_{i_{k,\ell^*}} - \delta_{L_k+1} \right) - \left( \delta_{L_k+1}- \delta_{L_k+1} \right) \\
        &= r_{i_{k,\ell^*}}
 \end{align*}
The first equality follows from the construction of $\bx$ and from  Claim~\ref{claim:bob_dylan} (which implies that $\ell^* \le L_k$).  The second equality follows from algebra. The third equality follows from the fact that $\delta_{\ell^*} \le r_{i_{k,\ell^*}}$ (Claim~\ref{claim:man_this_is_tedious}), and the fourth equality follows from the fact that $\delta_{\ell^*} = \cdots = \delta_{L_k+1}$ (Claim~\ref{claim:man_this_is_tedious}). The fifth equality follows from algebra.  We similarly observe that
 \begin{align*}
     &\delta_{L_k+1}'+ \sum_{\ell=\ell^*}^{L_k}  \left( \max \left \{0, r_{i_{k,\ell}} - \delta_\ell' \right \} - \left( \delta_{\ell+1}' - \delta_\ell' \right) \right) x_{i_{k,\ell}}\\
     &=  \delta_{L_k+1}'+ \sum_{\ell = \ell^*+1}^{L_k}  \left( \max \left \{0, r_{i_{k,\ell}} - \delta_\ell' \right \} - \left( \delta_{\ell+1}' - \delta_\ell' \right) \right) 0 \notag \\
       &\quad + \left( \max \left \{0, r_{i_{k,\ell^*}} - \delta_{\ell^*}'\right \} - \left( \delta_{\ell^*+1}' - \delta_{\ell^*}' \right) \right)1 \notag\\
       &= \delta_{L_k+1}' + \max \left \{0, r_{i_{k,\ell^*}} - \delta_{\ell^*}'\right \} - \left( \delta_{\ell^*+1}' - \delta_{\ell^*}' \right) \\
    &= \delta_{L_k+1}' + \left( r_{i_{k,\ell^*}} - \delta_{\ell^*}' \right) - \left( \delta_{\ell^*+1}' - \delta_{\ell^*}' \right) \\
        &= \delta_{L_k+1}'+ \left( r_{i_{k,\ell^*}} - \delta_{L_k+1}' \right) - \left( \delta_{L_k+1}'- \delta_{L_k+1}' \right) \\
        &= r_{i_{k,\ell^*}}
 \end{align*}
 The first equality follows from the construction of $\bx$ and from  Claim~\ref{claim:bob_dylan} (which implies that $\ell^* \le L_k$).  The second equality follows from algebra. The third equality follows from the fact that $\delta_{\ell^*}' < r_{i_{k,\ell^*}}$ (Claim~\ref{claim:man_this_is_tedious}), and the fourth equality follows from the fact that $\delta_{\ell^*}' = \cdots = \delta_{L_k+1}'$ (Claim~\ref{claim:man_this_is_tedious}). The fifth equality follows from algebra.  
That concludes the proof of Claim~\ref{claim:hello_hassaan}. 
\halmos \end{proof}

In view of the above intermediary claims, the remainder of the proof of Proposition~\ref{prop:case5} is split into two parts.
\begin{itemize}
    \item \textbf{Part 1}: Suppose that $\delta_{2}' < \delta_2$. In that case, let  $\bx$ be the vector that is defined for each $\ell \in [N+1]$ as
    \begin{align*}
        x_{i_{k,\ell}} &= \begin{cases}
            1,&\text{if } \ell \in \{1, \ell^*,L_k+1\},\\
            0,&\text{otherwise}
        \end{cases}
    \end{align*}
We note in the above definition that the set $\{1,\ell^*,L_k+1\}$ is well defined because $\ell^* \in \{2,\ldots,L_k\}$ (Claim~\ref{claim:bob_dylan}).   We observe that $\bx \in \mathcal{X}$ and that
    \begin{align*}
        J_k(\bx,\bdelta') &= r_{i_{k,\ell^*}} + \sum_{\ell =1}^{\ell^*-1}  \left( \max \left \{0, r_{i_{k,\ell}} - \delta_\ell' \right \} - \left( \delta_{\ell+1}' - \delta_\ell' \right) \right) x_{i_{k,\ell}} \\
 &= r_{i_{k,\ell^*}} + \left( \max \left \{0, r_{i_{k,1}} - \delta_1' \right \} - \left( \delta_{2}' - \delta_1' \right) \right)  \\
  &= r_{i_{k,\ell^*}} +  \left( r_{i_{k,1}} - \delta_1' \right) - \left( \delta_{2}' - \delta_1' \right)  \\
    &= r_{i_{k,\ell^*}} +  r_{i_{k,1}} - \delta_2' \\
    &>  r_{i_{k,\ell^*}} + r_{i_{k,1}} - \delta_2 
    \end{align*}
The first equality follows from the definition of $J_k(\bx,\bdelta')$ and from Claim~\ref{claim:hello_hassaan}.  The second equality follows from the construction of $\bx$. The third equality follows from the fact that $\bdelta'$ satisfies Property~\ref{property:forward} (which implies that $\delta_1' \le r_{i_{k,1}}$). The fourth equality follows from algebra.  The inequality follows from the supposition that $\delta_2' < \delta_2$.  Moreover, we observe that 
        \begin{align*}
        J_k(\bx,\bdelta) &= r_{i_{k,\ell^*}} + \sum_{\ell =1}^{\ell^*-1}  \left( \max \left \{0, r_{i_{k,\ell}} - \delta_\ell \right \} - \left( \delta_{\ell+1} - \delta_\ell \right) \right) x_{i_{k,\ell}} \\
 &= r_{i_{k,\ell^*}} + \left( \max \left \{0, r_{i_{k,1}} - \delta_1 \right \} - \left( \delta_{2} - \delta_1 \right) \right)  \\
  &= r_{i_{k,\ell^*}} +  \left( r_{i_{k,1}} - \delta_1 \right) - \left( \delta_{2} - \delta_1\right)  \\
    &=  r_{i_{k,\ell^*}} +  r_{i_{k,1}} - \delta_2
    \end{align*}
The first equality follows from the definition of $J_k(\bx,\bdelta)$ and from Claim~\ref{claim:hello_hassaan}.    The second equality follows from the construction of $\bx$. The third equality follows from the fact that $\bdelta$ satisfies Property~\ref{property:forward} (which implies that $\delta_1 \le r_{i_{k,1}}$). The fourth equality follows from algebra. 

    Combining the above reasoning, we have shown that      \begin{align*}
        J_k(\bx,\bdelta') >  r_{i_{k,\ell^*}} + r_{i_{k,1}} - \delta_2  \ge J_k(\bx,\bdelta).
    \end{align*}
   Since $\bx \in \mathcal{X}$, we have shown that $\bdelta'$ does not dominate $\bdelta$ in Part 1. 

      \item \textbf{Part 2}: Suppose that $\delta_{2}' \ge \delta_2$. In that case, define the set 
      \begin{align*}
          \mathcal{L} \triangleq \left \{\ell \in \{2,\ldots,\ell^*-1\}: \delta_{\ell} < \delta_{\ell+1}\right \} 
      \end{align*}
      We begin by proving that $\mathcal{L}$ is nonempty. Indeed, it follows from Claim~\ref{claim:bob_dylan}  that  $\ell^* \ge 2$. Moreover, it follows from Claim~\ref{claim:man_this_is_tedious} that $\delta_{\ell^*}' < \delta_{\ell^*}$. Combining the inequalities $\ell^* \ge 2$ and $\delta_{\ell^*}' < \delta_{\ell^*}$ together with the supposition that $\delta'_2 \ge \delta_2$, we conclude that $\ell^* \ge 3$. We have thus shown that the set $\{2,\ldots,\ell^*-1\}$ is nonempty. Furthermore, it follows from the fact that $\bdelta' \in \Delta_k$ and from Claim~\ref{claim:man_this_is_tedious} that  $\delta_2' \le  \cdots \le \delta_{\ell^*}' < \delta_{\ell^*}$. It thus follows from the supposition that $\delta_2 \le  \delta_2'$ that there must exist at least one $\ell \in \{2,\ldots,\ell^*-1\}$ that satisfies $\delta_\ell < \delta_{\ell+1}$. We have thus shown that $\mathcal{L}$ is nonempty. 
      
      Let  $\bx$ be the vector that is defined for each $\ell \in [N+1]$ as
    \begin{align*}
        x_{i_{k,\ell}} &= \begin{cases}
            1,&\text{if } \ell \in \mathcal{L} \cup  \{ \ell^*,L_k+1\},\\
            0,&\text{otherwise}
        \end{cases}
    \end{align*}
 We observe that $\bx \in \mathcal{X}$. We also observe that
  \begin{align}
      &\sum_{\ell=1}^{\ell^*-1}  \left( \max \left \{0, r_{i_{k,\ell}} - \delta_\ell' \right \} - \left( \delta_{\ell+1}' - \delta_\ell' \right) \right) x_{i_{k,\ell}} \notag \\
      &=  \sum_{\ell \in \mathcal{L}}  \left( \max \left \{0, r_{i_{k,\ell}} - \delta_\ell' \right \} - \left( \delta_{\ell+1}' - \delta_\ell' \right) \right)  \notag \\     
      &\ge - \sum_{\ell \in \mathcal{L}}   \left( \delta_{\ell+1}' - \delta_\ell'  \right) \notag  \\
&\ge - \sum_{\ell =2}^{\ell^*-1}   \left( \delta_{\ell+1}' - \delta_\ell'  \right) \notag \\
&=  - \left( \delta_{\ell^*}' - \delta_2'\right) \notag \\
&>  - \left( \delta_{\ell^*} - \delta_2\right) \label{line:almost_there_happy}
 \end{align}
The first equality follows from the construction of $\bx$. The first inequality  follows from algebra. The second inequality follows from the fact that $\bdelta' \in \Delta_k$ and  the fact that $\mathcal{L} \subseteq \{2,\ldots,\ell^*-1\}$. The second equality follows from algebra.  The third inequality follows from Claim~\ref{claim:man_this_is_tedious} (which implies that $\delta'_{\ell^*} < \delta_{\ell^*}$) and from the supposition that  $\delta'_2 \ge \delta_2$. We further observe that 
  \begin{align}
      &\sum_{\ell=1}^{\ell^*-1}  \left( \max \left \{0, r_{i_{k,\ell}} - \delta_\ell \right \} - \left( \delta_{\ell+1} - \delta_\ell \right) \right) x_{i_{k,\ell}}\notag  \\
      &=  \sum_{\ell \in \mathcal{L}}  \left( \max \left \{0, r_{i_{k,\ell}} - \delta_\ell \right \} - \left( \delta_{\ell+1} - \delta_\ell \right) \right) \notag   \\     
      &=  -\sum_{\ell \in \mathcal{L}}   \left( \delta_{\ell+1} - \delta_\ell  \right) \notag  \\
&=  -\sum_{\ell =2}^{\ell^*-1}   \left( \delta_{\ell+1} - \delta_\ell  \right) \notag  \\
&= - \left(  \delta_{\ell^*} - \delta_2\right) \label{line:almost_there_happy_2}
 \end{align}
The first equality follows from the construction of $\bx$. The second equality follows from the fact that $\bdelta$ satisfies Property~\ref{property:gap} and from the fact that $\delta_\ell < \delta_{\ell+1}$ for all $\ell \in \mathcal{L} \subseteq \{2,\ldots,\ell^*-1\}$, which implies that $\delta_\ell \ge r_{i_{k,\ell}}$ for all $\ell \in \mathcal{L}$. The third equality follows from the fact that $\delta_{\ell} = \delta_{\ell+1}$ for all $\ell \in \{2,\ldots,\ell^*-1\} \setminus \mathcal{L}$. The fourth equality follows from algebra.

     Combining the above reasoning, we have shown that   
         \begin{align*}
        &J_k(\bx,\bdelta') - J_k(\bx,\bdelta) \\
        &= \sum_{\ell =1}^{\ell^*-1}  \left( \max \left \{0, r_{i_{k,\ell}} - \delta_\ell' \right \} - \left( \delta_{\ell+1}' - \delta_\ell' \right) \right) x_{i_{k,\ell}} - \sum_{\ell =1}^{\ell^*-1}  \left( \max \left \{0, r_{i_{k,\ell}} - \delta_\ell \right \} - \left( \delta_{\ell+1} - \delta_\ell \right) \right) x_{i_{k,\ell}} \\
       &> - \left(\delta_{\ell^*} - \delta_2 \right)  - \left( - \left(\delta_{\ell^*} - \delta_2 \right) \right)\\
       &= 0
    \end{align*}
    where the first equality follows from Claim~\ref{claim:hello_hassaan} and the definitions of $J_k(\bx,\bdelta')$ and $J_k(\bx,\bdelta)$, the inequality follows from lines~\eqref{line:almost_there_happy} and \eqref{line:almost_there_happy_2}, and the second equality follows from algebra.

   Since $\bx \in \mathcal{X}$, we have shown that $\bdelta'$ does not dominate $\bdelta$ in Part 2.

\end{itemize}
Because Parts 1 and 2 are exhaustive, our proof of Proposition~\ref{prop:case5} is complete. 
\halmos \end{proof}

\section{Proof of Proposition~\ref{prop:subroutine:1}}
\label{appx:proof:subroutine:1}
Let $\bdelta \in \Delta_k$. We recall from Section~\ref{sec:main_alg:reform} that $\mathcal{R}_k$ is the set of unique revenues among the products that are not less preferred by ranking $k$ to the no-purchase option, and  that $\bar{r}_k \triangleq \max \{ r \in \mathcal{R}_k\}$ denotes the highest revenue in $\mathcal{R}_k$. Following the notation from Subroutine 1, let  $\bar{r}_k' \triangleq \max \{r \in \mathcal{R}_k: r < \bar{r}_k \}$ denote the second-highest revenue  in $\mathcal{R}_k$. It is shown in Section~\ref{sec:main_alg:reform} that $| \mathcal{R}_k| \ge 2$, which implies that $\bar{r}_k'$ is always finite and that there always exists $\ell \in \{1,\ldots,L_k+1\}$ that satisfies $r_{i_{k,\ell}} = \bar{r}_k'$. 

If $T_k(\bdelta) > 1$, then it follows from Subroutine 1 that $\bdelta' = \bdelta$, which implies that $\bdelta'$ satisfies $\bdelta' \in \Delta_k$ and satisfies Property~\ref{property:notspecial}. Therefore, assume from this point onward that $T_k(\bdelta) = 1$. It follows from that assumption and  from the definition of $T_k(\bdelta)$ that $\delta_1 \le r_{i_{k,1}}$ and that $r_{i_{k,\ell}} < \delta_\ell \le \bar{r}_k$ for all $\ell \in \{2,\ldots,L_k+1\}$. For those inequalities to be true, it must be the case that $r_{i_{k,1}} = \bar{r}_k$ and it must be the case that $r_{i_{k,2}},\ldots,r_{i_{k,L_k+1}} \le \bar{r}'_k$. 

Now let $\bdelta'$ be defined as in Subroutine 1; that is, let $\bdelta'$ be the vector defined by
\begin{align*}
    \delta'_\ell \triangleq \min \{\delta_\ell, \bar{r}_k'\} \quad \forall \ell \in \{1,\ldots,L_k+1\}
\end{align*}
It follows readily from the fact that $\bdelta \in \Delta_k$  that $\bdelta' \in \Delta_k$. Furthermore, it follows from the fact that $r_{i_{k,2}},\ldots,r_{i_{k,L_k+1}} \le \bar{r}'_k$ that there exists  $\ell \in \{2,\ldots,L_k+1\}$ such that $r_{i_{k,\ell}} = \bar{r}_k'$. It thus follows  from the construction of $\bdelta'$ that $\delta'_\ell \le r_{i_{k,\ell}}$ for that $\ell$, which implies that $T_k(\bdelta') > 1$. We have thus shown that $\bdelta'$ satisfies Property~\ref{property:notspecial}. 

The remainder of the proof of Proposition~\ref{prop:subroutine:1} is organized as follows. In Appendix~\ref{appx:proof:subroutine:1:weakdominate}, we show that   $J_k(\bx,\bdelta') \le J_k(\bx,\bdelta)$ for all $\bx \in \mathcal{X}^c$. In Appendix~\ref{appx:proof:subroutine:1:dominate}, we show that there always exists $\bar{\bx} \in \mathcal{X}^c$ that satisfies $J_k(\bar{\bx},\bdelta') < J_k(\bar{\bx},\bdelta)$. The combination of 
Appendices~\ref{appx:proof:subroutine:1:weakdominate} and \ref{appx:proof:subroutine:1:dominate} thus implies that  $\bdelta'$ dominates $\bdelta$.

\subsection[\texorpdfstring{Proof that $J_k(\bx,\bdelta') \le J_k(\bx,\bdelta)$ for all $\bx \in \mathcal{X}^c$}{Text}]{Proof that $J_k(\bx,\bdelta') \le J_k(\bx,\bdelta)$ for all $\bx \in \mathcal{X}^c$} \label{appx:proof:subroutine:1:weakdominate}
In the present Appendix~\ref{appx:proof:subroutine:1:weakdominate}, we show that  
 $J_k(\bx,\bdelta') \le J_k(\bx,\bdelta)$ for all $\bx \in \mathcal{X}^c$. To this end, consider any $\bx \in \mathcal{X}^c$. We first observe that
\begin{align*}
J_k(\bx,\bdelta) &= \delta_{L_k+1}+ \sum_{\ell=1}^{L_k}  \left( \max \left \{0, r_{i_{k,\ell}} - \delta_\ell \right \} - \left( \delta_{\ell+1} - \delta_\ell \right) \right) x_{i_{k,\ell}}\\
&= \delta_{L_k+1}+ \sum_{\ell=2}^{L_k}  \left( \max \left \{0, r_{i_{k,\ell}} - \delta_\ell \right \} - \left( \delta_{\ell+1} - \delta_\ell \right) \right) x_{i_{k,\ell}}  +  \left( \max \left \{0, r_{i_{k,1}} - \delta_1  \right \} - \left( \delta_{2} - \delta_1 \right) \right) x_{i_{k,1}} \\
&= \delta_{L_k+1}+ \sum_{\ell=2}^{L_k}  \left( 0- \left( \delta_{\ell+1} - \delta_\ell \right) \right) x_{i_{k,\ell}}  +  \left( \left( r_{i_{k,1}} - \delta_1  \right) - \left( \delta_{2} - \delta_1 \right) \right) x_{i_{k,1}}\\
&= \delta_{L_k+1}- \sum_{\ell=1}^{L_k}   \left( \delta_{\ell+1} - \delta_\ell \right) x_{i_{k,\ell}}  +  \left( r_{i_{k,1}} - \delta_{1}  \right)  x_{i_{k,1}}
\end{align*}
where the first equality is the definition of $J_k(\bx,\bdelta)$, and the second equality follows from algebra. The third equality follows from the fact that $r_{i_{k,\ell}} < \delta_\ell$ for all $\ell \in \{2,\ldots,L_k+1\}$ and from the fact that $\delta_1 \le r_{i_{k,1}}$. The fourth equality follows from algebra. Moreover, we observe that
\begin{align*}
J_k(\bx,\bdelta') &= \delta_{L_k+1}'+ \sum_{\ell=1}^{L_k}  \left( \max \left \{0, r_{i_{k,\ell}} - \delta_\ell' \right \} - \left( \delta_{\ell+1}' - \delta_\ell' \right) \right) x_{i_{k,\ell}}\\
&= \min \left \{ \bar{r}_k', \delta_{L_k+1}\right \}+ \sum_{\ell=2}^{L_k}  \left( \max \left \{0, r_{i_{k,\ell}} - \min \left \{ \bar{r}_k',  \delta_\ell \right \} \right \} - \left( \min \left \{ \bar{r}_k', \delta_{\ell+1} \right \} - \min \left \{ \bar{r}_k', \delta_\ell \right \} \right) \right) x_{i_{k,\ell}}\\
&\quad +  \left( \max \left \{0, r_{i_{k,1}} - \min \left \{ \bar{r}_k',  \delta_1 \right \} \right \} - \left( \min \left \{ \bar{r}_k', \delta_{2} \right \} - \min \left \{ \bar{r}_k', \delta_1 \right \} \right) \right) x_{i_{k,1}}\\
&= \bar{r}_k'+ \sum_{\ell=2}^{L_k}  \left( \max \left \{0, r_{i_{k,\ell}} - \min \left \{ \bar{r}_k',  \delta_\ell \right \} \right \} - \left( \min \left \{ \bar{r}_k', \delta_{\ell+1} \right \} - \min \left \{ \bar{r}_k', \delta_\ell \right \} \right) \right) x_{i_{k,\ell}}\\
&\quad +  \left( \max \left \{0, r_{i_{k,1}} - \min \left \{ \bar{r}_k',  \delta_1 \right \} \right \} - \left( \min \left \{ \bar{r}_k', \delta_{2} \right \} - \min \left \{ \bar{r}_k', \delta_1 \right \} \right) \right) x_{i_{k,1}}\\
&= \bar{r}_k'+ \sum_{\ell=2}^{L_k}  \left( 0 - \left( \min \left \{ \bar{r}_k', \delta_{\ell+1} \right \} - \min \left \{ \bar{r}_k', \delta_\ell \right \} \right) \right) x_{i_{k,\ell}}\\
&\quad +  \left( \max \left \{0, r_{i_{k,1}} - \min \left \{ \bar{r}_k',  \delta_1 \right \} \right \} - \left( \min \left \{ \bar{r}_k', \delta_{2} \right \} - \min \left \{ \bar{r}_k', \delta_1 \right \} \right) \right) x_{i_{k,1}}\\
&= \bar{r}_k'- \sum_{\ell=2}^{L_k}  \left( \min \left \{ \bar{r}_k', \delta_{\ell+1} \right \} - \min \left \{ \bar{r}_k', \delta_\ell \right \} \right) x_{i_{k,\ell}}\\
&\quad +  \left( \left(  r_{i_{k,1}} -  \min \left \{ \bar{r}_k', \delta_1 \right \} \right)  - \left( \min \left \{ \bar{r}_k', \delta_{2} \right \} - \min \left \{ \bar{r}_k', \delta_1 \right \} \right) \right) x_{i_{k,1}}\\
&=\bar{r}_k'-  \sum_{\ell=1}^{L_k} \left( \min \left \{ \bar{r}_k', \delta_{\ell+1} \right \} - \min \left \{ \bar{r}_k', \delta_\ell \right \} \right)   x_{i_{k,\ell}} +  \left(  r_{i_{k,1}} -  \min \left \{\bar{r}_k', \delta_1 \right \} \right) x_{i_{k,1}}
\end{align*}
The first equality is the definition of $J_k(\bx,\bdelta')$.    The second equality follows from the construction of $\bdelta'$. The third equality holds because it must be the case that $\delta_{L_k+1} > \bar{r}_k'$, since if that inequality did not hold, then it would follow from the fact that $\bdelta \in \Delta_k$ that there would exist $\ell \in \{2,\ldots,L_k+1 \}$ such that $r_{i_{k,\ell}} = \bar{r}_k'$ and $\delta_{\ell} \le r_{i_{k,\ell}}$.  The fourth equality holds because $r_{i_{k,\ell}} < \delta_\ell$  and $r_{i_{k,\ell}} \le \bar{r}_k'$ for all $\ell \in \{2,\ldots,L_k+1\}$.  The fifth equality holds because $r_{i_{k,1}} = \bar{r}_k > \bar{r}_k'$ and because $r_{i_{k,1}} \ge \delta_1$. The sixth equality follows from algebra. 

Therefore,
\begin{align}
    &J_k(\bx,\bdelta') - J_k(\bx,\bdelta) \notag \\
    &= \left( \bar{r}_k' - \delta_{L_k+1} \right) -  \sum_{\ell=1}^{L_k} \left( \min \left \{ \bar{r}_k', \delta_{\ell+1} \right \} - \min \left \{ \bar{r}_k', \delta_\ell \right \} - \left( \delta_{\ell+1} - \delta_\ell \right) \right)   x_{i_{k,\ell}} \notag \\
    &\quad + \left( \left(  r_{i_{k,1}} -  \min \left \{\bar{r}_k', \delta_1 \right \} \right) - \left( r_{i_{k,1}} - \delta_{1}  \right) \right) x_{i_{k,1}} \notag \\
    &=  \bar{r}_k'- \delta_{L_k+1}  -  \sum_{\ell=1}^{L_k} \left( \min \left \{ \bar{r}_k'- \delta_{\ell+1},0 \right \} - \min \left \{ \bar{r}_k' - \delta_\ell, 0 \right \}  \right)   x_{i_{k,\ell}} - \min \left \{ \bar{r}'_k - \delta_1, 0 \right \} x_{i_{k,1}} \label{line:proof:subroutine:1:difference_intermediary}
\end{align}
We have two cases to consider.
\begin{itemize}
    \item First, suppose that $\bar{r}'_k < \delta_1 \le \cdots \le \delta_{L_k+1}$. In that case, we have 
\begin{align*}
   \eqref{line:proof:subroutine:1:difference_intermediary} &=  \bar{r}_k'- \delta_{L_k+1}-  \sum_{\ell=1}^{L_k} \left( \left(  \bar{r}_k'- \delta_{\ell+1}\right) - \left(  \bar{r}_k' - \delta_\ell \right)  \right)   x_{i_{k,\ell}} - \left(  \bar{r}'_k - \delta_1 \right)  x_{i_{k,1}}\\
    &=  \bar{r}_k'- \delta_{L_k+1} +   \sum_{\ell=1}^{L_k} \left(  \delta_{\ell+1} - \delta_\ell   \right)   x_{i_{k,\ell}}  - \left(  \bar{r}'_k - \delta_1 \right)  x_{i_{k,1}}\\
 &\le  \bar{r}_k'- \delta_{L_k+1} +   \sum_{\ell=1}^{L_k} \left(  \delta_{\ell+1} - \delta_\ell   \right)  1  - \left(  \bar{r}'_k - \delta_1 \right) x_{i_{k,1}}\\
  &=  \bar{r}_k'- \delta_1 -  \left(  \bar{r}'_k - \delta_1 \right)  x_{i_{k,1}}\\
  &\le  \bar{r}_k'- \delta_1 -  \left(  \bar{r}'_k - \delta_1 \right) 1\\
  &= 0
\end{align*}
where the first equality follows from the supposition that  $\bar{r}'_k < \delta_\ell$ for all $\ell \in \{1,\ldots,L_k+1\}$, the second equality follows from algebra, the first inequality follows from the fact that $\bx \in \mathcal{X}^c$ (which implies that $x_{i_{k,\ell}} \le 1$ for all $\ell \in \{1,\ldots,L_k+1\}$) and the fact that $\bdelta \in \Delta_k$ (which implies that $\delta_{\ell+1} - \delta_\ell \ge 0$ for all $\ell \in \{1,\ldots,L_k\}$). The third equality follows from algebra. The second inequality follows from the fact that  $\bx \in \mathcal{X}^c$ (which implies that $x_{i_{k,1}} \le 1$) and the supposition that $\bar{r}_k' < \delta_1$. The fourth equality follows from algebra. 

\item Second, suppose that  $\delta_1 \le \bar{r}_k'$. In that case,  it follows from the fact that $ \bar{r}_k' < \delta_{L_k+1}$ that there exists $L \in \{1,\ldots,L_k\}$ such that $\delta_L \le \bar{r}'_k < \delta_{L+1}$. In that case, we observe that
\begin{align*}
\eqref{line:proof:subroutine:1:difference_intermediary} 
&=  \bar{r}_k'- \delta_{L_k+1}- \sum_{\ell=1}^{L-1} \left( \min \left \{ \bar{r}_k'- \delta_{\ell+1},0 \right \} - \min \left \{ \bar{r}_k' - \delta_\ell, 0 \right \}  \right)   x_{i_{k,\ell}} \\
 &\quad -  \left( \min \left \{ \bar{r}_k'- \delta_{L+1},0 \right \} - \min \left \{ \bar{r}_k' - \delta_L, 0 \right \}  \right)   x_{i_{k,L}}\\
& \quad - \sum_{\ell=L+1}^{L_k} \left( \min \left \{ \bar{r}_k'- \delta_{\ell+1},0 \right \} - \min \left \{ \bar{r}_k' - \delta_\ell, 0 \right \}  \right)   x_{i_{k,\ell}} - \min \left \{ \bar{r}'_k - \delta_1, 0 \right \} x_{i_{k,1}}\\
 &=  \bar{r}_k'- \delta_{L_k+1}- \sum_{\ell=1}^{L-1} \left( 0 - 0   \right)   x_{i_{k,\ell}} \\
 &\quad -  \left( \left( \bar{r}_k'- \delta_{L+1} \right) - 0  \right)   x_{i_{k,L}}\\
&\quad - \sum_{\ell=L+1}^{L_k} \left( \left( \bar{r}_k'- \delta_{\ell+1}\right) -  \left( \bar{r}_k' - \delta_\ell\right)  \right)   x_{i_{k,\ell}} - \left(  \bar{r}'_k - \delta_1 \right) x_{i_{k,1}} \\
 &=  \bar{r}_k'- \delta_{L_k+1}  +  \left(\delta_{L+1} -  \bar{r}_k' \right)  x_{i_{k,L}} + \sum_{\ell=L+1}^{L_k} \left( \delta_{\ell+1} - \delta_\ell  \right)   x_{i_{k,\ell}} +  \left( \delta_1 -  \bar{r}'_k  \right)x_{i_{k,1}} \\
  &\le  \bar{r}_k'- \delta_{L_k+1}   +  \left(\delta_{L+1} -  \bar{r}_k' \right)    1 + \sum_{\ell=L+1}^{L_k} \left( \delta_{\ell+1} - \delta_\ell  \right)   1 + \left( \delta_1 -  \bar{r}'_k  \right) 0\\
  &=    \bar{r}_k'- \delta_{L_k+1} +  \left(\delta_{L+1} -  \bar{r}_k' \right)   + \left( \delta_{L_k+1} - \delta_{L+1}  \right)   \\
  &= 0
\end{align*}
where the first equality follows from algebra. The second equality follows from fact that $\bdelta \in \Delta_k$ and the fact that $\delta_{L} \le \bar{r}_k' < \delta_{L+1}$, which  together imply that $\delta_1,\ldots,\delta_L \le \bar{r}_k' < \delta_{L+1},\ldots,\delta_{L_k+1}$. The inequality follows from the fact that $\bx \in \mathcal{X}^c$ (which implies that $x_{i_{k,L}},\ldots,x_{i_{k,L_k+1}} \le 1$ and $x_{i_{k,1}} \ge 0$), the fact that $\bdelta \in \Delta_k$ (which implies that $\delta_{\ell+1} - \delta_\ell \ge 0$ for all $\ell \in \{L+1,\ldots,L_k\}$), and the fact that $\delta_1 \le \bar{r}_k' < \delta_{L+1}$. The fourth and fifth equalities follow from algebra.  
\end{itemize}
Since the above two cases are exhaustive, we conclude that $J_k(\bx,\bdelta') - J_k(\bx,\bdelta) \le 0$.
\subsection[\texorpdfstring{Proof that there exists $\bar{\bx} \in \mathcal{X}^c$ that satisfies $J_k(\bar{\bx},\bdelta') < J_k(\bar{\bx},\bdelta)$}{Text}]{Proof that there exists $\bar{\bx} \in \mathcal{X}^c$ that satisfies $J_k(\bar{\bx},\bdelta') < J_k(\bar{\bx},\bdelta)$} \label{appx:proof:subroutine:1:dominate}
In the present Appendix~\ref{appx:proof:subroutine:1:dominate}, we show that  there exists $\bar{\bx} \in \mathcal{X}^c$ that satisfies $J_k(\bar{\bx},\bdelta') < J_k(\bar{\bx},\bdelta)$. To this end, let $\bar{\bx}$  be the vector that is defined for each $\ell \in [N+1]$ as
\begin{align*}
    \bar{x}_{i_{k,\ell}} &= \begin{cases}
        1,&\text{if } \ell = L_k+1,\\
        0,&\text{otherwise}
    \end{cases}
\end{align*}
It follows from the discussion in the beginning of Appendix~\ref{appx:proof:subroutine:1} that there exists $\ell^* \in \{2,\ldots,L_k+1\}$ that satisfies $r_{i_{k,\ell^*}} = \bar{r}_k'$ and that $\delta_{\ell^*} > r_{i_{k,\ell^*}}$. We observe for any such $\ell^*$ that
\begin{align*}
    J_k(\bar{\bx}, \bdelta') &= \delta'_{L_k+1} + \sum_{\ell=1}^{L_k}   \left( \max \left \{0, r_{i_{k,\ell}} - \delta_\ell' \right \} - \left( \delta_{\ell+1}' - \delta_\ell' \right) \right) 0\\
    &= \min \left \{ \delta_{L_k+1}, \bar{r}_k' \right \} \\
    &< \delta_{L_k+1}\\
    &= \delta_{L_k+1} + \sum_{\ell=1}^{L_k}   \left( \max \left \{0, r_{i_{k,\ell}} - \delta_\ell \right \} - \left( \delta_{\ell+1} - \delta_\ell \right) \right) 0\\
    &= J_k(\bar{\bx}, \bdelta)
\end{align*}
The first equality follows from the construction of $\bar{\bx}$. The second equality follows from algebra and from the construction of $\bdelta'$. The inequality follows from the fact that $\bdelta \in \Delta_k$ (which implies that $\delta_{L_k+1} \ge \delta_{\ell^*}$) and the fact that $T_k(\bdelta) = 1$ and $\ell^* \in \{2,\ldots,L_k+1\}$ (which implies that $\delta_{\ell^*} > r_{i_{k,\ell^*}} = \bar{r}_k'$).  The third equality follows from algebra. The fourth equality follows from the construction of $\bar{\bx}$. This completes our proof that there always exists $\bar{\bx} \in \mathcal{X}^c$ that satisfies $J_k(\bar{\bx}, \bdelta') < J_k(\bar{\bx}, \bdelta)$.

\section{Proof of Proposition~\ref{prop:subroutine:2}}
\label{appx:proof:subroutine:2}
Let $\bdelta \in \Delta_k$ satisfy Property~\ref{property:notspecial}, and let $\bdelta'$ be the output of Subroutine 2. If $\delta_1 \le r_{i_{k,1}}$, then it follows from the construction of $\bdelta'$ that $\bdelta' = \bdelta$, which implies that $\bdelta'$ satisfies $\bdelta' \in \Delta_k$ and satisfies Properties~\ref{property:notspecial} and \ref{property:forward}. Therefore, assume from this point onward that $\delta_1 > r_{i_{k,1}}$, in which case we observe that the output $\bdelta'$ of Subroutine 2 satisfies
\begin{align*}
    \delta'_\ell = \begin{cases}
        \delta_\ell,&\text{if } \ell \in \{2,\ldots,L_k+1\},\\
        r_{i_{k,1}},&\text{if } \ell = 1.
    \end{cases}
\end{align*}
We readily observe that $\bdelta' \in \Delta_k$. Moreover, it follows from the fact that $\bdelta$ satisfies Property~\ref{property:notspecial} that $\bdelta'$ satisfies Property~\ref{property:notspecial}. The vector $\bdelta'$ also obviously satisfies Property~\ref{property:forward}.

The remainder of the proof of Proposition~\ref{prop:subroutine:2} is organized as follows. In Appendix~\ref{appx:proof:subroutine:2:weakdominate}, we show that   $J_k(\bx,\bdelta') \le J_k(\bx,\bdelta)$ for all $\bx \in \mathcal{X}^c$. In Appendix~\ref{appx:proof:subroutine:2:dominate}, we show that there always exists $\bar{\bx} \in \mathcal{X}^c$ that satisfies $J_k(\bar{\bx},\bdelta') < J_k(\bar{\bx},\bdelta)$. The combination of  Appendices~\ref{appx:proof:subroutine:2:weakdominate} and \ref{appx:proof:subroutine:2:dominate} thus implies that  $\bdelta'$ dominates $\bdelta$.

\subsection[\texorpdfstring{Proof that $J_k(\bx,\bdelta') \le J_k(\bx,\bdelta)$ for all $\bx \in \mathcal{X}^c$}{Text}]{Proof that $J_k(\bx,\bdelta') \le J_k(\bx,\bdelta)$ for all $\bx \in \mathcal{X}^c$} \label{appx:proof:subroutine:2:weakdominate}
In the present Appendix~\ref{appx:proof:subroutine:2:weakdominate}, we show that  
 $J_k(\bx,\bdelta') \le J_k(\bx,\bdelta)$ for all $\bx \in \mathcal{X}^c$. To this end, consider any $\bx \in \mathcal{X}^c$. We observe that
\begin{align*}
    J_k(\bx,\bdelta') - J_k(\bx,\bdelta) &= \left( \delta_{L_k+1}'+ \sum_{\ell=1}^{L_k}  \left( \max \left \{0, r_{i_{k,\ell}} - \delta_\ell' \right \} - \left( \delta_{\ell+1}' - \delta_\ell' \right) \right) x_{i_{k,\ell}}  \right) \\
    &\quad - \left( \delta_{L_k+1}+ \sum_{\ell=1}^{L_k}  \left( \max \left \{0, r_{i_{k,\ell}} - \delta_\ell \right \} - \left( \delta_{\ell+1} - \delta_\ell \right) \right) x_{i_{k,\ell}}  \right) \\
    &=   \left( \max \left \{0, r_{i_{k,1}} - r_{i_{k,1}} \right \} - \left( \delta_{2} - r_{i_{k,1}} \right) \right) x_{i_{k,1}} -  \left( \max \left \{0, r_{i_{k,1}} - \delta_{1} \right \} - \left( \delta_{2} - \delta_1 \right) \right) x_{i_{k,1}}  \\
    &=   \left( 0- \left( \delta_{2} - r_{i_{k,1}} \right) \right) x_{i_{k,1}} -  \left( 0 - \left( \delta_{2} - \delta_1 \right) \right) x_{i_{k,1}}  \\
&=  \left(  r_{i_{k,1}}   -   \delta_1  \right) x_{i_{k,1}} \\
&\le 0
\end{align*}
where the first equality follows from the definitions of $    J_k(\bx,\bdelta')$ and $J_k(\bx,\bdelta)$, the second equality follows from the construction of $\bdelta'$, the third equality follows from algebra and from the supposition that $\delta_1 > r_{i_{k,1}}$, the fourth equality follows from algebra, and the inequality follows from the supposition that $\delta_1 > r_{i_{k,1}}$ and the fact that $\bx \in \mathcal{X}^c$. We thus conclude that $J_k(\bx,\bdelta') - J_k(\bx,\bdelta) \le 0$. 
\subsection[\texorpdfstring{Proof that there exists $\bar{\bx} \in \mathcal{X}^c$ that satisfies $J_k(\bar{\bx},\bdelta') < J_k(\bar{\bx},\bdelta)$}{Text}]{Proof that there exists $\bar{\bx} \in \mathcal{X}^c$ that satisfies $J_k(\bar{\bx},\bdelta') < J_k(\bar{\bx},\bdelta)$} \label{appx:proof:subroutine:2:dominate}
In the present Appendix~\ref{appx:proof:subroutine:2:dominate}, we show that  there exists $\bar{\bx} \in \mathcal{X}^c$ that satisfies $J_k(\bar{\bx},\bdelta') < J_k(\bar{\bx},\bdelta)$. To this end, let $\bar{\bx}$  be the vector that is defined for each $\ell \in [N+1]$ as
\begin{align*}
    \bar{x}_{i_{k,\ell}} &= \begin{cases}
        1,&\text{if } \ell \in \{1, L_k+1\},\\
        0,&\text{otherwise}
    \end{cases}
\end{align*}
It follows from Assumption~\ref{ass:L_k} from Section~\ref{sec:intro:methods} that $L_k \ge 1$, which implies that $1 \neq L_k+1$. We observe that
\begin{align*}
    J_k(\bar{\bx}, \bdelta') &= \delta'_{L_k+1} + \sum_{\ell=2}^{L_k}   \left( \max \left \{0, r_{i_{k,\ell}} - \delta_\ell' \right \} - \left( \delta_{\ell+1}' - \delta_\ell' \right) \right) 0\\
    &\quad +  \left( \max \left \{0, r_{i_{k,1}} - \delta_1' \right \} - \left( \delta_{2}' - \delta_1' \right) \right) 1\\
    &= \delta_{L_k+1} +  \max \left \{0, r_{i_{k,1}} - \delta_1' \right \} - \left( \delta_{2} - \delta_1' \right) \\
        &= \delta_{L_k+1} +  \max \left \{0, r_{i_{k,1}} - \delta_1 \right \} - \left( \delta_{2} - \delta_1' \right) \\
        &< \delta_{L_k+1} +  \max \left \{0, r_{i_{k,1}} - \delta_1 \right \} - \left( \delta_{2} - \delta_1 \right) \\
    &= \delta_{L_k+1} + \sum_{\ell=1}^{L_k}   \left( \max \left \{0, r_{i_{k,\ell}} - \delta_\ell \right \} - \left( \delta_{\ell+1} - \delta_\ell \right) \right) 0\\
        &\quad +  \left( \max \left \{0, r_{i_{k,1}} - \delta_1 \right \} - \left( \delta_{2} - \delta_1 \right) \right) 1\\
    &= J_k(\bar{\bx}, \bdelta)
\end{align*}
The first equality follows from the construction of $\bar{\bx}$. The second equality follows from algebra and from the fact that $\delta'_\ell = \delta_{\ell}$ for all $\ell \in \{2,\ldots,L_k+1\}$. The third equality follows from the fact that $0 = r_{i_{k,1}} - r_{i_{k,1}} = r_{i_{k,1}} - \delta'_1 > r_{i_{k,1}}  - \delta_1$, which implies that $\max \{0, r_{i_{k,1}} - \delta'_1 \} = \max \{0, r_{i_{k,1}} - \delta_1 \}$. The inequality follows from the fact that $\delta_1' < \delta_1$. The fourth equality follows from algebra, and the fifth equality follows from the construction of $\bar{\bx}$. This completes our proof that there always exists $\bar{\bx} \in \mathcal{X}^c$ that satisfies $J_k(\bar{\bx},\bdelta') < J_k(\bar{\bx},\bdelta)$. 
\section{Proof of Proposition~\ref{prop:subroutine:3}}\label{appx:proof:subroutine:3}

Let $\bdelta \in \Delta_k$ satisfy Properties~\ref{property:notspecial} and \ref{property:forward}, and let $\bdelta'$ be the output of Subroutine 3. We will make use of the following lemma.
\begin{lemma} \label{lem:listening_to_music}
    If $\bdelta$ satisfies Property~\ref{property:gap}, then $\bdelta' = \bdelta$.
    \end{lemma}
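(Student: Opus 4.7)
\medskip

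\noindent\textbf{Proposal for Lemma~\ref{lem:listening_to_music}.}

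The plan is to prove $\bdelta' = \bdelta$ componentwise by backward induction on the index $\ell$. The endpoints are immediate from Subroutine~3: Step~1 forces $\delta'_{L_k+1} = \delta_{L_k+1}$, and Step~3 forces $\delta'_1 = \delta_1$. Hence it suffices to show that $\delta'_\ell = \delta_\ell$ for every $\ell \in \{2,\ldots,L_k\}$ under the assumption that $\bdelta$ satisfies Property~\ref{property:gap}.

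The induction will run from $\ell = L_k$ down to $\ell = 2$. At each index $\ell$, the update rule in Step~2 splits into two regimes.  In the first regime, $\delta_\ell \ge r_{i_{k,\ell}}$, and Subroutine~3 sets $\delta'_\ell \leftarrow \delta_\ell$ directly, so there is nothing to prove.  In the second regime, $\delta_\ell < r_{i_{k,\ell}}$, and Subroutine~3 sets $\delta'_\ell \leftarrow \min\{r_{i_{k,\ell}},\delta'_{\ell+1}\}$. Here I would invoke Property~\ref{property:gap}, which (because $\ell \in \{2,\ldots,L_k\}$ and $\delta_\ell < r_{i_{k,\ell}}$) gives $\delta_\ell = \delta_{\ell+1}$.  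Combining this with the inductive hypothesis $\delta'_{\ell+1} = \delta_{\ell+1}$ (or the base identity $\delta'_{L_k+1} = \delta_{L_k+1}$ when $\ell = L_k$), I obtain
\[
\delta'_\ell \;=\; \min\{r_{i_{k,\ell}},\delta'_{\ell+1}\} \;=\; \min\{r_{i_{k,\ell}},\delta_{\ell+1}\} \;=\; \min\{r_{i_{k,\ell}},\delta_\ell\} \;=\; \delta_\ell,
\]
where the final equality uses precisely the regime assumption $\delta_\ell < r_{i_{k,\ell}}$. This closes the inductive step and completes the proof.

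There is no real obstacle here: the entire argument is a routine case split driven by Property~\ref{property:gap}, and the recursive form of Subroutine~3 aligns perfectly with that property.  The only point that requires mild care is the boundary behavior at $\ell = L_k$, where one must use the Step~1 initialization $\delta'_{L_k+1} = \delta_{L_k+1}$ as the base case of the induction rather than the (later-assigned) value $\delta'_1$.
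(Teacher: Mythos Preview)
Your proposal is correct and follows essentially the same approach as the paper's proof: backward induction on $\ell$ with the same base cases ($\delta'_{L_k+1}=\delta_{L_k+1}$ and $\delta'_1=\delta_1$), the same case split on whether $\delta_\ell \ge r_{i_{k,\ell}}$, and the same appeal to Property~\ref{property:gap} in the second regime to replace $\delta_{\ell+1}$ by $\delta_\ell$.
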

    \begin{proof}{Proof of Lemma~\ref{lem:listening_to_music}.}
    Suppose that  $\bdelta$ satisfies Property~\ref{property:gap}.
    It follows from the construction of $\bdelta'$ that $\delta'_1 = \delta_1$.  In what follows, we prove that $\delta'_\ell = \delta_\ell$ for all $\ell \in \{2,\ldots,L_k+1\}$ by backward induction on $\ell$. In the base case where $\ell = L_k+1$, it follows immediately from the construction of $\bdelta'$ that $\delta'_{L_k+1} = \delta_{L_k+1}$. Now consider any arbitrary $\ell \in \{2,\ldots,L_k\}$, and assume by induction that $\delta'_{\ell'} = \delta_{\ell'}$ for all $\ell' \in \{\ell+1,\ldots,L_k+1\}$. In the case of $\ell' = \ell$, we have 
    \begin{align*}
          \delta_{\ell}' &= \begin{cases}
   \delta_\ell,&\text{if } \delta_\ell \ge r_{i_{k,\ell}},\\
   \min \left \{ r_{i_{k,\ell}}, \delta_{\ell+1}' \right \}&\text{if } \delta_\ell < r_{i_{k,\ell}}
   \end{cases} \\
&= \begin{cases}
   \delta_\ell,&\text{if } \delta_\ell \ge r_{i_{k,\ell}},\\
   \min \left \{ r_{i_{k,\ell}}, \delta_{\ell+1} \right \}&\text{if } \delta_\ell < r_{i_{k,\ell}}
   \end{cases} \\
   &= \begin{cases}
   \delta_\ell,&\text{if } \delta_\ell \ge r_{i_{k,\ell}},\\
   \min \left \{ r_{i_{k,\ell}}, \delta_{\ell} \right \}&\text{if } \delta_\ell < r_{i_{k,\ell}}
   \end{cases} \\
   &= \delta_\ell
    \end{align*}
    where the first equality follows from the construction of $\bdelta'$, the second equality follows from the induction hypothesis, the third equality follows from the fact that $\ell \in \{2,\ldots,L_k\}$ and from the supposition that $\bdelta$ satisfies Property~\ref{property:gap}, and the fourth equality follows from algebra. Since $\ell \in \{2,\ldots,L_k\}$ was chosen arbitrarily, our proof of Lemma~\ref{lem:listening_to_music} is complete. 
    \halmos \end{proof}

    It follows from Lemma~\ref{lem:listening_to_music} that if $\bdelta$ satisfies Property~\ref{property:gap}, then $\bdelta' = \bdelta$, which implies that $\bdelta'$ satisfies $\bdelta' \in \Delta_k$ and satisfies Properties~\ref{property:notspecial}, \ref{property:forward}, and \ref{property:gap}.  Therefore, assume from this point onward that $\bdelta$ does not satisfy Property~\ref{property:gap}.   Our proof will also make use of the following intermediary lemma. 
\begin{lemma} \label{lem:subroutine3:induction}
    $\delta'_\ell \ge \delta_\ell$ for all $\ell \in \{1,\ldots,L_k+1\}$. 
\end{lemma}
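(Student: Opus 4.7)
The plan is to prove Lemma~\ref{lem:subroutine3:induction} by backward induction on $\ell$, starting at $\ell = L_k+1$ and descending. The base case is trivial: Subroutine 3 sets $\delta'_{L_k+1} \leftarrow \delta_{L_k+1}$ in its first line, so $\delta'_{L_k+1} \ge \delta_{L_k+1}$ holds with equality. The inductive step from $\ell+1$ down to $\ell$ is where the case analysis in Subroutine 3 must be unpacked.

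For the inductive step at $\ell \in \{2,\ldots,L_k\}$, I would assume $\delta'_{\ell+1} \ge \delta_{\ell+1}$ and split into the same two cases used in Subroutine 3. In the first case, where $\delta_\ell \ge r_{i_{k,\ell}}$, the subroutine sets $\delta'_\ell \leftarrow \delta_\ell$, so the conclusion is immediate. In the second case, where $\delta_\ell < r_{i_{k,\ell}}$, the subroutine sets $\delta'_\ell \leftarrow \min\{r_{i_{k,\ell}}, \delta'_{\ell+1}\}$, and I would bound each of the two terms inside the minimum from below by $\delta_\ell$: the first because $r_{i_{k,\ell}} > \delta_\ell$ is the hypothesis of the case, and the second because $\delta'_{\ell+1} \ge \delta_{\ell+1} \ge \delta_\ell$, where the first inequality is the induction hypothesis and the second uses $\bdelta \in \Delta_k$ (i.e., the chain constraint $\delta_\ell \le \delta_{\ell+1}$). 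Taking the minimum preserves this lower bound, giving $\delta'_\ell \ge \delta_\ell$.

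Finally, for $\ell = 1$, the subroutine sets $\delta'_1 \leftarrow \delta_1$ in its last substantive step, so $\delta'_1 \ge \delta_1$ holds with equality and the induction is complete. No obstacle is really anticipated here; the argument is purely a mechanical unfolding of the definition of Subroutine 3 combined with the chain-constraint monotonicity built into $\Delta_k$. The only mildly delicate point is remembering that in the nontrivial case one must lower-bound \emph{both} arguments of the minimum, rather than only $\delta'_{\ell+1}$, since a priori either could be the smaller value.
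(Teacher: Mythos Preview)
Your proposal is correct and follows essentially the same approach as the paper: backward induction from $\ell=L_k+1$ down to $\ell=2$ with the same two-case split on whether $\delta_\ell \ge r_{i_{k,\ell}}$, together with the separate observation that $\delta'_1 = \delta_1$ by construction. The only cosmetic difference is that the paper presents the second case via a chain of inequalities (first replacing $\delta'_{\ell+1}$ by $\delta_{\ell+1}$ using the induction hypothesis, then replacing $\delta_{\ell+1}$ by $\delta_\ell$ using $\bdelta\in\Delta_k$), whereas you directly bound both arguments of the minimum by $\delta_\ell$; the logic is identical.
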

\begin{proof}{Proof of Lemma~\ref{lem:subroutine3:induction}.}
It follows immediately from the construction of $\bdelta'$ that $\delta'_1 = \delta_1$. In what follows, we prove that $\delta'_\ell \ge \delta_\ell$ for all $\ell \in \{2,\ldots,L_k+1\}$ by backwards induction on $\ell$. In the base case where $\ell = L_k+1$, it follows immediately from construction of $\bdelta'$ that $\delta_{L_k+1}' = \delta_{L_k+1}$. Now consider any arbitrary $\ell \in \{2,\ldots,L_k\}$, and assume by induction that $\delta'_{\ell'} \ge \delta_{\ell'}$ for all $\ell' \in \{\ell+1,\ldots,L_k+1\}$. In the case of $\ell' = \ell$, we have
\begin{align*}
   \delta_{\ell}' &= \begin{cases}
   \delta_\ell,&\text{if } \delta_\ell \ge r_{i_{k,\ell}},\\
   \min \left \{ r_{i_{k,\ell}}, \delta_{\ell+1}' \right \}&\text{if } \delta_\ell < r_{i_{k,\ell}}
   \end{cases} \\
&\ge \begin{cases}
   \delta_\ell,&\text{if } \delta_\ell \ge r_{i_{k,\ell}},\\
   \min \left \{ r_{i_{k,\ell}}, \delta_{\ell+1} \right \}&\text{if } \delta_\ell < r_{i_{k,\ell}}
   \end{cases} \\
   &\ge \begin{cases}
   \delta_\ell,&\text{if } \delta_\ell \ge r_{i_{k,\ell}},\\
   \min \left \{ r_{i_{k,\ell}}, \delta_{\ell} \right \}&\text{if } \delta_\ell < r_{i_{k,\ell}}
   \end{cases} \\
&=  \delta_\ell
   \end{align*}
The  first equality follows from the construction of $\bdelta'$. The first inequality follows from the induction hypothesis. The second inequality follows from the fact that $\bdelta \in \Delta_k$ (which implies that $\delta_{\ell+1} \ge \delta_\ell$). The second equality follows from algebra. 
 Since $\ell \in \{2,\ldots,L_k\}$ was chosen arbitrarily, our proof of Lemma~\ref{lem:subroutine3:induction} is complete. 
\halmos \end{proof}

Equipped with Lemma~\ref{lem:subroutine3:induction}, the remainder of the proof of Proposition~\ref{prop:subroutine:3} is organized as follows. In Appendix~\ref{appx:proof:subroutine:3:delta_prime_feasible}, we prove that $\bdelta' \in \Delta_k$. In Appendix~\ref{appx:proof:subroutine:3:property_1}, we prove that $\bdelta'$ satisfies Property~\ref{property:notspecial}. In Appendix~\ref{appx:proof:subroutine:3:property_2}, we prove that $\bdelta'$ satisfies Property~\ref{property:forward}. In Appendix~\ref{appx:proof:subroutine:3:property_3}, we prove that $\bdelta'$ satisfies Property~\ref{property:gap}. In Appendix~\ref{appx:proof:subroutine:3:weakdominate}, we show that   $J_k(\bx,\bdelta') \le J_k(\bx,\bdelta)$ for all $\bx \in \mathcal{X}^c$. In Appendix~\ref{appx:proof:subroutine:3:dominate}, we show that there always exists $\bar{\bx} \in \mathcal{X}^c$ that satisfies $J_k(\bar{\bx},\bdelta') < J_k(\bar{\bx},\bdelta)$. The combination of  Appendices~\ref{appx:proof:subroutine:3:weakdominate} and \ref{appx:proof:subroutine:3:dominate} thus implies that  $\bdelta'$ dominates $\bdelta$. 

\subsection[\texorpdfstring{Proof that $\bdelta'$ satisfies $\bdelta' \in \Delta_k$}{Text}]{Proof that $\bdelta'$ satisfies $\bdelta' \in \Delta_k$} \label{appx:proof:subroutine:3:delta_prime_feasible}

In the present Appendix~\ref{appx:proof:subroutine:3:delta_prime_feasible}, we show that $\bdelta' \in\Delta_k$.  Indeed, it follows from the construction of $\bdelta'$ and from the fact that $\bdelta \in \Delta_k$ that  $\delta'_{L_k+1} = \delta_{L_k+1} \le  \bar{r}_k$, and it follows from the construction of $\bdelta'$ and from the fact that $\bdelta \in \Delta_k$ that $\delta'_1 = \delta_1 \ge 0$. We observe that  $$\delta'_1 = \delta_1 \le \delta_2 \le \delta_2'$$
where the equality follows from the construction of $\bdelta'$, the first inequality follows from the fact that $\bdelta \in \Delta_k$, and the second inequality follows from Lemma~\ref{lem:subroutine3:induction}.  Finally, for all $\ell \in \{2,\ldots,L_k\}$, we have 
\begin{align*}
    \delta_{\ell}' &= \begin{cases}
   \delta_\ell,&\text{if } \delta_\ell \ge r_{i_{k,\ell}},\\
   \min \left \{ r_{i_{k,\ell}}, \delta'_{\ell+1} \right \},&\text{if } \delta_\ell < r_{i_{k,\ell}} 
   \end{cases}\\
   &\le \begin{cases}
   \delta_{\ell+1},&\text{if } \delta_\ell \ge r_{i_{k,\ell}},\\
   \min \left \{ r_{i_{k,\ell}}, \delta'_{\ell+1} \right \},&\text{if } \delta_\ell < r_{i_{k,\ell}} 
   \end{cases}\\
      &\le \begin{cases}
   \delta_{\ell+1}',&\text{if } \delta_\ell \ge r_{i_{k,\ell}},\\
   \min \left \{ r_{i_{k,\ell}}, \delta'_{\ell+1} \right \},&\text{if } \delta_\ell < r_{i_{k,\ell}} 
   \end{cases}\\
         &\le \begin{cases}
   \delta_{\ell+1}',&\text{if } \delta_\ell \ge r_{i_{k,\ell}},\\
 \delta'_{\ell+1},&\text{if } \delta_\ell < r_{i_{k,\ell}} 
   \end{cases}\\
   &=  \delta'_{\ell+1}
\end{align*}
where the first equality follows from the construction of $\bdelta'$, the first inequality follows from the fact that $\bdelta \in \Delta_k$ (which implies that $\delta_\ell \le \delta_{\ell+1}$), the second inequality follows from Lemma~\ref{lem:subroutine3:induction}, the third inequality follows from algebra, and the second equality follows from algebra. We have thus shown that $\bdelta' \in \Delta_k$.

\subsection[\texorpdfstring{Proof that $\bdelta'$ satisfies Property~\ref{property:notspecial}}{Text}]{Proof that $\bdelta'$ satisfies Property~\ref{property:notspecial}} \label{appx:proof:subroutine:3:property_1}

In the present Appendix~\ref{appx:proof:subroutine:3:property_1}, we show that $\bdelta'$ satisfies Property~\ref{property:notspecial}. Indeed, it follows from the fact that $\bdelta$ satisfies Property~\ref{property:notspecial} that $T_k(\bdelta) > 1$, which implies that there exists $\ell^* \in \{2,\ldots,L_k+1\}$ such that $\delta_{\ell^*} \le r_{i_{k,\ell^*}}$. If $\ell^* = L_k+1$, then it follows from the construction of $\bdelta'$ that $\delta'_{\ell^*} = \delta_{\ell^*} \le r_{i_{k,\ell^*}} = 0$, which implies that $T_k(\bdelta') = L_k+1 > 1$. If $\ell^* < L_k+1$, then it follows from the fact that   $\ell^* \in \{2,\ldots,L_k\}$ that
\begin{align*}
    \delta_{\ell^*}' &= \begin{cases}
         \delta_{\ell^*},&\text{if } \delta_{\ell^*} = r_{i_{k,\ell^*}},\\
            \min \left \{ r_{i_{k,\ell^*}}, \delta'_{\ell^*+1} \right \}, &\text{if } \delta_{\ell^*} < r_{i_{k,\ell^*}}
    \end{cases}\\
    &\le r_{i_{k,\ell^*}}
\end{align*}
The equality follows from the construction of $\bdelta'$ and from the definition of $\ell^*$ (which implies that $\delta_{\ell^*} \le r_{i_{k,\ell^*}})$. The inequality follows from algebra. This shows that $T_k(\bdelta') \ge \ell^* \ge 2$, which implies that $\bdelta'$ satisfies Property~\ref{property:notspecial}. This completes our proof that $\bdelta'$ satisfies Property~\ref{property:notspecial}.

\subsection[\texorpdfstring{Proof that $\bdelta'$ satisfies Property~\ref{property:forward}}{Text}]{Proof that $\bdelta'$ satisfies Property~\ref{property:forward}} \label{appx:proof:subroutine:3:property_2}

In the present Appendix~\ref{appx:proof:subroutine:3:property_2}, we show that $\bdelta'$ satisfies Property~\ref{property:forward}. Indeed, we observe that
        \begin{align*}
            \delta'_1 &=  \delta_1 \le r_{i_{k,1}}
        \end{align*}
        where the equality follows from the construction of $\bdelta'$ and the inequality  follows from the fact that $\bdelta$ satisfies Property~\ref{property:forward}. We have thus shown that $\bdelta'$ satisfies Property~\ref{property:forward}.

\subsection[\texorpdfstring{Proof that $\bdelta'$ satisfies Property~\ref{property:gap}}{Text}]{Proof that $\bdelta'$ satisfies Property~\ref{property:gap}} \label{appx:proof:subroutine:3:property_3}

In the present Appendix~\ref{appx:proof:subroutine:3:property_3}, we show that $\bdelta'$ satisfies Property~\ref{property:gap}. Indeed, for each $\ell \in \{2,\ldots,L_k\}$, we observe that 
        \begin{align}
            \delta'_{\ell} &= \begin{cases}
            \delta_{\ell},&\text{if } \delta_\ell \ge r_{i_{k,\ell}}\\
   \min \left \{ r_{i_{k,\ell}}, \delta'_{\ell+1} \right \},&\text{if } \delta_\ell < r_{i_{k,\ell}}
   \end{cases} \notag \\
   &= \begin{cases}
            \delta_{\ell},&\text{if } \delta_\ell \ge r_{i_{k,\ell}}\\
  r_{i_{k,\ell}},&\text{if } \delta_\ell < r_{i_{k,\ell}} \text{ and }  r_{i_{k,\ell}} < \delta'_{\ell+1} \\
  \delta'_{\ell+1},&\text{if } \delta_\ell < r_{i_{k,\ell}} \text{ and }  r_{i_{k,\ell}} \ge \delta'_{\ell+1} 
   \end{cases} \label{line:happyhappyhappy}
        \end{align}
where the first equality follows from the construction of $\bdelta'$ and the second equality follows from algebra. We now consider each of the above three cases. If $\delta_\ell \ge r_{i_{k,\ell}}$, then it follows from line~\eqref{line:happyhappyhappy} that $\delta'_\ell \ge r_{i_{k,\ell}}$, which complies with Property~\ref{property:gap}. If $\delta_\ell < r_{i_{k,\ell}}$ and $r_{i_{k,\ell}} < \delta'_{\ell+1}$, then it follows from line~\eqref{line:happyhappyhappy} that $\delta_\ell' \ge r_{i_{k,\ell}}$, which complies with Property~\ref{property:gap}. If $\delta_\ell < r_{i_{k,\ell}}$ and $r_{i_{k,\ell}} \ge \delta'_{\ell+1}$, then $\delta'_\ell = \delta'_{\ell+1}$, which complies with Property~\ref{property:gap}. 

 Since this reasoning holds for all $\ell \in \{2,\ldots,L_k\}$, we have shown that $\bdelta'$ satisfies Property~\ref{property:gap}. 

\subsection[\texorpdfstring{Proof that $J_k(\bx,\bdelta') \le J_k(\bx,\bdelta)$ for all $\bx \in \mathcal{X}^c$}{Text}]{Proof that $J_k(\bx,\bdelta') \le J_k(\bx,\bdelta)$ for all $\bx \in \mathcal{X}^c$} \label{appx:proof:subroutine:3:weakdominate}
In the present Appendix~\ref{appx:proof:subroutine:3:weakdominate}, we show that  
 $J_k(\bx,\bdelta') \le J_k(\bx,\bdelta)$ for all $\bx \in \mathcal{X}^c$. We will make use of the following intermediary lemma.
\begin{lemma}\label{lem:subroutine3:new}
For all $\ell \in \{1,\ldots,L_k\}$, we have
\begin{align*}
 \max \left \{0, r_{i_{k,\ell}} - \delta_\ell' \right \} - \left( \delta_{\ell+1}' - \delta_\ell' \right) \le  \max \left \{0, r_{i_{k,\ell}} - \delta_\ell \right \} - \left( \delta_{\ell+1} - \delta_\ell \right) 
\end{align*}
\end{lemma}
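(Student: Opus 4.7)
The plan is to prove the inequality pointwise for each $\ell \in \{1,\ldots,L_k\}$ by a short case analysis driven directly by the definition of $\bdelta'$ in Subroutine 3, with the heavy lifting already done by Lemma~\ref{lem:subroutine3:induction}, which gives the crucial comparison $\delta'_{\ell+1} \ge \delta_{\ell+1}$. Throughout, I will abbreviate $r \triangleq r_{i_{k,\ell}}$ for the ranking's $\ell$th product revenue in each step.

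The first easy case is $\ell = 1$, where Subroutine 3 sets $\delta_1' = \delta_1$. Then the two sides of the target inequality differ only in the $\delta_{\ell+1}$ versus $\delta_{\ell+1}'$ term, so the inequality reduces to $\delta_2' \ge \delta_2$, which is Lemma~\ref{lem:subroutine3:induction}. The second easy case is $\ell \in \{2,\ldots,L_k\}$ with $\delta_\ell \ge r$: here Subroutine 3 gives $\delta_\ell' = \delta_\ell$, so again both sides agree in the first term and the inequality reduces to $\delta_{\ell+1}' \ge \delta_{\ell+1}$, which is Lemma~\ref{lem:subroutine3:induction}.

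The remaining case is $\ell \in \{2,\ldots,L_k\}$ with $\delta_\ell < r$, so Subroutine~3 sets $\delta_\ell' = \min\{r, \delta'_{\ell+1}\}$. The right-hand side of the target inequality simplifies to $r - \delta_{\ell+1}$ because $\delta_\ell < r$ means $\max\{0, r - \delta_\ell\} = r - \delta_\ell$, and this cancels the $\delta_\ell$ inside $\delta_{\ell+1}-\delta_\ell$. The key observation is that the left-hand side simplifies to $r - \delta'_{\ell+1}$ regardless of which branch of the $\min$ is active: if $\delta'_{\ell+1} \le r$ then $\delta_\ell' = \delta'_{\ell+1}$ and the LHS equals $\max\{0, r - \delta'_{\ell+1}\} - 0 = r - \delta'_{\ell+1}$; if $\delta'_{\ell+1} > r$ then $\delta_\ell' = r$ and the LHS equals $0 - (\delta'_{\ell+1} - r) = r - \delta'_{\ell+1}$. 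Thus the target inequality reduces once more to $\delta'_{\ell+1} \ge \delta_{\ell+1}$, which is Lemma~\ref{lem:subroutine3:induction}.

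The main obstacle here is merely bookkeeping: carefully recognizing that the two branches of the $\min$ in Subroutine 3 collapse to the same value $r_{i_{k,\ell}} - \delta'_{\ell+1}$ on the left-hand side. Once that is noticed, every case of the lemma becomes a direct consequence of the monotonicity estimate $\delta'_{\ell+1}\ge \delta_{\ell+1}$ already established in Lemma~\ref{lem:subroutine3:induction}, so no additional structural input from Properties~\ref{property:notspecial} or \ref{property:forward} is needed for this particular lemma.
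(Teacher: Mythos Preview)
Your proof is correct and follows essentially the same approach as the paper's proof: both treat $\ell=1$ separately using $\delta_1'=\delta_1$, and for $\ell\in\{2,\ldots,L_k\}$ split according to whether $\delta_\ell \ge r_{i_{k,\ell}}$ or $\delta_\ell < r_{i_{k,\ell}}$, reducing every case to the monotonicity $\delta'_{\ell+1}\ge \delta_{\ell+1}$ from Lemma~\ref{lem:subroutine3:induction}. The only cosmetic difference is that the paper first applies Lemma~\ref{lem:subroutine3:induction} to bound the left-hand side and then simplifies, whereas you simplify both sides first and apply the lemma at the end; the content is identical.
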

\begin{proof}{Proof of Lemma~\ref{lem:subroutine3:new}.}
Consider any $\ell \in \{1,\ldots,L_k\}$. If $\ell = 1$, then we observe that
\begin{align*}
  \max \left \{0, r_{i_{k,1}} - \delta_1' \right \} - \left( \delta_{2}' - \delta_1' \right) 
&= \max \left \{0, r_{i_{k,1}} - \delta_1 \right \} - \left( \delta_{2}' - \delta_1 \right) \\
 &\le \max \left \{0, r_{i_{k,1}} - \delta_1 \right \} - \left( \delta_{2} - \delta_1 \right)
\end{align*}
The equality  follows from the construction of $\bdelta'$ (which implies that $\delta'_1 = \delta_1$). The inequality follows Lemma~\ref{lem:subroutine3:induction}. If $\ell \in \{2,\ldots,L_k\}$, then we observe that
\begin{align*}
   & \max \left \{0, r_{i_{k,\ell}} - \delta_\ell' \right \} - \left( \delta_{\ell+1}' - \delta_\ell' \right) \\
   & \le     \max \left \{0, r_{i_{k,\ell}} - \delta_\ell' \right \} - \left( \delta_{\ell+1} - \delta_\ell' \right) \\
   &=\begin{cases}
    \max \left \{0, r_{i_{k,\ell}} - \delta_\ell \right \} - \left( \delta_{\ell+1} - \delta_\ell \right) ,&\text{if } \delta_\ell \ge r_{i_{k,\ell}},\\
  \max \left \{0, r_{i_{k,\ell}} -  \min \left \{ r_{i_{k,\ell}}, \delta'_{\ell+1} \right \} \right \} - \left( \delta_{\ell+1} -  \min \left \{ r_{i_{k,\ell}}, \delta'_{\ell+1} \right \} \right),&\text{if } \delta_\ell < r_{i_{k,\ell}}
   \end{cases} 
   \end{align*}
 where the inequality follows from Lemma~\ref{lem:subroutine3:induction} and the  equality follows from the construction of $\bdelta'$.   If $\delta_{\ell} \ge r_{i_{k,\ell}}$, then we have shown the desired result.  Otherwise, if $\delta_{\ell} < r_{i_{k,\ell}}$, then 
   \begin{align*}
         &\max \left \{0, r_{i_{k,\ell}} -  \min \left \{ r_{i_{k,\ell}}, \delta'_{\ell+1} \right \} \right \} - \left( \delta_{\ell+1} -  \min \left \{ r_{i_{k,\ell}}, \delta'_{\ell+1} \right \} \right)\\
      &=
\left(  r_{i_{k,\ell}} -  \min \left \{ r_{i_{k,\ell}}, \delta'_{\ell+1} \right \} \right) - \left( \delta_{\ell+1} -  \min \left \{ r_{i_{k,\ell}}, \delta'_{\ell+1} \right \} \right) \\
      &=
  r_{i_{k,\ell}} -   \delta_{\ell+1} \\
         &=
  \left( r_{i_{k,\ell}} - \delta_\ell \right)  -   \left( \delta_{\ell+1} - \delta_\ell \right) \\
          &=
 \max \left \{ 0,  r_{i_{k,\ell}} - \delta_\ell \right \}  -   \left( \delta_{\ell+1} - \delta_\ell \right)
   \end{align*}
where the first, second, and third equalities follow from algebra, and the fourth equality follows from the fact that $\delta_{\ell} < r_{i_{k,\ell}}$. Our proof of Lemma~\ref{lem:subroutine3:new} is thus complete.  \halmos 
\end{proof}

Equipped with Lemma~\ref{lem:subroutine3:new}, we now show that  
 $J_k(\bx,\bdelta') \le J_k(\bx,\bdelta)$ for all $\bx \in \mathcal{X}^c$. Indeed, consider any $\bx \in \mathcal{X}^c$. We observe that 
\begin{align*}
    J_k(\bx,\bdelta') &= \delta_{L_k+1}' + \sum_{\ell=1}^{L_k}\left( \max \left \{0, r_{i_{k,\ell}} - \delta_\ell' \right \} - \left( \delta_{\ell+1}' - \delta_\ell' \right) \right) x_{i_{k,\ell}} \notag \\
&= \delta_{L_k+1} + \sum_{\ell=1}^{L_k}\left( \max \left \{0, r_{i_{k,\ell}} - \delta_\ell' \right \} - \left( \delta_{\ell+1}' - \delta_\ell' \right) \right) x_{i_{k,\ell}}\\
&\le  \delta_{L_k+1} + \sum_{\ell=1}^{L_k}\left( \max \left \{0, r_{i_{k,\ell}} - \delta_\ell \right \} - \left( \delta_{\ell+1} - \delta_\ell \right) \right) x_{i_{k,\ell}}\\
&= J_k(\bx,\bdelta)
\end{align*}
where the first equality is the definition of $J_k(\bx,\bdelta')$, the second equality follows from the construction of $\bdelta'$ (which implies that $\delta'_{L_k+1} = \delta_{L_k+1}$),  the inequality follows from Lemma~\ref{lem:subroutine3:new} and the fact that $\bx \in \mathcal{X}^c$ (which implies that $x_{i_{k,\ell}} \ge 0$ for all $\ell \in \{1,\ldots,L_k\}$), and the third equality is the definition of $J_k(\bx,\bdelta)$.   We thus conclude that $J_k(\bx,\bdelta') \le J_k(\bx,\bdelta)$.

\subsection[\texorpdfstring{Proof that there exists $\bar{\bx} \in \mathcal{X}^c$ that satisfies $J_k(\bar{\bx},\bdelta') < J_k(\bar{\bx},\bdelta)$}{Text}]{Proof that there exists $\bar{\bx} \in \mathcal{X}^c$ that satisfies $J_k(\bar{\bx},\bdelta') < J_k(\bar{\bx},\bdelta)$} \label{appx:proof:subroutine:3:dominate}
In the present Appendix~\ref{appx:proof:subroutine:3:dominate}, we show that  there exists $\bar{\bx} \in \mathcal{X}^c$ that satisfies $J_k(\bar{\bx},\bdelta') < J_k(\bar{\bx},\bdelta)$. Indeed, it follows from the assumption that $\bdelta$ does not satisfy Property~\ref{property:gap} and from the fact that $\bdelta'$ satisfies Property~\ref{property:gap} (see Appendix~\ref{appx:proof:subroutine:3:property_3}) that $\bdelta' \neq \bdelta$. For that to be the case, it follows from Lemma~\ref{lem:subroutine3:induction}  that there must exist $\ell \in \{1,\ldots,L_k+1\}$ such that $\delta_{\ell}' >  \delta_{\ell}$. 

Let $\ell^*$ denote the minimum $\ell \in \{1,\ldots,L_k+1\}$ such that $\delta_{\ell}' >  \delta_{\ell}$. 
It follows from the  construction of $\bdelta^*$ that $\delta'_1 = \delta_1$  and $\delta'_{L_k+1} = \delta_{L_k+1}$, which imply that $\ell^* \in \{2,\ldots,L_k\}$.  Let $\bar{\bx}$ be the vector that is defined for each $\ell \in [N+1]$ as
\begin{align*}
    \bar{x}_{i_{k,\ell}} &= \begin{cases}
        1,&\text{if } \ell \in \{\ell^*-1, L_k+1\},\\
        0,&\text{otherwise}
    \end{cases}
\end{align*}
 We observe that
\begin{align*}
    J_k(\bar{\bx}, \bdelta') &= \delta'_{L_k+1} + \sum_{\ell \in \{1,\ldots,L_k\} \setminus \{\ell^*-1\}}   \left( \max \left \{0, r_{i_{k,\ell}} - \delta_\ell' \right \} - \left( \delta_{\ell+1}' - \delta_\ell' \right) \right) 0\\
    &\quad + \left( \max \left \{0, r_{i_{k,\ell^*-1}} - \delta_{\ell^*-1}' \right \} - \left( \delta_{\ell^*}' - \delta_{\ell^*-1}' \right) \right) 1\\
    &= \delta'_{L_k+1} + \left( \max \left \{0, r_{i_{k,\ell^*-1}} - \delta_{\ell^*-1}' \right \} - \left( \delta_{\ell^*}' - \delta_{\ell^*-1}' \right) \right) \\
        &= \delta_{L_k+1} + \left( \max \left \{0, r_{i_{k,\ell^*-1}} - \delta_{\ell^*-1} \right \} - \left( \delta_{\ell^*}' - \delta_{\ell^*-1}\right) \right) \\
        &< \delta_{L_k+1} + \left( \max \left \{0, r_{i_{k,\ell^*-1}} - \delta_{\ell^*-1} \right \} - \left( \delta_{\ell^*} - \delta_{\ell^*-1}\right) \right) \\
         &= \delta_{L_k+1} + \sum_{\ell \in \{1,\ldots,L_k\} \setminus \{\ell^*-1\}}   \left( \max \left \{0, r_{i_{k,\ell}} - \delta_\ell \right \} - \left( \delta_{\ell+1} - \delta_\ell \right) \right) 0\\
    &\quad +\left( \max \left \{0, r_{i_{k,\ell^*-1}} - \delta_{\ell^*-1} \right \} - \left( \delta_{\ell^*} - \delta_{\ell^*-1}\right) \right)1\\
    &= J_k(\bar{\bx}, \bdelta)
\end{align*}
The first equality follows from the construction of $\bar{\bx}$. The second equality follows from algebra. The third equality follows the construction of $\bdelta'$ (which implies that $\delta'_{L_k+1} = \delta_{L_k+1}$) and from the definition of $\ell^*$ and Lemma~\ref{lem:subroutine3:induction} (which imply that $\delta'_{\ell} = \delta_{\ell}$ for all $\ell \in \{1,\ldots,\ell^*-1\}$). The inequality follows from the definition of $\ell^*$ (which implies that $\delta'_{\ell^*} > \delta_{\ell^*}$). The fourth equality follows from algebra, and the fifth equality follows from the construction of $\bar{\bx}$.  This completes our proof that there always exists $\bar{\bx} \in \mathcal{X}^c$ that satisfies $J_k(\bar{\bx}, \bdelta') < J_k(\bar{\bx}, \bdelta)$.
\section{Proof of Proposition~\ref{prop:subroutine:4}} \label{appx:proof:subroutine:4}
Let $\bdelta \in \Delta_k$ satisfy Properties~\ref{property:notspecial}, \ref{property:forward}, and \ref{property:gap}. Let $L$, $\hat{r}$, and $\bdelta'$ be as defined in Subroutine 4, which we repeat below for convenience. 
\begin{align*}
L &\triangleq T_k(\bdelta)\\
    \hat{r}&\triangleq  \max \left \{ \delta_L, \max_{\ell \in \{L+1,\ldots,L_k+1\}} r_{i_{k,\ell}} \right\}\\
    \delta'_\ell &\triangleq \min \left \{ \delta_\ell, \hat{r} \right \} \quad \forall \ell \in \{1,\ldots,L_k+1\}
\end{align*}
Our proof of Proposition~\ref{prop:subroutine:4}  will make use of the following intermediary lemma. 
\begin{lemma} \label{lem:subroutine4:helpful}
    We have  $\delta_\ell' = \delta_\ell$ for all $\ell \in \{1,\ldots,L\}$.
\end{lemma}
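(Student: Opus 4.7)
The plan is to show that $\hat{r} \ge \delta_L$, after which the claim follows immediately from the monotonicity of $\bdelta$. Specifically, I would observe first that by the definition of $\hat{r}$ in Subroutine 4, namely
\[
\hat{r} = \max \left\{ \delta_L,\, \max_{\ell \in \{L+1,\ldots,L_k+1\}} r_{i_{k,\ell}} \right\},
\]
it is immediate that $\hat{r} \ge \delta_L$.

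Next, I would invoke the feasibility $\bdelta \in \Delta_k$, which gives $\delta_1 \le \delta_2 \le \cdots \le \delta_L$. Combining this with the inequality above yields $\delta_\ell \le \delta_L \le \hat{r}$ for every $\ell \in \{1,\ldots,L\}$. Therefore, for each such $\ell$,
\[
\delta'_\ell = \min\{\delta_\ell, \hat{r}\} = \delta_\ell,
\]
which is the desired conclusion.

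There is no substantive obstacle here: the lemma is essentially a bookkeeping observation that ensures Subroutine 4 does not alter the coordinates up through position $L = T_k(\bdelta)$. This will be used subsequently to verify that Subroutine 4 preserves Properties~\ref{property:notspecial}, \ref{property:forward}, and \ref{property:gap} on the prefix $\{1,\ldots,L\}$, while simultaneously enforcing Property~\ref{property:reverse} on the suffix $\{L,\ldots,L_k+1\}$.
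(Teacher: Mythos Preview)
Your proof is correct and follows essentially the same approach as the paper's own proof: both arguments use $\hat r \ge \delta_L$ from the definition of $\hat r$ together with the monotonicity $\delta_\ell \le \delta_L$ from $\bdelta \in \Delta_k$ to conclude $\min\{\delta_\ell,\hat r\} = \delta_\ell$. The paper unpacks this into separate $\ge$ and $\le$ inequalities, whereas you state it more directly, but the content is identical.
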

\begin{proof}{Proof of Lemma~\ref{lem:subroutine4:helpful}.}
For each $\ell \in \{1,\ldots,L\}$, we observe that
\begin{align*}
    \delta_\ell' &= \min \{\delta_\ell, \hat{r} \}  \\
     &= \min \left \{\delta_\ell, \max \left \{ \delta_L, \max_{\ell \in \{L+1,\ldots,L_k+1\}} r_{i_{k,\ell}} \right\} \right \}\\
     &\ge \min \left \{\delta_\ell,  \delta_L  \right \}\\
    &= \delta_\ell 
\end{align*}
The first equality is the definition of $\delta'_\ell$. The second equality is the definition of $\hat{r}$. The inequality follows from algebra.  The third equality follows from the fact that $\bdelta \in \Delta_k$ and the fact that $\ell \in \{1,\ldots,L\}$, which implies that $\delta_L \ge \delta_\ell$. Moreover, we observe that
\begin{align*}
\delta_\ell' &= \min \left \{ \delta_\ell, \hat{r} \right \} \le \delta_\ell
\end{align*}
where the  equality is the definition of $\delta_\ell'$ and the inequality follows from algebra. Combining the above reasoning, we have shown that $\delta_\ell' = \delta_\ell$, which completes our proof of Lemma~\ref{lem:subroutine4:helpful}. \halmos \end{proof}
We will also make use of the following lemma.
\begin{lemma} \label{lem:listening_to_music_subroutine_4}
    If $\bdelta$ satisfies Property~\ref{property:reverse}, then $\bdelta' = \bdelta$.
    \end{lemma}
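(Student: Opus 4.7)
The plan is to split the indices into two ranges. For $\ell \in \{1,\ldots,L\}$, Lemma~\ref{lem:subroutine4:helpful} already gives $\delta'_\ell = \delta_\ell$ and no further work is needed. So the only content is to handle $\ell \in \{L+1,\ldots,L_k+1\}$, and here Property~\ref{property:reverse} does essentially all of the work.

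For $\ell \in \{L+1,\ldots,L_k+1\}$, I would start by invoking Property~\ref{property:reverse} at index $\ell$: since $L = T_k(\bdelta)$, the property forces $\delta_L = \delta_{L+1} = \cdots = \delta_{L_k+1}$, so in particular $\delta_\ell = \delta_L$. Substituting into the definition of $\delta'_\ell$ from Subroutine 4 gives $\delta'_\ell = \min\{\delta_\ell,\hat{r}\} = \min\{\delta_L,\hat{r}\}$. The final step is to observe that, directly from the definition $\hat{r} = \max\{\delta_L, \max_{\ell' \in \{L+1,\ldots,L_k+1\}} r_{i_{k,\ell'}}\}$, we have $\hat{r} \ge \delta_L$, hence $\min\{\delta_L,\hat{r}\} = \delta_L$. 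Combining these equalities yields $\delta'_\ell = \delta_L = \delta_\ell$, which is the desired conclusion for this range.

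There is no genuine obstacle here: the proof is a direct unpacking of Property~\ref{property:reverse} together with the definition of $\hat{r}$, once Lemma~\ref{lem:subroutine4:helpful} has disposed of the initial indices. The only thing to be careful about is bookkeeping at the boundary index $\ell = L$, which is covered by both cases consistently (Lemma~\ref{lem:subroutine4:helpful} gives $\delta'_L = \delta_L$, and the second argument also gives $\min\{\delta_L,\hat{r}\} = \delta_L$), so there is no case-analysis subtlety to worry about.
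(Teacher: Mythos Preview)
Your proposal is correct and follows essentially the same approach as the paper: split on $\ell \le L$ versus $\ell > L$, invoke Lemma~\ref{lem:subroutine4:helpful} for the first range, and for the second range use Property~\ref{property:reverse} to replace $\delta_\ell$ by $\delta_L$ and then observe $\hat{r} \ge \delta_L$ from the definition of $\hat{r}$. The paper additionally records (but does not actually use) the fact that $\delta_\ell > r_{i_{k,\ell}}$ for $\ell > L$; your omission of this is harmless.
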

    \begin{proof}{Proof of Lemma~\ref{lem:listening_to_music_subroutine_4}.}
    Suppose that  $\bdelta$ satisfies Property~\ref{property:reverse}. Then it follows from the definition of Property~\ref{property:reverse} that $\delta_L = \cdots = \delta_{L_k+1}$. Moreover, it follows from the definition of $T_k(\bdelta)$ that $\delta_{\ell} > r_{i_{k,\ell}}$ for all $\ell \in \{L+1,\ldots,L_k+1\}$. Therefore, we conclude for all $\ell \in \{1,\ldots,L_k+1\}$ that
    \begin{align*}
    \delta_\ell' &= \begin{cases}
        \delta_\ell,&\text{if } \ell \in \{1,\ldots,L\},\\
        \min \left \{ \delta_\ell, \hat{r} \right \},&\text{if } \ell \in \{L+1,\ldots,L_k+1\}
    \end{cases}\\
    &= \begin{cases}
        \delta_\ell,&\text{if } \ell \in \{1,\ldots,L\},\\
        \min \left \{ \delta_L, \hat{r} \right \},&\text{if } \ell \in \{L+1,\ldots,L_k+1\}
    \end{cases}\\
        &= \begin{cases}
        \delta_\ell,&\text{if } \ell \in \{1,\ldots,L\},\\
         \delta_L,&\text{if } \ell \in \{L+1,\ldots,L_k+1\}
    \end{cases}\\
    &= \delta_\ell
    \end{align*}
    The first equality follows from the definition of $\delta_\ell'$ and from Lemma~\ref{lem:subroutine4:helpful}. The second equality follows from the fact that  $L = T_k(\bdelta)$ and from the supposition that $\bdelta$ satisfies Property~\ref{property:reverse}, which implies that $\delta_L = \cdots = \delta_{L_k+1}$. The third equality follows from the fact that $\hat{r} = \max \{ \delta_L,\ldots \} \ge \delta_L$. The fourth equality follows from the fact that  $\delta_L = \cdots = \delta_{L_k+1}$.    Our proof of Lemma~\ref{lem:listening_to_music_subroutine_4} is complete. 
    \halmos \end{proof}

    It follows from Lemma~\ref{lem:listening_to_music_subroutine_4} that if $\bdelta$ satisfies Property~\ref{property:reverse}, then $\bdelta' = \bdelta$, which implies that $\bdelta'$ satisfies $\bdelta' \in \Delta_k$ and satisfies Properties~\ref{property:notspecial}, \ref{property:forward},  \ref{property:gap}, and \ref{property:reverse}.  Therefore, assume from this point onward that $\bdelta$ does not satisfy Property~\ref{property:reverse}. 
    
    The remainder of the proof of Proposition~\ref{prop:subroutine:4} is organized as follows.  In Appendix~\ref{appx:proof:subroutine:4:delta_prime_feasible}, we show that $\bdelta' \in \Delta_k$.  In Appendix~\ref{appx:proof:subroutine:4:property_2}, we show that $\bdelta'$ satisfies Property~\ref{property:forward}.  In Appendix~\ref{appx:proof:subroutine:4:case1}, we consider the case where $\delta_L \ge \max_{\ell \in \{L+1,\ldots,L_k+1\}} r_{i_{k,\ell}}$, and for that case we prove that $\bdelta'$ satisfies Properties~\ref{property:notspecial}, \ref{property:gap}, and \ref{property:reverse} as well as  dominates $\bdelta$.  In Appendix~\ref{appx:proof:subroutine:4:case2}, we consider the case where $\delta_L < \max_{\ell \in \{L+1,\ldots,L_k+1\}} r_{i_{k,\ell}}$, and for that case we prove that $\bdelta'$ satisfies Properties~\ref{property:notspecial}, \ref{property:gap}, and \ref{property:reverse} as well as dominates $\bdelta$. 
The combination of these results completes the proof of Proposition~\ref{prop:subroutine:4}. 

\subsection[\texorpdfstring{Proof that $\bdelta'$ satisfies $\bdelta' \in \Delta_k$}{Text}]{Proof that $\bdelta'$ satisfies $\bdelta' \in \Delta_k$} \label{appx:proof:subroutine:4:delta_prime_feasible}
In the present Appendix~\ref{appx:proof:subroutine:4:delta_prime_feasible}, we show that $\bdelta' \in\Delta_k$.  Indeed, it follows from Lemma~\ref{lem:subroutine4:helpful} and from the fact that $\bdelta \in \Delta_k$ that $\delta_1' = \delta_1 \ge 0$. Moreover, we observe that
\begin{align*}
    \delta_{L_k+1}' = \min \{ \delta_{L_k+1}, \hat{r} \} \le \delta_{L_k+1} \le \bar{r}_k
\end{align*}
where the  equality is the definition of $\delta_{L_k+1}'$, the first inequality follows from algebra, and the second inequality follows from the fact that  $\bdelta \in \Delta_k$. Finally, we observe for each $\ell \in \{1,\ldots,L_k\}$ that
\begin{align*}
    \delta_\ell' &=\min \left \{ \delta_\ell, \hat{r} \right \} \le \min \left \{ \delta_{\ell+1}, \hat{r} \right \} = \delta_{\ell+1}'
\end{align*}
where the first equality is the definition of $\delta'_\ell$,   the inequality follows from the fact that $\bdelta \in \Delta_k$ (which implies $\delta_\ell \le \delta_{\ell+1})$, and the second equality is the definition of $\delta'_{\ell+1}$.   We have thus shown that $\bdelta' \in \Delta_k$.\looseness=-1

\subsection[\texorpdfstring{Proof that $\bdelta'$ satisfies Property~\ref{property:forward}}{Text}]{Proof that $\bdelta'$ satisfies Property~\ref{property:forward}} \label{appx:proof:subroutine:4:property_2}

In the present Appendix~\ref{appx:proof:subroutine:4:property_2}, we show that $\bdelta'$ satisfies Property~\ref{property:forward}. Indeed, it follows from the fact that $\bdelta$ satisfies Property~\ref{property:forward} that $\delta_1 \le r_{i_{k,1}}$. Moreover, it follows from Lemma~\ref{lem:subroutine4:helpful} that $\delta_1' = \delta_1$. Therefore, we conclude that $\delta_1' = \delta_1 \le r_{i_{k,1}}$, which shows that $\bdelta'$ satisfies Property~\ref{property:forward}.

\subsection[\texorpdfstring{Case where $\delta_L \ge \max_{\ell \in \{L+1,\ldots,L_k+1\}} r_{i_{k,\ell}}$}{Text}]{Case where $\delta_L \ge \max_{\ell \in \{L+1,\ldots,L_k+1\}} r_{i_{k,\ell}}$} \label{appx:proof:subroutine:4:case1}

In the present Appendix~\ref{appx:proof:subroutine:4:case1}, we consider the case in which $\delta_L \ge \max_{\ell \in \{L+1,\ldots,L_k+1\}} r_{i_{k,\ell}}$, and for this case we prove that $\bdelta'$ satisfies Properties~\ref{property:notspecial}, \ref{property:gap}, and \ref{property:reverse} as well as  dominates $\bdelta$. Indeed, assume throughout the remainder of Appendix~\ref{appx:proof:subroutine:4:case1} that $\delta_L \ge \max_{\ell \in \{L+1,\ldots,L_k+1\}} r_{i_{k,\ell}}$. We  will make use of the following lemma. 
\begin{lemma} \label{lem:subroutine4:ge}
$T_k(\bdelta') \ge L$, and for all $\ell \in \{1,\ldots,L_k+1\}$ we have
\begin{align*}
    \delta_\ell' = \begin{cases}
        \delta_\ell,&\text{if } \ell \in \{1,\ldots,L\},\\
        \delta_L,&\text{if } \ell \in \{L+1,\ldots,L_k+1\}
    \end{cases}
\end{align*}
\end{lemma}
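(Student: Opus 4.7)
{Proof plan for Lemma~\ref{lem:subroutine4:ge}.}
The plan is to exploit the case hypothesis $\delta_L \ge \max_{\ell \in \{L+1,\ldots,L_k+1\}} r_{i_{k,\ell}}$ to simplify the definition of $\hat{r}$, and then compute $\delta'_\ell$ index by index. First, under the case hypothesis, the definition of $\hat r$ from Subroutine 4 collapses to $\hat{r} = \delta_L$.

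For the indices $\ell \in \{1,\ldots,L\}$, the formula $\delta'_\ell = \delta_\ell$ is already given by Lemma~\ref{lem:subroutine4:helpful}, so there is nothing new to verify. For the indices $\ell \in \{L+1,\ldots,L_k+1\}$, I will argue that $\delta'_\ell = \min\{\delta_\ell,\hat{r}\} = \min\{\delta_\ell,\delta_L\} = \delta_L$, where the last equality uses $\bdelta \in \Delta_k$, which guarantees $\delta_\ell \ge \delta_L$ for every such $\ell$. Combining these two ranges yields the closed-form description of $\bdelta'$ claimed in the lemma.

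Finally, to obtain $T_k(\bdelta') \ge L$, I will recall from the definition of $T_k(\bdelta)$ in \eqref{defn:T_k_delta} that $\delta_L \le r_{i_{k,L}}$, since $L = T_k(\bdelta)$ by construction in Subroutine 4. Because $\delta'_L = \delta_L$ by the formula just derived, this gives $\delta'_L \le r_{i_{k,L}}$, which by the definition of $T_k(\bdelta')$ forces $T_k(\bdelta') \ge L$. This completes the plan; no step is a real obstacle, as each reduction is a one-line consequence of the case hypothesis, the monotonicity built into $\Delta_k$, and the already-established Lemma~\ref{lem:subroutine4:helpful}.
\halmos
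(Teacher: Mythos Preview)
Your proposal is correct and follows essentially the same approach as the paper: simplify $\hat r$ to $\delta_L$ under the case hypothesis, use Lemma~\ref{lem:subroutine4:helpful} for $\ell\le L$, use monotonicity of $\bdelta\in\Delta_k$ to get $\delta'_\ell=\delta_L$ for $\ell>L$, and deduce $T_k(\bdelta')\ge L$ from $\delta'_L=\delta_L\le r_{i_{k,L}}$.
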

\begin{proof}{Proof of Lemma~\ref{lem:subroutine4:ge}.}
  We observe that 
 \begin{align*}
     \delta'_{L} &= \delta_L \le r_{i_{k,L}}
 \end{align*}
 where the first equality follows from Lemma~\ref{lem:subroutine4:helpful} and the second inequality follows from the definition of $L$. We have thus shown that $T_k(\bdelta') \ge L$. We conclude the proof of Lemma~\ref{lem:subroutine4:ge} by showing that 
 $\delta_{\ell'}' = \delta_{L}$ for all $\ell' \in \{L+1,\ldots,L_k+1\}$. Indeed, we observe for each $\ell' \in \{L+1,\ldots,L_k+1\}$ that 
    \begin{align*}
        \delta_{\ell'}' &= \min \left \{ \delta_{\ell'}, \hat{r} \right \}\\
        &=  \min \left \{ \delta_{\ell'}, \max \left\{ \delta_L, \max_{\ell \in \{L+1,\ldots,L_k+1\}} r_{i_{k,\ell}} \right \} \right \}\\
        &=   \min \left \{ \delta_{\ell'},  \delta_L \right \}\\
     &=    \delta_L
    \end{align*}
     The first equality is the definition of $\delta'_{\ell'}$. The second equality is the definition of $\hat{r}$. The third equality follows from the assumption that $\delta_L \ge \max_{\ell \in \{L+1,\ldots,L_k+1\}} r_{i_{k,\ell}}$.  The fourth equality follows from the fact that $\bdelta \in \Delta_k$ and the fact that $\ell' \in \{L+1,\ldots,L_k+1\}$, which together imply that $\delta_{\ell'} \ge \delta_L$.   That completes our proof of Lemma~\ref{lem:subroutine4:ge}. 
\halmos \end{proof}

Equipped with Lemma~\ref{lem:subroutine4:ge}, the remainder of Appendix~\ref{appx:proof:subroutine:4:case1} is organized as follows. In Appendix~\ref{appx:proof:subroutine:4:case1:property_1}, we prove that $\bdelta'$ satisfies Property~\ref{property:notspecial}. In Appendix~\ref{appx:proof:subroutine:4:case1:property_3}, we prove that $\bdelta'$ satisfies Property~\ref{property:gap}. In Appendix~\ref{appx:proof:subroutine:4:case1:property_4}, we show that  $\bdelta'$ satisfies Property~\ref{property:reverse}. In Appendix~\ref{appx:proof:subroutine:4:case1:weakdominate}, we show that   $J_k(\bx,\bdelta') \le J_k(\bx,\bdelta)$ for all $\bx \in \mathcal{X}^c$. In Appendix~\ref{appx:proof:subroutine:4:case1:dominate}, we show that there always exists $\bar{\bx} \in \mathcal{X}^c$ that satisfies $J_k(\bar{\bx},\bdelta') < J_k(\bar{\bx},\bdelta)$. The combination of  Appendices~\ref{appx:proof:subroutine:4:case1:weakdominate} and \ref{appx:proof:subroutine:4:case1:dominate} thus implies that  $\bdelta'$ dominates $\bdelta$. 

\subsubsection[\texorpdfstring{Proof that $\bdelta'$ satisfies Property~\ref{property:notspecial}}{Text}]{Proof that $\bdelta'$ satisfies Property~\ref{property:notspecial}.} \label{appx:proof:subroutine:4:case1:property_1}
In the present Appendix~\ref{appx:proof:subroutine:4:case1:property_1}, we show that  $\bdelta'$ satisfies Property~\ref{property:notspecial}. Indeed, it follows from the fact that $\bdelta$ satisfies Property~\ref{property:notspecial} and from the definition of $L$ that $L \ge 2$. Moreover,  Lemma~\ref{lem:subroutine4:ge} shows that $T_k(\bdelta') \ge L$. We thus conclude that $T_k(\bdelta') \ge 2$, which shows that $\bdelta'$ satisfies Property~\ref{property:notspecial}. 

\subsubsection[\texorpdfstring{Proof that $\bdelta'$ satisfies Property~\ref{property:gap}}{Text}]{Proof that $\bdelta'$ satisfies Property~\ref{property:gap}.} \label{appx:proof:subroutine:4:case1:property_3} In the present Appendix~\ref{appx:proof:subroutine:4:case1:property_3}, we show that  $\bdelta'$ satisfies Property~\ref{property:gap}. 
Indeed, it follows from Lemma~\ref{lem:subroutine4:ge} that $\delta'_{L} = \cdots = \delta'_{L_k+1}$. Moreover, for each $\ell \in \{2,\ldots,L-1\}$, we observe that if $\delta'_{\ell} < r_{i_{k,\ell}}$, then
    \begin{align*}
        \delta'_{\ell} &= \delta_{\ell} = \delta_{\ell+1}  = \delta'_{\ell+1} 
    \end{align*}
    The first equality follows from Lemma~\ref{lem:subroutine4:ge} and the fact that $\ell \in \{2,\ldots,L-1\}$. The second equality follows from the fact that $\delta_\ell = \delta_\ell' < r_{i_{k,\ell}}$,  the fact that $\ell \in \{2,\ldots,L-1\}$, and the fact that $\bdelta$ satisfies Property~\ref{property:gap}, which together imply that $\delta_{\ell} = \delta_{\ell+1}$. The third equality follows from Lemma~\ref{lem:subroutine4:ge} and the fact that $\ell+1 \in \{3,\ldots,L\}$.    We have thus shown for all $\ell \in \{2,\ldots,L_k\}$  that if $\delta'_{\ell} < r_{i_{k,\ell}},$ then $\delta'_{\ell} = \delta'_{\ell+1}$, which completes our proof that $\bdelta'$ satisfies Property~\ref{property:gap}. 

\subsubsection[\texorpdfstring{Proof that $\bdelta'$ satisfies Property~\ref{property:reverse}}{Text}]{Proof that $\bdelta'$ satisfies Property~\ref{property:reverse}.} \label{appx:proof:subroutine:4:case1:property_4} In the present Appendix~\ref{appx:proof:subroutine:4:case1:property_4}, we show that  $\bdelta'$ satisfies Property~\ref{property:reverse}. Indeed,   it follows from Lemma~\ref{lem:subroutine4:ge} that $T_k(\bdelta') \ge L$ and that $\delta'_{L} = \cdots =   \delta'_{L_k+1} = \delta_L$, which imply that $\delta'_{T_k(\bdelta')} = \cdots =   \delta'_{L_k+1} = \delta_L$. This shows that $\bdelta'$ satisfies Property~\ref{property:reverse}.

\subsubsection[\texorpdfstring{Proof that $J_k(\bx,\bdelta') \le J_k(\bx,\bdelta)$ for all $\bx \in \mathcal{X}^c$}{Text}]{Proof that $J_k(\bx,\bdelta') \le J_k(\bx,\bdelta)$ for all $\bx \in \mathcal{X}^c$.} \label{appx:proof:subroutine:4:case1:weakdominate}
 In the present Appendix~\ref{appx:proof:subroutine:4:case1:weakdominate}, we show that  $J_k(\bx,\bdelta') \le J_k(\bx,\bdelta)$ for all $\bx \in \mathcal{X}^c$. Indeed, consider  any $\bx \in \mathcal{X}^c$. We first observe that 
    \begin{align*}
        J_k(\bx,\bdelta') &= \delta'_{L_k+1} +  \sum_{\ell=1}^{L_k}  \left( \max \left \{0, r_{i_{k,\ell}} - \delta_\ell' \right \} - \left( \delta_{\ell+1}' - \delta_\ell' \right) \right) x_{i_{k,\ell}} \\
        &=\delta_{L_k+1}' +  \sum_{\ell=L}^{L_k}  \left( \max \left \{0, r_{i_{k,\ell}} - \delta_\ell' \right \} - \left( \delta_{\ell+1}' - \delta_\ell' \right) \right) x_{i_{k,\ell}}\\
        &\quad +  \sum_{\ell=1}^{L-1}  \left( \max \left \{0, r_{i_{k,\ell}} - \delta_\ell' \right \} - \left( \delta_{\ell+1}' - \delta_\ell' \right) \right) x_{i_{k,\ell}}\\
     &=\delta_{L} +  \sum_{\ell=L}^{L_k}  \left( \max \left \{0, r_{i_{k,\ell}} - \delta_L \right \} - \left( \delta_{L} - \delta_L \right) \right) x_{i_{k,\ell}}\\
        &\quad +  \sum_{\ell=1}^{L-1}  \left( \max \left \{0, r_{i_{k,\ell}} - \delta_\ell \right \} - \left( \delta_{\ell+1} - \delta_\ell \right) \right) x_{i_{k,\ell}}\\
        &= \delta_{L} +  \sum_{\ell=L+1 }^{L_k}  \max \left \{0, r_{i_{k,\ell}} - \delta_L \right \}  x_{i_{k,\ell}} \\
        &\quad + \max \left \{0, r_{i_{k,L}} - \delta_L \right \}  x_{i_{k,L}}  + \sum_{\ell=1}^{L-1}  \left( \max \left \{0, r_{i_{k,\ell}} - \delta_\ell \right \} - \left( \delta_{\ell+1} - \delta_\ell \right) \right) x_{i_{k,\ell}}\\
        &= \delta_{L} +   \max \left \{0, r_{i_{k,L}} - \delta_L \right \}  x_{i_{k,L}}  + \sum_{\ell=1}^{L-1}  \left( \max \left \{0, r_{i_{k,\ell}} - \delta_\ell \right \} - \left( \delta_{\ell+1} - \delta_\ell \right) \right) x_{i_{k,\ell}}\\
&= \delta_{L} +   \left(  r_{i_{k,L}} - \delta_L \right)  x_{i_{k,L}}  + \sum_{\ell=1}^{L-1}  \left( \max \left \{0, r_{i_{k,\ell}} - \delta_\ell \right \} - \left( \delta_{\ell+1} - \delta_\ell \right) \right) x_{i_{k,\ell}}
    \end{align*}
    The first equality is the definition of $J_k(\bx,\bdelta')$. The second equality follows from algebra. The third equality follows from Lemma~\ref{lem:subroutine4:ge}. The  fourth equality follows from algebra.  The fifth equality follows from the fact  that $\delta_L \ge \max_{\ell \in \{L+1,\ldots,L_k+1\}} r_{i_{k,\ell}}$, which implies that $\max \left \{ 0, r_{i_{k,\ell}} - \delta_L \right \} =  0$ for all $\ell \in \{L+1,\ldots,L_k\}$. The sixth equality follows from the definition of $L$, which implies that $\delta_L \le r_{i_{k,L}}$. Moreover, we observe that
        \begin{align*}
        J_k(\bx,\bdelta) &= \delta_{L_k+1} +  \sum_{\ell=1}^{L_k}  \left( \max \left \{0, r_{i_{k,\ell}} - \delta_\ell \right \} - \left( \delta_{\ell+1} - \delta_\ell \right) \right) x_{i_{k,\ell}} \\
        &=\delta_{L_k+1} +  \sum_{\ell=L}^{L_k}  \left( \max \left \{0, r_{i_{k,\ell}} - \delta_\ell \right \} - \left( \delta_{\ell+1} - \delta_\ell \right) \right) x_{i_{k,\ell}}\\
        &\quad +  \sum_{\ell=1}^{L-1}  \left( \max \left \{0, r_{i_{k,\ell}} - \delta_\ell \right \} - \left( \delta_{\ell+1} - \delta_\ell \right) \right) x_{i_{k,\ell}}\\
     &=\delta_{L_k+1} - \sum_{\ell=L}^{L_k}  \left( \delta_{\ell+1} - \delta_\ell \right) x_{i_{k,\ell}} +  \max \left \{0, r_{i_{k,L}} - \delta_L \right \}  x_{i_{k,L}}\\
        &\quad +  \sum_{\ell=1}^{L-1}  \left( \max \left \{0, r_{i_{k,\ell}} - \delta_\ell \right \} - \left( \delta_{\ell+1} - \delta_\ell \right) \right) x_{i_{k,\ell}}\\
         &=\delta_{L_k+1} - \sum_{\ell=L}^{L_k}  \left( \delta_{\ell+1} - \delta_\ell \right) x_{i_{k,\ell}}+  \left( r_{i_{k,L}} - \delta_L \right)  x_{i_{k,L}}\\
        &\quad +  \sum_{\ell=1}^{L-1}  \left( \max \left \{0, r_{i_{k,\ell}} - \delta_\ell \right \} - \left( \delta_{\ell+1} - \delta_\ell \right) \right) x_{i_{k,\ell}}\\
       &\ge \delta_{L_k+1} - \sum_{\ell=L}^{L_k}  \left( \delta_{\ell+1} - \delta_\ell \right) 1+  \left( r_{i_{k,L}} - \delta_L \right)  x_{i_{k,L}}\\
        &\quad +  \sum_{\ell=1}^{L-1}  \left( \max \left \{0, r_{i_{k,\ell}} - \delta_\ell \right \} - \left( \delta_{\ell+1} - \delta_\ell \right) \right) x_{i_{k,\ell}}\\
       &= \delta_{L} +  \left( r_{i_{k,L}} - \delta_L \right)  x_{i_{k,L}}+  \sum_{\ell=1}^{L-1}  \left( \max \left \{0, r_{i_{k,\ell}} - \delta_\ell \right \} - \left( \delta_{\ell+1} - \delta_\ell \right) \right) x_{i_{k,\ell}}
    \end{align*}
    The first equality is the definition of $J_k(\bx,\bdelta)$. The second equality follows from algebra. The third equality follows from the definition of $L$, which implies that $ \max \left \{ 0, r_{i_{k,\ell}} - \delta_\ell\right \} = 0 $ for all $\ell \in \{L+1,\ldots,L_K\}$. The fourth equality follows from the definition of $L$, which implies that $\delta_L \le r_{i_{k,L}}$. The inequality follows from the fact that $\bdelta \in \Delta_k$ (which implies that $\delta_{\ell+1} - \delta_{\ell} \ge 0$ for all $\ell \in \{L,\ldots,L_k\}$) and the fact that $\bx \in \mathcal{X}^c$ (which implies that $x_{i_{k,\ell}} \le 1$ for all $\ell \in \{L,\ldots,L_k\}$). The fifth equality follows from algebra. Combining the above reasoning, we have shown that 
    \begin{align*}
        J_k(\bx,\bdelta') &=  \delta_{L} +   \left(  r_{i_{k,L}} - \delta_L \right)  x_{i_{k,L}}  + \sum_{\ell=1}^{L-1}  \left( \max \left \{0, r_{i_{k,\ell}} - \delta_\ell \right \} - \left( \delta_{\ell+1} - \delta_\ell \right) \right) x_{i_{k,\ell}}\\
        &\le J_k(\bx,\bdelta)
    \end{align*}
We thus conclude that $J_k(\bx,\bdelta') \le J_k(\bx,\bdelta)$.

\subsubsection[\texorpdfstring{Proof that there exists $\bar{\bx} \in \mathcal{X}^c$ that satisfies $J_k(\bar{\bx},\bdelta') < J_k(\bar{\bx},\bdelta)$}{Text}]{Proof that there exists $\bar{\bx} \in \mathcal{X}^c$ that satisfies $J_k(\bar{\bx},\bdelta') < J_k(\bar{\bx},\bdelta)$.} \label{appx:proof:subroutine:4:case1:dominate}
In the present Appendix~\ref{appx:proof:subroutine:4:case1:dominate}, we show that  there exists $\bar{\bx} \in \mathcal{X}^c$ that satisfies $J_k(\bar{\bx},\bdelta') < J_k(\bar{\bx},\bdelta)$. 
We will make use of the following lemma.
\begin{lemma} \label{lem:subroutine:4:case1:delta_L_k_so_so_big}
    $\delta'_{L_k+1} < \delta_{L_k+1}$.
\end{lemma}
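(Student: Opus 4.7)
The plan is to combine Lemma~\ref{lem:subroutine4:ge}, which already gives an explicit formula for $\delta'_{L_k+1}$ in the current case, with the standing assumption in Appendix~\ref{appx:proof:subroutine:4} that $\bdelta$ does not satisfy Property~\ref{property:reverse}. By Lemma~\ref{lem:subroutine4:ge}, in the case $\delta_L \ge \max_{\ell \in \{L+1,\ldots,L_k+1\}} r_{i_{k,\ell}}$, we have $\delta'_{L_k+1} = \delta_L$, so the claim reduces to showing $\delta_L < \delta_{L_k+1}$.

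Next, I would use the monotonicity guaranteed by $\bdelta \in \Delta_k$ to obtain the chain $\delta_L \le \delta_{L+1} \le \cdots \le \delta_{L_k+1}$. If equality held throughout this chain, then $\delta_{T_k(\bdelta)} = \delta_L = \delta_{L+1} = \cdots = \delta_{L_k+1}$, which is precisely Property~\ref{property:reverse}. Since we have assumed that $\bdelta$ does not satisfy Property~\ref{property:reverse}, equality cannot hold throughout, and hence the strict inequality $\delta_L < \delta_{L_k+1}$ must hold.

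Combining these two observations yields $\delta'_{L_k+1} = \delta_L < \delta_{L_k+1}$, which is exactly the conclusion of Lemma~\ref{lem:subroutine:4:case1:delta_L_k_so_so_big}. There is no serious obstacle here; the whole argument is a short two-line deduction from Lemma~\ref{lem:subroutine4:ge} and the failure of Property~\ref{property:reverse}, and the only thing to be careful about is invoking the correct standing assumption (that $\bdelta$ does not satisfy Property~\ref{property:reverse}) that was made immediately after Lemma~\ref{lem:listening_to_music_subroutine_4}.
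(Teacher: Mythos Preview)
Your argument is correct and is in fact a little more direct than the paper's. You use Lemma~\ref{lem:subroutine4:ge} to get $\delta'_{L_k+1}=\delta_L$, then use the failure of Property~\ref{property:reverse} for $\bdelta$ together with monotonicity ($\bdelta\in\Delta_k$) to conclude $\delta_L<\delta_{L_k+1}$. The paper instead goes through the intermediate fact $\bdelta\neq\bdelta'$: since $\delta'_\ell\le\delta_\ell$ for all $\ell$ by construction, and $\delta'_\ell=\delta_\ell$ for $\ell\le L$ by Lemma~\ref{lem:subroutine4:ge}, the inequality $\bdelta\neq\bdelta'$ forces some $\ell^*\in\{L+1,\ldots,L_k+1\}$ with $\delta'_{\ell^*}<\delta_{\ell^*}$; then $\delta'_{L_k+1}=\delta'_{\ell^*}<\delta_{\ell^*}\le\delta_{L_k+1}$. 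Your route avoids introducing $\ell^*$ and invokes the standing assumption (failure of Property~\ref{property:reverse}) directly rather than via its consequence $\bdelta\neq\bdelta'$; the paper's route has the minor virtue of being reusable verbatim in the parallel Case~2 (Lemma~\ref{lem:subroutine:4:case2:delta_L_k_so_so_big}), where it replaces $L$ by $\bar{L}$.
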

\begin{proof}{Proof of Lemma~\ref{lem:subroutine:4:case1:delta_L_k_so_so_big}.}
It follows from the construction of $\bdelta'$ that $\delta'_\ell \le \delta_\ell$ for all $\ell \in \{1,\ldots,L_k+1\}$. Therefore, it follows from the assumption that $\bdelta \neq \bdelta'$ and from the fact that $\delta'_\ell = \delta_\ell$ for all $\ell \in \{1,\ldots,L\}$ (Lemma~\ref{lem:subroutine4:ge}) that there must exist $\ell^* \in \{L+1,\ldots,L_k+1\}$ such that $\delta_{\ell^*}' < \delta_{\ell^*}$. We thus observe that
\begin{align*}
    \delta_{L_k+1}' = \delta_{\ell^*}' < \delta_{\ell^*} \le \delta_{L_k+1}
\end{align*}
The first equality follows from Lemma~\ref{lem:subroutine4:ge} (which implies that $\delta_{L+1}' = \cdots = \delta_{L_k+1}'$). The inequality follows from the construction of $\ell^*$. The second equality follows from the fact that $\bdelta \in \Delta_k$ (which implies that $\delta_{\ell^*} \le \cdots \le \delta_{L_k+1}$). We have thus shown that $\delta_{L_k+1}' < \delta_{L_k+1}$, which completes the proof of Lemma~\ref{lem:subroutine:4:case1:delta_L_k_so_so_big}. 
\halmos \end{proof}

Equipped with the above lemma, we now show that  there exists $\bar{\bx} \in \mathcal{X}^c$ that satisfies $J_k(\bar{\bx},\bdelta') < J_k(\bar{\bx},\bdelta)$. Indeed, let $\bar{\bx}$ be the vector that is defined for each $\ell \in [N+1]$ as
\begin{align*}
    \bar{x}_{i_{k,\ell}} &= \begin{cases}
        1,&\text{if } \ell = L_k+1,\\
        0,&\text{otherwise}
    \end{cases}
\end{align*}
 We observe that
\begin{align*}
    J_k(\bar{\bx}, \bdelta') &= \delta'_{L_k+1} + \sum_{\ell =1}^{L_k}   \left( \max \left \{0, r_{i_{k,\ell}} - \delta_\ell' \right \} - \left( \delta_{\ell+1}' - \delta_\ell' \right) \right) 0\\
    &<   \delta_{L_k+1} + \sum_{\ell =1}^{L_k}   \left( \max \left \{0, r_{i_{k,\ell}} - \delta_\ell \right \} - \left( \delta_{\ell+1} - \delta_\ell \right) \right) 0\\
    &= J_k(\bar{\bx}, \bdelta)
\end{align*}
The first equality follows from the construction of $\bar{\bx}$. The inequality follows from algebra and from Lemma~\ref{lem:subroutine:4:case1:delta_L_k_so_so_big}. The second equality follows from the construction of $\bar{\bx}$. This completes our proof that there always exists $\bar{\bx} \in \mathcal{X}^c$ that satisfies $J_k(\bar{\bx}, \bdelta') < J_k(\bar{\bx}, \bdelta)$.

\subsection[\texorpdfstring{Case where $\delta_L < \max_{\ell \in \{L+1,\ldots,L_k+1\}} r_{i_{k,\ell}}$}{Text}]{Case where $\delta_L < \max_{\ell \in \{L+1,\ldots,L_k+1\}} r_{i_{k,\ell}}$} \label{appx:proof:subroutine:4:case2}

In the present Appendix~\ref{appx:proof:subroutine:4:case2}, we consider the case in which $\delta_L < \max_{\ell \in \{L+1,\ldots,L_k+1\}} r_{i_{k,\ell}}$, and for this case we prove that $\bdelta'$ satisfies Properties~\ref{property:notspecial}, \ref{property:gap}, and \ref{property:reverse} as well as dominates $\bdelta$. Indeed, assume throughout the remainder of Appendix~\ref{appx:proof:subroutine:4:case2} that $\delta_L < \max_{\ell \in \{L+1,\ldots,L_k+1\}} r_{i_{k,\ell}}$. Moreover, let $\bar{L}$ denote the largest integer that satisfies $\delta_{\bar{L}} \le \max_{\ell \in \{L+1,\ldots,L_k+1\}} r_{i_{k,\ell}}$.  We will make use of the following lemmas.  

\begin{lemma} \label{lem:subroutine4:strict_less:L_bar_in_between}
    $L \le \bar{L} <  T_k(\bdelta')$.
\end{lemma}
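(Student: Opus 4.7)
The plan is to prove the two inequalities $L \le \bar L$ and $\bar L < T_k(\bdelta')$ separately, exploiting the fact that in this case $\hat r = \max_{\ell \in \{L+1,\ldots,L_k+1\}} r_{i_{k,\ell}}$ (since by assumption this max strictly exceeds $\delta_L$).

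For the first inequality, $L \le \bar L$, the idea is essentially tautological: the assumption $\delta_L < \max_{\ell \in \{L+1,\ldots,L_k+1\}} r_{i_{k,\ell}}$ says exactly that $L$ is one of the indices satisfying the defining inequality for $\bar L$, and since $\bar L$ is defined as the largest such index, we get $\bar L \ge L$ immediately.

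For the second inequality, $\bar L < T_k(\bdelta')$, I will exhibit an index $\ell^* > \bar L$ for which $\delta'_{\ell^*} \le r_{i_{k,\ell^*}}$, which forces $T_k(\bdelta') \ge \ell^* > \bar L$. Take $\ell^*$ to be any maximizer of $r_{i_{k,\ell}}$ over $\ell \in \{L+1,\ldots,L_k+1\}$, so that $r_{i_{k,\ell^*}} = \hat r$. First observe that $\ell^* \ne L_k+1$: the assumption of this case combined with $\bdelta \in \Delta_k$ (so $\delta_L \ge 0$) gives $\hat r > 0$, whereas $r_{i_{k,L_k+1}} = r_{N+1} = 0$. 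Hence $\ell^* \in \{L+1,\ldots,L_k\}$. Next, from the definition of $L = T_k(\bdelta)$, every index $\ell \in \{L+1,\ldots,L_k\}$ satisfies $\delta_\ell > r_{i_{k,\ell}}$; in particular $\delta_{\ell^*} > r_{i_{k,\ell^*}} = \hat r$, which by the maximality in the definition of $\bar L$ forces $\ell^* > \bar L$. Finally, from the construction of $\bdelta'$ we have $\delta'_{\ell^*} = \min\{\delta_{\ell^*}, \hat r\} \le \hat r = r_{i_{k,\ell^*}}$, which gives $T_k(\bdelta') \ge \ell^* > \bar L$, completing the proof.

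The argument is almost entirely bookkeeping with the definitions, so no real obstacle is expected; the only subtle step is ruling out $\ell^* = L_k+1$, which is handled by the observation that $\hat r$ is strictly positive in this case.
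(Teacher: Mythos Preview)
Your proof is correct and follows essentially the same approach as the paper: both pick a maximizer $\ell^*$ (the paper calls it $\hat L$) of $r_{i_{k,\ell}}$ over $\{L+1,\ldots,L_k+1\}$, show it lies strictly above $\bar L$, and verify $\delta'_{\ell^*}\le r_{i_{k,\ell^*}}$ to conclude $T_k(\bdelta')\ge \ell^*>\bar L$. Your explicit exclusion of $\ell^*=L_k+1$ is harmless but unnecessary, since the defining property of $L=T_k(\bdelta)$ already gives $\delta_\ell>r_{i_{k,\ell}}$ for \emph{every} $\ell>L$, including $\ell=L_k+1$.
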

\begin{proof}{Proof of Lemma~\ref{lem:subroutine4:strict_less:L_bar_in_between}.}
If follows from the fact that $\delta_L  < \max_{\ell \in \{L+1,\ldots,L_k+1\} }r_{i_{k,\ell}}$ and from the definition of $\bar{L}$ that $\bar{L}$ must be greater than or equal to ${L}$.

To show that $\bar{L} < T_k(\bdelta')$, let $\hat{L} \in \{L+1,\ldots,L_k+1\}$ denote any integer that satisfies $r_{i_{k,\hat{L}}} = \max_{\ell \in \{L+1,\ldots,L_k+1\}} r_{i_{k,\ell}}$. It follows from algebra that such $\hat{L}$ must exist.  We will now show by contradiction that $\bar{L} < \hat{L}$.  Indeed, suppose for the sake of developing a contradiction that $\hat{L} \le \bar{L}$. If that were true, then 
\begin{align*}
r_{i_{k,\hat{L}}} < \delta_{\hat{L}} \le \delta_{\bar{L}} &\le  \max_{\ell \in \{L+1,\ldots,L_k+1\}} r_{i_{k,\ell}} = r_{i_{k,\hat{L}}}
\end{align*}
The first inequality follows from the definition of $L$ and from the fact that $\hat{L} > L$. The second inequality follows from the supposition that $\hat{L} \le \bar{L}$ and the fact that $\bdelta \in \Delta_k$. The third inequality follows from the definition of $\bar{L}$.  The equality follows from the definition of $\hat{L}$. 
Since we cannot have $r_{i_{k,\hat{L}}} < r_{i_{k,\hat{L}}}$, it must be the case that $\bar{L} < \hat{L}$. 

To complete our proof that $\bar{L} < T_k(\bdelta')$, we now show that $T_k(\bdelta') \ge \hat{L}$. Indeed, we observe that 
\begin{align*}
    \delta'_{\hat{L}} &=  \min \left \{ \delta_{\hat{L}}, \hat{r} \right \} \\
     &= \min \left \{  \delta_{\hat{L}}, \max \left \{ \delta_L, \max_{\ell \in \{L+1,\ldots,L_k+1\}} r_{i_{k,\ell}}\right \} \right \}\\
        &=   \min \left \{  \delta_{\hat{L}}, \max_{\ell \in \{L+1,\ldots,L_k+1\}} r_{i_{k,\ell}}\right \} \\
        &=   \min \left \{  \delta_{\hat{L}}, r_{i_{k,\hat{L}}}\right \} \\
        &= r_{i_{k,\hat{L}}}
        \end{align*}
The first equality is the definition of $\delta'_{\hat{L}}$. The second equality is the definition of $\hat{r}$. The third equality follows from the assumption that $\delta_L < \max_{\ell \in \{L+1,\ldots,L_k+1\}} r_{i_{k,\ell}}$. The fourth equality follows from the definition of $\hat{L}$. 
The fifth equality follows from the definition of $L$ and from the fact that $\hat{L} > \bar{L} \ge L$, which together imply that $\delta_{\hat{L}} > r_{i_{k,\hat{L}}}$.  We have thus shown that $\delta'_{\hat{L}} \le r_{i_{k,\hat{L}}}$, which implies $T_k(\bdelta') \ge \hat{L}$. This concludes our proof that $T_k(\bdelta') > \bar{L}$, which completes our proof of Lemma~\ref{lem:subroutine4:strict_less:L_bar_in_between}. 
\halmos \end{proof}
\begin{lemma} \label{lem:subroutine4:strict_less:delta_prime_ell}
    For all $\ell \in \{1,\ldots,L_k+1\}$ we have
    \begin{align*}
        \delta_\ell' &= \begin{cases}
            \delta_\ell,&\text{if } \ell \in \left\{1,\ldots,\bar{L} \right \},\\
            \max_{\ell' \in \{L+1,\ldots,L_k+1\}} r_{i_{k,\ell'}},&\text{if } \ell \in \{\bar{L}+1,\ldots,L_k+1 \}
        \end{cases}
    \end{align*}
\end{lemma}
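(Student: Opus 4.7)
The plan is to prove Lemma~\ref{lem:subroutine4:strict_less:delta_prime_ell} by direct computation, splitting on whether $\ell \le \bar{L}$ or $\ell > \bar{L}$, after first simplifying the value of $\hat{r}$. The key observation that drives everything is that in the case considered throughout Appendix~\ref{appx:proof:subroutine:4:case2}, we assume $\delta_L < \max_{\ell' \in \{L+1,\ldots,L_k+1\}} r_{i_{k,\ell'}}$, and hence by the definition of $\hat{r}$ in Subroutine 4,
\begin{align*}
\hat{r} = \max \left\{ \delta_L, \max_{\ell' \in \{L+1,\ldots,L_k+1\}} r_{i_{k,\ell'}} \right\} = \max_{\ell' \in \{L+1,\ldots,L_k+1\}} r_{i_{k,\ell'}}.
\end{align*}
This identifies $\hat{r}$ with exactly the quantity appearing on the right-hand side in the second case of the lemma.

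Next, I would handle $\ell \in \{1,\ldots,\bar{L}\}$. Here I would use the fact that $\bdelta \in \Delta_k$ implies $\delta_\ell \le \delta_{\bar{L}}$, combined with the definition of $\bar{L}$ (the largest index with $\delta_{\bar{L}} \le \max_{\ell' \in \{L+1,\ldots,L_k+1\}} r_{i_{k,\ell'}} = \hat{r}$), to conclude $\delta_\ell \le \hat{r}$. Then by the construction of $\bdelta'$ in Subroutine 4, $\delta'_\ell = \min\{\delta_\ell, \hat{r}\} = \delta_\ell$.

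For $\ell \in \{\bar{L}+1,\ldots,L_k+1\}$, I would invoke the maximality in the definition of $\bar{L}$: since $\bar{L}$ is the largest index with $\delta_{\bar{L}} \le \hat{r}$, every $\ell > \bar{L}$ satisfies $\delta_\ell > \hat{r}$. Then $\delta'_\ell = \min\{\delta_\ell, \hat{r}\} = \hat{r}$, which by the first displayed identity equals $\max_{\ell' \in \{L+1,\ldots,L_k+1\}} r_{i_{k,\ell'}}$, giving the desired equality. The only mild subtlety is to confirm $\bar{L}$ is well-defined and that the set $\{\bar{L}+1,\ldots,L_k+1\}$ makes sense; these follow from Lemma~\ref{lem:subroutine4:strict_less:L_bar_in_between}, which guarantees $L \le \bar{L} < T_k(\bdelta') \le L_k+1$. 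No step here is genuinely hard, and I do not anticipate any real obstacle — the proof is essentially bookkeeping around the two defining properties of $\bar{L}$ and the collapse of $\hat{r}$ in this case.
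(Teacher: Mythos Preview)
Your proposal is correct and follows essentially the same approach as the paper: simplify $\hat{r}$ to $\max_{\ell' \in \{L+1,\ldots,L_k+1\}} r_{i_{k,\ell'}}$ using the case assumption, then split on $\ell \le \bar{L}$ versus $\ell > \bar{L}$ using the monotonicity of $\bdelta$ and the maximality defining $\bar{L}$. The paper presents this as a single chain of equalities rather than two separate cases, but the content is identical.
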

\begin{proof}{Proof of Lemma~\ref{lem:subroutine4:strict_less:delta_prime_ell}.} 
    We observe for all $\ell \in \{1,\ldots,L_k+1\}$ 
    that
    \begin{align*}
        \delta'_{\ell} &= \min \left \{ \delta_{\ell}, \hat{r} \right \} \\
        &= \min \left \{ \delta_{\ell}, \max \left \{ \delta_L, \max_{\ell' \in \{L+1,\ldots,L_k+1\}} r_{i_{k,\ell'}}\right \}\right \}\\
        &= \min \left \{ \delta_{\ell}, \max_{\ell' \in \{L+1,\ldots,L_k+1\}} r_{i_{k,\ell'}}\right \}\\
            &= \begin{cases}
            \delta_\ell,&\text{if } \ell \in \left\{1,\ldots,\bar{L} \right \},\\
            \max_{\ell' \in \{L+1,\ldots,L_k+1\}} r_{i_{k,\ell'}},&\text{if } \ell \in \{\bar{L}+1,\ldots,L_k+1 \}
        \end{cases}
    \end{align*}
 The first equality is the definition of $\delta'_{\ell}$. The second equality follows from the definition of $\hat{r}$. The third equality follows from our assumption that $\delta_L < \max_{\ell \in \{L+1,\ldots,L_k+1\}} r_{i_{k,\ell}}$. The fourth equality follows from the definition of $\bar{L}$ and the fact that $\bdelta \in \Delta_k$, which imply that $\delta_{\ell} \le \max_{\ell' \in \{L+1,\ldots,L_k+1\}} r_{i_{k,\ell'}}$ for all $\ell \in \{1,\ldots, \bar{L}\}$ and $\delta_{\ell} > \max_{\ell' \in \{L+1,\ldots,L_k+1\}} r_{i_{k,\ell'}}$ for all $\ell \in \{\bar{L}+1,\ldots,L_k+1\}$. This completes our proof of Lemma~\ref{lem:subroutine4:strict_less:delta_prime_ell}. 
\halmos \end{proof}

Equipped with the above intermediary lemmas, the remainder of Appendix~\ref{appx:proof:subroutine:4:case2} is organized as follows. In Appendix~\ref{appx:proof:subroutine:4:case2:property_1}, we prove that $\bdelta'$ satisfies Property~\ref{property:notspecial}. In Appendix~\ref{appx:proof:subroutine:4:case2:property_3}, we prove that $\bdelta'$ satisfies Property~\ref{property:gap}. In Appendix~\ref{appx:proof:subroutine:4:case2:property_4}, we show that  $\bdelta'$ satisfies Property~\ref{property:reverse}. In Appendix~\ref{appx:proof:subroutine:4:case2:weakdominate}, we show that   $J_k(\bx,\bdelta') \le J_k(\bx,\bdelta)$ for all $\bx \in \mathcal{X}^c$. In Appendix~\ref{appx:proof:subroutine:4:case2:dominate}, we show that there always exists $\bar{\bx} \in \mathcal{X}^c$ that satisfies $J_k(\bar{\bx},\bdelta') < J_k(\bar{\bx},\bdelta)$. The combination of  Appendices~\ref{appx:proof:subroutine:4:case2:weakdominate} and \ref{appx:proof:subroutine:4:case2:dominate} thus implies that  $\bdelta'$ dominates $\bdelta$.

\subsubsection[\texorpdfstring{Proof that $\bdelta'$ satisfies Property~\ref{property:notspecial}.}{Text}]{Proof that $\bdelta'$ satisfies Property~\ref{property:notspecial}.} \label{appx:proof:subroutine:4:case2:property_1}
In the present Appendix~\ref{appx:proof:subroutine:4:case2:property_1}, we show that  $\bdelta'$ satisfies Property~\ref{property:notspecial}. Indeed, it follows from the fact that $\bdelta$ satisfies Property~\ref{property:notspecial} and from the definition of $L$ that $L \ge 2$. Moreover, it follows from Lemma~\ref{lem:subroutine4:strict_less:L_bar_in_between} that  $T_k(\bdelta') > L$.  We thus conclude that $T_k(\bdelta') > L \ge 2$, which shows that $\bdelta'$ satisfies Property~\ref{property:notspecial}.

\subsubsection[\texorpdfstring{Proof that $\bdelta'$ satisfies Property~\ref{property:gap}}{Text}]{Proof that $\bdelta'$ satisfies Property~\ref{property:gap}.} \label{appx:proof:subroutine:4:case2:property_3}
 In the present Appendix~\ref{appx:proof:subroutine:4:case2:property_3}, we show that  $\bdelta'$ satisfies Property~\ref{property:gap}.
Indeed, it follows from Lemma~\ref{lem:subroutine4:strict_less:delta_prime_ell} that $\delta'_{\bar{L}+1} = \cdots = \delta'_{L_k+1}$. Moreover, for each $\ell \in \{2,\ldots,\bar{L}-1\}$, we observe that if $\delta'_{\ell} < r_{i_{k,\ell}}$, then
    \begin{align*}
        \delta'_{\ell} &= \delta_{\ell} = \delta_{\ell+1}  = \delta'_{\ell+1} 
    \end{align*}
    The first equality follows from Lemma~\ref{lem:subroutine4:strict_less:delta_prime_ell} and the fact that $\ell \in \{2,\ldots,\bar{L}-1\}$. The second equality follows from the fact that $\delta_\ell = \delta_\ell' < r_{i_{k,\ell}}$ and the fact that $\bdelta$ satisfies Property~\ref{property:gap}, which together imply that $\delta_{\ell} = \delta_{\ell+1}$. The third equality follows from Lemma~\ref{lem:subroutine4:strict_less:delta_prime_ell} and the fact that $\ell+1 \in \{3,\ldots,\bar{L}\}$. 
  Finally, we observe that 
       \begin{align*}
        \delta'_{\bar{L}} =  \delta_{\bar{L}} \ge r_{i_{k,\bar{L}}}
    \end{align*}
     The  equality follows from Lemma~\ref{lem:subroutine4:strict_less:delta_prime_ell}.  The inequality follows from the definition of $\bar{L}$ (which implies that $\delta_{\bar{L}} < \delta_{\bar{L}+1}$ and $\bar{L} \ge 2$) and the fact that  $\bdelta$ satisfies Property~\ref{property:gap}, which imply that $\delta_{\bar{L}} \ge r_{i_{k,\bar{L}}}$. We have thus shown  for each $\ell \in \{2,\ldots,L_k\}$ that if $\delta_\ell' > r_{i_{k,\ell}}$, then $\delta_{\ell}' = \delta_{\ell+1}'$, which shows that $\bdelta'$ satisfies Property~\ref{property:gap}.
\subsubsection[\texorpdfstring{Proof that $\bdelta'$ satisfies Property~\ref{property:reverse}}{Text}]{Proof that $\bdelta'$ satisfies Property~\ref{property:reverse}.} \label{appx:proof:subroutine:4:case2:property_4} In the present Appendix~\ref{appx:proof:subroutine:4:case2:property_4}, we show that  $\bdelta'$ satisfies Property~\ref{property:reverse}. Indeed, Lemma~\ref{lem:subroutine4:strict_less:L_bar_in_between} shows that $T_k(\bdelta') > \bar{L}$. Moreover, it follows from Lemma~\ref{lem:subroutine4:strict_less:delta_prime_ell} that $\delta'_{\bar{L}+1} = \cdots  = \delta_{L_k+1}'$. Therefore, we conclude that  $\delta'_{T_k(\bdelta')} = \cdots = \delta'_{L_k+1}$, which shows that $\bdelta'$ satisfies Property~\ref{property:reverse}.

\subsubsection[\texorpdfstring{Proof that $J_k(\bx,\bdelta') \le J_k(\bx,\bdelta)$ for all $\bx \in \mathcal{X}^c$}{Text}]{Proof that $J_k(\bx,\bdelta') \le J_k(\bx,\bdelta)$ for all $\bx \in \mathcal{X}^c$.} \label{appx:proof:subroutine:4:case2:weakdominate}
 In the present Appendix~\ref{appx:proof:subroutine:4:case2:weakdominate}, we show that  $J_k(\bx,\bdelta') \le J_k(\bx,\bdelta)$ for all $\bx \in \mathcal{X}^c$. Indeed, consider  any $\bx \in \mathcal{X}^c$. We first show that
\begin{align}
    J_k(\bx,\bdelta')  &=  \max_{\ell' \in \{L+1,\ldots,L_k+1\}} r_{i_{k,\ell'}}  + \left( \delta_{\bar{L}} - \max_{\ell' \in \{L+1,\ldots,L_k+1\}} r_{i_{k,\ell'}}  \right)  x_{i_{k,\bar{L}}}\notag \\
    &\quad + \sum_{\ell=1}^{\bar{L}-1}  \left( \max \left \{0, r_{i_{k,\ell}} - \delta_\ell \right \} - \left( \delta_{\ell+1} - \delta_\ell \right) \right) x_{i_{k,\ell}}  \label{line:subroutine4:strict_less:J_k_transform}
\end{align}
Indeed, we first observe that
\begin{align}
    &\delta_{L_k+1}' +  \sum_{\ell=\bar{L}+1}^{L_k}  \left( \max \left \{0, r_{i_{k,\ell}} - \delta_\ell' \right \} - \left( \delta_{\ell+1}' - \delta_\ell' \right) \right) x_{i_{k,\ell}}\notag \\
    &=\max_{\ell' \in \{L+1,\ldots,L_k+1\}} r_{i_{k,\ell'}} \notag \\
    &\quad +  \sum_{\ell=\bar{L}+1}^{L_k}  \left( \max \left \{0, r_{i_{k,\ell}} - \max_{\ell' \in \{L+1,\ldots,L_k+1\}} r_{i_{k,\ell'}} \right \}  - \left( \max_{\ell' \in \{L+1,\ldots,L_k+1\}} r_{i_{k,\ell'}} - \max_{\ell' \in \{L+1,\ldots,L_k+1\}} r_{i_{k,\ell'}} \right) \right) x_{i_{k,\ell}} \notag \\
        &= \max_{\ell' \in \{L+1,\ldots,L_k+1\}} r_{i_{k,\ell'}}  \label{line:subroutine4:strict_less:top_third} 
\end{align}
where the first equality follows from Lemma~\ref{lem:subroutine4:strict_less:delta_prime_ell} and the second   equality follows from algebra. We next observe that
\begin{align}
    & \sum_{\ell=1}^{\bar{L}-1}  \left( \max \left \{0, r_{i_{k,\ell}} - \delta_\ell' \right \} - \left( \delta_{\ell+1}' - \delta_\ell' \right) \right) x_{i_{k,\ell}}= \sum_{\ell=1}^{\bar{L}-1}  \left( \max \left \{0, r_{i_{k,\ell}} - \delta_\ell \right \} - \left( \delta_{\ell+1} - \delta_\ell \right) \right) x_{i_{k,\ell}} \label{line:subroutine4:strict_less:bottom_third} 
\end{align}
where the equality follows from Lemma~\ref{lem:subroutine4:strict_less:delta_prime_ell}. Finally, we observe that 
\begin{align}
    &  \left( \max \left \{0, r_{i_{k,\bar{L}}} - \delta_{\bar{L}}' \right \} - \left( \delta_{\bar{L}+1}' - \delta_{\bar{L}}' \right) \right) x_{i_{k,\bar{L}}}\notag \\
    &=  \left( \max \left \{0, r_{i_{k,\bar{L}}} - \delta_{\bar{L}} \right \} - \left( \max_{\ell' \in \{L+1,\ldots,L_k+1\}} r_{i_{k,\ell'}}  - \delta_{\bar{L}} \right) \right) x_{i_{k,\bar{L}}}\notag \\
&=  \left( 0 - \left( \max_{\ell' \in \{L+1,\ldots,L_k+1\}} r_{i_{k,\ell'}}  - \delta_{\bar{L}} \right) \right) x_{i_{k,\bar{L}}} \notag \\
&= \left( \delta_{\bar{L}} - \max_{\ell' \in \{L+1,\ldots,L_k+1\}} r_{i_{k,\ell'}}  \right)  x_{i_{k,\bar{L}}} \label{line:subroutine4:strict_less:middle_third} 
\end{align}
The first equality follows from Lemma~\ref{lem:subroutine4:strict_less:delta_prime_ell}. To see why the second equality holds, we  observe that Lemma~\ref{lem:subroutine4:strict_less:L_bar_in_between} implies that $\bar{L}+1 \le L_k+1$, and so it follows from  the definition of $\bar{L}$ that  $
\delta_{\bar{L}} < \delta_{\bar{L}+1}$. Therefore, it follows from  the fact that $\bdelta$ satisfies Property~\ref{property:gap} that $\delta_{\bar{L}} \ge r_{i_{k,\bar{L}}}$, which implies that $0 \ge r_{i_{k,\bar{L}}} - \delta_{\bar{L}} $. The third equality follows from algebra.   Combining \eqref{line:subroutine4:strict_less:top_third}, \eqref{line:subroutine4:strict_less:bottom_third}, and \eqref{line:subroutine4:strict_less:middle_third}, we have shown that 
\begin{align*}
    J_k(\bx,\bdelta') 
        &=\delta_{L_k+1}' +  \sum_{\ell=\bar{L}+1}^{L_k}  \left( \max \left \{0, r_{i_{k,\ell}} - \delta_\ell' \right \} - \left( \delta_{\ell+1}' - \delta_\ell' \right) \right) x_{i_{k,\ell}}\\
        &\quad +  \left( \max \left \{0, r_{i_{k,\bar{L}}} - \delta_{\bar{L}}' \right \} - \left( \delta_{\bar{L}+1}' - \delta_{\bar{L}}' \right) \right) x_{i_{k,\bar{L}}}\\
        &\quad +  \sum_{\ell=1}^{\bar{L}-1}  \left( \max \left \{0, r_{i_{k,\ell}} - \delta_\ell' \right \} - \left( \delta_{\ell+1}' - \delta_\ell' \right) \right) x_{i_{k,\ell}}\\
  &= \max_{\ell' \in \{L+1,\ldots,L_k+1\}} r_{i_{k,\ell'}}  + \left( \delta_{\bar{L}} - \max_{\ell' \in \{L+1,\ldots,L_k+1\}} r_{i_{k,\ell'}}  \right)  x_{i_{k,\bar{L}}} \\
    &\quad + \sum_{\ell=1}^{\bar{L}-1}  \left( \max \left \{0, r_{i_{k,\ell}} - \delta_\ell \right \} - \left( \delta_{\ell+1} - \delta_\ell \right) \right) x_{i_{k,\ell}}  
\end{align*}
where the first equality is the definition of $J_k(\bx,\bdelta')$ and the second equality follows from \eqref{line:subroutine4:strict_less:top_third}, \eqref{line:subroutine4:strict_less:bottom_third}, and \eqref{line:subroutine4:strict_less:middle_third}. We have thus shown that \eqref{line:subroutine4:strict_less:J_k_transform} holds.

We next show that
\begin{align}
           J_k(\bx,\bdelta)  &\ge 
           \max_{\ell' \in \{L+1,\ldots,L_k+1\}} r_{i_{k,\ell'}}  + \left( \delta_{\bar{L}} - \max_{\ell' \in \{L+1,\ldots,L_k+1\}} r_{i_{k,\ell'}}  \right)  x_{i_{k,\bar{L}}} \notag \\
        &\quad +  \sum_{\ell=1}^{\bar{L}-1}  \left( \max \left \{0, r_{i_{k,\ell}} - \delta_\ell \right \} - \left( \delta_{\ell+1} - \delta_\ell \right) \right) x_{i_{k,\ell}}\label{line:subroutine4:strict_less:J_k_transform:2}
\end{align}
Indeed, we first observe that
 \begin{align}
    &\delta_{L_k+1} +  \sum_{\ell=\bar{L}+1}^{L_k}  \left( \max \left \{0, r_{i_{k,\ell}} - \delta_\ell \right \} - \left( \delta_{\ell+1} - \delta_\ell \right) \right) x_{i_{k,\ell}}\notag \\
        &=\delta_{L_k+1} +  \sum_{\ell=\bar{L}+1}^{L_k}  \left( 0- \left( \delta_{\ell+1} - \delta_\ell \right) \right) x_{i_{k,\ell}}\notag \\
&\ge \delta_{L_k+1} -  \sum_{\ell=\bar{L}+1}^{L_k}  \left( \delta_{\ell+1} - \delta_\ell \right) 1\notag \\     
&= \delta_{\bar{L}+1}\label{line:subroutine4:strict_less:delta_orig:top_third} 
\end{align}
The first equality follows from Lemma~\ref{lem:subroutine4:strict_less:L_bar_in_between} (which implies that $\bar{L} + 1 > L$) and  the definition of $L$ (which implies that $\delta_\ell > r_{i_{k,\ell}}$ for all $\ell > L$).  The inequality follows from the fact that $\bdelta \in \Delta_k$ (which implies that $\delta_{\ell+1} \ge \delta_\ell$ for all $\ell \in \{\bar{L}+1,\ldots,L_k\}$) and the fact that $\bx \in \mathcal{X}^c$ (which implies that $x_{i_{k,\ell}} \le 1$ for all $\ell \in \{\bar{L}+1,\ldots,L_k \}$). The second equality follows from algebra. Moreover, we observe that 
\begin{align}
          \left( \max \left \{0, r_{i_{k,\bar{L}}} - \delta_{\bar{L}} \right \} - \left( \delta_{\bar{L}+1} - \delta_{\bar{L}} \right) \right) x_{i_{k,\bar{L}}}  & = \left( 0 - \left( \delta_{\bar{L}+1} - \delta_{\bar{L}} \right) \right) x_{i_{k,\bar{L}}} \notag\\
         &= \left( \delta_{\bar{L}}  - \delta_{\bar{L}+1} \right) x_{i_{k,\bar{L}}} 
\label{line:subroutine4:strict_less:delta_orig:middle_third}
\end{align}
To see why the first equality holds, we  observe that Lemma~\ref{lem:subroutine4:strict_less:L_bar_in_between} implies that $\bar{L}+1 \le L_k+1$, and so it follows from  the definition of $\bar{L}$ that  $
\delta_{\bar{L}} < \delta_{\bar{L}+1}$. Therefore, it follows from  the fact that $\bdelta$ satisfies Property~\ref{property:gap} that $\delta_{\bar{L}} \ge r_{i_{k,\bar{L}}}$, which implies that $0 \ge r_{i_{k,\bar{L}}} - \delta_{\bar{L}} $. 
 The second equality follows from algebra.   Combining \eqref{line:subroutine4:strict_less:delta_orig:top_third} and \eqref{line:subroutine4:strict_less:delta_orig:middle_third}, we have shown that 
\begin{align*}
            J_k(\bx,\bdelta) &= 
            \delta_{L_k+1} +  \sum_{\ell=\bar{L}+1}^{L_k}  \left( \max \left \{0, r_{i_{k,\ell}} - \delta_\ell \right \} - \left( \delta_{\ell+1} - \delta_\ell \right) \right) x_{i_{k,\ell}}\\
        &\quad +  \left( \max \left \{0, r_{i_{k,\bar{L}}} - \delta_{\bar{L}} \right \} - \left( \delta_{\bar{L}+1} - \delta_{\bar{L}} \right) \right) x_{i_{k,\bar{L}}}\\
        &\quad +  \sum_{\ell=1}^{\bar{L}-1}  \left( \max \left \{0, r_{i_{k,\ell}} - \delta_\ell \right \} - \left( \delta_{\ell+1} - \delta_\ell \right) \right) x_{i_{k,\ell}}\\
         &= 
            \delta_{\bar{L}+1} +  \left( \delta_{\bar{L}}  - \delta_{\bar{L}+1} \right) x_{i_{k,\bar{L}}} \\
        &\quad +  \sum_{\ell=1}^{\bar{L}-1}  \left( \max \left \{0, r_{i_{k,\ell}} - \delta_\ell \right \} - \left( \delta_{\ell+1} - \delta_\ell \right) \right) x_{i_{k,\ell}}\\
               &\ge 
           \max_{\ell' \in \{L+1,\ldots,L_k+1\}} r_{i_{k,\ell'}}  + \left( \delta_{\bar{L}} - \max_{\ell' \in \{L+1,\ldots,L_k+1\}} r_{i_{k,\ell'}}  \right)  x_{i_{k,\bar{L}}}\\
        &\quad +  \sum_{\ell=1}^{\bar{L}-1}  \left( \max \left \{0, r_{i_{k,\ell}} - \delta_\ell \right \} - \left( \delta_{\ell+1} - \delta_\ell \right) \right) x_{i_{k,\ell}}
\end{align*}
where the first equality is the definition of $ J_k(\bx,\bdelta)$, the second equality follows from \eqref{line:subroutine4:strict_less:delta_orig:top_third} and \eqref{line:subroutine4:strict_less:delta_orig:middle_third}, and the inequality follows from the definition of $\bar{L}$ (which implies that $\delta_{\bar{L}+1} > \max_{\ell' \in \{L+1,\ldots,L_k+1\}} r_{i_{k,\ell'}}$) and the fact that $\bx \in \mathcal{X}^c$ (which implies that $x_{i_{k,\bar{L}}} \le 1$). We have thus shown that \eqref{line:subroutine4:strict_less:J_k_transform:2} holds.

     Combining \eqref{line:subroutine4:strict_less:J_k_transform} and \eqref{line:subroutine4:strict_less:J_k_transform:2}, we conclude that $J_k(\bx,\bdelta') \le J_k(\bx,\bdelta)$

\subsubsection[\texorpdfstring{Proof that there exists $\bar{\bx} \in \mathcal{X}^c$ that satisfies $J_k(\bar{\bx},\bdelta') < J_k(\bar{\bx},\bdelta)$}{Text}]{Proof that there exists $\bar{\bx} \in \mathcal{X}^c$ that satisfies $J_k(\bar{\bx},\bdelta') < J_k(\bar{\bx},\bdelta)$.} \label{appx:proof:subroutine:4:case2:dominate}
In the present Appendix~\ref{appx:proof:subroutine:4:case2:dominate}, we show that  there exists $\bar{\bx} \in \mathcal{X}^c$ that satisfies $J_k(\bar{\bx},\bdelta') < J_k(\bar{\bx},\bdelta)$.

We will make use of the following lemma.
\begin{lemma} \label{lem:subroutine:4:case2:delta_L_k_so_so_big}
    $\delta'_{L_k+1} < \delta_{L_k+1}$.
\end{lemma}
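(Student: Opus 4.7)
The plan is to combine Lemma~\ref{lem:subroutine4:strict_less:delta_prime_ell} (which explicitly computes $\delta'_{L_k+1}$) with the definition of $L = T_k(\bdelta)$ (which controls how large $\delta_\ell$ must be for $\ell > L$). The strategy is to identify a single index $\ell^* \in \{L+1,\ldots,L_k+1\}$ that achieves $\max_{\ell \in \{L+1,\ldots,L_k+1\}} r_{i_{k,\ell}}$, and then show both that (i) $\delta'_{L_k+1}$ equals that maximum, and (ii) $\delta_{L_k+1}$ strictly exceeds it.

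For (i), I would first invoke Lemma~\ref{lem:subroutine4:strict_less:L_bar_in_between} to get $\bar{L} < T_k(\bdelta') \le L_k+1$, so that $L_k+1 \in \{\bar{L}+1,\ldots,L_k+1\}$. Lemma~\ref{lem:subroutine4:strict_less:delta_prime_ell} then immediately gives
\[
\delta'_{L_k+1} = \max_{\ell \in \{L+1,\ldots,L_k+1\}} r_{i_{k,\ell}}.
\]

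For (ii), I would use the assumption of this case, namely $\delta_L < \max_{\ell \in \{L+1,\ldots,L_k+1\}} r_{i_{k,\ell}}$, which in particular forces $L \le L_k$, so the set $\{L+1,\ldots,L_k+1\}$ is nonempty and $\ell^*$ is well-defined. Since $\ell^* > L = T_k(\bdelta)$, the definition of $T_k(\bdelta)$ in \eqref{defn:T_k_delta} yields $\delta_{\ell^*} > r_{i_{k,\ell^*}}$. Monotonicity of $\bdelta \in \Delta_k$ then gives $\delta_{L_k+1} \ge \delta_{\ell^*}$, so
\[
\delta_{L_k+1} \;\ge\; \delta_{\ell^*} \;>\; r_{i_{k,\ell^*}} \;=\; \max_{\ell \in \{L+1,\ldots,L_k+1\}} r_{i_{k,\ell}} \;=\; \delta'_{L_k+1},
\]
which is the desired strict inequality.

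I do not expect any significant obstacle here: the nontrivial structural work has already been absorbed into Lemmas~\ref{lem:subroutine4:strict_less:L_bar_in_between} and \ref{lem:subroutine4:strict_less:delta_prime_ell}. The only subtlety is justifying that $\ell^*$ indeed lies strictly above $T_k(\bdelta)$ (so that the definition of $T_k$ delivers the strict inequality $\delta_{\ell^*} > r_{i_{k,\ell^*}}$), but this is immediate from $\ell^* \in \{L+1,\ldots,L_k+1\}$ together with $L = T_k(\bdelta)$. Once Lemma~\ref{lem:subroutine:4:case2:delta_L_k_so_so_big} is in place, the remainder of Appendix~\ref{appx:proof:subroutine:4:case2:dominate} proceeds exactly as in the analogous step of Appendix~\ref{appx:proof:subroutine:4:case1:dominate}, using the single-product test assortment $\bar{x}_{i_{k,L_k+1}} = 1$ and $\bar{x}_{i_{k,\ell}} = 0$ otherwise.
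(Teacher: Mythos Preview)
Your argument is correct and takes a somewhat different route from the paper's. The paper argues as follows: since $\delta'_\ell \le \delta_\ell$ for all $\ell$ (by construction) and $\bdelta' \neq \bdelta$ (the standing assumption set up via Lemma~\ref{lem:listening_to_music_subroutine_4}), Lemma~\ref{lem:subroutine4:strict_less:delta_prime_ell} forces some $\ell^* \in \{\bar{L}+1,\ldots,L_k+1\}$ with $\delta'_{\ell^*} < \delta_{\ell^*}$; then $\delta'_{L_k+1} = \delta'_{\ell^*} < \delta_{\ell^*} \le \delta_{L_k+1}$, using the constancy of $\bdelta'$ on $\{\bar{L}+1,\ldots,L_k+1\}$ and the monotonicity of $\bdelta$. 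Your approach instead reads off $\delta'_{L_k+1} = \max_{\ell \in \{L+1,\ldots,L_k+1\}} r_{i_{k,\ell}}$ directly from Lemma~\ref{lem:subroutine4:strict_less:delta_prime_ell} and bounds $\delta_{L_k+1}$ below by $\delta_{\ell^*} > r_{i_{k,\ell^*}}$ via the definition of $T_k(\bdelta)$. Your proof is slightly more self-contained in that it never invokes the $\bdelta' \neq \bdelta$ hypothesis; the paper's version has the virtue of being verbatim parallel to the Case~1 argument (Lemma~\ref{lem:subroutine:4:case1:delta_L_k_so_so_big}). Either works.
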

\begin{proof}{Proof of Lemma~\ref{lem:subroutine:4:case2:delta_L_k_so_so_big}.}
It follows from the construction of $\bdelta'$ that $\delta'_\ell \le \delta_\ell$ for all $\ell \in \{1,\ldots,L_k+1\}$. Therefore, it follows from the assumption that $\bdelta \neq \bdelta'$ and from the fact that $\delta'_\ell = \delta_\ell$ for all $\ell \in \{1,\ldots,\bar{L}\}$ (Lemma~\ref{lem:subroutine4:strict_less:delta_prime_ell}) that there must exist $\ell^* \in \{\bar{L}+1,\ldots,L_k+1\}$ such that $\delta_{\ell^*}' < \delta_{\ell^*}$. We thus observe that
\begin{align*}
    \delta_{L_k+1}' = \delta_{\ell^*}' < \delta_{\ell^*} \le \delta_{L_k+1}
\end{align*}
The  equality follows from Lemma~\ref{lem:subroutine4:strict_less:delta_prime_ell} (which implies that $\delta_{\bar{L}+1}' = \cdots = \delta_{L_k+1}'$). The first inequality follows from the construction of $\ell^*$. The second inequality follows from the fact that $\bdelta \in \Delta_k$ (which implies that $\delta_{\ell^*} \le \cdots \le \delta_{L_k+1}$). We have thus shown that $\delta_{L_k+1}' < \delta_{L_k+1}$, which completes the proof of Lemma~\ref{lem:subroutine:4:case2:delta_L_k_so_so_big}. 
\halmos \end{proof}

Equipped with the above lemma, we now show that  there exists $\bar{\bx} \in \mathcal{X}^c$ that satisfies $J_k(\bar{\bx},\bdelta') < J_k(\bar{\bx},\bdelta)$. Indeed, let $\bar{\bx}$ be the vector that is defined for each $\ell \in [N+1]$ as
\begin{align*}
    \bar{x}_{i_{k,\ell}} &= \begin{cases}
        1,&\text{if } \ell = L_k+1,\\
        0,&\text{otherwise}
    \end{cases}
\end{align*}
 We observe that
\begin{align*}
    J_k(\bar{\bx}, \bdelta') &= \delta'_{L_k+1} + \sum_{\ell =1}^{L_k}   \left( \max \left \{0, r_{i_{k,\ell}} - \delta_\ell' \right \} - \left( \delta_{\ell+1}' - \delta_\ell' \right) \right) 0\\
    &<   \delta_{L_k+1} + \sum_{\ell =1}^{L_k}   \left( \max \left \{0, r_{i_{k,\ell}} - \delta_\ell \right \} - \left( \delta_{\ell+1} - \delta_\ell \right) \right) 0\\
    &= J_k(\bar{\bx}, \bdelta)
\end{align*}
The first equality follows from the construction of $\bar{\bx}$. The inequality follows from algebra and from Lemma~\ref{lem:subroutine:4:case2:delta_L_k_so_so_big}. The second equality follows from the construction of $\bar{\bx}$. This completes our proof that there always exists $\bar{\bx} \in \mathcal{X}^c$ that satisfies $J_k(\bar{\bx}, \bdelta') < J_k(\bar{\bx}, \bdelta)$.

\section{Implementation Details and  Additional Experimental Results}\label{appx:extra_experiments}

In this appendix, we complement the numerical experiments from Section~\ref{sec:experiments} by discussing implementation details and presenting additional  experiments. This appendix is organized as follows.
\begin{itemize}
    \item In Appendix~\ref{appx:extra_experiments:implementation}, we discuss in detail our implementations of the various solution methods from Section~\ref{sec:experiments}. 
    \vspace{0.5em} 
    
    \item In Appendix~\ref{appx:M_spread}, we present additional numerical results regarding the synthetic data generation process that is used in Section~\ref{sec:experiments:MNL_Cutoff}. 
      \vspace{0.5em} 
    
    \item In Appendix~\ref{appx:exclusion_set_budget}, we conduct additional experiments to those in Section~\ref{sec:experiments:mnl:xset} to investigate  the practical efficiency of the exclusion set formulation in assortment optimization problems with cardinality constraints.\looseness=-1
      \vspace{0.5em} 
    
    \item In Appendix~\ref{appx:cross_fold}, we conduct additional experiments to show the out-of-sample performance of the sample average approximation in the context of the real-world dataset from Section~\ref{sec:results:toubia}. 
\end{itemize}

\subsection{Implementation details} \label{appx:extra_experiments:implementation}
In Section~\ref{sec:experiments}, our goal is to perform numerical experiments that reflect an honest comparison between the various solution methods proposed in this paper and from the prior literature. In this appendix, we discuss in detail our implementations of the various solution methods from Section~\ref{sec:experiments}, with the aim of transparency as well as reproducibility. 
\subsubsection{Parameter tuning and optional heuristics.}  

In our experiments in Section~\ref{sec:experiments}, our goal is to conduct a direct comparison between various solution methods. To this end, we conducted all of our experiments without any optional heuristics or parameter tuning, both for our solution methods as well as the solution methods from the literature. This includes, for example, not tuning the Gurobi mixed-integer programming parameters, not changing the default frequency of callbacks, and not applying any optional heuristics such as the divide-and-conquer warm start from \citet[Section 5.3]{bertsimas2019exact}. The only parameter that we specify is the required numerical tolerance in the original two-phase Benders decomposition method from Section~\ref{sec:benders} and our accelerated Benders decomposition method from Section~\ref{sec:main_alg} for  when to add violated cuts. For both of these decomposition methods, we use a tolerance of $10^{-6}$ for detecting a violated cut, as well as a tolerance of $10^{-6}$ for terminating the decomposition method.\looseness=-1

While optional heuristics and parameter tuning  may lead to speedups in the various solution methods,  our rationale for not including them are three-fold. First,  because our goal in Section~\ref{sec:experiments} is to provide an honest comparison of various solution methods, we do not want the reported computation times to reflect a real or perceived disparity in efforts for choosing optional heuristics and tuning parameters more carefully for some solution methods than others. Second,  the spirit of sample average approximation is that it can provide a general approach to solving assortment optimization problems; as such, we believe it would not be ideal for a practitioner to need to select optional heuristics or tune solver parameters separately for different problem instances to obtain good performance. Third, there is a large number of combinations of solver parameters that can be tuned and optional heuristics that could be considered, both for our solutions methods and existing solution methods, making it impractical to conduct a comprehensive comparison of all combinations of optional heuristics and parameter tuning. 
\subsubsection{Implementation of original two-phase Benders decomposition method.} \label{appx:misic_benders_remark}
In Sections~\ref{sec:experiments:mnl:accelerated} and \ref{sec:results:toubia}, we report the computation times of the original two-phase Benders decomposition method from \citet[Section 4]{bertsimas2019exact}, which is reviewed in Section~\ref{sec:benders} and  Appendix~\ref{appx:benders_review}.
 Our implementation of their two-phase Benders decomposition method is based on the companion code for \cite{bertsimas2019exact} that is made publicly available by the authors.\footnote{\url{https://github.com/vvmisic/optimalPLD}, accessed on September 5, 2025.}\looseness=-1 
 
 Upon studying their code, we identified a small typo in their implementation of their two-phase Benders decomposition method. This typo did not affect the correctness of their code; however, this typo made their implementation of their algorithm   considerably \emph{slower} than the algorithm described in the main text of  \cite{bertsimas2019exact}. As we show  in Tables~\ref{tab:total_time_objective_bmoriginal}  and \ref{tab:large_benders_time_phases_bmoriginal} below, making a small fix to this typo in their code results in significant speedups in the two-phase Benders decomposition method for the empirical study conducted by \citet[Section 5.3]{bertsimas2019exact}.  To provide an honest comparison, all numerical results in Section~\ref{sec:experiments} report the computation time of their implementation of the original two-phase Benders decomposition method with our fix to the typo.\looseness=-1 
 
In greater detail, the typo in their implementation concerned their algorithm for generating cuts in Phase 2. Their algorithm for computing optimal cuts for Phase 2 is described in  \citet[Equations (11a)-(11c) in Section 4.1]{bertsimas2019exact}. In particular, their equation (11a) requires calculating  a vector $\bar{\bbeta} \in \R^{N+1}$ where $\bar{\beta}_\ell^k \triangleq \sum_{j: \sigma_k(j) < \ell} \beta^k_j$ for all $\ell \in [N+1]$.  This vector can be calculated in $\mathcal{O}(N)$ time using the recursion $\bar{\beta}_\ell^k = \bar{\beta}^k_{\ell-1} + \beta^k_{\sigma_k^{-1}(\ell-1)}$ for each $\ell$. However, their code for their implementation of this algorithm calculates  $\bar{\beta}_\ell^k$ separately  for each $\ell$, which requires $\mathcal{O}(N)$ time per $\ell$. This increased the runtime of their Phase 2 algorithm for generating cuts  from $\mathcal{O}(KN)$ to $\mathcal{O}(K N^2)$.   For this reason, the computation times of their Benders decomposition method reported in  \citet[Section 5.3]{bertsimas2019exact} made their method appear slower than it should be.

In all of our numerical experiments in Section~\ref{sec:experiments}, we report computation times for our fixed version of their code that computes cuts in Phase 2 in $\mathcal{O}(KN)$ time.  To demonstrate the significance of this fix, Tables~\ref{tab:total_time_objective_bmoriginal}  and \ref{tab:large_benders_time_phases_bmoriginal} repeat the numerical results shown in Tables~\ref{tab:total_time_objective} and \ref{tab:large_benders_time_phases} from  Section \ref{sec:results:toubia}, appended with additional numerical results that show the computation time of their original implementation of the two-phase Benders decomposition made publicly available by \cite{bertsimas2019exact}. These tables show that our small fix to the typo in their code results in 2x-20x speedup in their Phase 2 for the empirical study conducted in \citet[Section 5.3]{bertsimas2019exact}.

\afterpage{%
\null
\vfill
\begin{table}[t]
\TABLE{Experiments from Appendix \ref{appx:misic_benders_remark} - Computation Time.\label{tab:total_time_objective_bmoriginal}}
{\centering
\begin{tabular}{l @{\hspace{10pt}} 
                r @{\hspace{18pt}} 
                r @{\hspace{18pt}} 
                r @{\hspace{24pt}} }
\toprule
Constraints & 
\multicolumn{3}{c}{Computation Time} \\
 \cmidrule(lr){2-4} 
 & \shortstack{ABD} 
& \shortstack{BD${}^1$} 
& \shortstack{BD${}^0$} \\
\midrule
$\sum x_i = 2$ & 3.40 & 106.47 & 180.95 \\ 
$\sum x_i = 3$ & 5.87 & 276.21 & 480.89 \\ 
$\sum x_i = 4$ & 14.05 & 392.50 & 675.41 \\ 
$\sum x_i = 5$ & 18.09 & 387.86 & 903.59 \\ 
$\sum x_i = 6$ & 167.14 & 705.85 & 1845.19 \\ 
$\sum x_i = 7$ & 157.88 & 861.02 & 2386.83 \\ 
$\sum x_i = 8$ & 371.61 & 1396.28 & 3566.60 \\ 
$\sum x_i = 9$ & 508.31 & 2177.62 & 4525.05 \\ 
$\sum x_i = 10$ & 2215.70 & 4240.28 & 9018.24 \\ 
\bottomrule
\end{tabular}}{Results under different cardinality constraints for the computation time in seconds for three solution methods. The first is the accelerated Benders decomposition method from \S\ref{sec:main_alg} (ABD). The second is the implementation of the two-phase Benders decomposition method from \S\ref{sec:benders} given by \cite{bertsimas2019exact} with our improvement to their code (BD${}^1$). The third is the implementation of the two-phase Benders decomposition method from \S\ref{sec:benders} given by \cite{bertsimas2019exact} without our improvement to their code (BD${}^0$). Results are averaged over three replications to remove variability in solve times, and results are rounded to two decimal places. Table~\ref{tab:total_time_objective} in Section~\ref{sec:results:toubia} includes columns ABD and BD${}^1$.}
\end{table}

\begin{table}[t]
\TABLE{Experiments from Appendix \ref{appx:misic_benders_remark} - Computation Time by Phase\label{tab:large_benders_time_phases_bmoriginal}}
{\centering
\begin{tabular}{l
                r r@{\hspace{18pt}}  
                r r@{\hspace{18pt}}  
                r r@{\hspace{18pt}}  
                } 
\toprule
& 
\multicolumn{2}{c}{ABD} & 
\multicolumn{2}{c}{BD${}^1$} & \multicolumn{2}{c}{BD${}^0$} 
 \\
\cmidrule(lr){2-3} \cmidrule(lr){4-5} \cmidrule(lr){6-7}
Constraints  & \multicolumn{1}{c}{Phase 1} & \multicolumn{1}{c}{Phase 2} 
& \multicolumn{1}{c}{Phase 1} & \multicolumn{1}{c}{Phase 2} 
& \multicolumn{1}{c}{Phase 1} & \multicolumn{1}{c}{Phase 2} \\
\midrule
$\sum x_i = 2$ & 2.78 & 0.62 & 102.66 & 3.80 & 102.64 & 78.31 \\ 
$\sum x_i = 3$ & 3.31 & 2.55 & 263.99 & 12.22 & 273.05 & 207.84 \\ 
$\sum x_i = 4$ & 3.40 & 10.65 & 332.14 & 60.36 & 291.12 & 384.29 \\ 
$\sum x_i = 5$ & 4.22 & 13.87 & 333.92 & 53.94 & 340.17 & 563.42 \\ 
$\sum x_i = 6$ & 4.05 & 163.09 & 404.12 & 301.73 & 411.84 & 1433.35 \\ 
$\sum x_i = 7$ & 4.75 & 153.12 & 464.12 & 396.90 & 480.03 & 1906.80 \\ 
$\sum x_i = 8$ & 4.94 & 366.68 & 497.50 & 898.79 & 463.73 & 3102.87 \\ 
$\sum x_i = 9$ & 5.23 & 503.08 & 590.07 & 1587.55 & 611.46 & 3913.59 \\ 
$\sum x_i = 10$ & 5.39 & 2210.31 & 651.65 & 3588.63 & 686.38 & 8331.87 \\ 
\bottomrule
\end{tabular}}
{Results under different cardinality constraints for the computation time in seconds for three solution methods, split by Phase 1 and Phase 2. The first is the accelerated Benders decomposition method from \S\ref{sec:main_alg} (ABD). The second is the implementation of the two-phase Benders decomposition method from \S\ref{sec:benders} given by \cite{bertsimas2019exact} with our improvement to their code (BD${}^1$). The third is the implementation of the two-phase Benders decomposition method from \S\ref{sec:benders} given by \cite{bertsimas2019exact} without our improvement to their code (BD${}^0$). Results are averaged over three replications to remove variability in solve times, and results are rounded to two decimal places. Table~\ref{tab:large_benders_time_phases} in Section~\ref{sec:results:toubia} includes columns ABD and BD${}^1$.}
\end{table}
\null
\vfill
\clearpage
}%

\subsubsection{Initial cuts in Benders decomposition.} \label{appx:extra_experiments:initial_cuts}
To avoid unboundedness of the outer problem~\eqref{prob:outer}, both our accelerated Benders decomposition method from Section~\ref{sec:main_alg} and the original two-phase Benders decomposition method from \cite{bertsimas2019exact} described in Section~\ref{sec:benders} initialize the outer problem~\eqref{prob:outer} in Phase 1 with a single constraint for each ranking.  Consistent with the implementation given by \cite{bertsimas2019exact}, the original two-phase Benders decomposition method is initialized with the constraint $q_k \le \max_{i \in [N+1]} r_i$ for each ranking $k$. Our accelerated Benders decomposition method is initialized with the constraint $q_k \le \max_{\ell \in [L_k+1]} r_{i_{k,\ell}}$, which corresponds to the cut that would be obtained by \eqref{prob:dual_reform} from Section~\ref{sec:main_alg:reform} with  $\delta_1 = \cdots = \delta_{L_k+1} = \bar{r}_k$.   
\subsection{Discussion of the data generation process from Section~\ref{sec:data_generation}} \label{appx:M_spread}
The data generation process described in Section~\ref{sec:data_generation} consists of generating a "ground truth" ranking-based choice model with $M$ rankings (Step 1) and fitting the attraction parameters of the multinomial logit model to transaction data generated from that ranking-based choice model (Step 2).  In this appendix,  we perform additional experiments to demonstrate the effect of the parameter $M$ on the  multinomial logit models with rank cutoffs obtained from this data generation process.\looseness=-1

The results from these additional  experiments  are shown in Figure~\ref{fig:product_dist_by_M}. Similarly as in Section~\ref{sec:experiments:mnl:xset}, Figure~\ref{fig:product_dist_by_M} considers a setting with $N = 50$ products, a rank cutoff of $L = 5$, and $M \in \{5,15,25\}$. For each $M \in \{5,15,25\}$, we perform the data generation process described in Steps 1 and 2 of  Section~\ref{sec:data_generation} to construct a multinomial logit model with rank cutoffs, and we then generate $\tilde{K} = 50000$ samples from the constructed multinomial logit model with rank cutoffs  by following Steps 3 and 4 of Section~\ref{sec:data_generation}. In Figure~\ref{fig:product_dist_by_M}, we report for each product the proportion of samples in which that product is preferred to the no-purchase option, averaged across ten replications for each $M \in \{5,15,25\}$.\footnote{Because the rank cutoff is $L = 5$, we observe that at most $5$ products are preferred to the no-purchase option in any given sample.  As such, for each $M \in \{5,15,25\}$, the sum of the proportions in Figure~\ref{fig:product_dist_by_M} is at most 5.0. } Consistent with the data generation process described in Section~\ref{sec:data_generation}, the products  in Figure~\ref{fig:product_dist_by_M} are sorted within each replication so that the attraction parameters satisfy $\nu_1 \ge \cdots \ge \nu_{N}$.\looseness=-1

There are two main takeaways from Figure~\ref{fig:product_dist_by_M}. First, we observe that as the number of rankings $M$ in the ground truth model increases, the proportion of samples in which each product is preferred to the no-purchase option becomes more evenly distributed across products. This reflects the fact that larger values of $M$ leads to a smaller spread of the attraction parameters in the estimated multinomial logit model. Second, we observe that even for the smallest choice of $M$ ($M = 5$),  each product is preferred to the no-purchase option in at least some of the samples.

\begin{figure}[t] 
\FIGURE{\centering \includegraphics[width=0.5\textwidth]{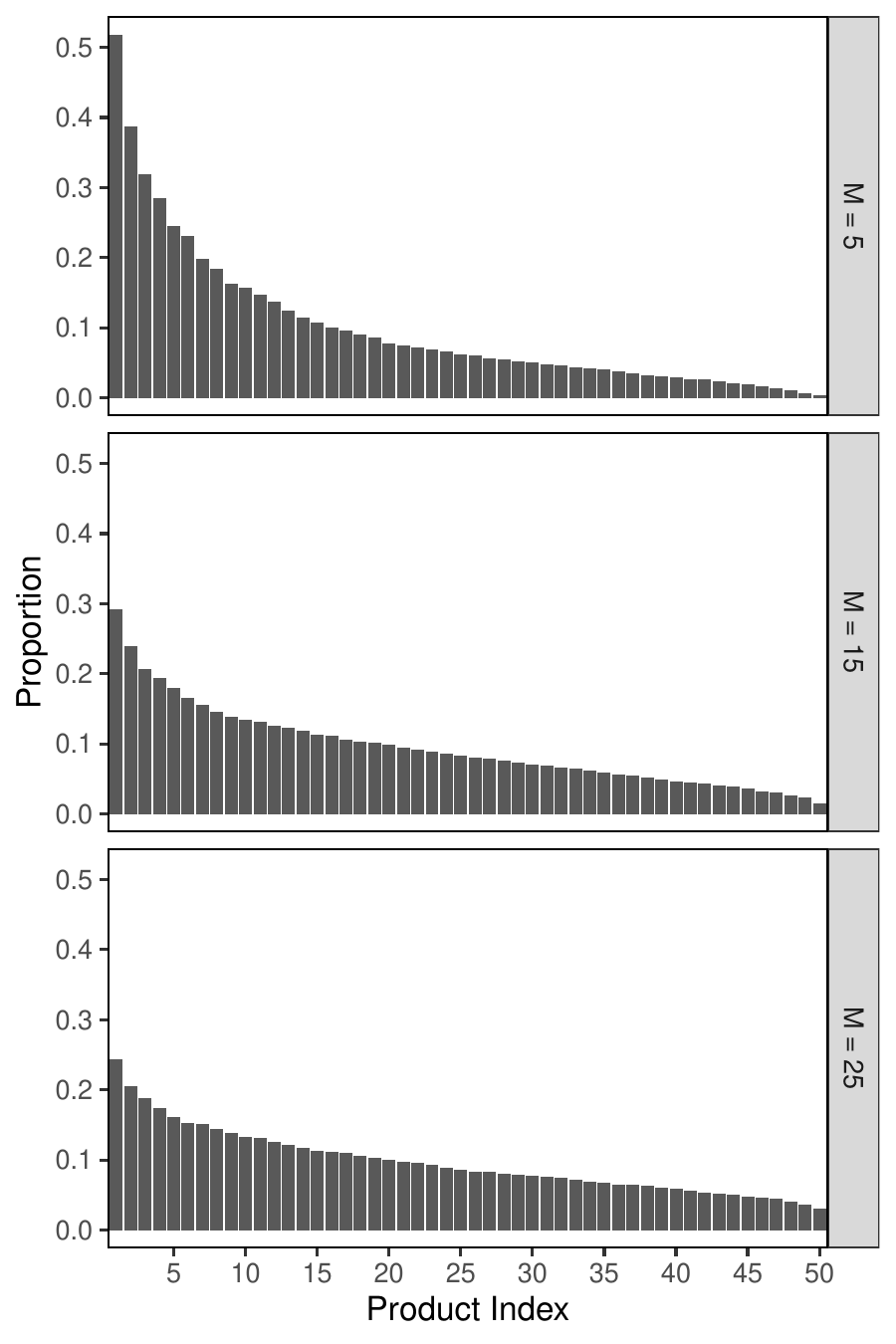}}
{Experiments from Appendix \ref{appx:M_spread} - Visualization of samples.\label{fig:product_dist_by_M}}{Proportion of samples in which each product is preferred to no-purchase option.}
\end{figure}

\subsection{Additional Experiments from Section~\ref{sec:experiments:mnl:xset} }\label{appx:exclusion_set_budget}
In this appendix, we present additional numerical experiments to those in Section~\ref{sec:experiments:mnl:xset} to explore the impact of cardinality constraints on the computation time of the exclusion set formulation~\eqref{prob:exclusionset} and the original mixed-integer programming formulation~\eqref{prob:misic_no_extra}. 

Our additional experiments follow a similar setup as  Section~\ref{sec:experiments:mnl:xset}. Specifically, we use the same data generation process from  Section~\ref{sec:data_generation} with   $N = 50$, $M=5$,  $\tilde{K} = 10000$, and $L=5$. We then compare the performance of the exclusion set formulation~\eqref{prob:exclusionset} and the original mixed-integer programming formulation~\eqref{prob:misic_no_extra} with  budget constraints of the form $\sum_{i=1}^N x_i \le 5$, $\sum_{i=1}^N x_i \le 10$, \ldots, $\sum_{i=1}^N x_i \le 45$,  $\sum_{i=1}^N x_i \le 50$.  Table~\ref{tab:es_cardinality_constraint}  shows the computation time for solving \eqref{prob:exclusionset}  and \eqref{prob:misic_no_extra}, and it also shows the gap of each method of the root relaxation, i.e., the ratio of the optimal objective values of the linear programming relaxations to the true optimal objective values.\looseness=-1 

\begin{table}[t]
\TABLE{Experiments Showing Exclusion Set Performance Under Cardinality Constraints. \label{tab:es_cardinality_constraint}}
{
\begin{tabular}{crrrrrr}
\toprule
{Budget} & 
\multicolumn{3}{c}{\shortstack{Computation Time}} & 
\multicolumn{3}{c}{Root Relaxation Gap} \\
\cmidrule(lr){2-4} \cmidrule(lr){5-7}
& \shortstack{XSET} & \shortstack{MIP}  & \shortstack{Ratio}
& \shortstack{XSET} & \shortstack{MIP} & \shortstack{Ratio}\\
\midrule
5 & 48.09 & 97.80 & 2.03 & 6.44\% & 6.99\% & 1.09  \\ 
10 & 281.00 & 549.89 & 1.96 & 10.29\% & 11.32\% & 1.10  \\ 
15 & 486.80 & 887.64 & 1.82 & 8.40\% & 10.75\% & 1.28  \\ 
20 & 207.06 & 550.55 & 2.66 & 4.11\% & 7.13\% & 1.74  \\ 
25 & 86.67 & 273.10 & 3.15 & 1.56\% & 3.87\% & 2.48  \\ 
30 & 11.59 & 32.83 & 2.83 & 0.44\% & 1.78\% & 4.07  \\ 
35 & 2.34 & 22.80 & 9.74 & 0.05\% & 1.02\% & 18.75  \\ 
40 & 1.51 & 18.12 & 11.96 & 0.02\% & 0.95\% & 58.16  \\ 
45 & 1.59 & 15.35 & 9.64 & 0.02\% & 0.95\% & 58.16  \\ 
Inf & 1.48 & 15.83 & 10.73 & 0.02\% & 0.95\% & 58.16  \\  
\bottomrule
\end{tabular}
}{Results for two solution methods under different cardinality constraints. The two solution methods are our exclusion set formulation~\eqref{prob:exclusionset} (XSET) and the original mixed-integer programming formulation~\eqref{prob:misic_no_extra} (MIP).  The Computation Time columns report the solution time (in seconds) required to solve each formulation under integrality constraints, along with the ratio of these times. The Root Relaxation Gap columns report the average percentage deviation of the root relaxation from the optimal integral solution for both formulations, as well as the ratio of these gaps. Results are averaged over five replications, and results are rounded to two decimal places.}
\end{table}

There are three main takeaways from Table~\ref{tab:es_cardinality_constraint}. First, we observe that the root relaxation gap for both methods decreases as the budget constraint decreases from 10 to 5. This decrease in the root relaxation gaps is consistent with Proposition~\ref{prop:integrality_budget_one} from Section~\ref{sec:exclusion_set:comparison}, which shows that the root relaxation gap of  \eqref{prob:exclusionset} and \eqref{prob:misic_no_extra} are both 0\% if we have a budget constraint of $\sum_{i=1}^N x_i \le 1$. Second, we observe that the computation times for solving \eqref{prob:exclusionset} and \eqref{prob:misic_no_extra} increase as the budget decreases from $50$ to $15$, and the computation times then decrease as the budget decreases from 15 to 5.  We attribute this to the fact that the root relaxation gaps for both formulations become larger as we decrease from $50$ to $15$, after which the root relaxation gaps for both formulations becomes smaller. Third, we observe that the ratio of computation times between \eqref{prob:exclusionset} and \eqref{prob:misic_no_extra} is closely correlated with the ratio of their root relaxation gaps. This demonstrates the value of the strength of the exclusion set formulation~\eqref{prob:exclusionset} compared to the original mixed-integer programming formulation~\eqref{prob:misic_no_extra} from the perspective of reducing the computation time for obtaining an optimal solution.

\subsection{Approximation Gap for Section~\ref{sec:results:toubia}} \label{appx:cross_fold}
In Section~\ref{sec:results:toubia}, we solve the sample average approximation~\eqref{prob:saa} using the dataset from \cite{toubia2003fast}, which consisted of $N = 3584$ products and $\tilde{K} = 330$ samples. This set of experiments is notably different than the experiments in Section~\ref{sec:experiments:MNL_Cutoff}  for two reasons. First, the dataset from \cite{toubia2003fast} contains far more products than samples, raising potential concerns  about the out-of-sample performance of assortments obtained from solving the sample average approximation. Second, the dataset from \cite{toubia2003fast} was  obtained using a conjoint analysis rather than Monte-Carlo simulation. As such,  it is not possible to obtain more samples to improve the approximation gap of the sample average approximation, nor is it possible to generate an out-of-sample validation set of samples to estimate the approximation gap of the assortments obtained from solving sample average approximation, as was done in Section~\ref{sec:experiments:mnl:accelerated}. 

To address these potential concerns, Table~\ref{tab:toubia_cross_validation}  presents the results of a five-fold cross-validation conducted on the dataset of \cite{toubia2003fast}  from Section \ref{sec:results:toubia}.   Specifically, after randomizing the dataset, we partitioned it into five equal subsets. For each budget constraint, we performed five iterations, using four subsets for the sample average approximation~\eqref{prob:saa} and evaluating the revenue performance of the optimal assortment on the remaining hold-out subset. This process allows us to assess how well the optimal assortments generalize to unseen data across different cardinality constraints.  To the best of our knowledge, Table~\ref{tab:toubia_cross_validation} offers  the first empirical analysis of the approximation gap from using the dataset of \cite{toubia2003fast}.\looseness=-1 

\begin{table}[t]
\TABLE{Experiments from Appendix~\ref{appx:cross_fold} - Estimated Approximation Gap.\label{tab:toubia_cross_validation}
}{
\centering
\begin{tabular}{l c}
\toprule
Constraints & \shortstack{Approximation Gap} \\
\midrule
$\sum x_i = 2$ & 94.96\% \\ 
$\sum x_i = 3$ & 95.55\% \\ 
$\sum x_i = 4$ & 96.99\% \\ 
$\sum x_i = 5$ & 96.14\% \\ 
$\sum x_i = 6$ & 97.26\% \\ 
$\sum x_i = 7$ & 97.46\% \\ 
$\sum x_i = 8$ & 95.82\% \\ 
\bottomrule
\end{tabular}}
{Relative accuracy (100\% - MAPE) of optimal assortments from the sample average approximation~\eqref{prob:saa}, solved using the accelerated Benders decomposition method from \S\ref{sec:main_alg}. MAPE is based on the revenue of each optimal assortment on the training data versus its performance on the out-of-sample data. Results are averaged over five-fold cross-validation (Appendix~\ref{appx:cross_fold}) and reported to two decimal places.}
\end{table}

Table~\ref{tab:toubia_cross_validation} reveals that the approximation gap of the sample average approximation~\eqref{prob:saa} on the dataset from \cite{toubia2003fast} is surprisingly small, and substantially smaller than the one reported in Table \ref{tab:saa-convergence}. This indicates that this dataset may possess structural properties, potentially stemming from the data collection methodology employed by \cite{toubia2003fast}, that facilitate strong generalization. The consistently low approximation gaps observed on this real-world dataset of $N = 3584$ products provide additional evidence of the viability of the sample average approximation approach for solving the assortment optimization problems with realistic numbers of products.

\end{APPENDICES}
\end{document}